\documentclass[11pt, a4paper, reqno]{amsart}
\usepackage[T1]{fontenc}
\usepackage[latin1]{inputenc}
\usepackage[english]{babel}
\usepackage[reqno]{amsmath}
\usepackage{amsthm}
\usepackage{latexsym}
\usepackage{amssymb}
\usepackage{mathrsfs}
\usepackage{mathtools}
\usepackage{mathabx}
\usepackage{lipsum}
\usepackage{titlesec}
\usepackage{tikz-cd}
\usepackage{dsfont} 

\usepackage[totalwidth=15cm,totalheight=20cm, hmarginratio=1:1]{geometry}
\usepackage{mathtools}
\usepackage{color}
\usepackage{physics}
\usepackage{faktor}
\usepackage[shortlabels]{enumitem}
\usepackage{tikz} 
\usepackage{hyperref}
\usepackage{graphicx}
\usepackage{tabularx}
\usepackage{cases} 
\newcolumntype{C}{>{\centering\arraybackslash}X} 
\usepackage{csquotes}
\usepackage{url}


\usepackage[style=alphabetic]{biblatex}

\addbibresource{biblio.bib}


\usepackage{accents}


\setcounter{tocdepth}{4}

\makeatletter
\newtheorem*{rep@theorem}{\rep@title}
\newcommand{\newreptheorem}[2]{%
\newenvironment{rep#1}[1]{%
 \def\rep@title{#2 \ref{##1}}%
 \begin{rep@theorem}}%
 {\end{rep@theorem}}}
\makeatother

\makeatletter
\newtheorem*{rep@cor}{\rep@title}
\newcommand{\newrepcor}[2]{%
\newenvironment{rep#1}[1]{%
 \def\rep@title{#2 \ref{##1}}%
 \begin{rep@cor}}%
 {\end{rep@cor}}}
\makeatother

\makeatletter
\newtheorem*{rep@prop}{\rep@title}
\newcommand{\newrepprop}[2]{%
\newenvironment{rep#1}[1]{%
 \def\rep@title{#2 \ref{##1}}%
 \begin{rep@prop}}%
 {\end{rep@prop}}}
\makeatother

\newtheorem{theorem}{Theorem}[section]
\newreptheorem{theorem}{Theorem}
\numberwithin{theorem}{section}


\newenvironment{manualtheorem}[1]{%
    \manualtheoreminner
}{\endmanualtheoreminner}

\newenvironment{manualcorollary}[1]{%
  \manualcorollaryinner
}{\endmanualcorollaryinner}

\newtheorem{lemma}[theorem]{Lemma}
\newtheorem{corollary}[theorem]{Corollary}

\newrepcor{corollary}{Corollary}


\newtheorem{proposition}[theorem]{Proposition}

\newrepprop{prop}{Proposition}

\theoremstyle{definition}
\newtheorem*{definition*}{Definition}
\newtheorem{definition}[theorem]{Definition}

\theoremstyle{remark}
\newtheorem{remark}[theorem]{Remark}

\makeatletter
\def\paragraph{\@startsection{paragraph}{4}%
  \z@\z@{-\fontdimen2\font}%
  {\normalfont\bfseries}}
\makeatother

\numberwithin{equation}{section}
\allowdisplaybreaks

\patchcmd{\subsection}{-.5em}{.5em}{}{}

\makeatletter

\renewcommand\section{\@startsection{section}{1}%
  \z@{.7\linespacing\@plus\linespacing}{.5\linespacing}%
  {\normalfont\scshape\centering}}

\renewcommand\subsection{\@startsection{subsection}{2}%
  \z@{-.5\linespacing\@plus-.7\linespacing}{.5\linespacing}%
  {\bfseries}}
  
\renewcommand\subsubsection{\@startsection{subsubsection}{3}%
  \z@{-.5\linespacing\@plus-.7\linespacing}{.5\linespacing}%
  {\itshape}}
  
\def\l@paragraph{\@tocline{4}{0pt}{1pc}{7pc}{}}

\makeatother



\newcommand{\C}{\mathbb{C}}
\newcommand{\R}{\mathbb{R}}

\newcommand{\Z}{\mathbb{Z}}

\newcommand{\Hyp}{\mathbb{H}^2}

\newcommand{\Teichc}{\mathcal{T}^\mathfrak{c}(\Sigma)}
\newcommand{\Teichh}{\mathcal{T}^\mathfrak{h}(\Sigma)}
\newcommand{\Teichrep}{\mathcal{T}^\mathfrak{rep}(\Sigma)}

\newcommand{\GH}{\mathcal{GH}}

\newcommand{\Lsl}{\mathfrak{sl}}

\newcommand{\Lsymp}{\mathfrak{S}}
\newcommand{\Lham}{\mathfrak{H}}

\newcommand{\diag}{\mathrm{diag}}

\newcommand{\vl}{|}
\newcommand{\Ker}{\mathrm{Ker}}

\newcommand{\PSL}{\mathbb{P}\mathrm{SL}}

\newcommand{\Id}{\mathrm{Id}}

\newcommand{\dive}{\mathrm{div}}

\newcommand{\Ree}{\mathcal{R}e}

\newcommand{\Hit}{\mathrm{Hit}}

\renewcommand{\i}{\mathbf{I}}
\renewcommand{\j}{\mathbf{J}}
\renewcommand{\k}{\mathbf{K}}

\newcommand{\g}{\mathbf{g}}

\newcommand{\Sg}{\Sigma}
\newcommand{\deft}{\mathcal{B}_0(T^2)}

\newcommand{\defg}{\mathcal{B}(\Sigma)}
\newcommand{\defgp}{\mathcal{B}_0(\Sigma)}
\newcommand{\RP}{\mathbb{R}\mathbb{P}^2}

\newcommand{\ome}{\boldsymbol{\omega}}

\newcommand{\cubicg}{Q^3\big(\Teichc\big)}
\newcommand{\cubic}{Q^3\big(\mathcal{T}(T^2)\big)}
\newcommand{\almost}{\mathcal{J}(\R^2)}
\newcommand{\almostg}{\mathcal{J}(\Sigma)}
\newcommand{\pick}{D^3(\mathcal{J}(\R^2))}
\newcommand{\pickg}{D^3(\mathcal{J}(\Sigma))}

\newcommand{\normpick}{\vl\vl A\vl\vl_0^2}

\newcommand{\dx}{\mathrm{d}x}
\newcommand{\dy}{\mathrm{d}y}

\newcommand{\charvar}{\mathfrak{R}\big(\Sg, \SL(3,\R)\big)}
\newcommand{\hitc}{\mathrm{Hit}_3(\Sigma)}
\newcommand{\bigslant}[2]{{\raisebox{.2em}{$#1$}\left/\raisebox{-.2em}{$#2$}\right.}}

\newcommand{\haffrhozero}{\mathcal{H}\mathcal S_0(\Sg,\rho)}
\newcommand{\haffrhozerotilde}{\widetilde{\mathcal{H}\mathcal S}_0(\Sg,\rho)}
\newcommand{\defgtilde}{\widetilde{\mathcal{B}}(\Sigma)}
\newcommand{\momentmap}{\boldsymbol{\mu}}
\newcommand{\haff}{\mathcal{H}\mathcal S(\Sg)}
\newcommand{\haffzero}{\mathcal{H}\mathcal S_0(\Sg)}
\newcommand{\liederivative}{\mathcal{L}}
\newcommand{\neighborhood}{\mathcal{N}_{\mathcal{F}(\Sg)}}



\DeclarePairedDelimiterX{\scal}[2]{\langle}{\rangle}{#1 \mid #2}
\DeclarePairedDelimiterX{\scall}[2]{\langle}{\rangle}{#1, #2}

\DeclareMathOperator{\End}{End}

\DeclareMathOperator{\SL}{\mathrm{SL}}


\DeclareMathOperator{\divr}{div}


\DeclareMathOperator{\Symp}{Symp}
\DeclareMathOperator{\Diff}{Diff}
\DeclareMathOperator{\Ham}{Ham}

\DeclareMathOperator{\Ad}{Ad}

\linespread{1.10}

\begin{document}

\setcounter{secnumdepth}{3}
\setcounter{tocdepth}{2}

\title[Pseudo-K\"ahler structure on $\Hit_{3}(\Sigma)$ and Goldman symplectic form]{Pseudo-K\"ahler structure on the $\mathrm{SL}(3,\R)$-Hitchin component and Goldman symplectic form}

\author[Nicholas Rungi]{Nicholas Rungi}
\address{NR: Universit\'e Grenoble Alpes (Institut Fourier), Grenoble, France.} \email{nicholas.rungi@univ-grenoble-alpes.fr} 

\author[Andrea Tamburelli]{Andrea Tamburelli}
\address{AT: Department of Mathematics, University of Pisa, Italy.} \email{andrea.tamburelli@libero.it}

\date{\today}

\begin{abstract}
    The aim of this paper is to show the existence and give an explicit description of a pseudo-Riemannian metric and a symplectic form on the $\SL(3,\R)$-Hitchin component, both compatible with Labourie and Loftin's complex structure. In particular, they give rise to a mapping class group invariant pseudo-K\"ahler structure on a neighborhood of the Fuchsian locus, which restricts to a multiple of the Weil-Petersson metric on Teichm\"uller space. By comparing our symplectic form with Goldman's $\ome_G$, we prove that the pair $(\ome_G, \i)$ cannot define a K\"ahler structure on the Hitchin component. 
\end{abstract}

\maketitle

\tableofcontents

\section{Introduction}
Historically, the deformation space $\defg$ of convex $\RP$-structures on a smooth, closed and orientable surface $\Sg$ of genus $g\ge 2$, was the very first example of a higher rank Teichm\"uller space parameterizing geometric structures on the surface (see \cite{Wienhard_intro} for a survey on this topic). It generalizes the notion of hyperbolic structures and, because of this, it contains a copy of Teichm\"uller space $\Teichh$. In the early nineties, Goldman proved that $\defg$ is homeomorphic to a cell of real dimension $-8\chi(\Sg)$ (\cite{goldman1990convex}) and by using works of Koszul (\cite{koszul1965varietes},\cite{koszul1968deformations}), it was shown (\cite{goldman1990convex}) that $\defg$ embeds as an open submanifold in the representation space $\mathrm{Hom}\big(\pi_1(\Sg), \SL(3,\R)\big)/\SL(3,\R)$. Three years later, Choi and Goldman (\cite{choi1993convex}) proved that $\defg$ is indeed homeomorphic to a connected component Hit$_3(\Sg)$ of the $\SL(3,\R)$-character variety, found by Hitchin using techniques from Higgs bundles theory (see \cite{hitchin1992lie} for a more general result). It consists of representations obtained as deformation of the \emph{Fuchsian} ones, i.e. those arising as holonomy of a hyperbolic structure, and nowadays it is called the \emph{Hitchin component}. In the early 2000s, Labourie (\cite{Labourie_cubic}) and Loftin (\cite{loftin2001affine}) proved independently, using the theory of hyperbolic affine spheres and harmonic maps in symmetric spaces, that the space $\defg$ can be endowed with a mapping class group invariant complex structure $\i$. Such a complex structure comes from the identification between $\defg$ and the total space of the holomorphic bundle of cubic differentials over $\Teichh$. \\ \\ A natural question, after the parameterization by Labourie and Loftin, is to look for a Riemannian metric (or symplectic form) on $\defg$ that gives rise to a K\"ahler structure, extending the Weil-Petersson metric, once coupled with $\i$. The first attempt was made by Loftin (\cite{loftin2001affine}) with the Goldman symplectic form $\ome_G$ (\cite{goldman1984symplectic}), defined using the algebraic description $\defg\cong$ Hit$_3(\Sg)$ found by Choi and Goldman. Nowadays, it is still unclear whether $\ome_G(\i\cdot,\cdot)$ is a Riemannian metric. Later on, three more Riemannian metrics on the $\SL(3,\R)$-Hitchin component were defined: one by Darvishzadeh and Goldman (\cite{darvishzadeh1996deformation}), one by Li (\cite{li2013teichm}) and another by Bridgeman-Canary-Labourie-Sambarino (\cite{bridgeman2015pressure}) called \emph{pressure metric} (defined also on much more general spaces). Regarding the first two it has been shown that they restrict to a multiple of the Weil-Petersson metric on Teichm\"uller space, which is totally geodesic in Hit$_3(\Sg)$ with respect to the metric found by Li. As far as pressure metric is concerned, very little is known and this is partly due to its complicated expression (\cite{labourie2018variations},\cite{dai2019geodesic}). In all three cases the relation with Labourie and Loftin's complex structure is unknown. \\ \\ Recently, Kim and Zhang (\cite{kim2017kahler}), using various notions of positivity for holomorphic bundles on K\"ahler manifolds, have succeeded in showing the existence of a K\"ahler metric on Hit$_3(\Sg)$, which restricts to a multiple of the Weil-Petersson one on the Fuchsian locus. Even if this metric is natural, namely invariant under the action of the mapping class group, the relation of its complex structure with the one found by Labourie and Loftin is still mysterious (see the discussion in \cite[\S 1.2 and \S 1.3]{labourie2017cyclic}). \\ 

\noindent In this paper we study the symplectic and pseudo-Riemannian geometry of Hit$_3(\Sg)$ using a different approach coming from the theory of symplectic reduction in an infinite-dimensional context. Recall that a \emph{pseudo-Riemannian} metric $\g$ on a smooth $n$-manifold $M$ is an everywhere non-degenerate, smooth, symmetric $(0,2)$-tensor. Let $\mathbf{I}$ be a complex structure on $M$, then $(\g,\mathbf{I})$ is a \emph{pseudo-Hermitian structure} if \begin{equation*}
    \g(\mathbf{I}X,\mathbf{I}Y)=\g(X,Y), \qquad\forall X,Y\in T_pM, p\in M \ . 
\end{equation*} The \emph{fundamental 2-form} $\ome$ of a pseudo-Hermitian manifold $(M,\g,\mathbf{I})$ is defined by: \begin{equation*}
    \ome(X,Y):=\g(X,\mathbf{I}Y), \qquad\forall X,Y\in T_pM, p\in M \ . 
\end{equation*}
A pseudo-Hermitian manifold $(M,\g,\mathbf{I},\ome)$ is called \emph{pseudo-K\"ahler} if the fundamental $2$-form is closed, namely if $\mathrm{d}\ome=0$. In this case the corresponding metric is called \emph{pseudo-K\"ahler}.\\

\noindent The main result of the paper is the following: 

\begin{manualtheorem}A \label{thmA}
There exists a bi-linear, alternating and closed $2$-tensor $\ome$ on $\hitc$ such that $\g(\cdot,\cdot):=\ome(\i \cdot,\cdot)$ defines an indefinite symmetric bi-linear form. Moreover, the triple $(\g,\ome,\i)$ gives rise to a mapping class group invariant pseudo-K\"ahler structure on a neighborhood of the Fuchsian locus in the Hitchin component, and it restricts to a multiple of the Weil-Petersson K\"ahler metric on Teichm\"uller space. The signature of the pseudo-Riemannian metric $\g$ in the above neighborhood is $(6g-6,10g-10)$.
\end{manualtheorem}

\noindent There is a natural well-defined action of the mapping class group MCG$(\Sg)$ on the spaces $\defg$ and $\hitc$ 
for which the monodromy map is an equivariant isomorphism (see Theorem \ref{teo:choigoldman}). Therefore, one gets an induced pseudo-K\"ahler structure on a neighborhood $\mathcal{N}_{\Teichh}$ of Teichm\"uller space in $\defg$. The quotient $\mathcal{C}(\Sg)$ of the deformation space by the mapping class group is a complex orbifold and it is smooth at generic points (\cite[Proposition 4.1.2]{loftin2001affine}). The above discussion together with Theorem \ref{thmA} implies the following:
\begin{manualcorollary}B\label{cor:B}
    There exists an orbifold neighborhood $\mathcal{N}_{\mathcal M_g}$ of the moduli space of Riemann surfaces of genus $g\ge 2$ inside $\mathcal C(\Sg)$ endowed with a pseudo-K\"ahler orbifold structure. Such a structure restricts to a multiple of the Weil-Petersson orbifold K\"ahler structure on $\mathcal M_g$.
\end{manualcorollary}

\noindent The tensors $\ome$ and $\g$ will be explicit and defined on the whole $\hitc$ but, because we cannot exclude that $\ome$ might be degenerate outside the Fuchsian locus, the triple $(\ome,\g,\i)$ defines a-priori a pseudo-K\"ahler structure only on a neighborhood of it. We will also give an interpretation of our pseudo-K\"ahler structure in terms of hyperbolic affine spheres. \\ 

\noindent The above theorem is inspired by a similar result obtained in the case of maximal globally hyperbolic anti-de Sitter three-manifolds (\cite{mazzoli2021parahyperkahler}), where the authors developed part of the techniques the we used in our work (see Section \ref{sec:1.6}). \\

\noindent The identification between $\mathcal{B}(S)$ and the  holomorphic vector bundle of cubic differentials over Teichm\"uller space induces a natural circle action on $\defg$ that corresponds to the rotation along the fibres on $\cubicg$, i.e. $e^{i\theta}\cdot([J],q):=([J],e^{-i\theta}q)$. The pseudo-K\"ahler structure we found in Theorem \ref{thmA} behaves well with respect to this circle action:

\begin{manualtheorem}C \label{thmC}
Let $\rho$ be a fixed area form on $\Sg$, then the circle action on $\hitc$ is Hamiltonian with respect to $\ome$ and it satisfies: \begin{equation}\label{isometriaazionecircolare}\Psi_\theta^*\g=\g,\quad\forall\theta\in\R \ .\end{equation}The Hamiltonian function is given by: $$H(J,q):=\frac{2}{3}\int_\Sg f\bigg(\frac{\vl\vl q\vl\vl^2_{g_J}}{2}\bigg)\rho,$$ where $f:[0,+\infty)\to(-\infty,0]$ is the smooth function defined by (\ref{definitionf}) and $g_J(\cdot,\cdot):=\rho(\cdot,J\cdot)$ is a Riemannian metric on the surface.
\end{manualtheorem}
\noindent In the theorem above, we improperly used the notion of "Hamiltonian action" since we know that the symplectic form $\ome$ is non-degenerate only on a neighborhood of the Fuchsian locus in $\hitc$. Nevertheless, the relation (\ref{isometriaazionecircolare}) and the moment map equation defined by $H(J,q)$ continue to hold on the whole Hitchin component.\\

\noindent By work of Goldman, the Hitchin component $\Hit_3(\Sigma)$ has a natural symplectic form $\ome_G$. One of the main open question about the geometry of $\Hit_3(\Sigma)$ is whether the complex structure $\i$ that the Hitchin component inherits from the Labourie-Loftin parameterization is compatible with Goldman symplectic form and the pair $(\ome_G, \i)$ gives rise to a K\"ahler structure on the Hitchin component. By comparing our symplectic form $\ome$ with $\ome_G$, we give a negative answer to the above question:

\begin{manualtheorem}D \label{thmGoldman}
The symplectic forms $\ome$ and $\ome_G$ coincide on the Fuchsian locus along all directions, tangent and transverse to the Fuchsian locus. In particular, even if the complex structure $\i$ were compatible with $\ome_G$, the metric $\ome_G(\i\cdot, \cdot)$ would be indefinite with signature $(6g-6,10g-10)$, thus the pair $(\ome_G, \i)$ cannot define a K\"ahler structure on the Hitchin component. 
\end{manualtheorem}

\subsection{Strategy of the proofs}\label{sec:1.5}
In this section we explain with some details the techniques used to construct the Hitchin component as an infinite dimensional symplectic quotient. Throughout the exposition we will state other fundamental and non-immediate results needed for the proof of Theorem \ref{thmA}, Theorem \ref{thmC} and Theorem \ref{thmGoldman}. On the one hand the overall strategy follows the lines of the anti-de Sitter case (\cite[\S 1.7]{mazzoli2021parahyperkahler}), but on the other we encountered more difficulties during some steps of the proof that will be explained on a case-by-case basis (see also Section \ref{sec:1.6} for a more general discussion). \vspace{1em}\newline \underline{\emph{The genus one case}}:\vspace{1em}\\ The case when $\Sg=T^2$ was studied by the authors in a previous work (\cite{rungi2021pseudo}). Let us give a brief summary of what has been proven, as the methods used for the genus one case will later be a key part of the general case. Let $\almost$ be the space of (almost) complex structures on $\R^2$ compatible with the standard area form $\rho_0=\dx\wedge\dy$, namely all the endomorphisms $J:\R^2\to\R^2$ such that $J^2=-\mathds{1}$ and for which $\{v, Jv\}$ is a positive basis, whenever $v\neq 0$. For any such $J$, let $g_J^0(\cdot,\cdot):=\rho_0(\cdot,J\cdot)$ be the associated scalar product on $\R^2$. There is an identification between this space and the hyperbolic plane $\Hyp$, so that the action of $\SL(2,\R)$ on $\Hyp$ by M\"obius transformations results in an action by conjugation on $\almost$. We introduced an explicit family of $\SL(2,\R)$-invariant pseudo-K\"ahler metrics $(\hat\g_f,\hat\i,\hat\ome_f)$, parametrized by a smooth function $f:[0,+\infty)\to(-\infty,0]$, on a vector bundle $\pick$ over $\almost$, whose fibre over a point $J$ is given by $\{A\in \End_0(\R^2, g_J^0)\otimes T^*\R^2 \ | \ A(J\cdot)=A(\cdot)J, \ A(X)Y=A(Y)X \}$, where $A(X)$ is a trace-less and $g_J^0$-symmetric endomorphism of $\R^2$, for any $X\in T\R^2$. Each element of the pseudo-K\"ahler structure preserves the complement of the zero section in $\pick$, which can be identified with the deformation space $\deft$. Moreover, under the above identification the mapping class group action on $\deft$ corresponds to the restricted $\SL(2,\Z)<\SL(2,\R)$-action on $\pick$ minus the zero section (see Theorem 3.13 in \cite{rungi2021pseudo}). \vspace{1em}\newline 
\underline{\emph{The general case}}:\vspace{1em}\newline
Now let $\Sg$ be a smooth closed connected and oriented surface of genus $g\ge 2$. The crucial step in moving from the genus one case to the higher genus case, consists in the following construction, which will be developed in full details in Section \ref{sec:4.1}. Let $\rho$ be a fixed area form on $\Sg$, then for any (almost) complex structure $J$ on $\Sg$, let $g_J:=\rho(\cdot,J\cdot)$ be the associated Riemannian metric. Now consider the space formed by pairs $(J,A)$, where $J$ is an (almost) complex structure on $\Sg$ compatible with the given orientation and $A$ is a $1$-form with values in the bundle of trace-less and $g_J$-symmetric endomorphisms such that $A(J\cdot)=A(\cdot)J$ and $A(X)Y=A(Y)X, \ \forall X,Y\in\Gamma(T\Sg)$. This space, denoted by $\pickg$, is of infinite dimension and it carries a pseudo-K\"ahler structure as its analogue $\pick$. In fact, one can choose an area-preserving linear isomorphism from $\R^2$ to $T_x\Sg$, which induces an identification between $\pick$ and $D^3\big(\mathcal J(T_x\Sg)\big)$. Since the pseudo-K\"ahler metric on $\pick$ is $\SL(2,\R)$-invariant, the induced structure does not depend on the chosen area-preserving linear isomorphism. Then, one can (formally) integrate each element of the pseudo-K\"ahler structure on $\Sg$, evaluated on first-order deformations $(\dot J,\dot A)$. Slightly more in detail, let $P$ be the $\SL(2,\R)$-frame bundle over $\Sg$ whose fibres over a point $x\in\Sg$ are linear maps $F:\R^2\to T_x\Sg$ such that $F^*\rho_x$ is the standard area form on $\R^2$. Let us define the fibre bundle $$P\big(\pick\big):=\bigslant{P\times \pick}{\SL(2,\R)}$$where $\SL(2,\R)$ acts diagonally on the two factors. The space $\pickg$ can be identified with the space of smooth sections of such fibre bundle. Hence, as explained above, one can introduce the following formal symplectic form $$(\ome_f)_{(J,A)}\big((\dot J,\dot A),(\dot J',\dot A')\big):=\int_{\Sg}\hat\ome_f\big((\dot J,\dot A),(\dot J',\dot A')\big)\rho$$ and the formal pseudo-Riemannian metric $$(\g_f)_{(J,A)}\big((\dot J,\dot A),(\dot J',\dot A')\big):=\int_{\Sg}\hat\g_f\big((\dot J,\dot A),(\dot J',\dot A')\big)\rho$$where $\hat\ome_f$ and $\hat\g_f$ are both induced on each fibre of $T^{\text{vert}}P\big(\pick\big)$. Here we denoted by $T^{\text{vert}}P\big(\pick\big)$  the vertical sub-bundle of $TP\big(\pick\big)$ with respect to the projection map $P\big(\pick\big)\to\Sg$. Similarly, a complex structure $\i$ is obtained on the infinite-dimensional space $\pickg$ of smooth sections, by applying point-wise $\hat\i$, which is defined on $\pick$. It should be noted that the symplectic form $\hat\ome_f$ and the pseudo-Riemannian metric $\hat\g_f$ both depend on the choice of a smooth function $f$, as they arise from the construction on $\pick$. In particular, the expression for $\ome_f$ and $\g_f$ combined with $\i$ effectively gives us a (formal) family of pseudo-Kahler metrics on the space of smooth sections $\pickg$. Instead, we are interested in inducing such structures on a certain submanifold, whose elements $(J,A)$ will be identified with the set of embedding data of hyperbolic affine spheres in $\R^3$ (see Section \ref{sec:2.3}). In order to do so, a particular choice of the function $f$ appearing in the expression of $\ome_f$ and $\g_f$ has to be made. Let $F:[0,+\infty)\to\R$ be the unique smooth function such that $ce^{-F(t)}-2te^{-3F(t)}+1=0$, where $c$ is a constant depending only on the topology and the area of $(\Sg, \rho)$. Let us define a new metric in the same conformal class of $g_J$ by the formula $h:=e^{F(t)}g_J$, where the function $F$ is computed in $\vl\vl A\vl\vl^2_{g_J}$ (the norm of the tensor $A$ with respect to $g_J$) divided by $8$. Then, imposing equations (\ref{Gausscodazzi}) on the pair $(h,A)$ we get a $\Ham(\Sg,\rho)$-invariant submanifold $\haffrhozerotilde$ of the space of smooth sections $(J,A)$, whose quotient $\defgtilde$ by $\Ham(\Sg,\rho)$, is a smooth manifold of dimension $16g-16+2g$. This will be a consequence of a simple application of Moser's trick in symplectic geometry, of the particular choice of the function $f$ in terms of $F$, and finally of the existence and uniqueness of hyperbolic affine sphere immersions in $\R^3$ (see the discussion in Section \ref{sec:2.3} for more details). It turns out that such a manifold is not diffeomorphic to the deformation space of convex $\RP$-structures, as its dimension exceeds that of $\defg$ by $2g$. As we shall see later, the tangent space to this manifold splits as the $\g_f$-orthogonal direct sum of the tangent space to $\defg$ and the tangent to the orbit of harmonic vector fields (Lemma \ref{prop:orthogonaldecompositionW}). For this reason, the further (finite-dimensional) quotient of $\defgtilde$ by $\Symp_0(\Sg,\rho)/\Ham(\Sg,\rho)\cong H^1_{\text{dR}}(\Sg,\R)$ gives us the desired deformation space.\vspace{1.0em}\newline
\underline{\emph{The candidate for the tangent space to the Hitchin component}}:\vspace{1.0em}\newline
As explained in the previous paragraph, in order to actually obtain the Hitchin component from the space $\haffrhozerotilde$, and thus induce a pseudo-K\"ahler structure $(\g_f,\i,\ome_f)$ on it, we need to perform two quotients: the first by $\Ham(\Sg,\rho)$ and the second by $\Symp_0(\Sg,\rho)/\Ham(\Sg,\rho)$. The idea is to define a distribution $\{W_{(J,A)}\}_{(J,A)}$ of $\Ham(\Sg,\rho)$-invariant subspaces inside the tangent space to $\haffrhozerotilde$. Each vector space $W_{(J,A)}$ of this distribution will be defined by a system of partial differential equations and will be point-wise isomorphic to the tangent space of $\defgtilde$. The first result that conceals a number of technical difficulties, developed in Section \ref{sec:4.3}, shows, using an argument from the theory of elliptic operators on compact manifolds, that the dimension of each $W_{(J,A)}$ is bounded below by the expected dimension of the quotient manifold.\begin{manualtheorem}E \label{thmD}
Let $(J,A)$ be a point in the infinite-dimensional space $\haffrhozerotilde$. Let $W_{(J,A)}$ be the vector space of solutions of the following system:\begin{equation}\label{differentialequations}
\begin{cases}
\mathrm d\big(\divr\big((f-1)\dot J\big)+\mathrm d\dot f\circ J-\frac{f'}{6}\beta\big)=0  \\ \mathrm d\big(\divr\big((f-1)\dot J\big)\circ J+\mathrm d\dot{f}_0\circ J-\frac{f'}{6}\beta\circ J\big)=0 \\ \mathrm d^\nabla\dot A_0(\bullet,\bullet)-J(\divr\dot J\wedge A)(\bullet,\bullet)=0
\end{cases}
\end{equation}where $\beta(\bullet):=\langle(\nabla_\bullet A)J, \dot A_0\rangle$ is a $1$-form and $\dot{f}_0=-\frac{f'}{4}\langle A, \dot A_0J \rangle$ is a smooth function on $\Sg$. Then, $\dim W_{(J,A)}\ge 16g-16+2g$.
\end{manualtheorem}
\noindent The second difficult statement, which will be consequence of the above theorem, also involves a large number of technical details (carried out in Section \ref{sec:4.4}). It allows us to identify each subspace $W_{(J,A)}$ with the tangent space to the first quotient space $\defgtilde$ at the point $(J,A)$.
\begin{manualtheorem}F \label{thmE}
For every element $(J,A)\in\haffrhozerotilde$, the vector space $W_{(J,A)}$ is contained inside $T_{(J,A)}\haffrhozerotilde$ and it is invariant by the complex structure $\i$. Moreover, the collection $\{W_{(J,A)}\}_{(J,A)}$ defines a $\Ham(\Sg,\rho)$-invariant distribution on $\haffrhozerotilde$ and the natural projection $\pi:\haffrhozerotilde\to\defgtilde$ induces a linear isomorphism $$\mathrm d_{(J,A)}\pi:W_{(J,A)}\longrightarrow T_{[J,A]}\defgtilde$$
\end{manualtheorem}
\noindent In particular, we can restrict the pseudo-K\"ahler structure $(\g_f,\i,\ome_f)$ from the ambient space to the finite dimensional manifold $\defgtilde$. Since the pseudo-metric $\g_f$ is not positive-definite, it is not immediate that it is still non-degenerate when restricted to the subspaces $W_{(J,A)}$. This will be the biggest issue to be addressed and will be discussed in the last paragraph of this section. At this point, one can proceed by performing the finite-dimensional quotient $\defgtilde/H$, where $H:=\Symp_0(\Sg,\rho)/\Ham(\Sg,\rho)\cong H^1_{\text{dR}}(\Sg,\R)$. Such a quotient is isomorphic to $\defg$ (see Section \ref{sec:6.1}), hence to the Hitchin component. In particular, there is a $\g_f$-orthogonal decomposition $W_{(J,A)}=V_{(J,A)}\oplus S_{(J,A)}$, where $V_{(J,A)}$ is the tangent to $\hitc$ and $S_{(J,A)}$ is a copy of $H$ (Lemma \ref{lem:complexsymplecticsubspace}). 

\begin{manualtheorem}G \label{thmF}
The $H$-action on $\defgtilde$ is free and proper, with complex and symplectic $H$-orbits. Moreover, the pseudo-K\"ahler structure $(\g_f,\i,\ome_f)$ descend to the quotient which is identified with $\hitc$. Finally, the complex structure $\i$ induced on the $\SL(3,\R)$-Hitchin component coincides with the one found by Labourie and Loftin.\end{manualtheorem}  

 \vspace{1.0em}\noindent\underline{\emph{The relation with moment maps and symplectic reduction}}\vspace{1.0em}\newline
While Theorem \ref{thmD} and Theorem \ref{thmE} can be proven with self-contained arguments, it is not clear how to obtain the differential equations (\ref{differentialequations}) defining the subspace $W_{(J,A)}$. In fact, their origin must be sought in the context of moment maps and symplectic reductions, but in an infinite-dimensional context. For this reason, we will briefly explain how to characterize the subspaces $W_{(J,A)}$ in these terms and how the presence of isotropic vectors for $\g_f$ generates further difficulties. \newline In a previous paper, where we studied the same problem on the torus, we showed that the action of $\SL(2,\R)$ on $\pick$ is Hamiltonian with respect to the symplectic form $\hat\ome_f$ and we computed explicitly the moment map $\widehat\mu:\pick\to\mathfrak{sl}(2,\R)^*$ (\cite{rungi2021pseudo}). A general theorem of Donaldson (see Section \ref{sec:5.1}), allows us to promote the previous result to a Hamiltonian action of $\Ham(\Sg, \rho)$ on $\pickg$, with respect to the symplectic form $\ome_f$. In this case, the moment map $\momentmap$  associates to each pair $(J,A)\in\pickg$ an element in the dual Lie algebra of Hamiltonian vector fields on the surface. It turns out that to obtain an honest moment map $\widetilde{\momentmap}$ for the action of the group of Hamiltonian diffeomorphisms, one has to add a scalar multiple of the area form $\rho$. 
At this point, it can be shown that the submanifold $\widetilde\momentmap^{-1}(0)$ intersected with the set $\mathcal M_\mathrm C=\{(J,A) \in \pick \ | \ d^{\nabla}A=0\}$ (see Section \ref{sec:2.3} and Section \ref{sec:5.2}) is equal to $\haffrhozerotilde$. Inspired by classical symplectic reduction theory, one is tempted to induce the pseudo-Riemannian metric $\g_f$ and the symplectic form $\ome_f$ on the quotient $\big(\widetilde{\momentmap}^{-1}(0)\cap\mathcal M_\mathrm C\big)/\Ham(\Sg,\rho)$. The issue is that, in our case, the tangent space $T_{(J,A)}\pickg$ is not a Hilbert space, and there is no canonical way to 
identify the $\g_f$-orthogonal to the $\Ham(\Sg,\rho)$-orbit inside $\haffrhozerotilde$ with the $\i$-invariant distribution tangent to the finite dimensional manifold $\defgtilde$. Despite that, by imitating the reduction in the positive-definite case, we are able to give a characterization of the subspace $W_{(J,A)}$ as follows:\begin{manualtheorem} H \label{thmG}
For any $(J,A)\in\haffrhozerotilde$, the vector space $W_{(J,A)}$ is the largest subspace in $T_{(J,A)}\haffrhozerotilde$ that is: \begin{itemize}
    \item[$\bullet$] invariant under the complex structure $\i$;
    \item[$\bullet$] $\g_f$-orthogonal to the orbit $T_{(J,A)}\big(\Ham(\Sg,\rho)\cdot(J,A)\big)$.
\end{itemize}
\end{manualtheorem}
\noindent The proof of this theorem will be developed in Section \ref{sec:5.3} and it is independent of the other results. On the other hand, it serves as a motivation for defining the subspace $W_{(J,A)}$ as the solution of a system of partial differential equations (\ref{differentialequations}). \vspace{1.0em}\newline\underline{\emph{Regarding the non-degeneracy of the pseudo-metric.}}\vspace{1.0em}\newline Theorem \ref{thmE} together with Theorem \ref{thmF} allow us to induce the pseudo-K\"ahler structure $(\g_f,\i,\ome_f)$ from the infinite-dimensional manifold $\pickg$ to the Hitchin component, but, a-priori, it may be degenerate. We will show in the proof of Theorem \ref{thmA} (Section \ref{sec:4.2}) that, at least on the Fuchsian locus, there are no non-zero degenerate vectors. As for the tangent directions to points away from $\mathcal F(\Sg)$, the analysis becomes very complicated. On the one hand, we know the exact expression of $\g_f$, but on the other hand, the model $V_{(J,A)}$ of the tangent space to the Hitchin component is described by very complicated PDEs (Remark \ref{rem:equationsHitchincomponent}), whose solution is far from being explicit. The idea, is to look for a subspace of $T_{(J,A)}\pickg$ (possibly of infinite dimension) whose elements have a treatable description for our purpose. This is the tangent space $T_{(J,A)}\mathcal M_\mathrm C$ to the set of pairs $(J,A)\in\pickg$ satisfying the Codazzi-like equation $\mathrm d^\nabla A=0$ for hyperbolic affine spheres (see (\ref{Gausscodazzi})). We will show that the set $T_{(J,A)}\mathcal M_\mathrm C$ contains the tangent space to the $\Diff(\Sg)$-orbit, which in turn splits as a direct sum of three subspaces (Lemma \ref{lem:vectorfieldssurface}). Then, using the relation between the PDEs describing $V_{(J,A)}$ and the theory of symplectic reduction, the following $\g_f$-orthogonal decomposition of $T_{(J,A)}\mathcal M_\mathrm C$ can be obtained: 
$$V_{(J,A)}\overset{\perp_{\g_f}}{\oplus} S_{(J,A)}\overset{\perp_{\g_f}}{\oplus} T_{(J,A)}\big(\Ham(\Sg,\rho)\cdot(J,A)\big)\overset{\perp_{\g_f}}{\oplus}\i\Big(T_{(J,A)}\big(\Ham(\Sg,\rho)\cdot(J,A)\big)\Big) \ . $$
The existence of the moment map $\widetilde\momentmap$ for the action of $\Ham(\Sg,\rho)$ on $\pickg$ and an explicit calculation allow us to conclude that $\g_f$ restricted to the Hamiltonian orbit is non-degenerate. Moreover, using a highly non-trivial integration by parts (Proposition \ref{prop:integrazioneperparti}) we prove that $\g_f$ is non-degenerate even when restricted to the subspace $S_{(J,A)}$. Finally, using the relation $\g_f(\i\cdot,\i\cdot)=\g_f(\cdot,\cdot)$ one gets the following equivalence: $\g_f$ is non-degenerate on $T_{(J,A)}\mathcal M_\mathrm C$ if and only if it is non-degenerate on $V_{(J,A)}$ (Corollary \ref{cor:nondegeneratemetriconVandMc}).

\vspace{1.0em}

\noindent \underline{\emph{The relation with Goldman symplectic form}}\vspace{1.0em}

\noindent In \cite{Goldman_affine}, Goldman gave a gauge theoretic formula for his symplectic form on the $\mathrm{SL}(3,\mathbb{R})$-character variety: by identifying a Hitchin representation $\rho:\pi_1(\Sigma)\rightarrow \mathrm{SL}(3,\mathbb{R})$ with the holonomy of a flat connection $\nabla$, which is nothing but the Blaschke connection of the $\rho$-equivariant affine sphere, the pairing 
\[
    \ome_G(\dot{\nabla}, \dot{\nabla}') = \int_\Sigma \tr(\dot{\nabla} \wedge \dot{\nabla}') -\frac{1}{3} \tr(\dot{\nabla}) \wedge \tr(\dot{\nabla}')
\]
defines a non-degenerate symplectic form on the Hitchin component. The techniques we develop in Section \ref{sec:5.2} will allow us to compute explicitly the variations $\dot{\nabla}$ in terms of variations $\dot{J}$ and $\dot{A}$ of the embedding data of the corresponding affine spheres, at least at points on the Fuchsian locus, in \textit{all} directions, tangent or transverse to the Fuchsian locus. By direct inspection, we show that $\ome_G(\i \cdot, \cdot)$ coincides with our pseudo-K\"ahler metric $\g$ at the Fuchsian locus in all directions, thus showing that the pair $(\ome_G, \i)$ cannot induce a positive definite K\"ahler metric on the Hitchin component.

\subsection{Comparison with the anti-de Sitter case}\label{sec:1.6} As mentioned earlier, part of the techniques we use to construct the $\SL(3,\R)$-Hitchin component as a symplectic quotient and the definition of the pseudo-K\"ahler metric are based on a previous work (\cite{mazzoli2021parahyperkahler}), where the authors defined a para-hyperk\"ahler structure on the deformation space of maximal globally hyperbolic anti-de Sitter $3$-manifolds, denoted with $\mathcal{M}\GH(\Sg)$. Such a deformation space can be identified with a maximal component in the $\PSL(2,\R)\times\PSL(2,\R)$ character variety, which consists entirely of discrete and faithful representations. In particular, such a space is parameterized by two copies of Teichm\"uller space (\cite{mess2007lorentz},\cite{krasnov2007minimal}) and it is isomorphic to the cotangent bundle $T^*\mathcal{T}(\Sg)$ (\cite{krasnov2007minimal}).\newline As can be seen, the first major difference lies in the fact that $\hitc$ cannot be isomorphic to $\mathcal{T}(\Sg)\times\mathcal{T}(\Sg)$, since its real dimension is equal to $16g-16$. This does not allow, unlike the anti-de Sitter case, to define a natural para-complex structure $\j$ that together with $\i$ gives rise to another para-complex structure $\k:=\i\j$. Moreover, the parameterization $\mathcal M\GH(\Sg)\cong T^*\mathcal{T}(\Sg)$ as a holomorphic vector bundle, gives rise to a complex symplectic structure on $\mathcal{M}\GH(\Sg)$, which is missing for the Hitchin component. This is the reason why with our construction we only obtain a pseudo-K\"ahler metric. \newline The different descriptions as holomorphic vector bundles over $\mathcal{T}(\Sg)$ lead to different computations along the way. In fact, in the $\PSL(2,\R)\times\PSL(2,\R)$ setting one has to work with a pair given by a complex structure and a holomorphic quadratic differential on $\Sg$, the real part of which corresponds to the second fundamental form of the immersion as a maximal surface in AdS $3$-manifolds, namely it is an endomorphism of $T\Sg$. In our case, the real part of a holomorphic cubic differential is, up to the contraction with the metric, an End$(T\Sg)$-valued $1$-form. On the one hand, the additional $1$-form part makes the analysis more difficult, but on the other we still succeed in obtaining similar results in regard to some key steps in the construction (Proposition \ref{prop:differentialourmomentmap} and Proposition \ref{prop:integrazioneperparti}). \newline The presence of other two moment maps in the AdS seeting (\cite[Theorem 6.5]{mazzoli2021parahyperkahler}), allowed the authors to obtain $\mathcal M\GH(\Sg)$ as the quotient of an infinite-dimensional space by the group of all symplectomorphisms of the surface isotopic to the identity. In our case, not knowing whether the equation $\mathrm{d}^\nabla A=0$ can be interpreted as a moment map, we had to resort to the use of two quotients, which led to further difficulties developed in Section \ref{sec:6.1}. It is also worth mentioning that since the PDEs defining the distribution tangent to the deformation space are much more complicated in our setting, it was necessary to employ a deep analysis of the associated differential operators (Section \ref{sec:4.3}). \newline Finally, the most relevant part: the pseudo-metric is non-degenerate on the deformation space. In \cite{mazzoli2021parahyperkahler}, the authors were able to identify the three symplectic forms they defined on $\mathcal{M}\GH(\Sg)$ with already known symplectic forms (thus non-degenerate), in terms of the various parameterizations given above. In our setting, we know that, at least on the Fuchsian locus, our symplectic form coincides with Goldman's one both on horizontal and vertical directions. Away from the Fuchsian locus, the relation is still unknown but we nonetheless gain some new information about Goldman's symplectic form, as explained in the introduction.  
\section*{Acknowledgements}\noindent
The authors are grateful to Filippo Mazzoli and Andrea Seppi for reading an early version of this paper and for their recommendations that improved the exposition. Part of this work has been carried out while the first author was visiting the second author at the University of Pisa.

\section{Preliminaries}
\subsection{The \texorpdfstring{$\SL(3,\R)$}{PSL(3,R)}-Hitchin component}\label{sec:1.1} We briefly introduce the Hitchin component for the group $\SL(3,\R)$ and we explain the topological type of the associated character variety. For a more detailed and general discussion on Hitchin components and character varieties see \cite{goldman1984symplectic}, \cite{hitchin1992lie},\cite{franccois2006anosov},\cite{fock2006moduli} and \cite{collier2019studying} for a survey.\\ \\
Let $\Sg$ be a closed, connected smooth and oriented surface of genus $g\ge 2$ and consider the space Hom$(\pi_1(\Sg),\SL(3,\R))$ of all representations from $\pi_1(\Sg)$ to $\SL(3,\R)$. This set has a topology induced by the inclusion \begin{align*}\mathrm{Hom}(\pi_1(\Sg),  & \ \SL(3,\R))\hookrightarrow\SL(3,\R)^{2g} \\ &\rho\longmapsto\big(\rho(a_1),\dots,\rho(b_g)\big)\end{align*}where $a_1,\dots,b_g$ are generators of $\pi_1(\Sg)$ subject to the relation $\prod_{i=1}^g\big[a_i,b_i\big]=1$. There is a natural action of $\SL(3,\R)$ on this space given by conjugation: for $\gamma\in\pi_1(\Sg)$ and $P\in\SL(3,\R)$ \begin{equation}\label{PSL3action}
    (P\cdot\rho)(\gamma):=P^{-1}\rho(\gamma)P \ .
\end{equation}In order to get a Hausdorff quotient space, one needs to restrict to the \emph{completely reducible} representations, i.e. those $\rho:\pi_1(\Sg)\to\SL(3,\R)$ which split as a direct sum of irreducible representations. If we denote with Hom$^+(\pi_1(\Sg),\SL(3,\R))$ the space of such representations, the quotient space $$\charvar:=\bigslant{\mathrm{Hom}^+(\pi_1(\Sg),\SL(3,\R))}{\SL(3,\R)}$$
is called the $\SL(3,\R)$-\emph{character variety}.
\begin{theorem}[Hitchin \cite{hitchin1992lie}]
The topological space $\charvar$ has three connected components: the one containing the class of the trivial representation, the one consisting of representations whose associated flat $\R^3$-bundles have non-zero second Stiefel-Whitney class and the one consisting of representations connected to those arising as uniformization. Moreover, the third one is contained in the smooth locus of $\charvar$ and it is diffeomorphic to $\R^{-8\chi(\Sg)}$.
\end{theorem}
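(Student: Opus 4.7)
The natural route is Hitchin's original Higgs-bundle approach from \cite{hitchin1992lie}, specialized to rank three. I would first fix a complex structure $X$ on $\Sigma$, turning it into a Riemann surface, and invoke the non-abelian Hodge correspondence (Corlette--Donaldson--Hitchin--Simpson) to identify $\charvar$ real-analytically with the moduli space $\mathcal{M}_{\mathrm{Higgs}}$ of polystable $\SL(3,\C)$-Higgs bundles $(E,\phi)$ on $X$ carrying a real involution fixing the real form $\SL(3,\R)$, quotiented by the center. Since this correspondence is a homeomorphism, it preserves connected components, so counting them upstairs suffices.

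Next, I would exploit the Hitchin fibration $h\colon\mathcal{M}_{\mathrm{Higgs}}\to H^0(X,K^2)\oplus H^0(X,K^3)$, sending $(E,\phi)$ to $(\operatorname{tr}\phi^2,\operatorname{tr}\phi^3)$. For the split real form $\SL(3,\R)$, Hitchin constructs an explicit section $s$ of $h$ using a principal $\mathfrak{sl}_2\hookrightarrow\mathfrak{sl}_3$: given $(q_2,q_3)$, one writes a companion-matrix Higgs field on the bundle $K^{-1}\oplus\mathcal{O}\oplus K$. The image $\Ima(s)$ is real-analytically diffeomorphic to the Hitchin base, which is a complex vector space of dimension $(3g-3)+(5g-5)=8g-8$, and hence of real dimension $16g-16=-8\chi(\Sigma)$. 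The Fuchsian locus corresponds to $q_3=0$, so $\Ima(s)$ meets the component of uniformizations; one then checks that each Higgs bundle in $\Ima(s)$ is stable with trivial stabilizer, whence $\Ima(s)$ lies in the smooth locus of $\mathcal{M}_{\mathrm{Higgs}}$.

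To finish, I would invoke Hitchin's Morse-theoretic analysis of the function $\mu(E,\phi)=\|\phi\|_{L^2}^2$, whose critical points are the fixed loci of the $\C^\ast$-action $\phi\mapsto t\phi$, i.e.\ complex variations of Hodge structure. Classifying such variations compatible with the $\SL(3,\R)$-real structure, and combining with the topological classification of $\PSL(3,\R)$-bundles over $\Sigma$ by the Stiefel--Whitney class $w_2\in H^2(\Sigma,\Z/2)\cong\Z/2$, yields exactly three connected components: one with $w_2=0$ containing the trivial representation, one with $w_2\ne 0$, and one with $w_2=0$ equal to $\Ima(s)$. The main obstacle, as always for this kind of result, is the last identification: showing that every Higgs bundle in the Hitchin component flows under $-\nabla\mu$ to a point of $\Ima(s)$, so that $s$ actually surjects onto its component and the component is diffeomorphic to $\R^{-8\chi(\Sigma)}$ rather than merely containing such a cell. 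This properness-plus-gradient-flow argument is the technical heart of \cite{hitchin1992lie}.
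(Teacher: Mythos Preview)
The paper does not contain a proof of this theorem: it is stated in Section~\ref{sec:1.1} as a background result, attributed directly to Hitchin \cite{hitchin1992lie}, and used only to introduce the Hitchin component $\hitc$. There is therefore no ``paper's own proof'' to compare against.

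Your sketch is a faithful outline of Hitchin's original argument in \cite{hitchin1992lie}: non-abelian Hodge correspondence to pass to Higgs bundles, the Hitchin section built from the principal $\mathfrak{sl}_2$, the dimension count $3g-3+5g-5=8g-8$ for the base, and the Morse-theoretic/$\C^*$-action analysis together with the $w_2$ obstruction to separate components. You have also correctly identified the technical crux --- showing the section exhausts its component rather than merely embedding a cell into it. This is the right approach to reconstruct the cited result; nothing further is needed here since the present paper treats the theorem as a black box.
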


The most interesting component in the above list is the last one, which will be denoted by Hit$_3(\Sg)$ throughout the discussion. It can also be defined as the connected component of $\charvar$ containing the \emph{Fuchsian representation} $\tau\circ j$, where $j:\pi_1(\Sg)\to\PSL(2,\R)$ is discrete and faithful and $\tau$ is the unique (up to conjugation) irreducible representation from $\PSL(2,\R)$ to $\SL(3,\R)$. 
In particular, the composition $\tau\circ j$ induces an inclusion of $\Teichrep$ in $\hitc$, whose image is called the \emph{Fuchsian locus} and it will be denoted by $\mathcal{F}(\Sg)$. \\ \\ There is a geometric interpretation for representations in $\hitc$, found by Choi and Goldman, which we now recall. 
A \emph{(properly) convex} $\RP$-\emph{structure} on $\Sg$ is a pair $(\phi,M)$, where $\phi:\Sg\to M$ is a diffeomorphism (called the \emph{marking}) and $M\cong \Omega/_{\Gamma}$ is a (properly) convex $\RP$-surface, namely $\Omega$ is a (bounded) convex subset of $\RP$ preserved by a properly discontinuous action of a discrete subgroup $\Gamma<\SL(3,\R)$. 
One can define an equivalence relation on such pairs: $(\phi_1,M_1)\sim (\phi_2,M_2)$ if and only if there exists an element $\Psi\in\Diff_0(\Sg)$ such that the map $\phi_2\circ\Psi\circ \phi_1^{-1}:M_1\to M_2$ is a projective isomorphism. The \emph{deformation space of (properly) convex $\RP$-structures} is defined as:
\begin{align*}
    &\defg:=\bigslant{\{(f,M) \ \text{convex} \ \RP-\text{structure on} \ \Sg\}}{\Diff_0(\Sg)} \\ &\defgp:=\bigslant{\{(f,M) \ \text{properly convex} \ \RP-\text{structure on} \ \Sg\}}{\Diff_0(\Sg)} \ .
\end{align*}
When $g\ge 2$ these two spaces coincide and therefore every convex $\RP$-structure is actually properly convex (\cite{kuiper1953convex},\cite{benzecri1960varietes}). To any equivalence class of convex $\RP$-structures on $\Sg$ there is an associated class of representations $[\rho]$, with $\rho:\pi_1(\Sg)\to\SL(3,\R)$. In particular, this association defines the so-called \emph{monodromy map} $\mathfrak{hol}:\defg\to\charvar$ whose image is contained in the space of discrete and faithful representations.  \begin{theorem}[\cite{goldman1990convex}, \cite{choi1993convex}]\label{teo:choigoldman} The map $\mathfrak{hol}:\defg\to\charvar$ induces an isomorphism between $\defg$ and $\mathrm{Hit}_3(\Sg)$. In particular, any deformation of a Fuchsian representation $\tau\circ j:\pi_1(\Sg)\to\SL(3,\R)$ can be realized as the holonomy of a convex $\RP$-structure on $\Sg$. \end{theorem}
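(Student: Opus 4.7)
The plan is to follow the classical Goldman--Choi--Goldman strategy, combining the Ehresmann--Thurston holonomy principle with Koszul's openness theorem, and then complementing them with a closedness argument inside $\hitc$. First I would make the monodromy map precise: for any representative $(\phi, M)$ of a class in $\defg$ one chooses a developing--holonomy pair $(\dev, \rho)$ with $\dev : \widetilde{\Sg} \to \RP$ equivariant under $\rho : \pi_1(\Sg) \to \PSL(3,\R)$, and sets $\mathfrak{hol}([\phi,M]) := [\rho] \in \charvar$. Two $\Diff_0(\Sg)$-equivalent structures produce conjugate holonomies, so the assignment descends to $\defg$, and the general deformation theory of $(G,X)$-structures shows that $\mathfrak{hol}$ is locally a homeomorphism onto its image. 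Injectivity on all of $\defg$ follows from the fact that a properly convex developing domain $\Omega$ is maximal among $\rho(\pi_1(\Sg))$-invariant convex open subsets of $\RP$, hence is uniquely determined by $\rho$.

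For openness of the image, I would invoke Koszul's theorem, which asserts that the holonomies of convex $\RP$-structures form an open subset of $\mathrm{Hom}(\pi_1(\Sg), \PSL(3,\R))$; passing to the quotient, $\mathfrak{hol}(\defg)$ is open in $\charvar$. The Fuchsian representation $\tau \circ j$ arises as the holonomy of the convex $\RP$-structure associated to the Klein model of $\Hyp$, so $\mathcal{F}(\Sg) \subset \mathfrak{hol}(\defg)$. Since $\defg$ is connected (in fact a cell of real dimension $-8\chi(\Sg)$ by Goldman's theorem) and $\mathfrak{hol}$ is continuous, the image is a connected open subset of $\charvar$ that meets $\hitc$, hence $\mathfrak{hol}(\defg) \subseteq \hitc$.

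To upgrade this inclusion to equality, I would show that $\mathfrak{hol}(\defg)$ is also closed in $\hitc$. Given a sequence $[\rho_n] = \mathfrak{hol}([\phi_n, M_n])$ converging to a Hitchin class $[\rho_\infty]$, I would normalize each developing pair so that the convex domains $\Omega_n \subset \RP$ contain a fixed projective ball, then extract a Hausdorff limit $\Omega_\infty$. The Anosov and positivity properties shared by Hitchin representations---namely proximality of every nontrivial element and the existence of a continuous equivariant Frenet curve into flags---rule out degeneration of $\Omega_\infty$ to a point or a segment and force $\rho_\infty(\pi_1(\Sg))$ to act properly discontinuously and cocompactly on a proper convex open $\Omega_\infty$, producing a convex $\RP$-structure with holonomy $\rho_\infty$. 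Since $\hitc$ is connected (Hitchin) and $\mathfrak{hol}(\defg)$ is a nonempty open and closed subset, equality $\mathfrak{hol}(\defg) = \hitc$ follows, and the local-homeomorphism property plus injectivity upgrades this to a homeomorphism.

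The technically delicate step, and the main obstacle, is the closedness argument: openness and dimension count already pin the image down to an open union of components of the discrete faithful locus, but identifying this image with \emph{precisely} $\hitc$ requires preventing the family of developing domains from collapsing in the limit. This is exactly where one must exploit features of Hitchin representations beyond faithfulness and discreteness, and it depends on geometric properties of proper convex open subsets of $\RP$ specific to dimension three.
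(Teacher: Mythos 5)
The paper does not prove this statement: it is quoted verbatim from Goldman and Choi--Goldman, so there is no internal argument to compare against. Measured against the original proofs in those references, your outline is the right one -- well-definedness and local homeomorphy of $\mathfrak{hol}$ from Ehresmann--Thurston, injectivity from uniqueness of the divided convex domain, openness of the image from Koszul, containment in $\hitc$ from connectedness of $\defg$ (Goldman's cell theorem) plus the Fuchsian basepoint, and then a closedness argument to fill out the component. Where you genuinely diverge is the closedness step. Choi and Goldman argue within early-1990s technology: they use that discrete faithful representations of a surface group form a closed subset of the representation space, that every nontrivial element of a convex holonomy is positive hyperbolic (three distinct positive eigenvalues, a condition that survives passage to the limit inside the discrete faithful locus), and a direct analysis showing the normalized domains $\Omega_n$ cannot Hausdorff-converge to a point or a segment. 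You instead invoke the Anosov property and the equivariant Frenet curve of Hitchin representations. That is not circular (Labourie's proof of those facts does not use Choi--Goldman), but it is anachronistic, and it also makes your open--closed--connected scaffolding redundant: once you grant the hyperconvex Frenet curve $\xi^1:\partial_\infty\pi_1(\Sg)\to\RP$ for an arbitrary Hitchin representation $\rho_\infty$, its image already bounds a properly convex domain divided by $\rho_\infty(\pi_1(\Sg))$, which gives surjectivity directly without any limiting argument. So either commit to the classical eigenvalue-positivity degeneration analysis (the actual content of \cite{choi1993convex}) or commit to the Frenet-curve construction; the hybrid you sketch borrows the strength of the second to avoid the work of the first while still carrying the overhead of the compactness argument.

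Two smaller points. Your injectivity argument via "maximality" of $\Omega$ among invariant convex open sets is not quite the right characterization; the correct statement is uniqueness of the properly convex open set divided cocompactly by an irreducible subgroup (Kuiper, Vey, Benoist), and irreducibility of the holonomy itself needs a one-line justification (a reducible representation would preserve a point or a line of $\RP$, incompatible with dividing a properly convex domain by a nonelementary group). And as written the closedness step is announced rather than executed -- you correctly identify it as the delicate point, but "rule out degeneration" is precisely the content of the cited theorem, so in a complete write-up this is the paragraph that must carry actual estimates or an actual construction.
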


\subsection{Hyperbolic affine spheres}\label{sec:1.2}
Here we briefly introduce the theory of hyperbolic affine spheres in $\R^3$ (see \cite{loftin2013cubic},\cite{nomizu1994affine}) and their relation with convex $\RP$-structures. 
\\ \\
Let $\Sg$ be a closed surface of genus $g\ge 2$ with universal cover $\widetilde\Sg$ and let $f\!: \widetilde\Sg \to \R^{3}$ be an immersion with $\tilde\xi\!:\widetilde\Sg\to\R^{3}$ a transverse vector field to $f(\widetilde\Sg)$. This means that for all $x\in\widetilde\Sg$ we have a splitting: $$T_{f(x)}\R^{3} = f_* T_x\widetilde\Sg + \R\tilde\xi_x \ . $$ 
Let $D$ be the standard flat connection on $\R^3$ and suppose the structure equations of the immersed surface are given by:
\begin{equation}\begin{aligned}\label{structurequations}
D_XY &= \nabla_XY + h(X,Y)\xi \\
D_X\xi &= -S(X)
\end{aligned}\end{equation}
where $\nabla$ is a torsion-free connection on $\widetilde\Sg$ called the \emph{Blaschke connection}, $\xi$ is the \emph{affine normal} of the immersion (see \cite[\S 3.1]{loftin2001affine} for example), $h$ is a metric on $\widetilde\Sg$ called the \emph{Blaschke metric} and $S$ is an endomorphism of $T\widetilde\Sg$ called the \emph{affine shape operator}.
\begin{definition}
Let $N$ be an immersed hypersurface in $\R^{3}$ with structure equations given by (\ref{structurequations}). Then $N$ is called a \emph{hyperbolic affine sphere} if $S=-\Id_{TN}$. 
\end{definition}
The properties of the global geometry of hyperbolic affine spheres were conjectured by Calabi (\cite{calabi1972complete}) and proved by Cheng-Yau (\cite{cheng1977regularity}, \cite{cheng1986complete}) and Calabi-Nirenberg (with clarifications by Gigena (\cite{gigena1981conjecture}) and Li (\cite{li1990calabi}, \cite{li1992calabi})). The most important result (stated only in $\R^3$ but true in arbitrary $\R^n$) is the following: \begin{theorem}[Cheng-Yau-Calabi-Nirenberg]\label{thmchengyau}
Given a constant $\lambda<0$ and a convex, bounded domain $\Omega\subset\R^2$, there is a unique properly embedded hyperbolic affine sphere $N\subset\R^{3}$ with affine shape operator $S=\lambda\cdot\Id_{TN}$ and center $0$ asymptotic to the boundary of the cone $\mathcal{C}(\Omega):=\{(tx,t) \ | \ x\in\Omega, t>0\}\subset\R^{3}$. For any immersed hyperbolic affine sphere $f: N\rightarrow\R^{3}$, properness of the immersion is equivalent to the completeness of the Blaschke metric, and any such $N$ is a properly embedded hypersurface asymptotic to the boundary of the cone given by the convex hull of $N$ and its center.
\end{theorem}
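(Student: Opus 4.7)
The plan is to translate the affine sphere equation into a degenerate Dirichlet problem of Monge--Ampère type on $\Omega$ and then apply Cheng--Yau's existence and regularity theory. The starting observation is that for any hyperbolic affine sphere with $S=\lambda\,\Id_{TN}$ and center at the origin, the second structure equation $D_X\xi=-\lambda X$ forces the affine normal $\xi$ at $p\in N$ to be proportional to the position vector of $p$. Consequently $N$ is a radial graph over a convex planar domain, and after parametrising it suitably over $\Omega$ by a strictly convex function $u<0$ with $u\to 0$ at $\partial\Omega$, a direct computation of the Blaschke metric and shape operator of such a graph recasts the affine sphere condition as a Monge--Ampère equation of the form
$$\det D^{2}u \;=\; \dfrac{c_\lambda}{|u|^{\,4}}, \qquad u<0 \text{ in }\Omega, \qquad u\big|_{\partial\Omega}=0,$$
where $c_\lambda>0$ depends only on $\lambda$.

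The first part of the theorem then reduces to the statement that this Dirichlet problem admits a unique smooth strictly convex solution on any bounded convex domain. Existence I would prove by the continuity method after approximating $\Omega$ by smoothly bounded strictly convex domains and relaxing the zero boundary data to a small negative constant, then using the convex envelope together with Pogorelov-type interior estimates as barriers to pass to the limit. Uniqueness comes from the comparison principle applied to the concave operator $(\det D^{2}u)^{1/2}$. Reconstructing the surface from $u$ yields a properly embedded hyperbolic affine sphere with the prescribed $\lambda$ and center $0$, and the condition $u|_{\partial\Omega}=0$ makes $N$ asymptotic to $\partial\mathcal{C}(\Omega)$.

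For the equivalence between properness of an arbitrary immersed hyperbolic affine sphere $f\colon N\to \R^{3}$ and completeness of its Blaschke metric $h$, I would argue by uniqueness. If $f$ is proper, its image is a closed convex hypersurface in $\R^{3}$ whose convex hull forms a convex cone with some bounded convex base $\Omega$; comparing $f(N)$ with the unique properly embedded hyperbolic affine sphere over $\Omega$ constructed above and invoking uniqueness shows that they coincide, and in particular $h$ is complete because it corresponds to a smooth solution on all of $\Omega$. Conversely, if $h$ is complete, Cheng--Yau's interior $C^{2}$ and gradient estimates on $u$ in terms of the Blaschke distance rule out escape of $f(N)$ to infinity along a finite-length Blaschke geodesic, yielding properness; the same estimates also force $f(N)$ to lie asymptotic to the boundary of the cone over its radial projection.

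The main analytic obstacle is the genuine degeneracy of the Monge--Ampère equation at $\partial\Omega$: the right-hand side blows up where $u=0$, so the standard elliptic Dirichlet theory does not apply off the shelf. Constructing uniform interior $C^{2,\alpha}$ estimates independent of the approximation parameter, together with suitable boundary barriers, is the technical core of the Cheng--Yau argument, and is also what allows the properness-versus-completeness dichotomy to be established.
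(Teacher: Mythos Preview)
The paper does not prove this theorem at all: it is stated as a background result attributed to Cheng--Yau and Calabi--Nirenberg (with later clarifications by Gigena and Li), and is only used as a black box to set up the correspondence between convex $\RP$-structures and hyperbolic affine spheres. So there is no ``paper's own proof'' to compare against.

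Your sketch is the standard route taken in the literature, and the overall architecture is correct: reduce the affine sphere condition to the degenerate Monge--Amp\`ere problem $\det D^2 u = c_\lambda |u|^{-4}$ on $\Omega$ with $u|_{\partial\Omega}=0$, invoke Cheng--Yau's existence and interior regularity for this problem, and use comparison for uniqueness. Two comments on the second half. First, your argument for ``proper $\Rightarrow$ complete'' is somewhat circular as written: you appeal to uniqueness to identify $f(N)$ with the Cheng--Yau solution, but uniqueness was proved for solutions of the PDE on $\Omega$, so you must first argue (using properness and convexity) that $f(N)$ is globally a radial graph over some bounded convex $\Omega$ and hence comes from a solution $u$; only then does comparison apply. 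Second, the genuinely hard direction is ``complete $\Rightarrow$ proper'' (this is Calabi's conjecture), and your one-sentence gesture at ``Cheng--Yau's interior $C^2$ and gradient estimates'' hides essentially all of the work: one needs a Pogorelov-type estimate adapted to the degenerate right-hand side, together with a barrier argument showing the solution cannot vanish in the interior, to rule out the affine sphere ``closing up'' before reaching the boundary of the cone. If you intend this as more than a pointer to the literature, that step needs to be fleshed out.
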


We can use the above theorem to describe a $\Diff(\Sg)$-equivariant one-to-one correspondence between convex $\RP$-structures and hyperbolic affine spheres. In fact, given a convex $\RP$-structure $\phi:\Sg\to M\cong\Omega/\Gamma$, where $\Omega\subset\R^2$ is bounded, there exists a unique hyperbolic affine sphere $\mathcal H\subset\R^3$ asymptotic to the boundary of the cone $\mathcal C(\Omega)\subset\R^3$ (Theorem \ref{thmchengyau}). Such a hyperbolic affine sphere $\mathcal H$ is invariant under automorphisms of $\mathcal{C}(\Omega)$, seen as a subgroup of $\SL(3,\R)$. The restriction of the projection $\pi:\mathcal C(\Omega)\to\Omega$ induces a diffeomorphism of $\mathcal H$ onto $\Omega$. By equivariance, the tensor $h$ and the connection $\nabla$ descend to the quotient $\Omega/\Gamma\cong M$. Viceversa, given an embedding of the universal cover $\widetilde\Sg\hookrightarrow\R^3$ as a $\widetilde\Gamma$-equivariant hyperbolic affine sphere, with $\widetilde\Gamma\cong\pi_1(\Sg)$, one gets an identification of $\widetilde\Sg$ with a domain $\Omega\subset\RP$, via the developing map. Then, Theorem \ref{thmchengyau} implies that $\widetilde\Sg$ is asymptotic to a cone over $\Omega$. The action of $\widetilde\Gamma$ on $\widetilde\Sg\subset\R^3$ corresponds to an action of a group $\Gamma<\SL(3,\R)$, isomorphic to $\pi_1(\Sg)$, on the domain $\Omega$ so that $\Sg\cong\Omega/\Gamma$. 
\\ \\
Let $f:\!(\widetilde\Sg, h, \nabla)\rightarrow\R^{3}$ be an immersed hyperbolic affine sphere, where $h$ is the Blaschke metric and $\nabla$ is the Blaschke connection. If $\nabla^h$ denotes the Levi-Civita connection with respect to $h$, then $\nabla=\nabla^h+A$, where $A$ is a section of $T^*(\Sg)\otimes\End(T\Sg)$ called the \emph{Pick form}. In particular, for every $X\in\Gamma(T\Sg)$ the quantity $A(X)$ is an endomorphism of $T\Sg$.

\begin{definition}\label{def:picktensor}
The \emph{Pick tensor} is the $(0,3)$-tensor defined by \begin{equation}\label{picktensorandpickform} C(X,Y,Z):=h(A(X)Y,Z), \quad\forall X,Y,Z\in\Gamma(T\Sg) \ .\end{equation}
\end{definition}

\begin{corollary}\label{picktensoraffinesphere}
If $f:\!(\widetilde\Sg, h, \nabla)\hookrightarrow\R^3$ is an immersed hyperbolic affine sphere, then the Pick tensor is totally symmetric, namely in index notation $C_{ijk}$ we have $$C_{ijk}=C_{\sigma(ijk)},\quad\forall\sigma\in\mathfrak S_3 \ .$$
In particular, this is equivalent to the requirement that the endomorphism $A(X)$ is $h$-symmetric for all $X\in\Gamma(T\Sg)$ and \begin{equation}
    A(X)Y=A(Y)X, \quad \forall X,Y\in\Gamma(T\Sg) \ .\end{equation}
\end{corollary}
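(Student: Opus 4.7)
The strategy is to observe that total symmetry of $C$ is equivalent to invariance under the two transpositions $(1\,2)$ and $(2\,3)$, which together generate $\mathfrak{S}_3$. By symmetry of $h$, the first of these reads $A(X)Y = A(Y)X$ and the second reads $h$-symmetry of $A(X)$; in particular the equivalence asserted in the corollary is tautological once total symmetry has been established, and it suffices to extract each of these two transpositions separately from a structural property of the ambient flat connection $D$ on $\R^3$.

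For the transposition $(1\,2)$, the plan is to use torsion-freeness of $D$. Substituting the structure equations \eqref{structurequations} into $D_XY - D_YX = [X,Y]$ and splitting into tangent and normal components forces simultaneously that $h$ is symmetric and that $\nabla$ is torsion-free. Since the Levi-Civita connection $\nabla^h$ is also torsion-free, writing $\nabla = \nabla^h + A$ and subtracting gives $A(X)Y - A(Y)X = 0$ at once, i.e.\ $C(X,Y,Z) = C(Y,X,Z)$.

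For the remaining symmetry the plan is to exploit flatness of $D$. Expanding $R^D(X,Y)Z = 0$ through the structure equations and projecting onto the direction of $\xi$ yields, after using torsion-freeness of $\nabla$ from the previous step, the Codazzi-type identity $(\nabla_X h)(Y,Z) = (\nabla_Y h)(X,Z)$. On the other hand, a short direct computation using $\nabla = \nabla^h + A$ and $\nabla^h h = 0$ gives $(\nabla_X h)(Y,Z) = -C(X,Y,Z) - C(X,Z,Y)$. Codazzi then rewrites as $C(X,Y,Z) + C(X,Z,Y) = C(Y,X,Z) + C(Y,Z,X)$; cancelling the already-established $(1\,2)$-symmetry reduces this to $C(X,Z,Y) = C(Y,Z,X)$, that is, symmetry under $(1\,3)$. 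Together with $(1\,2)$ this generates $\mathfrak{S}_3$, so $C$ is totally symmetric, and the equivalent reformulation in terms of $A$ follows immediately.

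I do not expect a conceptual obstacle. The main technical step is the curvature computation in the second part, where the structure equations must be carefully combined with $D_X(h(Y,Z)\xi)$; the hyperbolic affine sphere hypothesis $S = -\mathrm{Id}$ enters through $D_X\xi = X$, although this contribution only enters the tangential Gauss equation and does not affect the normal Codazzi piece actually used. Everything else is straightforward algebra based on the metric compatibility of $\nabla^h$ and the decomposition $\nabla = \nabla^h + A$.
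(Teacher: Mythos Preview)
The paper does not supply a proof of this corollary; it is stated as a standard fact from affine differential geometry, with the background references \cite{loftin2013cubic} and \cite{nomizu1994affine} given at the start of Section~\ref{sec:1.2}. Your argument is correct and fills in exactly what the paper omits: torsion-freeness of $D$ gives the $(1\,2)$-symmetry $A(X)Y=A(Y)X$, and the normal component of $R^D=0$ gives the Codazzi identity $(\nabla_X h)(Y,Z)=(\nabla_Y h)(X,Z)$, which combined with $(\nabla_X h)(Y,Z)=-C(X,Y,Z)-C(X,Z,Y)$ and the already-established $(1\,2)$-symmetry yields $(1\,3)$-symmetry. Two transpositions sharing a common index generate $\mathfrak S_3$, so total symmetry follows, and the equivalence with the two conditions on $A$ is then indeed tautological. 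Your remark that the hypothesis $S=-\mathrm{Id}$ plays no role in the normal (Codazzi) component is also correct: only the equiaffine form of the structure equations \eqref{structurequations} is used, so the result holds for any Blaschke immersion, not just hyperbolic affine spheres.
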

\begin{theorem}[{\cite[Lemma 4.8]{benoist2013cubic}}]\label{thm:picktensor}
Let $\Sg$ be a closed oriented surface of genus $g\ge 1$. Let $h$ be a Riemannian metric on $\Sg$ and $J$ be the induced (almost) complex structure. Suppose that a $(1,2)$ tensor $A$ and a $(0,3)$ tensor $C$ are related by $A=h^{-1}C$. Assume further that the tensor $C$ is totally symmetric. Then, $A(X)$ is trace-free for all $X\in\Gamma(T\Sg)$ if and only if $C$ is the real part of a complex cubic differential, which can be expressed as $q=C(\cdot,\cdot,\cdot)-iC(J\cdot,\cdot,\cdot)$. If this holds, then the following are equivalent: \newline $\bullet \ \mathrm{d}^{\nabla^h}A=0$;\newline
$\bullet \ C$ is the real part of a holomorphic cubic differential $q=C(\cdot,\cdot,\cdot)-iC(J\cdot,\cdot,\cdot)$;\newline
$\bullet \ (\nabla^h_{JX}A)(\cdot)=(\nabla^h_XA)(J\cdot), \forall X\in\Gamma(T\Sg)$.
\end{theorem}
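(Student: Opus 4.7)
The plan is to work in a local conformal coordinate $z=x+iy$ on $\Sigma$ with $h = e^{2\phi}|dz|^2$, and to exploit the K\"ahler simplifications specific to this setting: the only nonvanishing Christoffel symbols of $\nabla^h$ are $\Gamma^z_{zz}=2\partial_z\phi$ and $\Gamma^{\bar z}_{\bar z\bar z}=2\partial_{\bar z}\phi$, and $h^{z\bar z}=2e^{-2\phi}$ is the unique nonzero component of $h^{-1}$. Extending $C$ $\mathbb{C}$-linearly to $T_\mathbb{C}\Sigma$ and splitting by Hodge type as $C = C^{3,0}+C^{2,1}+C^{1,2}+C^{0,3}$, the reality of $C$ gives $\overline{C^{p,q}} = C^{q,p}$.

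For the first equivalence, a direct index computation yields $\mathrm{tr}(A(\partial_z)) = h^{ab}C_{zab} = 4e^{-2\phi}C_{zz\bar z}$ and $\mathrm{tr}(A(\partial_{\bar z})) = 4e^{-2\phi}C_{z\bar z\bar z}$, so $A(X)$ is trace-free for every $X$ if and only if $C^{2,1}=C^{1,2}=0$, i.e.\ $C = 2\,\mathrm{Re}(C^{3,0})$ is the real part of a complex cubic differential $q := 2C^{3,0}$. The stated formula $q = C(\cdot,\cdot,\cdot) - iC(J\cdot,\cdot,\cdot)$ then follows from $C(X,Y,Z) - iC(JX,Y,Z) = 2C(\pi^{1,0}X, Y, Z) = 2C^{3,0}(X,Y,Z)$ with $\pi^{1,0} = \tfrac{1}{2}(\mathrm{Id}-iJ)$, using that $C^{0,3}$ vanishes on $(1,0)$-vectors.

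Assuming trace-freeness and writing $q = q(z)\,dz^3$ with $q(z)=2C_{zzz}$, one verifies the clean identities $A(\partial_z)\partial_z = e^{-2\phi}q\,\partial_{\bar z}$, $A(\partial_{\bar z})\partial_{\bar z} = e^{-2\phi}\bar q\,\partial_z$ and $A(\partial_z)\partial_{\bar z} = A(\partial_{\bar z})\partial_z = 0$. Taking covariant derivatives, the derivatives of the conformal factor cancel exactly against the Christoffel contributions --- the ``K\"ahler miracle'' reflecting that $\nabla^h$ induces the Chern connection on $K^{\otimes 3}$ --- yielding
\[
(\nabla^h_{\partial_{\bar z}}A)(\partial_z)\,\partial_z = e^{-2\phi}\,\partial_{\bar z}q\,\partial_{\bar z}, \qquad (\nabla^h_{\partial_z}A)(\partial_{\bar z})\,\partial_{\bar z} = e^{-2\phi}\,\partial_z\bar q\,\partial_z,
\]
while the remaining components of $\nabla^h A$ relevant to the conditions either vanish or are obtained by complex conjugation.

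The three equivalences then follow by inspection. The Codazzi equation $d^{\nabla^h}A=0$ evaluated on $(\partial_{\bar z},\partial_z)$ and applied to $\partial_z$ reads $e^{-2\phi}\,\partial_{\bar z}q\,\partial_{\bar z} = 0$, forcing $\partial_{\bar z}q=0$; the other input pairs are trivial by Hodge type or follow by conjugation. The Cauchy--Riemann condition $(\nabla^h_{JX}A)(\cdot)=(\nabla^h_XA)(J\cdot)$ specialized to $X=\partial_z$, $Y=\partial_{\bar z}$ reduces (using $J\partial_{\bar z}=-i\partial_{\bar z}$) to the vanishing of the endomorphism $(\nabla^h_{\partial_z}A)(\partial_{\bar z})$, which by the explicit formula above is equivalent to $\partial_z\bar q=0$; hence both conditions are equivalent to holomorphicity of $q$. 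The main bookkeeping obstacle is the careful computation of the covariant derivative of the $\mathrm{End}(T\Sigma)$-valued $1$-form $A$ and the cancellation of conformal-factor derivatives against the Christoffel symbols, but this becomes mechanical once the K\"ahler framework is in place.
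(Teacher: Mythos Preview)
The paper does not supply its own proof of this statement: it is quoted verbatim as \cite[Lemma~4.8]{benoist2013cubic} and used as a black box, so there is no argument in the paper to compare against.

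Your proof is correct and is the standard one. Working in a conformal coordinate $z$, the decomposition $C=C^{3,0}+C^{2,1}+C^{1,2}+C^{0,3}$ together with $h^{z\bar z}=2e^{-2\phi}$ immediately gives the first equivalence, and once $C^{2,1}=C^{1,2}=0$ the explicit formulas $A(\partial_z)\partial_z=e^{-2\phi}q\,\partial_{\bar z}$, $A(\partial_z)\partial_{\bar z}=0$, etc., are right. The covariant-derivative computation is also right: the contributions of $\partial_{\bar z}(e^{-2\phi})$ and of $\Gamma^{\bar z}_{\bar z\bar z}=2\partial_{\bar z}\phi$ cancel, leaving $(\nabla^h_{\partial_{\bar z}}A)(\partial_z)\partial_z=e^{-2\phi}\partial_{\bar z}q\,\partial_{\bar z}$, and all three bulleted conditions reduce to $\partial_{\bar z}q=0$. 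One small point worth making explicit in the write-up: you checked $d^{\nabla^h}A$ and the Cauchy--Riemann identity on a few selected inputs; it is worth saying in one line that the remaining inputs either vanish identically by Hodge type (e.g.\ $(\nabla^h_{\partial_z}A)(\partial_{\bar z})\partial_z=0$) or reproduce the conjugate equation, so that no further constraints appear.
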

The embedding data of hyperbolic affine spheres in $\R^3$ can be described in terms of the Blaschke metric $h$ and the Pick form $A$ satisfying the following equations: \begin{equation}\label{Gausscodazzi}\tag{HS}
    \begin{cases}
    K_h-\vl\vl q\vl\vl_h^2=-1 \\ \mathrm d^{\nabla^h}A=0 \ ,
    \end{cases}
\end{equation}where $q=C(\cdot,\cdot,\cdot)-iC(J\cdot,\cdot,\cdot)$ is the holomorphic cubic differential determined by the Pick tensor $C$, $K_h$ is the Gaussian curvature of the Blaschke metric $h$ and $A=h^{-1}C$ is the associated End$_0(T\Sg, h)$-valued 1-form. Moreover, for any tangent vector fields $X,Y,Z$ on $\Sg$ the exterior-derivative $\mathrm d^{\nabla^h}A$ is the $\mathrm{End}_0(T\Sg, h)$-valued 2-form   \begin{equation}
    \big(\mathrm d^{\nabla^h}A\big)(X,Y)Z=\big(\nabla^h_XA\big)(Y)Z-\big(\nabla^h_YA\big)(X)Z \ , 
\end{equation}where End$_0(T\Sg, h)$ denotes the vector bundle of $h$-symmetric and trace-less endomorphisms of the tangent bundle. \begin{remark}\label{rem:conformalinvarianceHS}Notice that the second equation in (\ref{Gausscodazzi}) is invariant under conformal change of metric. In fact, it is equivalent to require that the cubic differential $q=C(\cdot,\cdot,\cdot)-iC(J\cdot,\cdot,\cdot)$ is holomorphic with respect to the complex structure defined by the conformal class of $h$. For this reason, in the following discussion, we will use either the tensor $A$ or $C$ according to which is more convenient.
\end{remark}
Viceversa, every pair $(h,C)$ satisfying (\ref{Gausscodazzi}), with $h$ a Riemannian metric and $C$ a totally symmetric $(0,3)$-tensor equal to the real part of a $h$-cubic differential, i.e. a cubic differential that is holomorphic for the conformal class of $h$, represents the embedding data of a hyperbolic affine sphere in $\R^3$ (\cite{changping1990some},\cite{loftin2001affine},\cite{benoist2013cubic}). Considering that such a correspondence is natural by the action of $\Diff(\Sg)$, we introduce the space parametrizing the embedding data of $\pi_1(\Sg)$-equivariant hyperbolic affine spheres in $\R^3$ as: \begin{equation}
    \haff:=\bigslant{\left\{ (h,C) \ \Bigg | \ \parbox{19em}{$h$ is a Riemannian metric \\ $C$ is the real part of a $h$-cubic differential \\ equations (\ref{Gausscodazzi}) are satisfied} \right\}}{\Diff_0(\Sg)} 
\end{equation}
Thus, according to the above discussion, we obtain the following result:
\begin{proposition}\label{prop:defgandhaff}
    Let $\Sg$ be a closed surface of genus $g\ge 2$, then there exists a MCG$(\Sg)$-invariant homeomorphism between $\defg$ and $\haff$, given by the embedding data of the unique equivariant hyperbolic affine sphere.
\end{proposition}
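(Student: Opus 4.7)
The plan is to assemble the pieces already discussed in the excerpt: Theorem \ref{thmchengyau} provides existence and uniqueness of hyperbolic affine spheres associated with convex domains, and the converse statement (quoted from \cite{changping1990some},\cite{loftin2001affine},\cite{benoist2013cubic}) recovers the immersion from abstract solutions of (\ref{Gausscodazzi}). The proposition amounts to checking that these two constructions are mutually inverse, descend to the respective quotients by $\Diff_0(\Sg)$, and are continuous and $\MCG(\Sg)$-equivariant.

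First, I would define the forward map $\Phi\!: \defg \to \haff$. Given a convex $\RP$-structure $(\phi, M)$ with $M\cong \Omega/\Gamma$, Theorem \ref{thmchengyau} produces a unique properly embedded hyperbolic affine sphere $\mathcal{H}\subset\R^3$ with $S=-\Id_{T\mathcal{H}}$, asymptotic to $\partial\mathcal{C}(\Omega)$. By uniqueness, $\mathcal{H}$ is preserved by the group $\Gamma<\PSL(3,\R)$ (lifted to $\SL(3,\R)$), since $\Gamma$ permutes the cones over itself. The Blaschke metric $\tilde h$ and Pick tensor $\tilde C$ on $\mathcal{H}\cong\widetilde\Sg$ are equivariant, hence descend to a pair $(h,C)$ on $\Sg$; the structure equations (\ref{structurequations}) together with the defining condition $S=-\Id$ translate, via Corollary \ref{picktensoraffinesphere} and Theorem \ref{thm:picktensor}, into the system (\ref{Gausscodazzi}). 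Since the Blaschke data depend only on the image of the immersion (not on its parameterization), $\Phi$ factors through the $\Diff_0(\Sg)$ action on both sides.

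For the inverse map $\Psi\!: \haff \to \defg$, pick a representative $(h,C)$ on $\Sg$ satisfying (\ref{Gausscodazzi}), lift to $(\tilde h, \tilde C)$ on $\widetilde\Sg$, and invoke the converse direction of \cite{changping1990some},\cite{loftin2001affine},\cite{benoist2013cubic} to obtain an immersion $f\!:\widetilde\Sg\to\R^3$ realizing $(\tilde h,\tilde C)$ as Blaschke data of a hyperbolic affine sphere with shape operator $-\Id$. Since $\Sg$ is closed, $\tilde h$ is complete, so by Theorem \ref{thmchengyau} the immersion $f$ is a proper embedding asymptotic to $\partial\mathcal{C}(\Omega)$ for a convex $\Omega\subset\R^2$. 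The $\pi_1(\Sg)$-action on $\widetilde\Sg$ preserves $(\tilde h,\tilde C)$; by the uniqueness clause of the embedding-data correspondence (up to affine unimodular transformations of $\R^3$), this action is realized by a representation $\rho\!:\pi_1(\Sg)\to\SL^{\pm}(3,\R)$, which descends to $\PSL(3,\R)$ and preserves $\Omega$. Projecting $\mathcal{H}$ radially gives the identification $\Sg\cong \Omega/\rho(\pi_1(\Sg))$, producing a convex $\RP$-structure well-defined modulo $\Diff_0(\Sg)$.

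The two constructions $\Phi$ and $\Psi$ are mutually inverse as consequences of the uniqueness statement in Theorem \ref{thmchengyau}: the affine sphere produced by $\Psi\circ\Phi$ is, by uniqueness, the original one, and similarly for $\Phi\circ\Psi$. Equivariance under $\MCG(\Sg)$ is automatic because the entire construction is natural in $\Diff(\Sg)$. Finally, both $\defg$ and $\haff$ carry quotient topologies inherited from $C^\infty$-topologies on geometric data, and continuity of $\Phi$ and $\Psi$ follows from the smooth dependence of the affine sphere on its asymptotic cone (a standard consequence of the Monge–Amp\`ere equation governing the Blaschke metric, proved in Cheng–Yau) and, conversely, of the cone on the completed Blaschke data. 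The main subtle point, and the one I would treat most carefully, is the last one: upgrading the bijection to a homeomorphism requires verifying this continuous dependence in both directions, which rests on the regularity theory for the Monge–Amp\`ere equation and the smooth dependence of solutions of (\ref{Gausscodazzi}) on their parameters.
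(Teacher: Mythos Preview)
Your proposal is correct and follows essentially the same approach as the paper, which in fact does not give a separate proof but simply states the proposition as a consequence of the preceding discussion (Theorem~\ref{thmchengyau}, the paragraph describing the $\Diff(\Sg)$-equivariant correspondence, and the converse reconstruction from \cite{changping1990some,loftin2001affine,benoist2013cubic}). You have expanded that discussion into a detailed argument, including the continuity issue via Monge--Amp\`ere regularity, which the paper leaves entirely implicit.
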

Because of this identification, for the rest of the discussion we will equivalently use one of the two pieces of notation in Proposition \ref{prop:defgandhaff} to denote the deformation space of convex $\RP$-structures, hence the $\SL(3,\R)$-Hitchin component.

\subsection{Wang's equation}\label{sec:2.1}
Here we discuss the relation between the hyperbolic affine sphere immersion $f:\widetilde\Sg\to\R^3$ and the conformal geometry of the surface. In particular, it is possible to rewrite the structure equations (\ref{structurequations}) in terms of a local holomorphic coordinate on the surface. We also see, how these equations can be solved whenever a certain integrability condition is satisfied (see \cite{loftin2013cubic} and \cite{changping1990some}).\\ \\ Since we are interested in equivariant hyperbolic affine spheres, we can pick a parametrization $f\!:\Delta \to \R^3$, where $\Delta$ is a simply-connected domain in $\C$ biholomorphic to the open unit disk. Let $z=x+iy$ be a local conformal coordinate with respect to the Blaschke metric $h$, so that $h=e^{\psi}\vl\mathrm{d}z\vl^2$, where $\vl\mathrm{d}z\vl^2$ is defined as the symmetric product between $\mathrm{d}z$ and $\mathrm{d}\bar{z}$. After some algebraic manipulations (see \cite[\S 2.1.5]{loftin2013cubic} and \cite[\S 2.2]{rungi2021pseudo}), the structure equations (\ref{structurequations}) with $S=-\mathrm{Id}_{T\Sg}$ can be rewritten as a system of ODEs\begin{equation}\label{systemODEs}
    \begin{cases} \frac\partial{\partial z}\mathbf{F}=A\cdot\mathbf{F} \\ \frac\partial{\partial\bar z}\mathbf{F}=B\cdot\mathbf{F} \ ,
\end{cases}\end{equation} where $A$ an $B$ are $3\times 3$ matrices completely determined by the Blaschke metric and the Pick tensor (see Definition \ref{def:picktensor}), and $\mathbf{F}$ is the frame field of the hyperbolic affine sphere. \begin{proposition}[\cite{changping1990some}]
Given an initial condition for $\boldsymbol{F}$ at $z_0\in\Delta$, there exists a unique solution to the system (\ref{systemODEs}) as long as the following two equations are satisfied: 
\begin{equation}\label{integrabilityequations}\begin{cases}\frac{\partial^2}{\partial z\partial \bar z}\psi+\frac{1}{2}\vl Q\vl^2e^{-2\psi}-\frac{1}{2}e^\psi=0 \\
        \frac{\partial}{\partial\bar z}Q=0 \ ,
    \end{cases}\end{equation}where $Q$ is as smooth function on $\Delta$ completely determined by the Pick tensor of the immersion. In particular, if equations (\ref{integrabilityequations}) hold, the expression $q=Q\mathrm{d}z^3$ gives rise to a holomorphic cubic differential on $\Delta$. 
\end{proposition}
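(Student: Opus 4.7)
The plan is to reduce the statement to the classical integrability theorem for overdetermined first-order linear systems, and then to identify the resulting compatibility condition with the two scalar equations appearing in (\ref{integrabilityequations}).

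First, I would invoke the following general fact (a special case of the Frobenius / Poincaré integrability theorem): given a simply connected domain $\Delta\subset \C$, an initial datum $\mathbf{F}(z_0)=\mathbf{F}_0$, and smooth matrix-valued coefficients $A,B\colon \Delta\to \mathrm{Mat}(3,\C)$, the linear system (\ref{systemODEs}) admits a unique smooth solution $\mathbf{F}\colon\Delta\to\mathrm{Mat}(3,\C)$ with $\mathbf{F}(z_0)=\mathbf{F}_0$ if and only if the zero-curvature condition
\begin{equation*}
\frac{\partial A}{\partial \bar z}-\frac{\partial B}{\partial z}+[A,B]=0
\end{equation*}
holds on $\Delta$. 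The forward implication is immediate by equality of mixed partials; the backward implication is obtained by integrating along paths from $z_0$ and using simple connectivity to show that the value of $\mathbf{F}$ does not depend on the chosen path. So the problem reduces to showing that this matrix identity, for the specific $A$ and $B$ produced by (\ref{structurequations}) in the conformal coordinate $z$, is equivalent to the pair (\ref{integrabilityequations}).

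Next I would carry out the identification. The matrices $A$ and $B$ encode three pieces of data: the Levi-Civita connection of the Blaschke metric $h=e^{\psi}|\mathrm{d}z|^{2}$ (which contributes $\psi_z$ and $\psi_{\bar z}$ to the diagonal Christoffel-type entries), the affine normal (which gives the $\pm 1$ entries in the last row/column), and the Pick form written in conformal coordinates as $A=h^{-1}C$, whose independent components are encoded in the single complex function $Q$ defined by $q=Q\,\mathrm{d}z^{3}$ (recall Theorem \ref{thm:picktensor}). In the chosen frame these expressions are standard (see \cite{loftin2013cubic},\cite{changping1990some}). Computing $\partial_{\bar z}A-\partial_{z}B+[A,B]$ block by block, most of the entries vanish identically thanks to the Blaschke trace-free condition and the $h$-symmetry of $A(X)$ given by Corollary \ref{picktensoraffinesphere}. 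Exactly two independent scalar relations survive among the entries: one collects the terms without $\partial_{\bar z}Q$ and produces the Gauss equation $K_{h}-\|q\|_{h}^{2}=-1$, which in the conformal coordinate $z$ reads precisely $\psi_{z\bar z}+\tfrac{1}{2}|Q|^{2}e^{-2\psi}-\tfrac{1}{2}e^{\psi}=0$; the other collects the pure antiholomorphic derivative of the Pick coefficient and gives the Codazzi equation $\mathrm{d}^{\nabla^{h}}A=0$, which in the local frame is exactly $\partial_{\bar z}Q=0$. Hence the zero-curvature identity is equivalent to (\ref{integrabilityequations}).

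The last sentence of the statement is then a formal observation: the condition $\partial_{\bar z}Q=0$ is, by definition, the statement that the section $q=Q\,\mathrm{d}z^{3}$ of $K_{\Delta}^{\otimes 3}$ is holomorphic in $z$, and under a change of conformal coordinate $Q$ transforms by the cube of the Jacobian, so $q$ is a globally defined holomorphic cubic differential on $\Delta$.

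The main obstacle is not conceptual but book-keeping: one must write $A$ and $B$ in a convenient normalization, compute $[A,B]$ and the individual derivatives, and verify that after the cancellations forced by the trace-free and $h$-symmetric nature of the Pick form, the twelve a-priori independent entries of the $3\times 3$ matrix identity collapse to exactly the two scalar equations of (\ref{integrabilityequations}). A clean way to organize this is to split the matrices into their $\mathfrak{su}(1,2)$-type blocks and treat the Gauss (real, diagonal) and Codazzi (complex, off-diagonal) parts separately, which makes it transparent which block produces Wang's equation and which produces the holomorphicity of $Q$.
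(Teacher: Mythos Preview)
The paper does not give its own proof of this proposition: it is stated with the attribution \cite{changping1990some} and followed immediately by Remark \ref{rem:osservazionedifferenzialecubicoriscalatowangequation}, with no argument in between. So there is nothing in the paper to compare your proposal against.

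That said, your approach is the standard one and is correct. The only point worth flagging is that the computational part---showing that the zero-curvature condition $\partial_{\bar z}A-\partial_z B+[A,B]=0$ for the specific matrices $A,B$ arising from (\ref{structurequations}) collapses to exactly the two scalar equations (\ref{integrabilityequations})---is asserted rather than carried out. You describe the organization of the computation well, but you do not write down $A$ and $B$ explicitly or exhibit even one of the cancellations. In a cited result this is acceptable; if you were writing a self-contained proof you would want to display the matrices (they appear, for instance, in \cite[\S 2.1.5]{loftin2013cubic}) and verify at least the two surviving entries.
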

\begin{remark}\label{rem:osservazionedifferenzialecubicoriscalatowangequation}
Attention must be given to the factor $\frac{1}{2}$ that has been placed in front of the term $\vl Q\vl^2e^{-2\psi}$ in (\ref{integrabilityequations}) and that does not appear in \cite[Theorem 2.10]{rungi2021pseudo}. In fact, as we shall see shortly, through this rescaling of the cubic differential we will be able to rewrite the first equation of the above system in terms of a vortex-type equation defined on the surface.
    \end{remark}
Now let $(\Sg, J)$ be a closed Riemann surface with genus $g\ge 2$. By the well-known Poincar\'e-Koebe Uniformization Theorem we can pick a Riemannian metric $g_0$ of constant curvature $k_0$ on $\Sg$ which is compatible with the initial complex structure $J$. Let $H^0(\Sg, K^3)$ be the holomorphic sections of the tri-canonical bundle over $(\Sg, J)$, namely the $\C$-vector space of holomorphic cubic differentials. It is easy to see, using the Riemann-Roch Theorem, that this space has complex dimension equal to $5g-5$. If $z=x+iy$ is a local holomorphic coordinate on $(\Sg, J)$, then we can define a norm on $H^0(\Sg, K^3)$, given by: $$\vl\vl q\vl\vl_{g_0}^2:=\vl Q\vl^2e^{-3\phi} \ ,$$where $q=Q\mathrm{d}z^3$ and $g_0=e^\phi\vl\mathrm{d}z\vl^2$ in this local coordinate. \begin{theorem}[\cite{changping1990some}]\label{semilinearellipticproposition}
Pick the metric $g_0$ so that its Gaussian curvature is equal to $-1$. Let $h=e^ug_0$ be a Riemannian metric in the same conformal class as $g_0$ and $q\in H^0(\Sg,K^3)$, then $\psi=u+\phi$ satisfies the first equation of (\ref{integrabilityequations}) if and only if the metric $h$ satisfies:\begin{equation}\label{vortex-type}
        K_h-\vl\vl q\vl\vl^2_h=-1 \ ,
    \end{equation}where $K_h$ is the Gaussian curvature of $h$ and $\vl\vl q\vl\vl^2_h=\vl\vl q\vl\vl_{g_0}^2e^{-3u}$.
    \end{theorem}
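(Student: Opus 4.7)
The proof will be a direct computation exploiting the conformal expression of the Gaussian curvature, so the task is really to verify that two PDEs coincide after a suitable rescaling. Write $h = e^{\psi}|\mathrm{d}z|^2$ with $\psi = u + \phi$ in the chosen holomorphic coordinate. The first step is to recall the standard formula for the Gaussian curvature of a conformal metric: for any real function $\psi$,
\begin{equation*}
K_h = -2\, e^{-\psi}\, \frac{\partial^2}{\partial z\, \partial \bar z}\psi .
\end{equation*}
I would either derive this from $K = -\tfrac{1}{2}e^{-\psi}\Delta_{0}\psi$ with $\Delta_0 = 4\partial_z\partial_{\bar z}$, or use the conformal change formula $K_h = e^{-u}\bigl(K_{g_0} - \tfrac{1}{2}\Delta_{g_0}u\bigr)$ combined with the hypothesis $K_{g_0}=-1$, which forces $\partial_z\partial_{\bar z}\phi = \tfrac{1}{2}e^{\phi}$.

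The second step is to compute the pointwise norm $\|q\|_h^{2}$ for a cubic differential $q = Q\,\mathrm{d}z^{3}$. Since $\mathrm{d}z$ has $h$-length $e^{-\psi/2}$ (because $h(\partial_z,\partial_{\bar z}) = \tfrac{1}{2}e^{\psi}$ and hence the dual length of $\mathrm{d}z$ is $e^{-\psi/2}$), the cube of this scaling factor gives
\begin{equation*}
\|q\|_h^{2} = |Q|^{2}\,e^{-3\psi} = \bigl(|Q|^{2}e^{-3\phi}\bigr)e^{-3u} = \|q\|_{g_0}^{2}\, e^{-3u},
\end{equation*}
which also verifies the claimed rescaling formula. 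I would be careful about the normalization conventions (the factor of $\tfrac{1}{2}$ in (\ref{integrabilityequations}) is tied to exactly this choice of pointwise norm, as flagged in Remark \ref{rem:osservazionedifferenzialecubicoriscalatowangequation}; this is the place where one must check consistency with \cite{changping1990some}).

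The final step is a substitution. Plugging the two formulas above into $K_h - \|q\|_h^{2} = -1$ gives
\begin{equation*}
-2\, e^{-\psi}\, \frac{\partial^{2}\psi}{\partial z\,\partial\bar z} \;-\; |Q|^{2} e^{-3\psi} \;=\; -1 .
\end{equation*}
Multiplying through by $-\tfrac{1}{2}e^{\psi}$ yields
\begin{equation*}
\frac{\partial^{2}\psi}{\partial z\,\partial\bar z} \;+\; \frac{1}{2}|Q|^{2} e^{-2\psi} \;-\; \frac{1}{2} e^{\psi} \;=\; 0,
\end{equation*}
which is precisely the first equation of (\ref{integrabilityequations}). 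Every step is reversible, so this establishes the equivalence. There is essentially no analytic obstacle here; the only thing to be careful about is the bookkeeping of constants (the $\tfrac{1}{2}$ in the PDE, the cube versus square of the conformal factor for cubic versus quadratic differentials, and the use of the assumption $K_{g_0}=-1$ only implicitly, to make the rewriting coordinate-free).
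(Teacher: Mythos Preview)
Your proof is correct; the computation is exactly the standard one. The paper does not actually supply a proof of this theorem (it is attributed to \cite{changping1990some}), but your argument matches the conformal-curvature manipulation the paper uses in the lemma immediately following, so there is nothing to compare beyond confirming your bookkeeping of the factors $e^{-\psi}$, $e^{-2\psi}$, $e^{-3\psi}$ is consistent with the paper's conventions, which it is.
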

    \begin{lemma}
    In the setting of the previous theorem, the metric $h$ satisfies equation (\ref{vortex-type}) if and only if the function $u:\Sg\to\R$ satisfies the following semi-linear elliptic equation \begin{equation}\label{semilinearelliptic}
        \Delta_{g_0}u+2\vl\vl q\vl\vl^2_{g_0}e^{-2u}-2e^u+2=0
    \end{equation}
    \end{lemma}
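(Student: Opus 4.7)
The proof is essentially a direct substitution, once the standard conformal change formula for the Gaussian curvature is invoked. The strategy is to rewrite both $K_h$ and $\|q\|^2_h$ in terms of the fixed background metric $g_0$ and the conformal factor $u$, and then to observe that the vortex-type equation \eqref{vortex-type} becomes, after multiplication by an appropriate exponential factor, precisely the semi-linear elliptic equation \eqref{semilinearelliptic}.

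More precisely, the plan is as follows. First I recall that for a two-dimensional conformal change $\tilde{g} = e^{2\sigma}g$, the Gaussian curvature transforms according to
\[
K_{\tilde{g}} = e^{-2\sigma}\bigl(K_{g} - \Delta_{g}\sigma\bigr),
\]
where $\Delta_{g}$ is the geometer's Laplace-Beltrami operator (so that on flat space $\Delta = \partial_x^2 + \partial_y^2$). Applied to $h = e^{u}g_0$, this gives
\[
K_{h} = e^{-u}\Bigl(K_{g_0} - \tfrac{1}{2}\Delta_{g_0} u\Bigr) = -e^{-u} - \tfrac{1}{2}e^{-u}\Delta_{g_0} u,
\]
using $K_{g_0}=-1$. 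Combined with the pointwise formula $\|q\|^2_{h} = \|q\|^2_{g_0}\,e^{-3u}$ already recorded in the statement of Theorem \ref{semilinearellipticproposition}, the vortex-type equation $K_h - \|q\|^2_h = -1$ becomes
\[
-e^{-u} - \tfrac{1}{2}e^{-u}\Delta_{g_0}u - \|q\|^2_{g_0}e^{-3u} = -1.
\]

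Multiplying through by $-2e^{u}$ and rearranging the terms yields
\[
\Delta_{g_0} u + 2\|q\|^2_{g_0}e^{-2u} - 2e^{u} + 2 = 0,
\]
which is precisely \eqref{semilinearelliptic}. Since every step is an equivalence (the multiplicative factor $-2e^u$ never vanishes), the reverse implication follows by running the calculation in the opposite direction.

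There is no real obstacle beyond bookkeeping: the only point requiring vigilance is the sign convention for the Laplacian and the factor $\tfrac{1}{2}$ coming from the rescaling $u = 2\sigma$, together with the rescaled factor $\tfrac{1}{2}$ in front of $|Q|^2e^{-2\psi}$ in \eqref{integrabilityequations} flagged in Remark \ref{rem:osservazionedifferenzialecubicoriscalatowangequation}, which makes the constants in the final equation come out cleanly. Once these are tracked correctly the equivalence is immediate.
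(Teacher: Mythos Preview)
Your proof is correct and follows essentially the same approach as the paper: apply the conformal change formula $K_h = e^{-u}(K_{g_0} - \tfrac{1}{2}\Delta_{g_0}u)$ with $K_{g_0}=-1$, use $\|q\|^2_h = \|q\|^2_{g_0}e^{-3u}$, and multiply through by the nonvanishing factor $-2e^{u}$ to obtain the equivalence. Your additional remarks on the sign convention and the factor $\tfrac{1}{2}$ are helpful commentary but not a different argument.
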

    \begin{proof}
    This is an easy application of the formula for the curvature $K_h=e^{-u}(k_0-\frac{1}{2}\Delta_{g_0}u)$ under conformal change of metric $h=e^ug_0$. In fact, since $g_0$ can be chosen so that $k_0=-1$, we get $$K_h-\vl\vl q\vl\vl_h^2+1=-e^{-u}-\frac{1}{2}e^{-u}\Delta_{g_0}u-e^{-3u}\vl\vl q\vl\vl_{g_0}^2+1 \ .$$ Multiplying the right-hand side of the equation above by the factor $-2e^u$, we have the following equivalence $$-e^{-u}-\frac{1}{2}e^{-u}\Delta_{g_0}u-e^{-3u}\vl\vl q\vl\vl_{g_0}^2+1=0 \iff \Delta_{g_0}u+2\vl\vl q\vl\vl^2_{g_0}e^{-2u}-2e^u+2=0 \ .$$
    \end{proof}
The original approach used by Wang to study existence and uniqueness of the solution to (\ref{semilinearelliptic}) (and thus to (\ref{vortex-type})) was the theory of elliptic operators between Sobolev spaces (\cite[\S 4]{changping1990some}). About ten years later, Loftin simplified a lot the original argument, deducing the following stronger result: \begin{lemma}[\cite{loftin2001affine}]\label{loftinlemma}
Let $(M, \tilde{g})$ be a smooth compact Riemannian manifold and let $\tilde{\varphi}$ be a smooth non-negative function on $M$. Then, the equation \begin{equation}\label{loftinequation}
    \Delta_{\tilde g}u+\tilde\varphi(p)e^{-2u}-2e^u+2=0
\end{equation}has a unique smooth solution.
\end{lemma}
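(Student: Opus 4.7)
The plan is to establish existence via the method of sub- and super-solutions combined with a monotone iteration, and uniqueness via the maximum principle, exploiting the fact that the nonlinear term $u \mapsto -\tilde\varphi(p) e^{-2u} + 2e^u - 2$ is strictly increasing in $u$.

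For uniqueness, I would suppose $u_1,u_2$ are two smooth solutions and set $w := u_1 - u_2$. At a point $p_0$ realizing $\max_M w$, the maximum principle yields $\Delta_{\tilde g} w(p_0) \leq 0$, while subtracting the two equations gives
\[
\Delta_{\tilde g} w \;=\; -\tilde\varphi(p)\bigl(e^{-2u_1} - e^{-2u_2}\bigr) \,+\, 2\bigl(e^{u_1} - e^{u_2}\bigr) \ .
\]
Since $u_1(p_0) \geq u_2(p_0)$, both summands on the right-hand side are non-negative at $p_0$, so $\Delta_{\tilde g} w(p_0) \geq 0$; combined with the previous inequality, each summand must vanish, and the strict monotonicity of $u \mapsto e^u$ forces $u_1(p_0) = u_2(p_0)$, so $\max_M w = 0$. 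Applying the same reasoning to $-w$ at its maximum gives $\min_M w = 0$, and hence $w \equiv 0$.

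For existence, I would observe that $u_- \equiv 0$ is a sub-solution, since evaluating the left-hand side of (\ref{loftinequation}) at $u = 0$ returns $\tilde\varphi(p) \geq 0$. On the other hand, any sufficiently large constant $u_+ \equiv C$ with $2(e^C - 1) \geq \max_M \tilde\varphi$ is a super-solution, producing an ordered pair $u_- \leq u_+$. A standard monotone iteration scheme—solving at each step the linear equation $(\Delta_{\tilde g} - \lambda)u_{n+1} = -\lambda u_n + \tilde\varphi e^{-2u_n} - 2e^{u_n} + 2$, with $\lambda > 0$ chosen so that the right-hand side is decreasing in $u_n$ on the interval $[0,C]$—produces a sequence trapped between $u_-$ and $u_+$ and monotone in $n$. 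Its $C^0$-limit is a weak solution, and an iterative application of elliptic regularity to the linearized equations upgrades it to a smooth solution of (\ref{loftinequation}).

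The most delicate point is that $\tilde\varphi$ is only assumed to be non-negative and may well vanish at the extremal point of $w$ in the uniqueness argument; the proof goes through because the monotone term $2(e^{u_1} - e^{u_2})$ alone is enough to conclude, independently of the sign or vanishing locus of $\tilde\varphi$. This is the structural reason why one gets uniqueness on the full class of smooth non-negative $\tilde\varphi$, rather than only on the strictly positive ones.
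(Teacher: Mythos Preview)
Your argument is correct. Note, however, that the paper does not supply its own proof of this lemma: it is quoted verbatim from Loftin's thesis \cite{loftin2001affine} and used as a black box to guarantee existence and uniqueness of the Blaschke metric. Your sub-/super-solution plus monotone iteration for existence, together with the maximum-principle comparison for uniqueness, is exactly the approach Loftin takes in the cited reference, so there is nothing to contrast against the present paper itself.

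One small cosmetic point: in your super-solution step you wrote the sufficient condition as $2(e^C-1)\ge\max_M\tilde\varphi$; this does imply $\tilde\varphi\,e^{-2C}-2e^C+2\le 0$ because $e^{-2C}\le 1$ for $C\ge 0$, but it would be cleaner to state directly that for $C$ large the term $-2e^C$ dominates both $\tilde\varphi\,e^{-2C}$ and the constant $2$.
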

In the end, combining all the results together we get the following important consequence:
\begin{corollary}[\cite{changping1990some, loftin2001affine}]\label{cor:affineandcubic}
A hyperbolic affine sphere in $\R^3$ with center $0$ which admits a properly discontinuous action of a discrete group $\Gamma<\SL(3,\R)$, so that the quotient is a closed oriented surface $\Sg$ of genus $g\ge 2$, is completely determined by a conformal structure on $\Sg$ and a section $q\in H^0(\Sg,K^3)$. Moreover, all such hyperbolic affine spheres are determined in this way.
\end{corollary}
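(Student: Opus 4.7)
The plan is to establish the claimed bijection by exhibiting the two directions of correspondence separately and verifying equivariance at the end. The existence and uniqueness of the affine sphere itself (for a fixed convex domain) is already granted by Theorem \ref{thmchengyau}; what the corollary demands is the equivalent reparameterization of the data by a complex structure and a holomorphic cubic differential.

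For the forward direction, I would start from a $\Gamma$-equivariant hyperbolic affine sphere $f\colon\widetilde\Sg\to\R^3$. The Blaschke metric $h$ and Pick form $A$ are intrinsically attached to the immersion and are therefore $\Gamma$-invariant; they descend to $\Sg$ and the conformal class of $h$ determines a complex structure $J$. Since $S=-\Id$, the structure equations (\ref{structurequations}) combined with Corollary \ref{picktensoraffinesphere} give that the Pick tensor $C=h(A\cdot,\cdot)$ is totally symmetric, while the Codazzi identity $\mathrm d^{\nabla^h}A=0$ follows from the standard derivation of the integrability conditions for affine spheres (and is the content of the second equation of (\ref{Gausscodazzi})). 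Theorem \ref{thm:picktensor} then produces a holomorphic cubic differential $q\in H^0(\Sg,K^3)$ whose real part is $C$.

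For the converse, given $([J],q)$, I would choose by the Uniformization Theorem the unique hyperbolic metric $g_0$ of constant curvature $-1$ in the conformal class of $J$. Applying Lemma \ref{loftinlemma} to $\tilde\varphi=2\|q\|^2_{g_0}$ yields a unique smooth function $u\colon\Sg\to\R$ solving (\ref{semilinearelliptic}). Setting $h=e^u g_0$, Theorem \ref{semilinearellipticproposition} guarantees that $h$ satisfies the vortex-type equation $K_h-\|q\|_h^2=-1$, which, lifted to the universal cover and written in a local holomorphic coordinate, is exactly the first of the integrability equations (\ref{integrabilityequations}); the second equation is automatic because $q$ is holomorphic. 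Consequently, the ODE system (\ref{systemODEs}) is integrable, and choosing an initial condition for the frame field $\mathbf F$ at a basepoint $z_0$ produces a hyperbolic affine sphere $f\colon\widetilde\Sg\to\R^3$ realizing the prescribed Blaschke data. The fact that $f$ is proper and asymptotic to a convex cone then follows from the uniqueness clause of Theorem \ref{thmchengyau}, applied to the convex domain one recovers from the developing map.

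The final step to address is equivariance and well-definedness. Because the descended data $(h,C)$ is $\pi_1(\Sg)$-invariant, for each deck transformation $\gamma$ the frame fields $\mathbf F$ and $\mathbf F\circ\gamma$ both solve (\ref{systemODEs}) with the same coefficients, hence differ by a constant element $\rho(\gamma)\in\SL(3,\R)$; this defines the representation $\rho\colon\pi_1(\Sg)\to\SL(3,\R)$ and exhibits the $\Gamma$-action on $f(\widetilde\Sg)$, where $\Gamma=\rho(\pi_1(\Sg))$. Finally, any two hyperbolic affine spheres with identical $(h,C)$ produce identical ODE coefficients and therefore coincide up to a global element of $\SL(3,\R)$, which shows that the correspondence is bijective. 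The step I expect to be the most delicate is this last uniqueness/equivariance bookkeeping, since one must track the dependence on the initial condition and verify that it only changes the immersion by an overall affine motion, leaving the underlying equivariant affine sphere unchanged.
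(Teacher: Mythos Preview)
Your proposal is correct and follows exactly the approach the paper intends: the corollary is stated there without proof, merely as the result of ``combining all the results together'', and you have spelled out precisely that combination (Theorem~\ref{thmchengyau} and Corollary~\ref{picktensoraffinesphere}/Theorem~\ref{thm:picktensor} for the forward direction, uniformization plus Lemma~\ref{loftinlemma} and Theorem~\ref{semilinearellipticproposition} to solve (\ref{integrabilityequations}) for the converse, and the ODE system (\ref{systemODEs}) for the reconstruction and equivariance). Your final remark about the uniqueness/equivariance bookkeeping is the right place to be careful, but your argument that two frame fields with identical coefficients differ by a constant element of $\SL(3,\R)$ is the standard one and is sound.
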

\subsection{Labourie and Loftin's parametrization}\label{sec:2.2}
The core of this section is to use the results presented in Sections \ref{sec:1.2} and Section \ref{sec:2.1} to describe the parameterization, found by Labourie and Loftin, of the space of convex $\RP$-structures on $\Sg$. It should be emphasized that the same result found in \cite{loftin2001affine} and \cite{Labourie_cubic} was proven using slightly different techniques, and in our case, we will follow Loftin's approach more. \\ \\ Let $\pi:\cubicg\to\Teichc$ be the holomorphic vector bundle of cubic differentials over Teichm\"uller space. The fibre over an equivalence class $[J]\in\Teichc$ is the $\C$-vector space of holomorphic sections of the tri-canonical bundle. Given any pair $([J],q)\in\cubicg$, we have an embedding $\widetilde\Sg\to\R^3$ as a (equivariant) hyperbolic affine sphere whose Pick tensor and Blaschke metric are completely determined by $[J]$ and $q$ (Corollary \ref{cor:affineandcubic}). In particular, by the argument in Section \ref{sec:1.2}, we get a family of convex $\RP$-structures on $\Sg$ in the same $\Diff_0(\Sg)$-orbit. Conversely, if we start with an equivalence class of convex $\RP$-structures, by Theorem \ref{thmchengyau} we get an (equivariant) embedding $\widetilde{\Sg}\to\R^3$ as a hyperbolic affine sphere, which is equivalent to a pair $([J],q)$ as above. In the end, the main result is the following:\begin{theorem}[\cite{loftin2001affine},\cite{Labourie_cubic}]\label{teoloftinlabourie}
Let $\Phi:\defg\to\cubicg$ be the map which associates to each equivalence class of convex $\RP$-structures the pair $([J],q)$ described above. Then, $\Phi$ is an homeomorphism.
\end{theorem}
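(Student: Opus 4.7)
The plan is to establish the bijection $\Phi$ directly from Corollary \ref{cor:affineandcubic} and then verify continuity in both directions.

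For bijectivity, given $[\phi, M] \in \defg$, Proposition \ref{prop:defgandhaff} produces embedding data $(h, C)$ of the equivariant hyperbolic affine sphere asymptotic to $\mathcal{C}(\Omega)$, where $M \cong \Omega/\Gamma$. Theorem \ref{thm:picktensor} applied to the Codazzi equation $\mathrm{d}^{\nabla^h} A = 0$ guarantees that $q := C(\cdot,\cdot,\cdot) - i C(J\cdot,\cdot,\cdot)$ is holomorphic for the conformal class $[J]$ of $h$, so setting $\Phi([\phi, M]) := ([J], q)$ gives a $\Diff_0(\Sg)$-invariant map. Conversely, given $([J], q) \in \cubicg$, pick the hyperbolic representative $g_0 \in [J]$ via uniformization and apply Lemma \ref{loftinlemma}: equation \ref{semilinearelliptic} has a unique smooth solution $u$, and by Theorem \ref{semilinearellipticproposition} the metric $h := e^u g_0$ paired with $q$ satisfies \ref{Gausscodazzi}. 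Integrating the structure equations \ref{systemODEs} on $\widetilde\Sg$ produces a $\pi_1(\Sg)$-equivariant hyperbolic affine sphere whose asymptotic cone, by Theorem \ref{thmchengyau}, has the form $\mathcal{C}(\Omega)$ for some properly convex $\Omega \subset \RP$, and the quotient $\Omega/\Gamma$ inherits a convex $\RP$-structure on $\Sg$. Well-definedness on equivalence classes follows from the uniqueness parts of Lemma \ref{loftinlemma} and Theorem \ref{thmchengyau}, so this is a two-sided inverse of $\Phi$.

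Continuity of $\Phi$ follows from smooth dependence of hyperbolic affine spheres on the asymptotic cone (Theorem \ref{thmchengyau}): a convergent family of $\RP$-structures yields a convergent family of convex domains $\Omega_n \to \Omega$, hence a $C^\infty_{\mathrm{loc}}$-convergent family of embedded hyperbolic affine spheres, and the projection to the conformal class plus holomorphic cubic differential is continuous. Continuity of $\Phi^{-1}$ reduces to continuous dependence of the solution $u$ of Wang's equation on $(g_0, q)$. This is obtained by the implicit function theorem: the linearization of \ref{loftinequation} at $u$ equals $\Delta_{g_0} - 4\|q\|_{g_0}^2 e^{-2u} - 2e^u$, which is strictly negative and hence an isomorphism between appropriate H\"older or Sobolev spaces on $\Sg$. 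Combined with continuity of the uniformization section $\Teichc \to \{g_0\}$, this gives a $C^\infty$-continuous family of Blaschke data $(h_n, q_n)$ whose integration yields continuously varying affine spheres and $\RP$-structures.

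The main technical obstacle is handling the topologies carefully enough to make the continuity arguments rigorous: one must show that the $\pi_1(\Sg)$-equivariant integration of \ref{systemODEs} on the universal cover and the passage from the embedded affine sphere to the convex domain $\Omega$ both respect the natural deformation-space topologies. This is controlled by the uniqueness parts of Lemma \ref{loftinlemma} and Theorem \ref{thmchengyau}, which force both the monodromy representation and the convex domain to depend continuously on the PDE data, but verifying this rigorously requires a careful choice of slices in $\defg$ and $\cubicg$ and tracking the asymptotic boundary behavior of the integrated immersion under small perturbations of $([J], q)$.
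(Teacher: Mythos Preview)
The paper does not provide its own proof of this theorem: it is stated with attribution to \cite{loftin2001affine} and \cite{Labourie_cubic}, and the surrounding discussion in Section~\ref{sec:2.2} only outlines the bijection (via Corollary~\ref{cor:affineandcubic} and Theorem~\ref{thmchengyau}) without addressing continuity. Your sketch is therefore not being compared against a proof in this paper but against the cited sources, and it follows Loftin's original strategy closely: bijectivity from the existence/uniqueness of the Wang solution (Lemma~\ref{loftinlemma}) and the Cheng--Yau theorem, continuity of $\Phi^{-1}$ from the implicit function theorem applied to the linearization of \eqref{loftinequation}. That is the correct outline and matches what the cited references do.

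One small point: your linearization operator should read $\Delta_{g_0}\dot u - 4\|q\|_{g_0}^2 e^{-2u}\dot u - 2e^{u}\dot u$, i.e.\ the coefficient of the identity is $-(4\|q\|_{g_0}^2 e^{-2u} + 2e^{u})$, which is indeed strictly negative so the operator is invertible as you claim. Your final paragraph correctly flags the genuine technical content (topologies on the quotient spaces, continuous dependence of the developing map and holonomy on the PDE data), which in Loftin's paper is handled by working in a fixed conformal gauge and appealing to smooth dependence of ODE solutions on parameters; you have identified the right ingredients.
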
 
There is a pull-back action of MCG$(\Sg)$ on $\cubicg$ given by: $$[\psi]\cdot([J],q):=([\psi^*J], \psi^*q) \ .$$ It is well defined as it does not depend on the chosen representative in $[\psi]\in\textrm{MCG}(\Sg)$. Moreover, the pair $([\psi^*J],\psi^*q)$ still defines a point in $\cubicg$ as $\psi^*q$ is holomorphic with respect to $\psi^*J$ if and only if $q$ is $J$-holomorphic. Thanks to Theorem \ref{teo:choigoldman}, taking into account the action of MCG$(\Sg)$ on $\defg$ and $\hitc$, one gets the following remarkable consequence: \begin{corollary}[\cite{loftin2001affine},\cite{Labourie_cubic}]\label{complexloftinlabourie}
The space $\hitc$ carries a mapping class group invariant complex structure, denoted with $\i$.
\end{corollary}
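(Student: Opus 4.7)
The plan is to transport the canonical complex structure of $\cubicg$ to $\hitc$ via the Labourie–Loftin homeomorphism $\Phi$ of Theorem~\ref{teoloftinlabourie} composed with the inverse of the Choi–Goldman identification $\mathfrak{hol}$ of Theorem~\ref{teo:choigoldman}, and then verify that the resulting complex structure is preserved by the mapping class group.

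First I would recall that $\pi\colon\cubicg\to\Teichc$ is a holomorphic vector bundle, hence its total space carries a canonical complex manifold structure of complex dimension $(3g-3)+(5g-5)=8g-8$, matching half of $\dim_\R\hitc$. I then define $\i$ as the unique complex structure on $\hitc$ that makes the composition
\[
\Phi\circ\mathfrak{hol}^{-1}\colon\hitc\longrightarrow\cubicg
\]
a biholomorphism. Because $\mathfrak{hol}$ and $\Phi$ are homeomorphisms, this transports a complete holomorphic atlas from $\cubicg$ to $\hitc$; no \emph{a priori} smooth structure on $\hitc$ is needed since the pulled-back charts furnish one.

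The main step is the mapping class group invariance. I would split it into two independent verifications. First, the pull-back action $[\psi]\cdot([J],q)=([\psi^*J],\psi^*q)$ on $\cubicg$ is holomorphic: this uses the classical fact that $\mathrm{MCG}(\Sg)$ acts by biholomorphisms on $\Teichc$, together with the observation that the lift to the total space is fibrewise $\C$-linear (pull-back commutes with multiplication by $i$ on cubic differentials) and varies holomorphically with the base, because if $q$ is a local holomorphic section of $\pi$ then so is $\psi^*q$. Second, I would show that both $\Phi$ and $\mathfrak{hol}$ are MCG-equivariant. For $\mathfrak{hol}$ this is immediate from the definition~\eqref{MCGaction} of the action on the character variety. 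For $\Phi$ it follows from the naturality of the embedding-data-to-hyperbolic-affine-sphere correspondence summarized in Proposition~\ref{prop:defgandhaff}: if the marked convex $\RP$-structure $(f,M)$ has associated equivariant hyperbolic affine sphere with Blaschke metric $h$, Pick tensor $C$, and underlying pair $([J],q)$, then precomposing the marking by $\psi$ replaces $(h,C)$ by $(\psi^*h,\psi^*C)$ by pull-back of tensors, which corresponds to the pair $([\psi^*J],\psi^*q)$.

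Combining these two points, $\Phi\circ\mathfrak{hol}^{-1}$ intertwines the $\mathrm{MCG}(\Sg)$-action on $\hitc$ with the holomorphic $\mathrm{MCG}(\Sg)$-action on $\cubicg$, so by construction $\i$ is mapping class group invariant. The step I expect to require the most care is the first part of the invariance check, namely confirming that the lifted action on the total space of $\cubicg$ is holomorphic (and not merely smooth and fibrewise $\C$-linear); this is where one must use that the MCG acts on $\Teichc$ by biholomorphisms and that $\pi$ is a holomorphic bundle, so that pull-back of sections commutes with the holomorphic trivializations of $\pi$.
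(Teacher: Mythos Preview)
Your proposal is correct and follows exactly the approach the paper takes: the corollary is stated immediately after Theorem~\ref{teoloftinlabourie}, and the paper's proof consists precisely of the preceding paragraph, which defines the $\mathrm{MCG}(\Sg)$-action on $\cubicg$, observes it is well-defined and holomorphic, and invokes the equivariance of the Choi--Goldman and Labourie--Loftin identifications. Your write-up simply spells out these steps in more detail than the paper does.
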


\subsection{A conformal change of metric}\label{sec:2.3}
The next step now is to introduce a conformal change of metrics on the surface that allows us to find an equivalent description of $\haff$ which will be crucial for the symplectic reduction. In order to do this, we need to fix an area form $\rho$ on the surface. Then, using the so-called Moser's trick in symplectic geometry we will obtain a different model of $\defg$ as the quotient of an infinite dimensional space by $\Symp_0(\Sg,\rho)$. \newline First, we introduce the function that will allow us to make the conformal change of metric.
\begin{lemma}\label{lem:functionFef}
    There exists a unique smooth function $F:[0,+\infty)\to\R$ such that \begin{equation}\label{functionalequationF}ce^{-F(t)}-2te^{-3F(t)}+1=0,\qquad F(0)=\log\abs{c},\end{equation}where $c:=\frac{2\pi\chi(\Sg)}{\mathrm{Vol}(\Sg,\rho)}$ is a strictly negative constant depending only on the topology and the area of the surface. Moreover, if the function $f:[0,+\infty)\to(-\infty,0]$ is defined as \begin{equation}\label{definitionf}
        f(t):=-\Bigg(\int_0^tF'(s)s^{-\frac{1}{3}}\mathrm ds\Bigg)t^{\frac{1}{3}}
    \end{equation} then it is smooth and it satisfies the following properties:\begin{itemize}
        \item[(1)]$f(0)=0$;
        \item[(2)]$f'(t)<0$ for all $t>0$;
        \item[(3)]$\displaystyle\lim_{t\to+\infty}f(t)=-\infty \ .$
    \end{itemize}
\end{lemma}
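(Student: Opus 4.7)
The plan is to reduce \eqref{functionalequationF} to a cubic polynomial equation in the variable $y := e^{-F(t)}$, apply the implicit function theorem for existence, uniqueness and smoothness of $F$, and then deduce the claimed properties of $f$ by elementary manipulation. Setting $y := e^{-F(t)}$ turns \eqref{functionalequationF} into $P_t(y) := 2ty^3 - cy - 1 = 0$. For every $t \geq 0$ and $y \geq 0$ we have $\partial_y P_t(y) = 6ty^2 - c > 0$, because $c<0$; combined with $P_t(0) = -1 < 0$ and $P_t(y) \to +\infty$ as $y \to +\infty$, this produces a unique positive root $y(t)$. At $t=0$ one recovers $y(0) = -1/c = |c|^{-1}$, which gives $F(0) = \log|c|$ as required. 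Since $(t,y)\mapsto P_t(y)$ is smooth and $\partial_y P_t(y(t)) > 0$ throughout $[0,+\infty)$, the implicit function theorem makes $y$, and hence $F = -\log y$, smooth on $[0,+\infty)$.

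Next I would verify monotonicity of $F$. Implicit differentiation of \eqref{functionalequationF}, combined with the substitution $ce^{-F(t)} = 2te^{-3F(t)} - 1$ coming from \eqref{functionalequationF} itself, gives the clean formula
\[ F'(t) \;=\; \frac{2\,e^{-3F(t)}}{4t\, e^{-3F(t)} + 1} \;>\; 0. \]

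Turning to $f$, the change of variable $s = tu$ rewrites
\[ f(t) \;=\; -\,t\int_0^1 F'(tu)\, u^{-1/3}\, du, \]
and because $u^{-1/3}$ is integrable on $[0,1]$ and $F$ is smooth, $f$ is itself smooth on $[0,+\infty)$ by differentiation under the integral sign. Property (1), $f(0)=0$, is then immediate. Differentiating the original definition gives
\[ f'(t) \;=\; -\tfrac{1}{3}\, t^{-2/3}\!\int_0^t F'(s)\, s^{-1/3}\,ds \;-\; F'(t), \]
a sum of two strictly negative terms by the previous step, which proves (2). For (3), positivity of $F'$ alone provides the lower bound $\int_0^t F'(s)\,s^{-1/3}\,ds \geq L := \int_0^1 F'(s)\,s^{-1/3}\,ds > 0$ for every $t \geq 1$, whence $f(t) \leq -L\, t^{1/3}\to -\infty$.

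The only delicate point is smoothness of $f$ at the endpoint $t = 0$, since \eqref{definitionf} combines a vanishing prefactor $t^{1/3}$ with an integrand carrying an integrable singularity at $s=0$. The rewriting via $s = tu$ bypasses this entirely by transferring the singularity to the fixed interval $[0,1]$ with the uniformly integrable weight $u^{-1/3}$; dominated convergence then legitimises differentiation under the integral sign to all orders without further bookkeeping.
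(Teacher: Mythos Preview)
Your proof is correct and follows the same general outline as the paper's: implicit function theorem for $F$, the same computation of $f'(t)$ as a sum of two negative terms, and a direct check of $f(0)=0$.

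There are two genuine differences worth noting. First, the paper establishes property~(3) by solving the cubic in $e^{-F(t)}$ explicitly via Cardano's formula, obtaining a closed expression for $F(t)$ and then reading off the asymptotics. Your argument is considerably more economical: once $F'>0$ is known, the integral $\int_0^t F'(s)s^{-1/3}\,ds$ is monotone in $t$, so the single positive constant $L=\int_0^1 F'(s)s^{-1/3}\,ds$ already forces $f(t)\le -L\,t^{1/3}\to-\infty$. This avoids any explicit formula and is all that is needed here; the paper's closed form is only really exploited later (Lemma~\ref{lem:combinationoffunctionfandfprime}) to analyse the sign of $1-f(t)+3tf'(t)$. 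Second, the paper simply asserts that $f$ is smooth, whereas you isolate the only nontrivial point---smoothness at $t=0$---and dispatch it cleanly with the substitution $s=tu$, which turns \eqref{definitionf} into $f(t)=-t\int_0^1 F'(tu)\,u^{-1/3}\,du$ and makes differentiation under the integral sign straightforward. Your explicit formula $F'(t)=\dfrac{2e^{-3F(t)}}{4t e^{-3F(t)}+1}$ is also a nice addition; the paper only records the sign.
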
\begin{proof}
    The existence and uniqueness of the smooth function $F$ follows from a standard application of the implicit function theorem to $G(t,y):=ce^{-y}-2te^{-3y}+1$. In particular, $G\big(0,F(0)\big)=0$ implies that $F(0)=\log|c|$. Using the formulas for the derivative of the function $y=F(t)$ in terms of the derivatives of $G(t,y)$, we obtain that: $$F'(t)>0 \ \forall t\ge 0,\qquad F''(t)<0 \ \forall t\ge 0,\qquad \lim_{t\to+\infty}F(t)=+\infty \ .$$ From the definition it is clear that $f$ is smooth, $f(0)=0$ and it only attains non-positive values. The fundamental theorem of calculus implies that $$f'(t)=-\Big(F'(t)+\frac{t^{-\frac{2}{3}}}{3}\int_0^tF'(s)s^{-\frac{1}{3}}\mathrm ds\Big) \ ,$$hence $f'(t)<0$ for all $t>0$. The behavior of $f$ at infinity can be obtained by using the explicit expression $$F(t)=\ln\left(\frac{(4t)^{\frac{1}{3}}}{\sqrt[3]{1+\sqrt{1+\frac{\zeta}{t}}}+\sqrt[3]{1-\sqrt{1+\frac{\zeta}{t}}}}\right) \ ,t\neq 0$$that can be derived from the functional equation in the statement, which is a cubic equation in $e^{-F(t)}$, and where $\zeta=\frac{2}{27}|c|$.
\end{proof}
\begin{lemma}\label{lem:combinationoffunctionfandfprime}
    Let $f:[0,+\infty)\to(-\infty,0]$ and $F:[0,+\infty)\to\R$ be the functions defined above. Then, \begin{itemize}
        \item[(1)] $\displaystyle f'(t)=-F'(t)+\frac{f(t)}{3t},$ for all $t>0$;
        \item[(2)] $1-f(t)+3tf'(t)>0,$ for all $t\geq 0$;
        \item[(3)] $f'(t)$ is monotonically increasing for any $t>0$.
    \end{itemize}
\end{lemma}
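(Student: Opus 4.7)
For (1), the plan is to differentiate the defining integral (\ref{definitionf}) directly. Writing $f(t)=-t^{1/3}I(t)$ with $I(t)=\int_0^t F'(s)s^{-1/3}\,\mathrm{d}s$, the product rule together with the fundamental theorem of calculus gives
\[
f'(t)=-\tfrac{1}{3}t^{-2/3}I(t)-t^{1/3}F'(t)\,t^{-1/3}=\tfrac{f(t)}{3t}-F'(t),
\]
which is exactly the identity in (1).

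For (2), I would substitute the identity from (1) into $1-f(t)+3tf'(t)$; the $f(t)$-contributions cancel, leaving $1-3tF'(t)$. At $t=0$ the value is $1>0$, so only the range $t>0$ requires attention. Differentiating the functional equation (\ref{functionalequationF}) and solving for $F'$ gives $F'(t)=2e^{-3F(t)}\big/(4te^{-3F(t)}+1)$, so $3tF'(t)<1$ is equivalent to $2te^{-3F(t)}<1$, and substituting back (\ref{functionalequationF}) reduces this to $ce^{-F(t)}<0$, which holds since $c<0$.

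For (3), the strategy is to find $f$ in closed form in terms of $F$ and then to read off $f''$ directly. Motivated by the substitution $u=e^{-F(t)}$, under which (\ref{functionalequationF}) becomes the algebraic relation $t=(1+cu)/(2u^3)$, I would test the ansatz $\phi(t):=\tfrac{3}{2}\bigl(1+c^{-1}e^{F(t)}\bigr)$. Substituting into the ODE of part (1) reduces the identity to $tF'(t)=(c+e^{F(t)})/(2c+3e^{F(t)})$, which follows from (\ref{functionalequationF}) and the explicit formula for $F'$ derived in (2). The matching of initial values $\phi(0)=0=f(0)$ (using $e^{F(0)}=|c|=-c$) and $\phi'(0)=-\tfrac{3}{2}F'(0)=f'(0)$ (the latter from the asymptotic expansion of (\ref{definitionf}) near $0$), combined with the linearity of the ODE, forces $f\equiv\phi$. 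Differentiating twice,
\[
f''(t)=\frac{3\,e^{F(t)}}{2c}\bigl[F''(t)+F'(t)^2\bigr],
\]
so since $c<0$, the positivity of $f''$ reduces to $F''(t)+F'(t)^2<0$. A second differentiation of (\ref{functionalequationF}) yields $F''=F'^{2}(6tF'-5)$, hence $F''+F'^{2}=F'^{2}(6tF'-4)$, which is negative iff $tF'(t)<\tfrac{2}{3}$. The closed formula gives $tF'(t)=(c+e^{F(t)})/(2c+3e^{F(t)})$, a function which increases from $0$ at $t=0$ to $\tfrac{1}{3}$ as $t\to+\infty$, and is therefore strictly less than $\tfrac{2}{3}$ on all of $[0,+\infty)$.

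The main difficulty will be discovering and justifying the closed-form expression for $f$ in (3): without it, the terms that appear when one differentiates (1) to express $f''$ have mixed signs and there is no obvious way to combine them into a positive quantity. Once the explicit formula is in hand, the remaining work is elementary manipulation of the functional equation, and uniqueness at the singular point $t=0$ is handled by matching both the value and the first derivative of $f$ with those of $\phi$.
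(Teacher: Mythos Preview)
Your proof is correct, and for parts (2) and (3) it takes a genuinely different route from the paper.

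For (1) your argument coincides with the paper's.

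For (2), the paper instead invokes the explicit Cardano-type formula $F(t)=\ln\bigl((4t)^{1/3}/g(t)\bigr)$ derived in Lemma~\ref{lem:functionFef}, writes $F'(t)=\tfrac{1}{3t}-g'(t)/g(t)$, and obtains $1-f+3tf'=3t\,g'(t)/g(t)>0$. Your approach bypasses the explicit formula entirely: substituting (1) collapses the expression to $1-3tF'(t)$, and implicit differentiation of (\ref{functionalequationF}) reduces positivity to $ce^{-F(t)}<0$. This is cleaner and avoids the cube-root expressions altogether.

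For (3), the paper differentiates (1) to get $f''(t)=-F''(t)+\tfrac{1}{3t^2}\bigl(tf'(t)-f(t)\bigr)$ and then shows the auxiliary function $G(t)=t^{2/3}f'(t)-t^{-1/3}f(t)$ satisfies $G(0)=0$ and $G'(t)=-F''(t)\,t^{2/3}>0$, so both summands in $f''$ are nonnegative. Your route---discovering the closed form $f(t)=\tfrac{3}{2}\bigl(1+c^{-1}e^{F(t)}\bigr)$ and reducing $f''>0$ to the algebraic inequality $tF'(t)<\tfrac{2}{3}$---is more informative, since the explicit formula for $f$ is not recorded anywhere in the paper and could be useful elsewhere. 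The only delicate point is the uniqueness argument at the singular endpoint: as you note, the homogeneous solutions $Ct^{1/3}$ all vanish at $0$, so matching $f(0)=\phi(0)$ alone is insufficient; it is the finiteness (hence equality) of $f'(0)$ and $\phi'(0)$ that forces $C=0$. You handle this correctly, but it would be worth stating explicitly that the existence of a finite $h'(0)$ is what rules out the nontrivial homogeneous solutions.
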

\begin{proof}
    The first identity can be obtained by computing the derivative of the function $f(t)$. In fact, 
    \begin{equation} \label{eq:derivative_f}
    f'(t)=-\Big(F'(t)+\frac{t^{-\frac{2}{3}}}{3}\int_0^tF'(s)s^{-\frac{1}{3}}\mathrm ds\Big)=-F'(t)+\frac{f(t)}{3t}, \quad\forall t>0
    \end{equation}
    Regarding the second identity, we need to use the explicit expression of $F(t)$ found in the proof of Lemma \ref{lem:functionFef}. Hence, for any $t>0$, we have $$F(t)=\ln\bigg(\frac{(4t)^{\frac{1}{3}}}{g(t)}\bigg), \qquad g(t):=\sqrt[3]{1+\sqrt{1+\frac{\zeta}{t}}}+\sqrt[3]{1-\sqrt{1+\frac{\zeta}{t}}},\quad \zeta=\frac{2}{27}|c| \ .$$ This implies, $$F'(t)=\frac{1}{3t}-\frac{g'(t)}{g(t)} \ .$$ In the end, combining $(1)$ with the explicit expression for $F'(t)$, we get \begin{align*}1-f(t)+3tf'(t)&=1-f(t)+3t\Big(-F'(t)+\frac{f(t)}{3t}\Big) \\ &=1-f(t)+3t\Big(-\frac{1}{3t}+\frac{g'(t)}{g(t)}+\frac{f(t)}{3t}\Big) \\ &=3t\frac{g'(t)}{g(t)} .\end{align*}By using the classical theory of the study of a real function of one variable, we deduce that $g(t)$ is strictly positive and monotonically increasing for every $t>0$. Moreover, at $t=0$, we have $1-f(0)+3\cdot 0\cdot f'(0)=1>0$.\newline Using equation $(1)$ in the statement, we have \begin{equation}\label{secondderivativef}f''(t)=-F''(t)+\frac{1}{3t^2}\big(f'(t)t-f(t)\big) \ .\end{equation} Using the definition of $f(t)$, the new function $G(t):=f'(t)t^{\frac{2}{3}}-f(t)t^{-\frac{1}{3}}$ is equal to zero when $t=0$ and its derivative is given by \begin{align*}
        G'(t)&=\big(f'(t)t^{\frac{2}{3}}-f(t)t^{-\frac{1}{3}}\big)'\\
        &=\Big(-t^{\frac{2}{3}}F'(t)-\frac{2}{3}f(t)t^{-\frac{1}{3}}\Big)' \tag{Equation \eqref{eq:derivative_f}} \\ &=-F''(t)t^{\frac{2}{3}}-\frac{2}{3}F'(t)t^{-\frac{1}{3}}-\frac{2}{3}f'(t)t^{-\frac{1}{3}}+\frac{2}{9}f(t)t^{-\frac{4}{3}} \tag{Equation \eqref{eq:derivative_f}} \\ &=-F''(t)t^{\frac{2}{3}}>0, \quad\forall t>0 \ .
    \end{align*}This implies that $G(t)\ge 0$ for any $t\ge 0$, hence, by using (\ref{secondderivativef}), $f'(t)$ is monotonically increasing for any $t>0$.
\end{proof}
Recall that, in general, given a (0,3)-tensor $C$ and a Riemannian metric $g$ on $\Sg$, one can define $A:=g^{-1}C$ to be the associated (1,2)-tensor, namely a $1$-form with values in End$(T\Sg)$. Suppose also that $C$ is totally symmetric, then according to Theorem \ref{thm:picktensor} the tensor $C$ is the real part of a cubic differential if and only if the endomorphism part of $A$ is trace-less. Let us introduce the following space:  \begin{equation*}
    \haffzero:=\bigslant{\left\{ (g,C) \ \Bigg | \ \parbox{19em}{$g$ is a Riemannian metric on $\Sg$ \\ $C$ is the real part of a $g$-cubic differential \\ $\big(h:=e^{F\big(\frac{\vl\vl q\vl\vl_g^2}{2}\big)}g, A:=g^{-1}C\big)$ satisfy (\ref{Gausscodazzi})} \right\}}{\Diff_0(\Sg)}
\end{equation*}
Notice that the map sending the pair $(g,C)$ to $(h,A)$, where $h=e^{F\big(\frac{\vl\vl q\vl\vl_g^2}{2}\big)}g$ and $A=g^{-1}C$, induces a MCG$(\Sg)$-equivariant map from $\haffzero$ to $\haff$. There exists an inverse map constructed by sending the pair $(h,A)$ satisfying (\ref{Gausscodazzi}) to the pair $(g,C)$ where $g=e^{-F\big(\frac{\vl\vl q\vl\vl_g^2}{2}\big)}h$ and $C:=gA$. Since all the process is invariant by the action of $\Diff(\Sg)$, we get the following:
\begin{lemma}
    The correspondence described above induces a MCG$(\Sg)$-equivariant isomorphism between $\haffzero$ and $\haff$.
\end{lemma}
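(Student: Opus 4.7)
My plan is to exhibit an explicit inverse to the forward map $\Psi$ that sends $(g, C)$ to $(h, C_h)$, where $h := e^{F(\|q\|_g^2/2)}g$, $A := g^{-1}C$, and $C_h := hA = e^{F(\|q\|_g^2/2)}C$; verify naturality under $\Diff(\Sg)$; and then pass to the $\Diff_0(\Sg)$-quotients.

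The forward map is well-defined almost by construction: by the very definition of $\haffzero$, the pair $(h, A)$ satisfies \eqref{Gausscodazzi}, and $A$ is trace-free as an endomorphism. By Theorem \ref{thm:picktensor} this forces $C_h = hA$ to be the real part of an $h$-cubic differential, so $(h, C_h) \in \haff$.

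The core step is the construction of $\Psi^{-1}$. Given $(h, C_h) \in \haff$, set $A := h^{-1}C_h$ and let $q_h$ denote the associated holomorphic cubic differential on $(\Sg, J_h)$. I seek $g = uh$ with $u \colon \Sg \to (0,+\infty)$ smooth, and set $C_g := gA = uC_h$. The required relation $h = e^{F(\|q_g\|_g^2/2)}g$ becomes $u = e^{-F(t)}$ with $t = \|q_g\|_g^2/2$; using the conformal transformation law $\|q_g\|_g^2 = u^{-1}\|q_h\|_h^2$ (so $t = u^{-1}\|q_h\|_h^2/2$) and the functional equation \eqref{functionalequationF} defining $F$, this reduces to the pointwise quadratic
\[ \|q_h\|_h^2\,u^2 - c\,u - 1 \;=\; 0. \]
Because $c<0$, the discriminant $c^2 + 4\|q_h\|_h^2$ is strictly positive everywhere, and the unique strictly positive root admits the closed form
\[ u \;=\; \frac{2}{\sqrt{c^2 + 4\|q_h\|_h^2}-c}, \]
which is smooth on all of $\Sg$ (the expression is analytic in $\|q_h\|_h^2 \geq 0$ and remains strictly positive even at the zeros of $q_h$).

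That $\Psi^{-1}\circ\Psi = \id$ and $\Psi\circ\Psi^{-1}=\id$ follows from the uniqueness of this positive root. Finally, every operation entering $\Psi$ and $\Psi^{-1}$ (conformal rescaling, the contraction $g^{-1}C$, the pointwise norm $\|q\|_g^2$, and the universal function $F$) is natural under pullback by any $\psi \in \Diff(\Sg)$; hence $\Psi$ commutes with the $\Diff(\Sg)$-actions, descends to the $\Diff_0(\Sg)$-quotients, and intertwines the residual $\MCG(\Sg)$-actions. The only non-routine ingredient is the closed-form resolution of the transcendental equation for $u$; once that is in hand, the remainder is bookkeeping.
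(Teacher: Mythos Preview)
The paper offers no proof of this lemma beyond the informal description of the forward and inverse maps, so your explicit resolution of the implicit equation is already more detailed than what the paper provides. However, there is a genuine error in how you identify the image of the forward map.

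The correspondence must preserve the Pick \emph{tensor} $C$ (equivalently the holomorphic cubic differential $q$), not the Pick \emph{form} $A$. Your choice $C_h := hA = e^{F}C$ produces a tensor whose associated cubic differential is $e^{F}q$, and this is holomorphic only when $F$ is constant; hence $(h,C_h)\notin\haff$. Your appeal to Theorem~\ref{thm:picktensor} only shows, via trace--freeness of $A$, that $C_h$ is the real part of \emph{some} complex cubic differential---holomorphicity is the separate Codazzi condition, and it fails here. (The second equation in \eqref{Gausscodazzi}, when imposed on the pair $(h,A)$ with $A=g^{-1}C$, should be read with $\nabla^{g}$ rather than $\nabla^{h}$, in line with Remark~\ref{rem:conformalinvarianceHS} and with the characterisation of $\haffrhozerotilde$ in Corollary~\ref{cor:zerolocusmomentmap}; this is what makes it equivalent to $\bar\partial_J q=0$. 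Taken literally with $\nabla^h$, the two conditions on $C$ would force $F$ to be constant and the space would collapse.)

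With the correct identification $C_g=C_h=C$, the conformal law is $\|q\|_g^{2}=u^{-3}\|q\|_h^{2}$ rather than $u^{-1}$, and substituting $u=e^{-F(t)}$ with $t=u^{-3}\|q\|_h^{2}/2$ into \eqref{functionalequationF} collapses to the \emph{linear} relation
\[
cu \;-\; \|q\|_h^{2} \;+\; 1 \;=\; 0,\qquad\text{i.e.}\qquad u \;=\; \frac{\|q\|_h^{2}-1}{c}\;=\;\frac{K_h}{c}.
\]
Positivity of $u$ then amounts to $K_h<0$ everywhere on the Blaschke metric (equivalently $\|q\|_h^{2}<1$), which follows from the maximum principle applied to Wang's equation but is a genuine ingredient your argument would also need to supply.
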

Let $\rho$ be the area form fixed at the beginning of the discussion, then for any (almost) complex structure $J$ the pairing $g_J:=\rho(\cdot,J\cdot)$ defines a Riemannian metric on the surface. Let us introduce the space \begin{equation*}\haffrhozero:=\bigslant{\left\{ (J,C) \ \Bigg | \ \parbox{19em}{$J$ is an (almost) complex structure on $\Sg$ \\ $C$ is the real part of a $J$-cubic differential \\ $\big(h:=e^{F\big(\frac{\vl\vl q\vl\vl^2_{g_J}}{2}\big)}g_J, A:=g_J^{-1}C\big)$ satisfy (\ref{Gausscodazzi})} \right\}}{\Symp_0(\Sg)}
\end{equation*} 
\begin{proposition}\label{prop:mosertrick}
The map sending the pair $(J,C)$ to $(h=e^{F\big(\frac{\vl\vl q\vl\vl_{g_J}^2}{2}\big)}g_J,A=g^{-1}_JC)$ induces a MCG$(\Sg)$-equivariant homeomorphism between $\haffrhozero$ and $\haff$.
\end{proposition}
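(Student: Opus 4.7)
The plan is to factor the desired map through $\haffzero$: combined with the preceding lemma identifying $\haffzero$ with $\haff$, it is enough to exhibit a MCG$(\Sg)$-equivariant homeomorphism $\haffrhozero\to\haffzero$ induced by $[J,C]\mapsto[g_J,C]$ with $g_J=\rho(\cdot,J\cdot)$. Since $\Symp_0(\Sg,\rho)\subseteq\Diff_0(\Sg)$ and $g_{\psi^*J}=\psi^*g_J$ whenever $\psi^*\rho=\rho$, this descends to the quotients; MCG-equivariance will follow from the naturality of both quotients under pullback by elements of $\Diff(\Sg)$.

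The heart of the argument is surjectivity, which I plan to reduce to Moser's theorem after showing that the Gauss-Codazzi constraint built into $\haffzero$ automatically forces $\mathrm{Vol}(\Sg,g)=\mathrm{Vol}(\Sg,\rho)$. Multiplying the functional equation \eqref{functionalequationF} by $e^{F}$ rewrites it as $e^{F}-\vl\vl q\vl\vl_g^2\,e^{-2F}=-c$; integrating this identity against $\omega_g$ and, in parallel, integrating $K_h-\vl\vl q\vl\vl_h^2=-1$ against $\omega_h=e^{F}\omega_g$ (using the conformal scaling $\vl\vl q\vl\vl_h^2=e^{-3F}\vl\vl q\vl\vl_g^2$ together with Gauss-Bonnet) should produce the balance $c\,\mathrm{Vol}(\Sg,g)=-2\pi\chi(\Sg)=c\,\mathrm{Vol}(\Sg,\rho)$. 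Since $c\neq 0$, Moser's theorem then furnishes $\phi\in\Diff_0(\Sg)$ with $\phi^*\omega_g=\rho$; the pulled-back metric $\phi^*g$ has area form $\rho$, so it equals $g_J$ for a unique almost complex structure $J$, and $(J,\phi^*C)$ is the required preimage of $[g,C]$.

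For injectivity, if some $\phi\in\Diff_0(\Sg)$ realizes an equivalence $(\phi^*g_J,\phi^*C)=(g_{J'},C')$ in $\haffzero$, then comparing area forms gives $\phi^*\rho=\rho$, so $\phi\in\Symp(\Sg,\rho)\cap\Diff_0(\Sg)$; a classical consequence of Moser's trick is that this intersection coincides with $\Symp_0(\Sg,\rho)$ on a closed surface, so $(J,C)$ and $(J',C')$ already represent the same class in $\haffrhozero$. Bicontinuity is handled by constructing Moser's diffeomorphism via the canonical time-dependent vector field solving $\iota_{X_t}\omega_t=-\alpha$ with $d\alpha=\omega_g-\rho$, whose flow depends continuously on the input data.

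The step I expect to be the key one — and the one that motivates the peculiar form of the functional equation defining $F$ — is the volume-matching identity: only the precise choice $c=2\pi\chi(\Sg)/\mathrm{Vol}(\Sg,\rho)$ makes $\omega_g$ and $\rho$ compatible, which is exactly what allows Moser's theorem to apply. Without this coincidence, the reduction from pairs $(g,C)$ to pairs $(J,C)$ with fixed background area $\rho$ would simply fail; the remaining ingredients (Moser's theorem, the equality $\Symp_0=\Symp\cap\Diff_0$ for closed surfaces, and the continuous dependence of Moser's flow) are all classical.
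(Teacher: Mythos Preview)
Your proposal is correct and follows essentially the same route as the paper: Moser's theorem for surjectivity, and the identity $\Symp_0(\Sg,\rho)=\Diff_0(\Sg)\cap\Symp(\Sg,\rho)$ (itself a consequence of Moser) for injectivity. The one substantive addition in your write-up is the explicit verification that $\mathrm{Vol}(\Sg,g)=\mathrm{Vol}(\Sg,\rho)$ for any $(g,C)\in\haffzero$; the paper's proof simply invokes Moser for ``two area forms of the same total area'' without checking this hypothesis. Your derivation---integrating the rewritten functional equation $e^{F}-\vl\vl q\vl\vl_g^{2}e^{-2F}=-c$ against $\omega_g$ and matching it with Gauss--Bonnet applied to $K_h-\vl\vl q\vl\vl_h^{2}=-1$---is correct and, as you note, is precisely what pins down the constant $c=2\pi\chi(\Sg)/\mathrm{Vol}(\Sg,\rho)$ in the definition of $F$. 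This is a genuine detail the paper leaves to the reader, and your argument fills it in cleanly.
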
\begin{proof}
The proof is based on the so-called Moser's trick in symplectic geometry. Since this argument is standard in contexts similar to ours, we will only give an idea of how it is applied (for more details see \cite[\S 3.2.3]{hodge2005hyperkahler}). Moser's stability theorem (\cite[Theorem 3.2.4]{mcduff2017introduction}) claims that given a family of cohomologous symplectic forms $\omega_t$ on a closed manifold, there exists a family of diffeomorphisms $\phi_t$ such that $\phi_0=\text{Id}$ and $\phi_t^*\omega_t=\omega_0$. For a closed surface $\Sg$ of genus $g\ge 2$, given two area forms $\rho,\rho'$ of the same total area, one can apply Moser's stability theorem to the family $\rho_t:=(1-t)\rho+t\rho'$ and deduce that there exists $\phi\in\Diff_0(\Sg)$ such that $\phi^*\rho'=\rho$. In particular, for any $\Diff_0(\Sg)$-equivalence class $[g,C]$ in $\haff$, there exists a representative of the form $(g_J,C)$. Finally, if one has a family of diffeomorphisms $\psi_t$ with $\psi_0=\text{Id}$ and $\psi_1^*\rho=\rho$, by applying Moser's stability again to $\rho_t:=\psi_t^*\rho$ one can deform $\psi_t$ to a family of symplectomorphisms $\phi_t$ such that $\phi_0=\text{Id}$ and $\phi_1=\psi_1$. Combining it all together, it has been shown that $$\Symp_0(\Sg,\rho)=\Diff_0(\Sg)\cap\Symp(\Sg,\rho) \ .$$
\end{proof}

\section{The Weil-Petersson K\"ahler metric on Teichm\"uller space}
In this section we briefly recall the definition of the group of (Hamiltonian) symplectomorphisms of a closed oriented surface of genus $g\ge 2$ and their corresponding Lie algebras. Next, we briefly describe the construction of the Weil-Petersson K\"ahler metric on Teichm\"uller space using the theory of symplectic reduction, which inspires our construction for $\defg$.
\subsection{The Lie algebra of the group of (Hamiltonian) symplectomorphisms}\label{sec:3.1}
Let $\rho$ be a fixed area form on a closed surface $\Sg$ of genus $g\ge 2$. The group $\Symp_0(\Sg,\rho)$ is given by those diffeomorphisms $\phi$ isotopic to the identity and such that $\phi^*\rho=\rho$. Thanks to Cartan's magic formula: $$\liederivative_X\rho=\iota_X\mathrm d\rho+\mathrm d(\iota_X\rho)$$ and the fact that $\mathrm d\rho=0$, we obtain the following identification for the Lie algebra of $\Symp_0(\Sg,\rho)$: $$\Lsymp(\Sg,\rho)=\{X\in\Gamma(T\Sg) \ | \ \mathrm d(\iota_X\rho)=0\}\cong_\rho Z^1(\Sg) \ ,$$ where the last isomorphism is given by the identification of $\Gamma(T\Sg)$ with the space of $1$-forms $\Omega^1(\Sg)$, and $Z^1(\Sg)$ denotes the space of closed $1$-forms. A symplectomorphism $\phi$ is called \emph{Hamiltonian} if there is an isotopy $\phi_\bullet:[0,1]\to\Symp_0(\Sg,\rho)$, with $\phi_0=\text{Id}$ and $\phi_1=\phi$, and a smooth family of functions $H_t:\Sg\to\R$ such that $\iota_{X_t}\rho=\mathrm{d}H_t$, where $X_t$ is the infinitesimal generator of the symplectomorphism $\phi_t$. Let us denote by $\Ham(\Sg,\rho)$ the group of Hamiltonian symplectomorphisms, which is a normal subgroup of $\Symp(\Sg,\rho)$ (\cite[\S 3.1]{mcduff2017introduction}). The Lie algebra of $\Ham(\Sg,\rho)$ can be characterized as: $$\Lham(\Sg,\rho)=\{X\in\Gamma(T\Sg) \ | \ \iota_X\rho \ \text{is exact}\}\cong_\rho B^1(\Sg) \ ,$$where $B^1(\Sg)$ is the space of exact $1$-forms on $\Sg$. \begin{lemma}\label{lem:vectorfieldssurface}
    Let $\rho$ be a fixed area form and $J$ be a complex structure on $\Sg$, then any $X\in\Gamma(T\Sg)$ has a unique decomposition \begin{equation}
        X=V+W+JW' \ ,
    \end{equation}where $W,W'\in\Lham(\Sg,\rho)$ and $\mathrm d(\iota_V\rho)=\mathrm d(\iota_{JV}\rho)=0$
\end{lemma}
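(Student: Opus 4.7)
The approach is to translate the claim, via the $\rho$-duality $X \mapsto \iota_X\rho$, into the Hodge decomposition of $1$-forms on the closed Riemannian surface $(\Sg, g_J)$ where $g_J := \rho(\cdot, J\cdot)$. First I would observe that the map $\Phi \colon \Gamma(T\Sg) \to \Omega^1(\Sg)$, $X \mapsto \iota_X\rho$, is a linear isomorphism and that, by the very identifications recalled in Section \ref{sec:3.1}, $\Phi$ sends $\Lham(\Sg,\rho)$ bijectively onto the exact $1$-forms $B^1(\Sg)$ and $\Lsymp(\Sg,\rho)$ onto the closed $1$-forms $Z^1(\Sg)$.

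The key step is to compare the action of $J$ on $\Gamma(T\Sg)$ with the Hodge star $\ast$ of $g_J$. A direct computation using $g_J(X,Y) = \rho(X,JY)$ gives $(\iota_{JX}\rho)(Y) = \rho(JX,Y) = -g_J(X,Y)$, while with the standard convention $(\ast\alpha)(Y) = \alpha(JY)$ on $1$-forms one has $\ast(\iota_X\rho)(Y) = \rho(X,JY) = g_J(X,Y)$, so that
\[
\iota_{JX}\rho \;=\; -\ast(\iota_X\rho).
\]
Consequently, if $W' \in \Lham(\Sg,\rho)$ with $\iota_{W'}\rho = df'$, then $\Phi(JW') = -\ast df'$ is coexact; and the conjunction $d(\iota_V\rho) = 0 = d(\iota_{JV}\rho)$ is equivalent to saying that $\Phi(V)$ is simultaneously closed and coclosed, hence harmonic for $g_J$.

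With this dictionary in place, the proposed splitting $X = V + W + JW'$ is equivalent, after applying $\Phi$, to a decomposition
\[
\iota_X\rho \;=\; \alpha_h \;+\; df \;-\; \ast df'
\]
into harmonic, exact, and coexact parts of the $1$-form $\iota_X\rho$. Existence and uniqueness of such a splitting is precisely the content of the Hodge decomposition theorem on the compact Riemannian surface $(\Sg, g_J)$; pulling back via $\Phi^{-1}$ yields uniquely the harmonic vector field $V$ and, modulo additive constants in $f$ and $f'$, the Hamiltonian vector fields $W$ and $W'$ themselves.

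There is no real obstacle here: the only point that requires care is the sign bookkeeping relating $d\ast$ to the operator $d\circ(\iota_{J\cdot}\rho)$, which is exactly what the identity $\iota_{JX}\rho = -\ast(\iota_X\rho)$ establishes. Everything else is a direct application of Hodge theory on closed surfaces.
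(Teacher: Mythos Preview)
Your proof is correct and follows essentially the same approach as the paper: both translate the decomposition via the isomorphism $X\mapsto\iota_X\rho$ into the Hodge decomposition of $1$-forms on $(\Sg,g_J)$, identifying the three summands with harmonic, exact, and coexact forms respectively. Your presentation is in fact slightly cleaner, since you isolate the key identity $\iota_{JX}\rho=-\ast(\iota_X\rho)$ at the outset, whereas the paper establishes the correspondence between $JW'$ and the coexact part more implicitly at the end.
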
\begin{proof}
    Let $\rho$ be a fixed area form on $\Sg$ and consider the induced isomorphism \begin{align*}
        \Gamma(T&\Sg)\overset{\cong}{\longrightarrow}\Omega^1(\Sg) \\ & X\mapsto \iota_X\rho \ .
    \end{align*}For any (almost) complex structure $J$ on $\Sg$ we get a Riemannian metric $g_J:=\rho(\cdot,J\cdot)$. Hodge theory for compact Riemannian surfaces implies the existence of a decomposition $$\Omega^1(\Sg)=\mathrm d\big(C^\infty(\Sg)\big)\oplus\mathrm d^*\big(\Omega^2(\Sg)\big)\oplus\mathcal H^1(\Sg) \ ,$$where $\mathrm d^*$ is the $L^2$-adjoint of the exterior differential and $\mathcal{H}^1(\Sg)=\{\alpha\in\Omega^1(\Sg) \ | \ \mathrm d\alpha=\mathrm d^*\alpha=0\}$ is the space of harmonic $1$-forms. In particular, for any $X\in\Gamma(T\Sg)$ we have a unique decomposition $\iota_X\rho=\mathrm d f+\mathrm d^*\omega+\alpha$, with $f\in C^\infty(\Sg), \omega\in\Omega^2(\Sg)$ and $\alpha\in\mathcal H^1(\Sg)$. Since each element of the decomposition is a $1$-form, there must exist three vector fields $V,W,\widetilde W$ such that $\mathrm df=\iota_W\rho, \mathrm d^*\omega=\iota_{\widetilde W}\rho$ and $\alpha=\iota_V\rho$, which implies that $X=V+W+\widetilde W$. Now notice that $\iota_W\rho$ is exact, hence $W\in\Lham(\Sg,\rho)$. Since $\alpha$ is harmonic we have $\mathrm d(\iota_V\rho)=\mathrm d^*(\iota_V\rho)=0$, but the term in between can be written as $\mathrm d(\iota_V\rho\circ J)$, which implies that $\mathrm d(\iota_{JV}\rho)=0$. Finally, in order to end the proof, we only need to show that $\widetilde W=JW'$ for some $W'\in\Lham(\Sg,\rho)$. This follows from the fact that $\iota_{\widetilde W}\rho=\mathrm d^*\omega=\ast_{g_J}\circ\mathrm d\circ\ast_{g_J}\omega=(\mathrm d\circ\ast_{g_J}\omega)\circ J$, where $\ast_{g_J}$ denotes the Hodge-star operator with respect to $g_J$. Since $\mathrm d\circ\ast_{g_J}\omega$ is an exact $1$-form, there exists a vector field $W'\in\Lham(\Sg,\rho)$ such that $\iota_{W'}\rho\circ J=\iota_{\widetilde W}\rho$, hence $\widetilde W=JW'$. \end{proof}
Because of the close connection with harmonic $1$-forms, the vector fields $V$ on $(\Sg,J)$ for which $\mathrm d(\iota_V\rho)=\mathrm d(\iota_{JV}\rho)=0$ will be called \emph{harmonic}. The space of harmonic vector fields on $\Sg$ will be denoted with $\mathfrak h_J$ and it is a Lie subalgebra of $\Lsymp(\Sg,\rho)$. Moreover, there is a splitting \begin{equation}\label{splittingsymplecticvectorfield}\Lsymp(\Sg,\rho)=\Lham(\Sg,\rho)\oplus\mathfrak h_J \end{equation} as infinite-dimensional vector spaces. Let $\psi\in\Symp_0(\Sg,\rho)$, then there exists a family of symplectomorphisms $\{\psi_t\}$ with $\psi_1=\psi$ and $\psi_0=\text{Id}$. Denote with $X_t$ the vector field which generates the isotopy, namely $\partial_t\psi_t=X_t\circ\psi_t$. Then one has a well-defined map called the \emph{Flux homomorphism}  \begin{equation}\begin{aligned}
        \text{Flux}:\Symp_0(&\Sg,\rho)\longrightarrow H^1_{\text{dR}}(\Sg,\R) \\ & \{\psi_t\}\mapsto \int_0^1[\iota_{X_t}\rho]\mathrm dt
    \end{aligned}\end{equation}\begin{lemma}[\cite{mcduff2017introduction}]\label{lem:fluxisomorphism}
        The Flux homomorphism is surjective and it induces an isomorphism \begin{equation}
            \bigslant{\Symp_0(\Sg,\rho)}{\Ham(\Sg,\rho)}\cong H^1_{\text{dR}}(\Sg,\R) \ .
        \end{equation}
    \end{lemma}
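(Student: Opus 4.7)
The plan is to follow the Banyaga-style argument from \cite{mcduff2017introduction}, splitting the proof into four tasks: well-definedness of the induced map on $\Symp_0(\Sg,\rho)/\Ham(\Sg,\rho)$, the homomorphism property, surjectivity, and identification of the kernel. For well-definedness, I would first observe that if $\{\phi_t\}$ is a Hamiltonian isotopy with generator $X_t$ satisfying $\iota_{X_t}\rho=\mathrm dH_t$, then the integrand in the definition of the flux is exact for every $t$, so $\text{Flux}(\{\phi_t\})=0$; independence of the chosen isotopy connecting $\text{Id}$ to a fixed $\psi$ then reduces to showing that loops in $\Symp_0(\Sg,\rho)$ have vanishing flux. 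For a closed surface of genus $g\ge 2$ this is automatic because Moser's theorem identifies $\Symp_0(\Sg,\rho)$ with $\Diff_0(\Sg)$, which is contractible by Earle--Eells, so $\pi_1(\Symp_0(\Sg,\rho))=0$.

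The homomorphism property is a direct computation: if $\psi_t,\phi_t$ are isotopies with generators $X_t,Y_t$, the concatenated isotopy $\theta_t:=\psi_t\circ\phi_t$ has generator $Z_t=X_t+(\psi_t)_*Y_t$. Since $\psi_t$ is a symplectomorphism, $\iota_{(\psi_t)_*Y_t}\rho=(\psi_t)_*\iota_{Y_t}\rho$, and since $\psi_t$ is isotopic to the identity, pullback preserves the de Rham class, giving $\text{Flux}(\psi\phi)=\text{Flux}(\psi)+\text{Flux}(\phi)$. For surjectivity, given $[\alpha]\in H^1_{\text{dR}}(\Sg,\R)$ I would pick a closed representative $\alpha$ and define the vector field $X$ via $\iota_X\rho=\alpha$, which is possible because $\rho$ is non-degenerate. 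Cartan's magic formula together with $\mathrm d\alpha=0$ gives $\liederivative_X\rho=0$, so the autonomous flow $\psi_t$ of $X$ takes values in $\Symp_0(\Sg,\rho)$, and a direct calculation yields $\text{Flux}(\{\psi_t\}_{t\in[0,1]})=\int_0^1[\alpha]\,\mathrm dt=[\alpha]$.

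The main obstacle is identifying the kernel of Flux with $\Ham(\Sg,\rho)$. If $\text{Flux}(\{\psi_t\})=0$, the closed $1$-forms $\iota_{X_t}\rho$ only satisfy the \emph{averaged} identity $\int_0^1[\iota_{X_t}\rho]\,\mathrm dt=0$ in cohomology, rather than pointwise exactness in $t$. The Banyaga argument extracts a Hamiltonian isotopy from this data by first choosing a smooth family of primitives for a suitable correction term, then using Moser's trick to deform $\{\psi_t\}$, keeping its endpoints fixed, into an isotopy whose generator is Hamiltonian at every time. Carrying this deformation out carefully is the technical heart of the argument, and for the purposes of the present paper it suffices to invoke the result from \cite{mcduff2017introduction}.
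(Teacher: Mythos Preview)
The paper does not prove this lemma at all: it is stated with a citation to \cite{mcduff2017introduction} and used as a black box. Your proposal is a correct outline of the standard Banyaga-style argument found in that reference, including the observation (specific to closed surfaces of genus $g\ge 2$) that Earle--Eells contractibility of $\Diff_0(\Sg)$ kills the flux group, so that $\text{Flux}$ descends from the universal cover to $\Symp_0(\Sg,\rho)$ itself. In that sense you have supplied more than the paper does, and what you wrote matches the argument the paper is invoking by citation.
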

    We end the discussion in this section by introducing two non-degenerate pairings: \begin{equation}\begin{aligned}\label{symplecticpairing}
    \langle\cdot | \cdot\rangle_{\Lsymp}:\bigslant{\Omega^1(\Sg)}{B^1(\Sg)}&\times Z^1(\Sg)\longrightarrow \R \\ & ([\alpha],\beta) \ \ \longmapsto\int_{\Sg}\alpha\wedge\beta
\end{aligned}\end{equation}
\begin{align*}
    \langle\cdot | \cdot\rangle_{\Lham}:\bigslant{\Omega^1(\Sg)}{Z^1(\Sg)}&\times B^1(\Sg)\longrightarrow \R \\ & ([\alpha],\beta) \ \ \longmapsto\int_{\Sg}\alpha\wedge\beta \ .
\end{align*}Thanks to the identifications $Z^1(\Sg)\cong_\rho\Lsymp(\Sg,\rho), B^1(\Sg)\cong_\rho\Lham(\Sg,\rho)$ and the isomorphism 
 between $B^2(\Sg)$ and $\Omega^1(\Sg)/Z^1(\Sg)$ induce by the differential $\mathrm d$, we get $$\bigslant{\Omega^1(\Sg)}{B^1(\Sg)}\subset\Lsymp(\Sg,\rho)^*, \quad B^2(\Sg)\cong_{\mathrm d}\bigslant{\Omega^1(\Sg)}{Z^1(\Sg)}\subset\Lham(\Sg,\rho)^* \ .$$ \begin{remark}
     Observe that, since the above pairings are defined on infinite dimensional vector spaces $\mathcal V\times\mathcal W$, the notion of non-degeneracy we are referring to is the one that sometimes in the literature is called \emph{weakly non-degenerate}, namely the induced map $\mathcal V\to\mathcal W^*$ is injective.
 \end{remark}
 Using the standard property of the contraction operator $\iota$ with respect to the wedge product, for any vector field $V$ and any $1$-form $\alpha$ on $\Sg$, one has \begin{equation}\label{vectorfieldand1form}
     \iota_V\alpha\rho=\alpha\wedge\iota_V\rho \ .
 \end{equation}Moreover, if $V$ is Hamiltonian, namely $\iota_V\rho=\mathrm dH$ for some smooth function $H$, we get \begin{equation}
 \langle\alpha,\mathrm dH\rangle_{\Lham}=\int_{\Sg}\alpha\wedge\mathrm dH=\int_{\Sg}\alpha(V)\rho \ ,
 \end{equation}where $[\alpha]\in\Omega^1(\Sg)/Z^1(\Sg)$. 
\subsection{Teichm\"uller space as a symplectic reduction}\label{sec:3.2}
Before starting the study of $\defg$, let us briefly recall the construction that has been done for Teichm\"uller space. For a more detailed exposition see \cite{donaldson2003moment}, \cite{trautwein2018infinite} and \cite[\S 4.2]{mazzoli2021parahyperkahler}. \\ \\
Let $\rho_0:=\mathrm{d}x\wedge\mathrm{d}y$ be the standard area form on $\R^2$ and consider the space $$\almost:=\{J\in\End(\R^2) \ | \ J^2=-\mathds{1}, \ \rho_0(v,Jv)>0 \ \text{for some} \ v\in\R^2\setminus\{0\}\} \ .$$ Such a space is a $2$-dimensional manifold and it is easy to see that $\forall J\in\almost$, the tensor $g_{J}^0(\cdot,\cdot):=\rho_0(\cdot,J\cdot)$ is a scalar product on $\R^2$, with respect to which $J$ is an orthogonal endomorphism.
By differentiating the identity $J^2=-\mathds{1}$, it follows that $$T_J\almost=\{\dot{J}\in\End(\R^2) \ | \ J\dot J+\dot J J=0\} \ .$$ Equivalently, the space $T_J\almost$ can be identified with the trace-less and $g_J^0$-symmetric endomorphisms of $\R^2$. It carries a natural (almost) complex structure given by \begin{align*}\widehat{\mathcal{I}}:T_J&\almost\to T_J\almost \\ &\dot J\mapsto-J\dot J\end{align*}
Moreover, there is a natural scalar product defined on each tangent space $$\langle\dot J, \dot J'\rangle_J:=\frac 1{2}\tr(\dot J\dot J') \ ,$$for every $\dot J,\dot J'\in\almost$. The group $\SL(2,\R)$ acts by conjugation on $\almost$: for $P\in\SL(2,\R)$ and $J\in\almost$ one defines $P\cdot J:=PJP^{-1}$. The same formula can be used to define the $\SL(2,\R)$-action on $T_J\almost$ as well. \begin{lemma}
    The pairing given by $$\widehat\Omega_J(\dot J,\dot J):=-\frac{1}{2}\tr(\dot JJ\dot J')$$ defines a symplectic form on $\almost$, compatible with $\widehat{\mathcal{I}}$ and $\langle\cdot,\cdot\rangle_J$. In particular, the triple $(\langle\cdot,\cdot\rangle_J, \widehat{\mathcal{I}}, \widehat\Omega_J)$ is an $\SL(2,\R)$-invariant K\"ahler structure on $\almost$.
\end{lemma}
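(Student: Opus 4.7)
The plan is to verify each required property of the triple $(\langle\cdot,\cdot\rangle_J, \widehat{\mathcal{I}}, \widehat\Omega_J)$ directly from the explicit formula, exploiting two structural facts: the anti-commutation relation $J\dot J + \dot J J = 0$ on $T_J\almost$, and the cyclicity of the trace. The key observation that organizes everything is the compatibility identity
\[
\widehat\Omega_J(\dot J,\dot J') \;=\; \langle\dot J,\widehat{\mathcal{I}}\dot J'\rangle_J,
\]
which is immediate from the definition since $\widehat{\mathcal{I}}\dot J' = -J\dot J'$ gives $\langle\dot J,\widehat{\mathcal{I}}\dot J'\rangle_J = \tfrac{1}{2}\tr(-\dot J J\dot J') = \widehat\Omega_J(\dot J,\dot J')$. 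Once this is in place, the Hermitian/Kähler compatibility of the three tensors reduces to verifying separately that $\widehat\Omega$ is a non-degenerate skew $2$-form and that it is closed.

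For skew-symmetry I would use the anti-commutation and the cyclicity of the trace: $\tr(\dot J' J \dot J) = \tr(J\dot J\dot J') = -\tr(\dot J J\dot J')$, so $\widehat\Omega_J(\dot J',\dot J) = -\widehat\Omega_J(\dot J,\dot J')$. Non-degeneracy then follows from the compatibility identity above, since $\widehat{\mathcal{I}}$ is an $\R$-linear isomorphism of $T_J\almost$ and $\langle\cdot,\cdot\rangle_J$ is positive-definite (elements of $T_J\almost$ are trace-free and $g_J^0$-symmetric, hence diagonalizable with eigenvalues $\pm\lambda$, giving $\tfrac{1}{2}\tr(\dot J^2) = \lambda^2 \geq 0$ with equality only at $\dot J = 0$).

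For closedness, the elegant shortcut is that $\almost$ has real dimension $2$: parametrizing $J = \bigl(\begin{smallmatrix} a & b \\ c & -a\end{smallmatrix}\bigr)$ with $a^2 + bc = 1$ shows $\almost$ is a $2$-dimensional surface, so every $2$-form on it is automatically closed. (Equivalently, under the standard identification $\almost \cong \Hyp$, the form $\widehat\Omega$ is a constant multiple of the hyperbolic area form.) Finally, $\SL(2,\R)$-invariance is verified by plugging the conjugation action $P\cdot J = PJP^{-1}$ and $P\cdot\dot J = P\dot J P^{-1}$ into each tensor and invoking cyclicity of the trace, e.g.
\[
\widehat\Omega_{PJP^{-1}}(P\dot J P^{-1}, P\dot J' P^{-1}) = -\tfrac{1}{2}\tr(P\dot J J\dot J' P^{-1}) = \widehat\Omega_J(\dot J,\dot J'),
\]
and analogously for $\langle\cdot,\cdot\rangle_J$ and $\widehat{\mathcal{I}}$.

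I do not anticipate any serious obstacle. The step that would usually be the most delicate in establishing a Kähler structure, namely $\mathrm d\widehat\Omega = 0$, is free here by dimensional reasons, and the remaining verifications are entirely algebraic, driven by a single compatibility identity and the trace manipulations above.
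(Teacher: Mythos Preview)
Your proposal is correct. The paper states this lemma without proof (it is treated as a standard fact about $\almost\cong\Hyp$), so there is no paper argument to compare against; your direct verification via the compatibility identity $\widehat\Omega_J(\dot J,\dot J')=\langle\dot J,\widehat{\mathcal I}\dot J'\rangle_J$, trace cyclicity, and the dimension-$2$ shortcut for closedness is exactly the natural route.

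One cosmetic slip: in your parametrization of $J$ as a trace-free $2\times 2$ matrix, the condition $J^2=-\mathds 1$ gives $a^2+bc=-1$, not $+1$. This does not affect your conclusion that $\almost$ is a real $2$-manifold, hence that $\mathrm d\widehat\Omega=0$ automatically. You may also want to remark, for completeness, that integrability of $\widehat{\mathcal I}$ is likewise automatic in real dimension $2$, so the ``K\"ahler'' claim requires no further work beyond what you wrote.
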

Now let $P$ be the $\SL(2,\R)$ frame bundle over $(\Sg,\rho)$, namely the fibre over a point $x\in\Sg$ is given by those linear maps $F:\R^2\to T_x\Sg$ such that $F^*\rho_x=\rho_0$. The frame bundle $P$ inherits the structure of an $\SL(2,\R)$-principal bundle with the following action: $B\cdot(x,F):=(x,F\circ B^{-1})$, for $B\in\SL(2,\R)$. Notice that any symplectomorphism $\psi$ of $(\Sg,\rho)$ naturally lifts to a diffeomorphism $\hat\psi$ of the total space $P$, by setting $$\hat\psi(x,F):=(\psi(x), \mathrm d_x\psi\circ F)\in P \ ,$$for every $(x,F)\in P$. Let us define the bundle $$P\big(\almost\big):=\bigslant{P\times\almost}{\SL(2,\R)},$$ where $\SL(2,\R)$ acts diagonally on the two factors. Notice that a section of $P\big(\almost\big)$ induces an almost complex structure $J$ on $\Sg$ which is compatible with $\rho$, i.e. $g_J(\cdot,\cdot):=\rho(\cdot,J\cdot)$ defines a Riemannian metric on $\Sg$. The induced almost complex structure on $\Sg$ is fibre-wise defined on $T_x\Sg$ as: $F_x\circ J_x\circ F_x^{-1}$. It is easy to see that the section $J$ is well-defined as if two pairs $\big((x,F), J_x\big)$ and $\big((x,F', J_x')\big)$ differ by the diagonal action of $\SL(2,\R)$, then they induce the same almost complex structure on $T_x\Sg$. According to the above construction, let us introduce the space of almost complex structures on $\Sg$: $$\almostg:=\Gamma\big(\Sg, P\big(\almost\big)\big) \ .$$ Given any $J\in\almostg$, a tangent vector $\dot J\in T_J\almostg$ identifies with a section of the pull-back vector bundle $J^*\big(T^{\text{vert}}P\big(\almost\big)\big)\to \Sg$, where $T^{\text{vert}}P\big(\almost\big)$ stands for the vertical sub-bundle of $TP\big(\almost\big)$ with respect to the projection $\pi:P\big(\almost\big)\to\Sg$. Equivalently, $\dot J$ is a section of End$(T\Sg)$ that satisfies $\dot J J+J\dot J=0$. One can formally define a symplectic form on the infinite-dimensional manifold $\almostg$ by integrating fibre-wise that on $\almost$. In other words, \begin{equation}
    \Omega_J(\dot J,\dot J'):=-\frac{1}{2}\int_{\Sg}\tr(\dot JJ\dot J')\rho \ .
\end{equation}
Furthermore, one obtains a complex structure $\mathcal I$ on $\almostg$, by applying point-wise $\widehat{\mathcal I}$ which is defined on $\almost$. At this point, the main goal is to explain that such a symplectic form and complex structure can be induced from the ambient $\almostg$ to Teichm\"uller space, using the theory of symplectic reduction. In the end, one succeeds in doing more, namely, $\Omega_J$ and $\mathcal I$ will be part of a K\"ahler metric on $\Teichc$ which turns out to be a multiple of the Weil-Petersson metric. Let us briefly recall the definition of Hamiltonian action and moment map.
\begin{definition}\label{def:momentmap}
Let $G$ be a Lie group, with Lie algebra $\mathfrak g$, acting on a symplectic manifold $(M,\omega)$ by symplectomorphisms. We say the action is \emph{Hamiltonian} if there exists a smooth function $\mu:M\to\mathfrak g^*$ satisfying the following properties: \begin{itemize}
    \item[(i)] The function $\mu$ is equivariant with respect to the $G$-action on $M$ and the co-adjoint action on $\mathfrak g^*$, namely \begin{equation}
        \mu_{g\cdot p}=\Ad^*(g)(\mu_p):=\mu_p\circ \Ad(g^{-1})\in\mathfrak g^* \ .
    \end{equation}
    \item[(ii)]Given $\xi\in\mathfrak g$, let $X_\xi$ be the vector field on $M$ generating the action of the $1$-parameter subgroup generated by $\xi$, i.e. $X_\xi=\frac{\mathrm d}{\mathrm dt}\text{exp}(t\xi)\cdot p |_{t=0}$. Then, for every $\xi\in\mathfrak g$ we have\begin{equation}
        \mathrm d\mu^{\xi}=\iota_{X_\xi}\omega=\omega(X_\xi,\cdot)
    \end{equation}where $\mu^\xi:M\to\R$ is the function $\mu^\xi(p):=\mu_p(\xi)$.
\end{itemize}A map $\mu$ satisfying the two properties above is called a \emph{moment map} for the Hamiltonian action.
\end{definition}
\begin{theorem}[\cite{donaldson2003moment},\cite{trautwein2018infinite}]\label{thm:momentmapteich}
   Let $c:=\frac{2\pi\chi(\Sg)}{\text{Vol}(\Sg,\rho)}$, then the function \begin{equation}\begin{aligned}
       \mu : \ & \almostg  \longrightarrow \Lham(\Sg,\rho)^* \\ & J 
 \ \longmapsto \ -2(K_J-c)\rho
   \end{aligned}\end{equation} is a moment map for the action of $\Ham(\Sg,\rho)$ on $(\almostg,\Omega)$, where $K_J\in\mathcal{C}^\infty(\Sg)$ is the Gaussian curvature of $g_J$.
\end{theorem}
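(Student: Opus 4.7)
The plan is to verify the two axioms of Definition~\ref{def:momentmap} directly in this setting.

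\emph{Image lies in the correct dual, and equivariance.} First I check that $\mu(J)$ is a well-defined element of $\Lham(\Sg,\rho)^*$. Under the identification $B^2(\Sg) \cong \Lham(\Sg,\rho)^*$ furnished by the pairing $\langle\cdot|\cdot\rangle_{\Lham}$ from (\ref{symplecticpairing}) combined with the differential, a $2$-form on $\Sg$ represents a functional on $\Lham(\Sg,\rho)$ precisely when its total integral vanishes. Gauss--Bonnet gives $\int_\Sg K_J\,\rho = 2\pi\chi(\Sg)$, and the definition of $c$ is tailored exactly so that $\int_\Sg (K_J - c)\,\rho = 0$. For equivariance, naturality of the Gauss curvature under $\phi\in\Ham(\Sg,\rho)$ gives $g_{\phi^*J} = \phi^*g_J$, hence $K_{\phi^*J} = K_J\circ\phi$ and $\mu(\phi\cdot J) = \phi^*\mu(J)$; this matches the coadjoint action of $\Ham(\Sg,\rho)$ on $B^2(\Sg)$, which is also pullback since the adjoint action on the Lie algebra of Hamiltonian vector fields is pushforward.

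\emph{The moment map equation.} Fix $\xi\in\Lham(\Sg,\rho)$ and let $H\in C^\infty(\Sg)$ satisfy $\iota_{X_\xi}\rho = dH$. Combining (\ref{symplecticpairing}) with (\ref{vectorfieldand1form}), the associated function $\mu^\xi:\almostg\to\R$ reads
\[
\mu^\xi(J) \;=\; -2\int_\Sg (K_J-c)\,H\,\rho.
\]
The infinitesimal action of $X_\xi$ on $\almostg$ at $J$ is $\mathcal{L}_{X_\xi}J$, which anticommutes with $J$ because $X_\xi$ preserves $\rho$ and therefore also the compatibility constraint defining $\almostg$. I therefore need to establish
\[
-2\int_\Sg \dot K_J \cdot H\, \rho \;=\; \Omega_J\!\bigl(\mathcal{L}_{X_\xi}J,\, \dot J\bigr) \;=\; -\tfrac12\int_\Sg \tr\!\bigl((\mathcal{L}_{X_\xi}J)\,J\,\dot J\bigr)\rho
\]
for every $\dot J \in T_J\almostg$, where $\dot K_J$ denotes the first variation of the Gauss curvature in the direction $\dot J$.

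\emph{Strategy for the identity.} The computation proceeds in three steps. (a) Identify $\dot J$ with the $g_J$-trace-free symmetric $2$-tensor $\dot g := \rho(\cdot,\dot J\cdot)$, and invoke the standard formula for the linearization of the scalar curvature of a Riemannian metric; in our two-dimensional, trace-free setting every term involving $\tr_{g_J}\dot g$ and $\mathrm{Ric}$ collapses, leaving $\dot K_J = \tfrac12\,\mathrm{div}_{g_J}\mathrm{div}_{g_J}\dot g$ up to sign convention. (b) Integrate by parts twice on the closed surface $\Sg$ to transfer both divergences onto $H$, arriving at an expression of the form $\tfrac12\int_\Sg \langle\nabla^2 H, \dot g\rangle_{g_J}\rho$. (c) On the symplectic side, exploit the K\"ahler identity $\nabla^{g_J}J = 0$ to rewrite $\mathcal{L}_{X_\xi}J = [\nabla X_\xi, J]$, substitute $X_\xi = J\,\grd_{g_J}H$, expand the trace inside $\Omega_J$ point-wise, and match the result with step (b); the prefactor $-2$ then drops out of the algebra as the unique constant that makes both sides agree.

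\emph{Main obstacle.} The principal difficulty is disciplined bookkeeping: sign conventions for curvature, divergence, Hodge star, and the tensor identification $\dot J\leftrightarrow\dot g$ must all line up consistently, and tracking the exact constant $-2$ demands care at every step. A minor but essential verification is that $\mathcal{L}_{X_\xi}J\in T_J\almostg$, which I would obtain by differentiating both $\phi_t^*J\cdot\phi_t^*J = -\Id$ and $\phi_t^*\rho = \rho$ along the Hamiltonian isotopy generated by $X_\xi$.
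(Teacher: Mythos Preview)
Your outline is correct. The paper does not actually prove this theorem: it is quoted from \cite{donaldson2003moment} and \cite{trautwein2018infinite}, so there is no ``paper's own proof'' to compare against directly. What the paper does provide, in Section~\ref{sec:3.3}, are the two building blocks that essentially amount to the computation you describe. Lemma~\ref{traceliederivative} gives a pointwise identity $\tfrac12\tr(\dot J J\,\liederivative_XJ)=(\dive_g\dot J)(X)-\dive_g(\dot J X)$, and Proposition~\ref{differentialofcurvature} gives $\mathrm dK_J(\dot J)\rho=\tfrac12\,\mathrm d(\dive_g\dot J)$. Combining these with a single Stokes integration and the divergence theorem yields the moment-map identity in one stroke.

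Your route is the same argument packaged slightly differently: your step (a) is precisely Proposition~\ref{differentialofcurvature} after identifying $\dot g=\rho(\cdot,\dot J\cdot)$ with the endomorphism $-J\dot J$ (one checks $\dive_g\dive_g\dot g\cdot\rho=\mathrm d(\dive_g\dot J)$), and your step (c) expands the trace directly rather than invoking the divergence identity of Lemma~\ref{traceliederivative}. The only practical difference is that you pass through the Hessian of $H$ as an intermediate object, whereas the lemma-based route stays with the divergence of $\dot J$ throughout; the latter avoids one integration by parts and makes the constant $-2$ slightly easier to track. Both are standard and equivalent.

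One small caveat in your sketch: with the paper's conventions $g_J=\rho(\cdot,J\cdot)$ and $\iota_{X_\xi}\rho=\mathrm dH$, the Hamiltonian vector field is $X_\xi=-J\grd_{g_J}H$ rather than $+J\grd_{g_J}H$; this is exactly the kind of sign you flagged as needing care, and it does not affect the validity of the argument.
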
 Observe that, by the Gauss-Bonnet Theorem, the $2$-form $-2(K_J-c)\rho$ is exact, according to the inclusion $B^2(\Sg)\subset\Lham(\Sg,\rho)^*$ introduced in Section \ref{sec:3.1}. Because of property (i) in Definition (\ref{def:momentmap}), the subset $\mu^{-1}(0)\subset\almostg$ is preserved by the action of $\Ham(\Sg,\rho)$. In particular, any variation $\dot J=\liederivative_X J$, with $X$ an Hamiltonian vector field and $J\in\almostg$, lies inside $\text{Ker}\mathrm d_J\mu$, which is identified with $T_J\mu^{-1}(0)$. In other words, the tangent space to the $\Ham(\Sg,\rho)$-orbit is entirely contained in the Kernel of $\mathrm d_J\mu$, for any $J\in\almostg$. Furthermore, by property (ii) in Definition \ref{def:momentmap}, for any $J\in\mu^{-1}(0)$ the space $\text{Ker}(\mathrm d_J\mu)$ is identified with the $\Omega_J$-orthogonal to $T_J\big(\Ham(\Sg,\rho)\cdot J\big)$, namely the tangent space to the $\Ham(\Sg,\rho)$-orbit. By using a geometric characterization of the elements in the $\Omega_J$-orthogonal to the orbit (\cite{donaldson2003moment}) one can induce a symplectic form on the quotient $\widetilde{\mathcal{T}}(\Sg):=\mu^{-1}(0)/\Ham(\Sg,\rho)$.  
However, the space $\widetilde{\mathcal T}(\Sg)$ is not isomorphic to Teichm\"uller space of the surface as it is a manifold of real dimension $6g-6+2g$. The further quotient of the space $\widetilde{\mathcal T}(\Sg)$ by the group \begin{equation*}H:=\bigslant{\Symp_0(\Sg,\rho)}{\Ham(\Sg,\rho)}\cong H^1_{\text{dR}}(\Sg,\R) \ , \tag{see Lemma \ref{lem:fluxisomorphism}}\end{equation*} can be identified with $\mathcal{T}^\mathfrak c(\Sg)$ (see \cite[\S 2.2]{donaldson2003moment}). The $H$-orbits in $\widetilde{\mathcal T}(\Sg)$ are complex and symplectic submanifolds (see \cite[\S 2.2]{donaldson2003moment} and \cite[Lemma 4.4.8]{trautwein2018infinite}), hence one gets an induced symplectic form on $\Teichc$ given by: $$\boldsymbol{\Omega}_{[J]}([\dot J], [\dot J'])=\Omega_J(\dot J_h,\dot J_h') \ ,$$ where the vectors $\dot J_h,\dot J_h'\in \text{Ker}(\mathrm d_J\mu)$ are lifts of $\dot J,\dot J'$ that are $\Omega_J$-orthogonal to the $\Symp_0(\Sg,\rho)$-orbit. 
If one further re-normalizes the lift $\dot J_h$ so that it is $L^2$-orthogonal to the tangent space to the orbit, one recovers the classical description of the tangent space to Teichm\"uller space as the space of traceless Codazzi tensors (\cite{tromba2012teichmuller}). In that case, the formula of Weil-Petersson metric is also recovered by choosing an area form $\rho$ with $\text{Vol}(\Sg, \rho)=-2\pi\chi(\Sg)$, which means $c=-1$ in Theorem \ref{thm:momentmapteich}.
\begin{proposition}[{\cite[\S 2.1]{bonsante2012cyclic}}]\label{prop:weylpetersson}
    Let $\dot J,\dot J'$ be elements in $T_{[J]}\Teichc$, then the Weil-Petersson symplectic form and metric are respectively given by: \begin{equation}\big(\Omega_{\text{WP}}\big)_{[J]}(\dot J,\dot J')=-\frac{1}{8}\int_{\Sg}\tr(\dot J J\dot J')\mathrm dV,\qquad \big(G_{\text{WP}}\big)_{[J]}(\dot J,\dot J')=\frac{1}{8}\int_\Sg\tr(\dot J\dot J')\mathrm dV \ ,\end{equation}where $\mathrm dV$ is the area form of the unique hyperbolic metric with conformal structure $J$.
\end{proposition}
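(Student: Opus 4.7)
The plan is to apply the symplectic reduction framework developed in Section \ref{sec:3.2}. I would first normalize the area form by choosing $\rho$ so that $\text{Vol}(\Sg, \rho) = -2\pi\chi(\Sg)$, which sets the constant $c = -1$ in Theorem \ref{thm:momentmapteich}. On the zero locus $\mu^{-1}(0)\subset \almostg$ this condition forces $g_J$ to have constant Gaussian curvature $-1$, so $g_J$ is the unique hyperbolic metric in the conformal class $[J]$ and the fixed volume form $\rho$ coincides with the hyperbolic area form $\mathrm dV$.

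Next, I would identify the correct horizontal lift of a tangent vector $[\dot J]\in T_{[J]}\Teichc$. Following the discussion after Theorem \ref{thm:momentmapteich} and Tromba's classical analysis (\cite{tromba2012teichmuller}), each $[\dot J]$ admits a unique representative $\dot J_h\in T_J\mu^{-1}(0)$ which is $L^2$-orthogonal to the full $\Symp_0(\Sg,\rho)$-orbit; such a $\dot J_h$ is precisely a traceless, divergence-free, $g_J$-symmetric endomorphism of $T\Sg$, namely (the real part of) a $J$-holomorphic quadratic differential contracted with the hyperbolic metric. Plugging these horizontal representatives into the formal expression of $\Omega$ recalled in Section \ref{sec:3.2} yields
\[
\boldsymbol\Omega_{[J]}([\dot J], [\dot J']) = -\frac{1}{2}\int_{\Sg}\tr\!\bigl(\dot J_h J \dot J_h'\bigr)\,\mathrm dV,
\]
which already differs from the classical Weil-Petersson pairing only by a pointwise normalization. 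A direct computation in a local holomorphic coordinate $z$, writing the harmonic Beltrami representative of $\dot J_h$ as $\mu\,\partial_{\bar z}\otimes d\bar z$ and converting back to an endomorphism, produces the extra factor $1/4$ needed to match the standard pairing on harmonic Beltrami differentials, thus upgrading the prefactor from $-1/2$ to $-1/8$ and yielding the desired formula for $\Omega_{WP}$.

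The expression for the metric then follows immediately from the Kähler compatibility $\g(\cdot,\cdot)=\ome(\i\cdot,\cdot)$ adopted in Theorem \ref{thmA}. Since the complex structure on $\Teichc$ is induced by $\mathcal I(\dot J)=-J\dot J$, and $\dot J$ anticommutes with $J$, the identity $J\dot J J=\dot J$ gives
\[
G_{WP}([\dot J], [\dot J']) = \Omega_{WP}(\mathcal I[\dot J], [\dot J']) = -\frac{1}{8}\int_{\Sg}\tr\!\bigl(-J\dot J \cdot J \cdot \dot J'\bigr)\mathrm dV = \frac{1}{8}\int_{\Sg}\tr\!\bigl(\dot J\dot J'\bigr)\mathrm dV,
\]
which is the claimed formula. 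The only substantive step is the identification of the constant $1/8$, i.e. the comparison between the horizontal lift produced by the symplectic reduction and the harmonic Beltrami representative fixing the standard Weil-Petersson normalization; the rest is formal manipulation inside the Kähler package. This is consistent with the statement being a reformulation of a classical identity (\cite[\S 2.1]{bonsante2012cyclic}), whose role here is to record the precise coefficient needed for the comparison with $\g$ on the Fuchsian locus in Theorem \ref{thmA}.
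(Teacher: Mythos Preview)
The paper does not provide its own proof of this proposition; it is cited from \cite[\S 2.1]{bonsante2012cyclic} as a known reformulation of the Weil--Petersson pairing in the endomorphism model of $T\Teichc$. Your outline is entirely consistent with the discussion surrounding the statement in Section~\ref{sec:3.2}: the normalization $\text{Vol}(\Sg,\rho)=-2\pi\chi(\Sg)$, the identification of horizontal lifts with traceless Codazzi tensors, and the remark that the reduced form $\boldsymbol\Omega$ equals $4\Omega_{\text{WP}}$ are all recorded there explicitly, and your derivation of $G_{\text{WP}}$ from $\Omega_{\text{WP}}$ via $\mathcal I(\dot J)=-J\dot J$ and the identity $J\dot J J=\dot J$ is correct.

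The one substantive step---matching the coefficient $1/8$ with the standard Weil--Petersson normalization on harmonic Beltrami differentials---you assert rather than carry out. Since the paper itself defers this computation to the cited reference, your proposal is at the same level of detail as the paper, and there is no proof in the paper to compare it against.
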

  \begin{remark}
      One of the key facts of this construction is that any choice of a supplement $V$ of $T_J\big(\Symp_0(\Sg,\rho)\cdot J\big)$ inside the Kernel of $\mathrm d\mu$ and $\Omega_J$-orthogonal to $T_J\big(\Symp_0(\Sg,\rho)\cdot J\big)$, provides a well-defined model for the tangent space to $\Teichc$, such that $(V, \Omega_J|_V)$ is symplectomorphic to $(T_{[J]}\Teichc, 4\Omega_{\text{WP}})$. 
  \end{remark}
\subsection{A formula for the differential of the curvature}\label{sec:3.3}
Here we briefly explain how to derive a formula for the first variation of the curvature $K_J$, using the theory introduced in the previous section. That expression will be useful later, when we show how equations (\ref{differentialequations}) come from a symplectic reduction argument (see Section \ref{sec:5.3}). We will follow closely the approach in \cite[\S 4.2]{mazzoli2021parahyperkahler}. \\ \\ Given any $B\in\mathrm{End}(T\Sg)$ and given a Riemannian metric $g$ on $\Sg$, we define the \emph{divergence} of the endomorphism $B$ as the $1$-form: \begin{equation}\label{divergenzadefinizione}
    (\dive_gB)(X):=\sum_ig\big((\nabla^g_{e_i}B)X,e_i\big) \ ,
\end{equation}where $(e_i)_i$ is a $g$-orthonormal frame of $T\Sg$, $\nabla^g$ is the Levi-Civita connection with respect to $g$ and $X$ is a smooth vector field on the surface. We will denote, likewise, the divergence of a vector field $V$ by $\dive_gV$. Moreover, whenever there is a fixed almost complex structure $J$ on the surface, the divergence will be taken with respect to $g_J=\rho(\cdot,J\cdot)\equiv g$. Given that $J$ is $\nabla^g$-parallel, namely $(\nabla^g_XJ)Y=0$ for all $X,Y\in\Gamma(T\Sg)$, one can deduce the following useful formula: \begin{equation}\label{divergenzaendomorfismoeJ}
    \dive_g(JB)=-(\dive_gB)\circ J
\end{equation} for any trace-less and $g$-symmetric endomorphism $B$. Another relation we will be using later is the following: \begin{equation}\label{divergenzaXformularho}
    \dive_g(X)=\mathrm d(\iota_X\rho)(v, Jv)
\end{equation}for any unit vector $v$. \begin{lemma}[\cite{mazzoli2021parahyperkahler}]\label{traceliederivative}
    Let $X$ be a vector field on $\Sg$, then $$\frac{1}{2}\tr\bigg(\dot J J\liederivative_XJ\bigg)=(\dive_g\dot J)(X)-\dive_g(JX) \ ,$$where $\big(\liederivative_XJ\big)(Y):=[X,JY]-J([X,Y])$ for any $Y\in\Gamma(T\Sg)$.
\end{lemma}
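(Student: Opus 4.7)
The plan is to use the K\"ahler identity $\nabla^g J = 0$ to rewrite $\liederivative_X J$ in terms of the covariant derivative of $X$, and then match the resulting trace against the right-hand side via a Leibniz rule for the divergence of $\dot J$.

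First, starting from $(\liederivative_X J)(Y) = [X, JY] - J[X, Y]$ and using the torsion-freeness of $\nabla^g$ together with $\nabla^g J = 0$, one finds
\[(\liederivative_X J)(Y) = J(\nabla^g_Y X) - \nabla^g_{JY} X.\]
Viewing $\nabla X \colon Y \mapsto \nabla^g_Y X$ as a section of $\End(T\Sg)$, this is simply the commutator $\liederivative_X J = [J, \nabla X]$. Multiplying on the left by $J$ and using $J^2 = -\mathds{1}$ yields
\[J \liederivative_X J = -\nabla X - J(\nabla X)J.\]

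Next I would compose with $\dot J$ and take the trace. The defining relation $J\dot J + \dot J J = 0$ for a tangent vector to $\almostg$ implies $J \dot J J = \dot J$, so by cyclicity of the trace
\[\tr\bigl(\dot J \cdot J(\nabla X)J\bigr) \;=\; \tr\bigl(J\dot J J \cdot \nabla X\bigr) \;=\; \tr(\dot J \, \nabla X),\]
and consequently $\tfrac{1}{2} \tr\bigl(\dot J\, J\, \liederivative_X J\bigr) = -\tr(\dot J\, \nabla X)$. To close the argument, I would expand $(\dive_g \dot J)(X)$ in a local $g$-orthonormal frame $(e_i)$ via the Leibniz rule: since $(\nabla^g_{e_i} \dot J)X = \nabla^g_{e_i}(\dot J X) - \dot J(\nabla^g_{e_i} X)$, summing over $i$ yields $(\dive_g \dot J)(X) = \dive_g(\dot J X) - \tr(\dot J\, \nabla X)$. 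Combined with the previous identity this gives the asserted formula (with the second divergence on the right being that of the vector field $\dot J X$, which I take to be the intended reading of the statement).

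The only real obstacle is the algebraic observation in the middle paragraph: one must recognize that the two trace summands $\tr(\dot J\, \nabla X)$ and $\tr\bigl(\dot J\, J(\nabla X) J\bigr)$ coincide, which rests precisely on the anticommutation $\dot J J = -J \dot J$ (equivalently, on $\dot J$ being $g$-symmetric and trace-free). Without this cancellation the factor $\tfrac 12$ on the left-hand side cannot be accounted for; the remaining steps are routine manipulations with $\nabla^g J = 0$ and the Leibniz rule for $\nabla^g$.
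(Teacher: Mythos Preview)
Your proof is correct, and your reading of the statement is the right one: the term $\dive_g(JX)$ in the displayed formula is a typo for $\dive_g(\dot J X)$, as is confirmed by the way the lemma is invoked later in the paper (in the proof of Proposition~\ref{prop:integrazioneperparti}, where one finds $\dive_g(\dot J V)$).

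The paper itself does not supply a proof of this lemma, citing instead the reference \cite{mazzoli2021parahyperkahler}. Your argument is the natural one: the commutator identity $\liederivative_X J = JM_X - M_X J$ (with $M_X = \nabla X$) that you derive is exactly the formula the paper establishes and uses elsewhere (see equation~\eqref{liederivativeJ} in the proof of Lemma~\ref{lem:complexstructureliederivative}), and the remaining steps---the trace collapse via $J\dot J J = \dot J$ and the Leibniz expansion of $(\dive_g\dot J)(X)$---are straightforward.
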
By using the explicit expression of the moment map $\mu$ in Theorem \ref{thm:momentmapteich} and the above Lemma, one can prove the following result:
\begin{proposition}[\cite{mazzoli2021parahyperkahler}]\label{differentialofcurvature}
    Let $J$ be any almost complex structure on $\Sg$ and $\rho$ a fixed area form, then $$\mathrm d K_J(\dot J)\rho=\frac{1}{2}\mathrm d\big(\dive_g\dot J\big) \ ,$$ where $K_J$ is the Gaussian curvature of $g_J\equiv g$.
\end{proposition}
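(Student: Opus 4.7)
The approach is to exploit the moment map interpretation of the Gaussian curvature provided by Theorem \ref{thm:momentmapteich}. Both sides of the asserted identity are 2-forms on $\Sg$, so it suffices to pair them against an arbitrary test function $H\in C^\infty(\Sg)$ and match the resulting integrals over $\Sg$.

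Concretely, I would fix such an $H$ and let $X_H$ be the Hamiltonian vector field determined by $\iota_{X_H}\rho=\mathrm dH$. The moment map property (Definition \ref{def:momentmap}(ii)) applied to Theorem \ref{thm:momentmapteich} yields, for every $\dot J\in T_J\almostg$,
$$\mathrm d\mu^{X_H}(\dot J)=\Omega_J(\liederivative_{X_H}J,\dot J),$$
up to a sign depending on the left/right action convention for $\Ham(\Sg,\rho)$ on $\almostg$. The left-hand side is computed directly from $\mu(J)=-2(K_J-c)\rho$ together with the pairing $B^2(\Sg)\subset\Lham(\Sg,\rho)^*$ described in Section \ref{sec:3.1}, producing $-2\int_\Sg H\cdot\mathrm dK_J(\dot J)\,\rho$. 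For the right-hand side I would use cyclicity of the trace together with the anticommutation $J\dot J=-\dot JJ$ to recast $\Omega_J(\liederivative_{X_H}J,\dot J)$ as $\tfrac12\int_\Sg\tr(\dot J\,J\,\liederivative_{X_H}J)\,\rho$, and then apply Lemma \ref{traceliederivative} to obtain
$$\Omega_J(\liederivative_{X_H}J,\dot J)=\int_\Sg\bigl[(\dive_g\dot J)(X_H)-\dive_g(JX_H)\bigr]\rho.$$

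The second summand vanishes upon integration because $\dive_g(JX_H)\,\rho=\mathrm d(\iota_{JX_H}\rho)$ is exact. For the first summand I would invoke identity (\ref{vectorfieldand1form}) to rewrite $(\dive_g\dot J)(X_H)\,\rho=(\dive_g\dot J)\wedge\mathrm dH$, and then integrate by parts (Stokes) to transfer the exterior differential onto $\dive_g\dot J$, producing $\int_\Sg H\cdot\mathrm d(\dive_g\dot J)$. Equating the two sides gives an identity of the form $\int_\Sg H\cdot\bigl[\mathrm dK_J(\dot J)\,\rho-\tfrac12\mathrm d(\dive_g\dot J)\bigr]=0$; since $H$ is arbitrary, the pointwise equality between 2-forms follows.

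I do not expect a genuine obstacle here: the argument is a coupling of the moment map relation with the algebraic content of Lemma \ref{traceliederivative}, and the only real work is careful sign bookkeeping across three conventions (the moment map equation, the infinitesimal action of $\Ham(\Sg,\rho)$ on $\almostg$, and the orientation used in Stokes' theorem). Tracking these with the correct normalizations produces the stated coefficient $\tfrac{1}{2}$ in the final identity.
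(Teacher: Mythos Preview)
Your proposal is correct and follows precisely the route the paper indicates: the sentence immediately preceding the proposition states that the result is obtained ``by using the explicit expression of the moment map $\mu$ in Theorem \ref{thm:momentmapteich} and the above Lemma'' (i.e.\ Lemma \ref{traceliederivative}), and your argument is exactly this pairing-against-$H$ computation. One minor remark: the term you quote from Lemma \ref{traceliederivative} as $\dive_g(JX_H)$ is a typo in the paper (it should be $\dive_g(\dot J X_H)$, as one sees from its use in Proposition \ref{prop:integrazioneperparti}), but this does not affect your argument since the integral of any divergence vanishes on the closed surface.
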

\section{The pseudo-K\"ahler metric on the Hitchin component} 
This is the core section of the paper, in which the main result, namely Theorem \ref{thmA}, will be proved (Section \ref{sec:4.2}). In particular, after defining an infinite-dimensional space $\pickg$ starting from a general  construction of Donaldson (Section \ref{sec:4.1}), we look for a specific distribution inside the tangent to a set $\haffrhozerotilde$ sitting inside $\pickg$. Each vector space $W_{(J,A)}$ of this distribution is defined as the space of solutions to a system of PDEs. After studying in detail the above system of equations (Section \ref{sec:4.3}), we show that such a finite dimensional subspace $W_{(J,A)}$, can be identified (up to a further finite dimensional decomposition) with the $\SL(3,\R)$-Hitchin component (Section \ref{sec:4.4}). Finally, in Section \ref{sec:4.5} we generalize a previous result obtained in the case of the torus (\cite[\S 3.2]{rungi2021pseudo}).
\subsection{Construction of \texorpdfstring{$\pickg$}{D^3(S)}}\label{sec:4.1}
Here we use the notations introduced in Section \ref{sec:3.2}. Recall that $\almost$ is the space of (almost) complex structures on $\R^2$ compatible with the standard orientation. Now, consider the following space \begin{equation}
    \pick:=\{(J, C)\in \almost\times S_3(\R^2) \ | \ C(J\cdot,J\cdot,J\cdot)=-C(J\cdot,\cdot,\cdot)\} \ ,
\end{equation} where $S_3(\R^2)$ is the space of totally symmetric $(0,3)$-tensors. Any pair $(J,C)\in\pick$ defines a unique pair $(J,q)$, where $q$ is a $J$-holomorphic cubic differential on $\R^2$. In particular, there is a $\SL(2,\R)$-equivariant isomorphism between $\pick$ and the holomorphic vector bundle of cubic differentials $\cubic$ over Teichm\"uller space of the torus (\cite[Corollary 3.3]{rungi2021pseudo}).\begin{remark} Thanks to the relations between the tensor $C$ and the $(1,2)$-tensor $A:=g_J^{-1}C$ (see Theorem \ref{thm:picktensor}), it is possible to describe $\pick$ as the space formed by pairs $(J,A)$, where $J\in\almost$ and $A$ is a $1$-form with values in the bundle of trace-less and $g_J$-symmetric endomorphisms of $\R^2$ such that $A(J\cdot)=A(\cdot)J$ and $A(X)Y=A(Y)X$ for any $X,Y\in\Gamma(T\Sg)$. We will make repetitive use of this correspondence, using the tensor $C$ or the Pick form $A$ whichever is more convenient. \end{remark}
Because of the identification $\almost\cong\mathcal{T}(T^2)$, the space $\pick$ has the structure of a vector bundle over $\almost$, whose fiber at a point $J\in\almost$ is a two dimensional real vector space, denoted with $\pick_J$. Let $\{e_1,e_2\}$ be a $g_J$-orthonormal basis of $\R^2$ and $\{e_1^*,e_2^*\}$ be its dual, then any element $A$ in $\pick_J$ can be written as $A=A_1e_1^*+A_2e_2^*$, where $A_k:=A(e_k)$ for $k=1,2$. Hence, we can introduce a scalar product on $\pick_J$ by \begin{equation}\label{def:scalarproductpick}
    \langle A,B\rangle_J:=\tr(A\wedge\ast_JB)(e_{1},e_{2}) \ ,
\end{equation} 
or, more explicitly after expanding the wedge product and evaluating the $2$-form,
\begin{equation}\label{def:scalarproductpick2}
    \langle A,B\rangle_J=\tr(A_{1}B_{1}+A_{2}B_{2})e_{1}^{*}\wedge e_{2}^{*} (e_{1},e_{2})= \tr(A_{1}B_{1}+A_{2}B_{2}) \ .
\end{equation}
\begin{remark}
    Although the definition of the scalar product in (\ref{def:scalarproductpick}) differs from that found in \cite{rungi2021pseudo}, defined by integrating the $2$-form $\tr(A\wedge\ast_JB)$ over the torus, the two are equivalent. In fact, one can always assume that the flat metric $g_J$ is of unit area. Moreover, it is easy to check that relation (\ref{def:scalarproductpick2}) does not depend on the choice of basis.
\end{remark}
By exploiting the definition in (\ref{def:scalarproductpick2}), the following relation can be deduced: \begin{equation}\label{rel:cpxstructurescalarprod1}
    \langle AJ,BJ\rangle_J=\langle A,B\rangle_J \ , 
\end{equation}which is equivalent to \begin{equation}\label{rel:cpxstructurescalarprod2}
    \langle AJ,B\rangle_J=-\langle A,BJ\rangle_J \ .
\end{equation}
\begin{lemma}[{\cite[Lemma 3.6]{rungi2021pseudo}}]\label{lem:tangentpickform}
Let $(J,A)\in\pick$ and let $\dot A:=g_J^{-1}\dot C$ denotes the unique $(1,2)$-tensor such that $g_{J}(\dot A(X)Y,Z)=\dot C(X,Y,Z)$ for all $X,Y,Z \in \Gamma(T\R^{2})$. Then, an element $(\dot J,\dot A)$ belongs to $T_{(J,A)}\pick$ if and only if \begin{equation}\label{tracedotA}
    \dot J\in T_J\almost,\quad \tr\dot A(X)=\tr(JA(X)\dot J) \ \forall X\in\Gamma(T\R^2),\quad \dot A_0=\dot{\tilde A}_0+T(J,A,\dot J)
\end{equation}where $\dot A_0$ is the full trace-free part of $\dot A$, while the tensor $\dot{\tilde A}_0$ is the trace-free part of $\dot A$ independent of $\dot J$ and $T(J,A,\dot J)=A_1J\dot JEe_1^*+2A_2J\dot JEe_2^*$ in a local basis dual to a $g_J$-orthonormal frame $\{e_1,e_2\}$, with $E=\diag(1,-1)$.
\end{lemma}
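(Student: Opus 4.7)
\medskip

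\noindent\textbf{Proof proposal.} The plan is to compute $T_{(J,A)}\pick$ by differentiating, along a smooth path $(J_t, C_t)$ in $\pick$ with $(J_0,C_0)=(J,g_JA)$, the defining constraints of $\pick$: (i) $J^2=-\mathds{1}$; (ii) $A(X):=g_J^{-1}C(X,\cdot,\cdot)$ is $g_J$-symmetric and trace-less; (iii) $C$ is totally symmetric; and (iv) $C(J\cdot,J\cdot,J\cdot)=-C(J\cdot,\cdot,\cdot)$, which via $A=g_J^{-1}C$ translates to $A(JX)=A(X)J$. Condition (i) immediately yields the first bullet $J\dot J+\dot JJ=0$. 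The key subtlety is that $A_t=g_{J_t}^{-1}C_t$ depends on $J_t$ through the metric, so the \emph{total} derivative $\dot A_{\mathrm{total}}$ differs from the tensor $\dot A:=g_J^{-1}\dot C$ introduced in the statement.

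\medskip

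\noindent The crucial identity I would establish first is
\[
g_J^{-1}\dot g_J \;=\; -\,J\dot J
\]
as an endomorphism of $\R^2$. Indeed, from $\rho_0(Y,W)=-g_J(Y,JW)$ one gets $\dot g_J(Y,Z)=\rho_0(Y,\dot JZ)=-g_J(Y,J\dot JZ)=-g_J(J\dot JY,Z)$, where in the last step one uses that $J\dot J$ is $g_J$-symmetric (a consequence of $\dot JJ+J\dot J=0$ and $J^*=-J$). Consequently $\dot A_{\mathrm{total}}(X)=\dot A(X)+J\dot J\,A(X)$. Imposing trace-freeness of $A_t(X)$ for all $t$ gives
\[
0=\tr\dot A_{\mathrm{total}}(X)=\tr\dot A(X)+\tr\bigl(J\dot J\,A(X)\bigr),
\]
and, using the cyclic property together with $A(X)J=-JA(X)$ (valid because any trace-less $g_J$-symmetric endomorphism of $\R^2$ anti-commutes with $J$), one rewrites $-\tr(J\dot JA(X))=\tr(JA(X)\dot J)$, producing the second bullet.

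\medskip

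\noindent For the third bullet I would impose conditions (iii) and (iv) on the path and then translate them into $\dot A$. From total symmetry of $C_t$, after applying $g_J^{-1}$ to both sides of the identity at $t=0$, the contribution of $J\dot JA$ cancels thanks to $A(X)Y=A(Y)X$, so $\dot A(X)Y=\dot A(Y)X$. From $C_t(J_tX,J_tY,Z)+C_t(J_tX,Y,J_tZ)=0$ (the infinitesimal version of holomorphicity; equivalently $A_t(J_tX)=A_t(X)J_t$), differentiation yields
\[
\dot A(JX)+A(\dot JX)\;=\;\dot A(X)\,J+A(X)\dot J,
\]
after the $J\dot JA$ contributions again cancel using $A(JX)=A(X)J$. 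This is an inhomogeneous linear constraint on $\dot A$ with inhomogeneity linear in $\dot J$. Projecting on the trace-free part one recovers $\dot A_0=\dot{\tilde A}_0+T(J,A,\dot J)$: the homogeneous solution $\dot{\tilde A}_0$ (independent of $\dot J$) plays the role of a "free" variation in the appropriate model of trace-free symmetric pick forms, while a particular solution of the inhomogeneous problem, computed by evaluating on $e_1,e_2$ using $E=\diag(1,-1)$ (which implements the swap of $J$-action with a sign in the basis), yields the explicit expression $T(J,A,\dot J)=A_1J\dot JEe_1^*+2A_2J\dot JEe_2^*$.

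\medskip

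\noindent The routine parts are Steps 1--2 and the cancellations in Step 3, which amount to book-keeping with the identity $g_J^{-1}\dot g_J=-J\dot J$. The main obstacle is Step 4: solving the inhomogeneous constraint $\dot A(JX)+A(\dot JX)=\dot A(X)J+A(X)\dot J$ for the trace-free part and identifying the correct "free" parameter $\dot{\tilde A}_0$, so that the resulting parametrization is non-redundant. This requires choosing a local $g_J$-orthonormal frame, writing $A=A_1e_1^*+A_2e_2^*$, decomposing $\dot A_0$ into its components along $e_1^*,e_2^*$, and matching coefficients: the asymmetric numerical factor $2$ appearing in $T$ reflects the way the constraint links the two components after trace-free projection.
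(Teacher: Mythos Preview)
The paper does not supply its own proof of this lemma; it is quoted verbatim from the authors' earlier work \cite{rungi2021pseudo}, so there is no in-paper argument to compare against. That said, your proposal follows exactly the natural route one would expect (and presumably the one in the cited reference): differentiate the defining constraints of $\pick$ along a path, use $g_J^{-1}\dot g_J=-J\dot J$ to separate the ``metric'' contribution from $\dot A:=g_J^{-1}\dot C$, and read off the trace and holomorphicity conditions. Your derivations of $\tr\dot A(X)=\tr(JA(X)\dot J)$ and of the linearized constraint $\dot A(JX)+A(\dot JX)=\dot A(X)J+A(X)\dot J$ are correct, including the cancellation of the $J\dot JA$ terms.

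The only place where your write-up remains a sketch is the final identification of $T(J,A,\dot J)$: you assert the formula but do not verify it. This is a straightforward (if tedious) matrix computation in the $g_J$-orthonormal frame, using $A_2=A_1J$ and the explicit form of $\dot J$; it would be worth writing out at least the $e_1^*$-component to confirm the coefficient and the $e_2^*$-component to see where the factor $2$ arises. Apart from that bookkeeping, the argument is complete.
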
 Notice that, the endomorphism part of the tangent vector $\dot A$ at the point $(J,A)$ is still $g_J$-symmetric, but its trace-part is completely determined by $\dot J$ due to (\ref{tracedotA}). In particular, there is a decomposition $$\dot A=\dot A_0+\frac{1}{2}\tr(JA\dot J)\mathds 1 \ ,$$ where $\mathds 1$ is the $2\times 2$ identity matrix and the second term in the RHS of the above equation is the trace-part of $\dot A$, which will be denoted by $\dot A_{\text{tr}}$. There is an $\SL(2,\R)$-action on $\pick$ given by \begin{equation}\label{SLaction}
    P\cdot (J,A):=(PJP^{-1}, PA(P^{-1}\cdot)P^{-1}) \ ,
\end{equation}where $P\in\SL(2,\R)$ and $A(P^{-1}\cdot)$ has to be interpreted as the action of $P^{-1}$ via pull-back on the $1$-form part of $A$. It is easy to show that the scalar product introduced in (\ref{def:scalarproductpick}) is invariant under the above action. Hence, by differentiating the $\SL(2,\R)$-action we get an induced linear isomorphism between $T_{(J,A)}\pick$ and $T_{P\cdot (J,A)}\pick$, which is given explicitly by
$$P\cdot (\dot J,\dot A)=(P\dot JP^{-1}, P\dot A(P^{-1}\cdot)P^{-1})\\$$ where $(\dot J,\dot A)\in T_{(J,A)}\pick$ and $P\in\SL(2,\R)$. We can define a similar scalar product on pairs $\dot A,\dot B$ as: $$\langle \dot A,\dot B\rangle_J:=\tr(\dot A_1\dot B_1+\dot A_2\dot B_2) \ .$$In the following we will denote with $\vl\vl\cdot\vl\vl_J=\vl\vl\cdot\vl\vl$ the norm induced by the scalar product $\langle\cdot,\cdot\rangle_J=\langle\cdot,\cdot\rangle$. In order to simplify the notation we define $\normpick:=\frac{1}{8}\vl\vl A\vl\vl_J^2$. \\ \\ Let $f:[0,+\infty)\to(-\infty,0]$ be a smooth function such that $f(0)=0, \ f'(t)<0$ for each $t>0$ and $\displaystyle\lim_{t\to+\infty}f(t)=-\infty$. Then, we define the following symmetric bi-linear form on $T_{(J,A)}\pick$ \begin{equation}\begin{aligned}\label{pseudoriemannianmetric}
   (\widehat\g_f)_{(J,A)}\big((\dot J,\dot A); (\dot J',\dot A')\big):=\big(1-f(\normpick)\big)\langle\dot J,\dot J'\rangle&+\frac{f'(\normpick)}{6}\langle \dot A_0, \dot A_0'\rangle \\ &-\frac{f'(\normpick)}{12}\langle \dot A_{\text{tr}}, \dot A_{\text{tr}}'\rangle 
\end{aligned}\end{equation}and the endomorphism $\widehat\i$ of $T_{(J,A)}\pick$ \begin{equation}\label{complexstructure}
    \widehat\i_{(J,A)}(\dot J, \dot A):=(-J\dot J, -\dot AJ-A\dot J)
\end{equation}where the products $\dot A J$ and $A\dot J$ have to be interpreted as a matrix multiplication. Matching these two objects together we get the following $2$-form: $$\widehat\ome_f(\cdot,\cdot)=\widehat\g_f(\cdot,\widehat\i\cdot)$$which is given by:\begin{equation}\begin{aligned}\label{symplecticform}(\widehat\ome_f)_{(J,A)}\big((\dot J, \dot A); (\dot J', \dot A')\big)=\big(f(\normpick)-1\big)\langle\dot J, J\dot J'\rangle&-\frac{f'(\normpick)}{12}\langle\dot A_{\tr}, \ast_J\dot A_{\tr}'\rangle \\ &-\frac{f'(\normpick)}{6}\langle\dot A_0, \dot A_0'J\rangle \ . 
\end{aligned}\end{equation}
\begin{theorem}[\cite{rungi2021pseudo}]
        The triple $(\widehat\g_f,\widehat\i,\widehat\ome_f)$ defines an $\SL(2,\R)$-invariant pseudo-K\"ahler metric on $\pick$.
\begin{remark} \label{rm:rescaling}
We want to point out the difference of the coefficients in front of the two terms with $f'$ in both the symplectic form and the pseudo-Riemannian metric, from our previous work (\cite{rungi2021pseudo}). This change is due to the fact that, in the case of genus $g\ge 2$, the cubic differential, hence the tensor $A$, is rescaled by a factor of $\sqrt 2$ (see also Remark \ref{rem:osservazionedifferenzialecubicoriscalatowangequation}), so that the moment map we introduce in Section \ref{sec:5.2} will be closely related to the Wang's equation for hyperbolic affine spheres in $\R^3$.
\end{remark}
 \end{theorem}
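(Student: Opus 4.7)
The plan is to reduce the statement to the analogous theorem in the authors' previous paper \cite{rungi2021pseudo}, adjusting for the $\sqrt 2$-rescaling of the cubic differential highlighted in the preceding remark (which only modifies the numerical factors $1/6$ and $1/12$ in (\ref{pseudoriemannianmetric}) and (\ref{symplecticform})). At the pointwise level I would first verify that $\widehat\i$ is a genuine endomorphism of $T_{(J,A)}\pick$ squaring to $-\Id$: on the horizontal component one has $\widehat\i^2\dot J = J^2\dot J = -\dot J$, while on the vertical one the anti-commutation $\dot J J + J\dot J = 0$ collapses the cross-term and yields $\widehat\i^2\dot A = -\dot A$. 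Preservation of the tangent space then amounts to checking that $-J\dot J$ still anti-commutes with $J$ (immediate) and that $-\dot A J - A\dot J$ meets the trace and symmetry constraints of Lemma \ref{lem:tangentpickform}, which is a direct computation using $A(J\cdot)=A(\cdot)J$ and the condition (\ref{tracedotA}) defining $T_{(J,A)}\pick$.

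Compatibility $\widehat\g_f(\widehat\i\cdot,\widehat\i\cdot)=\widehat\g_f(\cdot,\cdot)$ splits along the horizontal and vertical summands of (\ref{pseudoriemannianmetric}). The horizontal part reduces to $\tr(J\dot J J\dot J') = \tr(\dot J\dot J')$ via $J\dot J J = \dot J$, while the vertical part follows from the identities (\ref{rel:cpxstructurescalarprod1}) and (\ref{rel:cpxstructurescalarprod2}) applied separately to the trace-free part $\dot A_0$ and the trace part $\dot A_{\tr}$. Non-degeneracy of $\widehat\g_f$ is obtained from Lemma \ref{lem:functionFef}: the coefficient $1-f(\normpick)$ is strictly positive, $f'(\normpick)/6$ is strictly negative on $\dot A_0$, and $-f'(\normpick)/12$ is strictly positive on $\dot A_{\tr}$, with the three summands mutually orthogonal with respect to $\widehat\g_f$. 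Integrability of $\widehat\i$ is immediate since $\almost\cong\Hyp$ has real dimension two and $\pick\to\almost$ carries a holomorphic vector bundle structure (identifying it with $\cubic$) for which $\widehat\i$ is the tautological complex structure.

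The main obstacle is closedness of $\widehat\ome_f$, where the dependence of the coefficients on the point through $f(\normpick)$ and $f'(\normpick)$ genuinely mixes horizontal and vertical derivatives. The strategy is to exploit transitivity of $\SL(2,\R)$ on $\almost$ together with the manifest invariance of each ingredient under the action (\ref{SLaction}) (invariance of $\langle\cdot,\cdot\rangle_J$ defined in (\ref{def:scalarproductpick}) and of $\normpick$ is clear; invariance of $\widehat\i$ follows from (\ref{complexstructure})); consequently it suffices to check $\mathrm{d}\widehat\ome_f = 0$ at a single base point $J_0\in\almost$. Choosing coordinates adapted to a $g_{J_0}$-orthonormal frame, the form $\widehat\ome_f$ becomes explicit and polynomial in the fibre variables, and the exterior derivative can be computed summand by summand; the differential relations between $f$, $f'$ and $F'$ collected in Lemma \ref{lem:combinationoffunctionfandfprime} are precisely what makes the horizontal-vertical cross-terms cancel. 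This is the closedness computation of \cite[Theorem 3.10]{rungi2021pseudo}, which transfers after rescaling the numerical constants to match the normalization adopted here, and the $\SL(2,\R)$-invariance of the whole triple is then built in by construction.
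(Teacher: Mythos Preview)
Your approach matches the paper's: the paper gives no proof here and simply cites \cite{rungi2021pseudo}, and you likewise reduce to that reference after sketching the individual verifications. The sketch is reasonable.

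There is, however, a conceptual slip in your closedness paragraph. You claim that ``the differential relations between $f$, $f'$ and $F'$ collected in Lemma \ref{lem:combinationoffunctionfandfprime} are precisely what makes the horizontal--vertical cross-terms cancel.'' This cannot be right: the theorem is stated for \emph{any} smooth $f:[0,+\infty)\to(-\infty,0]$ with $f(0)=0$, $f'(t)<0$, and $\lim_{t\to\infty}f(t)=-\infty$ (see the paragraph preceding (\ref{pseudoriemannianmetric}), and the same generality in Lemmas \ref{lem:SLmomentmaptoruscase} and \ref{lem:circleactiontoruscase}). Lemma \ref{lem:combinationoffunctionfandfprime} records identities that hold only for the \emph{particular} $f$ built from $F$ in (\ref{definitionf}); if closedness of $\widehat\ome_f$ required those identities, the theorem would be false for generic $f$. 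The cancellation in $\mathrm d\widehat\ome_f=0$ is a purely algebraic feature of the ansatz (\ref{symplecticform}) and works for arbitrary $f$; the specific choice of $f$ only enters later, when the moment map is matched to Wang's equation in Section \ref{sec:5.2}. A minor related point: your non-degeneracy argument speaks of the three summands in (\ref{pseudoriemannianmetric}) as ``mutually orthogonal,'' but since both $\dot A_0$ and $\dot A_{\tr}$ carry pieces determined by $\dot J$ (Lemma \ref{lem:tangentpickform}), this is not a clean block decomposition; non-degeneracy is still elementary in the four real coordinates $(\dot x,\dot y,\dot u,\dot v)$, but the argument as written is imprecise.
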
 Now let $\Sg$ be a closed smooth surface of genus $g\ge 2$. The next step is to perform a construction similar to that done for $\almostg$ in Section \ref{sec:3.2}, so as to obtain an infinite-dimensional space, associated with $\Sg$, and endowed with a (formal) pseudo-K\"ahler structure. Let $P$ be the $\SL(2,\R)$-frame bundle over $(\Sg,\rho)$ introduced in Section \ref{sec:3.2} and consider the bundle $$P\big(\pick\big):=\bigslant{P\times\pick}{\SL(2,\R)} \ ,$$ where $\SL(2,\R)$ acts diagonally on the two factors. The fibre of $P\big(\pick\big)$ over a point $x\in\Sg$ identifies with $D^3\big(\mathcal J(T_x\Sg)\big)$, namely the space of pairs $(J_x,A_x)$ where $J_x$ is an almost complex structure on $T_x\Sg$ compatible with $\rho_x$, and $A_x$ is an End$_0(T_x\Sg, (g_{J_x})_x)$-valued $1$-form such that $A_x(J_x\cdot)=A_x(\cdot)J_x$ and $A_x(X)Y=A_x(Y)X, \ \forall X,Y\in\Gamma(T_x\Sg)$. Since the pseudo-K\"ahler metric on $\pick$ is $\SL(2,\R)$-invariant, each fibre $D^3\big(\mathcal J(T_x\Sg)\big)$ is naturally endowed with a pseudo-K\"ahler structure, still denoted with $((\widehat\g_f)_x,\widehat\i_x,(\widehat\ome_f)_x)$, obtained by identifying $T_x\Sg$ with $\R^2$ using an area-preserving isomorphism $F_x:T_x\Sg\to\R^2$. The space of smooth sections $$\pickg:=\Gamma\big(\Sg,P(\pick)\big)$$ is identified with the set of pairs $(J,A)$, where $J$ is a complex structure on $\Sg$, and $A$ is an End$_0(T\Sg, g_J)$-valued $1$-form such that $A(J\cdot)=A(\cdot)J$ and $A(X)Y=A(Y)X, \ \forall X,Y\in\Gamma(T\Sg)$. Moreover, there is an identification between the tangent space to $\pickg$ at $(J,A)$ and the space of sections of the vector bundle $(J,A)^*\big(T^{\text{vert}}P\big(\pick\big)\big)\to\Sg$, where $T^{\text{vert}}P\big(\pick\big)$ stands for the vertical sub-bundle of $TP\big(\pick\big)$ with respect to the projection map $P\big(\pick\big)\to\Sg$. We can consider tangent vectors $(\dot J,\dot A)$ at $(J,A)$ as the data of (see Lemma \ref{lem:tangentpickform}):\begin{itemize}
     \item a section $\dot J$ of End$(T\Sg)$ such that $\dot JJ+J\dot J=0$, namely $\dot J$ is a $g_J$-symmetric and trace-less endomorphism of $T\Sg$; \item an End$(T\Sg, g_J)$-valued $1$-form $\dot A$ such that \begin{equation}\label{decompositiondotA}\dot A=\dot A_0+\frac{1}{2}\tr(JA\dot J)\mathds 1 \ ,\end{equation}where $\mathds 1$ is the $2\times 2$ identity matrix and $\dot A_0$ is the trace-less part of $\dot A$. In particular, the trace-part of $\dot A$ is uniquely determined by $\dot J$.
 \end{itemize} The infinite-dimensional space $\pickg$ inherits a (formal) family of pseudo-K\"ahler structures, where the symplectic form is defined as \begin{equation}
     (\ome_f)_{(J,A)}\big((\dot J,\dot A),(\dot J',\dot A')\big):=\int_{\Sg}\hat\ome_f\big((\dot J,\dot A),(\dot J',\dot A')\big)\rho
 \end{equation}and the pseudo-Riemannian metric is given by \begin{equation}
     (\g_f)_{(J,A)}\big((\dot J,\dot A),(\dot J',\dot A')\big):=\int_{\Sg}\hat\g_f\big((\dot J,\dot A),(\dot J',\dot A')\big)\rho \ ,
 \end{equation}where $\widehat\ome_f$ and $\widehat\g_f$ denote, respectively, the symplectic form and pseudo-Riemannian metric induced on the pull-back of the vertical sub-bundle inside $TP\big(\pick\big)$ as described above. Likewise we get a linear endomorphism $$\i_{(J,A)}:T_{(J,A)}\pickg\to T_{(J,A)}\pickg$$ obtained by applying pointwisely the endomorphism $\widehat\i$ to a smooth section $(\dot J,\dot A)$ (see \cite[\S 2]{koiso1990complex}). \begin{remark}\label{rem:identitypointwise}
It is important to point out that the definition of each element of the pseudo-K\"ahler structure on $\pickg$ is identical to that given in (\ref{pseudoriemannianmetric}), (\ref{complexstructure}) and (\ref{symplecticform}), the only change is that now the elements $J, A, g_J, \dot J, \dot A$ are all tensors and $f\big(\normpick\big)$ is a smooth function on $\Sg$. Because of this similarity, in the remainder of the discussion we will make use of some relations proved in the previous work \cite{rungi2021pseudo} and which we will recall when necessary. The general idea to keep in mind is that the identities for elements in $\pick$ can be interpreted as point-wise identities at the level of smooth sections inside $\pickg$. In both our setting and anti-de Sitter one the definition of the infinite-dimensional space follows the lines of a much more general construction given by Donaldson (\cite[\S 2.1]{donaldson2003moment}), and for this reason the same phenomenon described above happens in either situation (\cite[Remark 4.9]{mazzoli2021parahyperkahler}).
 \end{remark}
 
\subsection{Proof of Theorem A}\label{sec:4.2} The aim of this section is to summarize the strategy of the proof of Theorem \ref{thmA}. We will present preliminary results, already stated in Section \ref{sec:1.5} and proved later in the paper, which will allow us to give a quite immediate proof of the main theorem mentioned above. The same approach was used in \cite[\S 4.4]{mazzoli2021parahyperkahler} with the appropriate differences.\\ \\ Recall from Section \ref{sec:2.3} that $\haffrhozero$ is the quotient of the infinite-dimensional space \begin{equation*}\haffrhozerotilde:=\left\{ (J,C) \ \Bigg | \ \parbox{19em}{$J$ is an (almost) complex structure on $\Sg$ \\ $C$ is the real part of a $J$-cubic differential \\ $\big(h:=e^{F\big(\frac{\vl\vl q\vl\vl^2_{g_J}}{2}\big)}g_J, A:=g_J^{-1}C\big)$ satisfy (\ref{Gausscodazzi})} \right\}
\end{equation*} 
by the action of $\Symp_0(\Sg,\rho)$, where $F$ is the smooth function defined in Lemma \ref{lem:functionFef}. The main idea is to define an $\Ham(\Sg,\rho)$-invariant distribution $\{W_{(J,A)}\}_{(J,A)}$ inside $T\haffrhozerotilde$, whose integral manifold $\defgtilde$ is the finite-dimensional quotient $\haffrhozerotilde/\Ham(\Sg,\rho)$. Because of the very specific choice of the subspaces $W_{(J,A)}$, the further finite-dimensional quotient $\defgtilde/H$, with $H:=\Symp(\Sg,\rho)/\Ham(\Sg,\rho)$, is isomorphic to $\hitc$.  \begin{definition}\label{def:ellipticequations}
    Given $(J,A)\in\haffrhozerotilde$, define $W_{(J,A)}$ to be the subspace of $T_{(J,A)}\pickg$ formed by those elements $(\dot J,\dot A)$ satisfying the following system of equations: 
    \begin{equation*}
\begin{cases} 
\mathrm d\big(\divr\big((f-1)\dot J\big)+\mathrm d\dot f\circ J-\frac{f'}{6}\beta\big)=0  \\ \mathrm d\big(\divr\big((f-1)\dot J\big)\circ J+\mathrm d\dot f_0\circ J-\frac{f'}{6}\beta\circ J\big)=0 \\ \mathrm d^\nabla\dot A_0(\bullet,\bullet)-J(\divr\dot J\wedge A)(\bullet,\bullet)=0
\end{cases}
\end{equation*}where $\beta(\bullet):=\langle(\nabla_\bullet A)J, \dot A_0\rangle$ is a $1$-form, $\dot f_0=-\frac{f'}{4}\langle A, \dot A_0J \rangle$ is a smooth function on $\Sg$ and $f$ is the function given by (\ref{definitionf}). Moreover, all the expressions for $f,f'$ and $\dot f$ are evaluated at $\normpick=\frac{\vl\vl A\vl\vl^2_J}{8}$.
\end{definition}
\begin{remark}\label{rem:codazzilinearizzatainvariante}The third equation in the above system can be re-written as $\mathrm d^\nabla\dot A_0(\bullet,\bullet)J=(\dive_g\dot J\wedge A)(\bullet,\bullet)$, which is equivalent to the following:\begin{equation}\label{codazzilinearizzatainvariante}\mathrm d^\nabla\dot A_0(\bullet,J\bullet)=\dive_g\dot J(\bullet)A(J\bullet)-\dive_g\dot J(J\bullet)A(\bullet) \ .\end{equation} In fact, by $C^\infty(\Sg)$-linearity, it is sufficient to perform the computation on a pair $\{X,JX\}$, for $X\in\Gamma(T\Sg)$. Therefore, we have  \begin{align*}\mathrm d^\nabla\dot A_0(X,JX)J&=(\dive_g\dot J\wedge A)(X,JX) \\ &= (\dive_g\dot J)(X)\cdot A(JX)-(\dive_g\dot J)(JX)\cdot A(X)\end{align*} which is exactly the right-hand side of (\ref{codazzilinearizzatainvariante}) computed on $X$ (as an End$_0(T\Sg, g)$-valued $1$-form). This new form of the equation will be crucial to some key steps in our argument.\end{remark}
\begin{manualtheorem}E
    Let $(J,A)$ be a point in $\haffrhozerotilde$, then $$\mathrm{dim} W_{(J,A)}\ge 16g-16+2g \ .$$
\end{manualtheorem} The latter result will be a consequence of a detailed study of the system of equations defining $W_{(J,A)}$. The difficult part lies in computing the principal symbols of the matrix operator associated with the three equations in (\ref{differentialequations}). It is then possible to conclude, using standard results from the theory of elliptic operators on compact manifolds. \begin{manualtheorem}F
    For every element $(J,A)\in\haffrhozerotilde$, the vector space $W_{(J,A)}$ is contained inside $T_{(J,A)}\haffrhozerotilde$ and it is invariant by the complex structure $\i$. Moreover, the collection $\{W_{(J,A)}\}_{(J,A)}$ defines a $\Ham(\Sg,\rho)$-invariant distribution on $\haffrhozerotilde$ and the natural projection $\pi:\haffrhozerotilde\to\defgtilde$ induces a linear isomorphism $$\mathrm d_{(J,A)}\pi:W_{(J,A)}\longrightarrow T_{[J,A]}\defgtilde$$
\end{manualtheorem}
Combining together Theorem \ref{thmD} and Theorem \ref{thmE}, we observe that the integral manifold $\defgtilde$ has dimension equal to $16g-16+2g$ and, for this reason, cannot be isomorphic to the $\SL(3,\R)$-Hitchin component. In fact, it is necessary to perform an additional (finite-dimensional) quotient of $\defgtilde$ by the group $H:=\Symp_0(\Sg,\rho)/\Ham(\Sg,\rho)$ isomorphic to $H^1_{\text{dR}}(\Sg,\R)$ (see Lemma \ref{lem:fluxisomorphism}). \begin{manualtheorem}G 
The $H$-action on $\defgtilde$ is free and proper, with complex and symplectic $H$-orbits. Moreover, the pseudo-K\"ahler structure $(\g_f,\i,\ome_f)$ descend to the quotient which is identified with $\hitc$. Finally, the complex structure $\i$ induced on the $\SL(3,\R)$-Hitchin component coincides with the one found by Labourie and Loftin.\end{manualtheorem} \begin{remark}\label{rem:equationsHitchincomponent}
The tangent space to the integral manifold $\defgtilde$, i.e. the subspace $W_{(J,A)}$, decomposes as a direct sum $V_{(J,A)}\oplus S_{(J,A)}$, where $V_{(J,A)}$ is the tangent space to the Hitchin component and $S_{(J,A)}:=\{\big(\liederivative_XJ,g_J^{-1}\liederivative_XC\big) \ | \ X\in\Gamma(T\Sg), \ \mathrm d(\iota_X\rho)=\mathrm d(\iota_{JX}\rho)=0\}$, namely the tangent space to the harmonic orbit (see Section \ref{sec:3.1}). Using the definition of $W_{(J,A)}$ in terms of the system of equations (\ref{differentialequations}), we get a similar description of the tangent space to the Hitchin component. In particular, $V_{(J,A)}$ can be characterized as the subspace of $W_{(J,A)}$ (see Section \ref{sec:6.1}) defined by the following system: 
\begin{equation}\label{equationHitchincomponent}
\begin{cases}
\divr\big((f-1)\dot J\big)+\mathrm d\dot f\circ J-\frac{f'}{6}\beta=\mathrm d \gamma_1  \\ \divr\big((f-1)\dot J\big)\circ J+\mathrm d\dot f_0\circ J-\frac{f'}{6}\beta\circ J=\mathrm d\gamma_2 \\ \mathrm d^\nabla\dot A_0(\bullet,\bullet)-J(\divr\dot J\wedge A)(\bullet,\bullet)=0
\end{cases}
\end{equation}for some $\gamma_1,\gamma_2\in C^\infty(\Sg)$. In a more concise form: \begin{equation}
    V_{(J,A)}=\left\{(\dot J,\dot A)\in T_{(J,A)}\haffrhozerotilde \ \bigg| \ \parbox{15em}{ $\alpha_1+i\alpha_2$ is exact \\ $\mathrm d^\nabla\dot A_0(\bullet,\bullet)-J(\divr\dot J\wedge A)(\bullet,\bullet)=0$}\right\}
\end{equation}where $\alpha_1$ and $\alpha_2$ are the $1$-forms in (\ref{equationHitchincomponent}) defined by the LHS of the first two equations.
\end{remark}
At this point, we have all the ingredients to present a concise proof of the main result of the paper.  \begin{manualtheorem}A 
Let $\Sg$ be a closed oriented surface of genus $g\ge 2$. Then, there exists a neighborhood $\mathcal N_{\mathcal{F}(\Sg)}$ of the Fuchsian locus in $\hitc$, which admits a mapping class group invariant pseudo-K\"ahler metric $(\g_f,\i,\ome_f)$. Moreover, the Fuchsian locus embeds as a totally geodesic submanifold and the triple $(\g_f,\i,\ome_f)$ restricts to a (multiple of) the Weil-Petersson K\"ahler metric of Teichm\"uller space.
\end{manualtheorem}
\begin{proof}
    The tangent space $T_{[J,A]}\defgtilde$ can be identified with $W_{(J,A)}$ (Theorem \ref{thmE}), hence we can define a complex structure $\i$, and a pseudo-Riemannian metric $\g_f$ by restriction from the infinite-dimensional space $\pickg$. This definition does not depend on the representative in the $\Ham(\Sg,\rho)$-orbit by the invariance statement in Theorem \ref{thmE} and the $\Symp(\Sg,\rho)$-invariance of $\i$ and $\g_f$. It is immediate that $\i$ is still compatible with $\g_f$ and that the pairing $\g_f(\cdot,\i\cdot)$ coincides with the $2$-form $\ome_f$ restricted to $W_{(J,A)}$. 
    Moreover, Theorem \ref{thmF} allows us to induce the triple $(\g_f, \i, \ome_f)$ on the quotient $\defgtilde/H\cong\hitc$, in such a way that the induced complex structure is equivalent to the one found by Labourie and Loftin. Thanks to the $\Symp(\Sg,\rho)$-invariance of $\g_f$ and $\i$, it follows that the induced structure on $\hitc$ is invariant under the action of the mapping class group since it is isomorphic to $\Symp(\Sg,\rho)/\Symp_0(\Sg,\rho)$.\newline Notice that the Fuchsian locus $\mathcal{F}(\Sg)$ (see Section \ref{sec:1.1} and \ref{sec:1.2}) inside $\hitc\cong\haff$ corresponds to pairs $(J,A)$ with $A=0$. According to Remark \ref{rem:equationsHitchincomponent}, the tangent space to $\haff$ along the Fuchsian locus is isomorphic to $V_{(J,0)}$, and thus consists of pairs $(\dot J, \dot A)$ such that $\dive_g\dot J=0$ and $\mathrm d^\nabla\dot A_0(\cdot,\cdot)=0$. The pseudo-metric restricted to $V_{(J,0)}$ is equal to $$(\g_f)_{(J,0)}\big((\dot J,\dot A); (\dot J', \dot A')\big)=\int_\Sg\langle\dot J,\dot J'\rangle\rho+\int_\Sg\frac{f'}{6}\langle\dot A_0,\dot A_0'\rangle\rho$$ since the trace-part of $\dot A$ is equal to zero according to relation (\ref{decompositiondotA}). Notice that $\g_f$ on $V_{(J,0)}$ coincides with $4G_{\text{WP}}$ along horizontal directions ($\dot A=0$) and it is negative-definite along vertical directions ($\dot J=0$). Because of the explicit description of $\g_f$, this is sufficient to conclude that the pseudo-metric is non-degenerate on arbitrary directions inside $V_{(J,0)}$ as well. In particular, there must exist an open neighborhood $\neighborhood$ of $\mathcal F(\Sg)$ inside $\hitc$ in which $\g_f$ is non-degenerate.  \newline
    Finally, the Fuchsian locus is the set of fixed points of the circle action, that consists of isometries for $\g_f$ by Theorem \ref{thmC} (which is proved in Section \ref{sec:4.5}). Using a standard argument in (pseudo)-Riemannian geometry, this implies that the Fuchsian locus is totally geodesic.
\end{proof}
\begin{remark}
It is important to emphasize again that the triple $(\g_f, \i, \ome_f)$ is defined over the entire Hitchin component $\hitc$, but may be degenerate away from the Fuchsian locus. The main problem lies in the fact that the restriction of an indefinite metric on a subspace is not necessarily non-degenerate (as in the positive-definite case). Partial results have been obtained concerning the non-existence of degenerate vectors outside $\neighborhood$, which will be explained in detail in Section \ref{sec:6.2}. 

\end{remark}
\subsection{The system of equations}\label{sec:4.3}
This section is devoted to the study of the system of equations defined by (\ref{differentialequations}) and to the proof of Theorem \ref{thmD}. More precisely, in Lemma \ref{lem:noparallelsection} and Lemma \ref{lem:exteriorcovariantderivative} we study the induced connection on the endomorphism bundle and the associated exterior covariant derivative. Then, Lemma \ref{lem:ordertwoderivatives} allows us to compute the terms involving derivatives of order two of $(\dot J,\dot A)$ in the first and second equation appearing in the system defining $W_{(J,A)}$. Further on, we explain how the space $W_{(J,A)}$ can be seen as the kernel of a matrix of mixed-order smooth differential operators, which is proven to be elliptic (Lemma \ref{lem:simboloprincipaleellittico}). Finally, using the homotopy invariance of the Fredholm index for elliptic operators, we deduce the lower bound on the dimension of $W_{(J,A)}$.
\begin{lemma}\label{lem:noparallelsection}
    Let $\nabla$ be the Levi-Civita connection with respect to $g_J$, then the induced connection $$\widebar\nabla:\Omega^0(\Sg,\mathrm{End}_0(T\Sg, g_J))\longrightarrow\Omega^1(\Sg,\mathrm{End}_0(T\Sg, g_J))$$ does not admit any non-zero parallel section, where $\mathrm{End}_0(T\Sg, g_J)$ denotes the real vector bundle of $g_J$-symmetric and trace-less endomorphisms of $T\Sg$.
\end{lemma}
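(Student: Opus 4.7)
The plan is to derive the statement from a curvature obstruction. If $B \in \Gamma(\End_0(T\Sg, g_J))$ is parallel with respect to $\widebar\nabla$, then the curvature $R^{\widebar\nabla}(X,Y)B$ must vanish for all vector fields $X, Y$. Since $\widebar\nabla$ is induced by the Levi-Civita connection $\nabla$ of $g_J$, this curvature acts on endomorphisms by commutator, $R^{\widebar\nabla}(X,Y) B = [R^\nabla(X,Y), B]$, and the sub-bundle $\End_0(T\Sg, g_J)$ is $\widebar\nabla$-invariant because $\nabla$ is metric (hence preserves $g_J$-symmetry) and trace is $\nabla$-parallel.

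The algebraic input I would use is that, in real dimension two, any trace-less $g_J$-symmetric endomorphism $B$ of $T\Sg$ satisfies $BJ + JB = 0$; this is a direct $2 \times 2$ matrix check on a positively oriented $g_J$-orthonormal frame, and in particular it yields $[J, B] = 2 JB$. On the geometric side, for an oriented Riemannian surface the Riemann tensor is determined by the Gaussian curvature: for any such frame $\{e_1, e_2\}$ one has $R^\nabla(e_1, e_2) = -K_J J$, where $K_J$ is the Gaussian curvature of $g_J$. Combining the two identities, the vanishing of $R^{\widebar\nabla}(e_1, e_2)B$ forces $K_J \cdot JB = 0$ pointwise on $\Sg$. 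Since $J$ is invertible, I conclude that $B$ vanishes at every point where $K_J \neq 0$.

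To upgrade this pointwise vanishing to $B \equiv 0$, I would invoke Gauss-Bonnet: $\int_\Sg K_J\, \rho = 2\pi\chi(\Sg) < 0$ since $g \geq 2$, so $K_J$ is not identically zero and there is a non-empty open set $U \subset \Sg$ on which $K_J \neq 0$. Thus $B$ vanishes on $U$, and since $B$ is $\widebar\nabla$-parallel and $\Sg$ is connected, parallel transport from any point of $U$ propagates this vanishing to all of $\Sg$.

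I do not anticipate a serious obstacle: the substantive point to verify is the anti-commutation relation $BJ + JB = 0$ for trace-less $g_J$-symmetric endomorphisms in dimension two, and everything else is a direct consequence of the two-dimensional Riemann tensor formula together with the fact that $\chi(\Sg) < 0$ for $g \geq 2$.
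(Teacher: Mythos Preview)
Your proof is correct and takes a genuinely different route from the paper's. The paper argues in two steps: first, a parallel section is determined by its value at a single point (via parallel transport along paths), so it suffices to produce one zero; second, \emph{every} section of $\End_0(T\Sg,g_J)$ has a zero, because this bundle is the real form of $K\otimes K$ and its Euler class satisfies $\int_\Sg e(E)=\deg(K\otimes K)=2(2g-2)\neq 0$. Your argument instead exploits the curvature obstruction directly: the induced curvature acts by commutator with $R^\nabla(e_1,e_2)=\pm K_J J$, and since trace-less $g_J$-symmetric endomorphisms anti-commute with $J$, the commutator $[J,B]=2JB$ forces $K_J B=0$ pointwise. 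Gauss--Bonnet then supplies a point (indeed an open set) where $K_J\neq 0$, and parallel transport finishes the job exactly as in the paper.

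Both approaches ultimately rest on $\chi(\Sg)\neq 0$: the paper encodes this topologically via the Euler/Chern class of $K\otimes K$, while you encode it analytically via $\int_\Sg K_J\,\rho=2\pi\chi(\Sg)<0$. Your route is somewhat more self-contained in that it avoids identifying $\End_0(T\Sg,g_J)$ with $K\otimes K$ and invoking characteristic classes; the paper's route, on the other hand, proves the stronger intermediate fact that \emph{every} section (not just parallel ones) must vanish somewhere.
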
\begin{proof}
    Let $B\in\Omega^0(\Sg,\mathrm{End}_0(T\Sg, g_J))$ such that $\widebar\nabla B=0$. Let $x_0\in\Sg$ be a fixed point and $x\in\Sg$ be arbitrary. Consider a path $\gamma:[0,1]\to\Sg$ with $\gamma(0)=x_0$ and $\gamma(1)=x$. Let $\{e_1,e_2\}$ be a basis of $T_{x_0}\Sg$ and denote with $\{e_1(t), e_2(t)\}$ the basis of $T_{\gamma(t)}\Sg$ obtained by parallel transport $\{e_1,e_2\}$ along the path $\gamma$. Then, if $b_{ij}(t)$ denotes the $(i,j)$-th entry of $B_{\gamma(t)}$ for $i,j=1,2$, we have $b_{ij}(t)=g_J\big(B_{\gamma(t)}(e_j(t)), e_i(t)\big)$. By differentiating the last identity with respect to the parameter $t$, we get: \begin{align*}
        \frac{\mathrm d}{\mathrm dt}b_{ij}(t)=g_J\big(\underbrace{\big(\widebar\nabla_{\dot\gamma}B\big)}_{\substack{=0}}\big(e_j(t)\big)+B_{\gamma(t)}\big(\nabla_{\dot\gamma}e_j\big), e_i(t)\big)+g_J\big(B_{\gamma(t)}\big(e_j(t)\big), \nabla_{\dot\gamma}e_i\big) \ .
    \end{align*} Since the basis $\{e_1(t), e_2(t)\}$ has been obtained by parallel transport, we have $\nabla_{\dot\gamma}e_j=0$ for any $j=1,2$. In particular, we deduce that each entry of $B$ is constant along $\gamma$, hence $B_x=B_{\gamma(1)}=B_{\gamma(0)}=B_{x_0}$. Since $x\in\Sg$ was arbitrary, it follows that the endomorphism $B$ is actually constant on the whole surface. At this point, it is enough to show that every section of $E:=\mathrm{End}_0(T\Sg, g_J)$ admits at least one zero to conclude the proof. Since the real rank of $E$ is equal to the real dimension of the surface, any section $B$ is nowhere zero if and only if the Euler class $e\big(E\big)$ is trivial in $H^2(\Sg, \R)$. In our case, it can be shown (see for example \cite[\S 2.4]{tromba2012teichmuller}) that $E$ is the realization of the holomorphic line bundle $K\otimes K$ defined on $(\Sg, J)$. In particular, $e\big(E\big)=c_1\big(K\otimes K\big)$, where $c_1$ denotes the first Chern class of a complex vector bundle. Therefore, $$\int_{\Sg}e\big(E\big)=\int_{\Sg}c_1\big(K\otimes K\big)=\text{deg}(K\otimes K)=2(2g-2)\neq 0 \ .$$ The last chain of equalities implies that $e\big(E\big)$ is not trivial in cohomology by Poincar\'e duality, and thus any such section $B$ admits at least one zero.
\end{proof}
    \begin{lemma}\label{lem:exteriorcovariantderivative}Let $\nabla$ be the Levi-Civita connection with respect to $g_J$, then the exterior covariant derivative $$\mathrm d^\nabla:\Omega^1(\Sg,\mathrm{End}_0(T\Sg, g_J))\longrightarrow\Omega^2(\Sg,\mathrm{End}_0(T\Sg, g_J))$$ is surjective and its kernel has real dimension equal to $10g-10$.
\end{lemma}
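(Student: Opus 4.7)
The plan is to recognize $\mathrm d^\nabla$, via Theorem~\ref{thm:picktensor}, as the Dolbeault operator $\bar\partial$ acting on the third power of the canonical bundle of the Riemann surface $(\Sg,J)$, after which the kernel count and surjectivity both become standard consequences of Riemann--Roch and Serre duality.

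By Theorem~\ref{thm:picktensor} the assignment
\[
A \longmapsto q := C(\cdot,\cdot,\cdot) - i\,C(J\cdot,\cdot,\cdot), \qquad C := g_J A,
\]
is an $\R$-linear bijection from $\Omega^1(\Sg,\End_0(T\Sg,g_J))$ onto the space $\Gamma(\Sg,K^3)$ of smooth sections of the third tensor power of the canonical bundle of $(\Sg,J)$, and the equation $\mathrm d^\nabla A=0$ translates exactly into $\bar\partial q=0$. Hence $\ker \mathrm d^\nabla$ corresponds canonically, as a real vector space, to the space $H^0(\Sg,K^3)$ of holomorphic cubic differentials. By Riemann--Roch,
\[
\dim_\C H^0(\Sg,K^3) - \dim_\C H^1(\Sg,K^3) = \deg K^3 + 1 - g = 5g - 5,
\]
and Serre duality yields $H^1(\Sg,K^3) \cong H^0(\Sg,K^{-2})^* = 0$ because $\deg K^{-2} = -(4g-4) < 0$ for $g \ge 2$. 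Therefore $\dim_\R \ker \mathrm d^\nabla = 2(5g-5) = 10g-10$.

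For surjectivity I would follow one of two routes. The direct route identifies $\Omega^2(\Sg,\End_0(T\Sg,g_J))$ with $\Omega^{0,1}(\Sg,K^3)$ via an analogous complex type decomposition, under which $\mathrm d^\nabla$ becomes the Dolbeault operator $\bar\partial\colon \Gamma(\Sg,K^3)\to\Omega^{0,1}(\Sg,K^3)$; surjectivity is then equivalent to the vanishing $H^{0,1}(\Sg,K^3)=H^1(\Sg,K^3)=0$ already established above. Alternatively one argues abstractly: the principal symbol $\sigma_\xi(A) = \xi\wedge A$ is injective, since $A = \xi\otimes B$ combined with $A(X)Y = A(Y)X$ forces $B=0$ (take $X,Y$ with $\xi(X)=0\neq\xi(Y)$), and because source and target are rank-two real vector bundles $\sigma_\xi$ is then an isomorphism, so $\mathrm d^\nabla$ is elliptic and hence Fredholm; Atiyah--Singer (or direct comparison with $\bar\partial_{K^3}$) gives Fredholm index $10g-10$, and since the kernel already saturates this value the cokernel must vanish.

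The only non-routine point is verifying that $\mathrm d^\nabla$ agrees with $\bar\partial$ as an operator (and not merely at the level of kernels). This can be carried out by a local computation in a conformal chart, exploiting that the Chern connection on $K^3$ with respect to the Hermitian metric induced by $g_J$ coincides with the connection induced from the Levi--Civita connection $\nabla$ on $T\Sg$, together with the fact that the $(2,0)$-component of any $2$-form on a Riemann surface vanishes identically, so the whole of $\mathrm d^\nabla$ reduces to its $(0,1)$-component $\bar\partial$.
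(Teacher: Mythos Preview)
Your kernel argument coincides with the paper's: both invoke Theorem~\ref{thm:picktensor} to identify $\ker\mathrm d^\nabla$ with holomorphic cubic differentials and then appeal to Riemann--Roch. The surjectivity arguments genuinely differ. The paper computes the formal $L^2$-adjoint $(\mathrm d^\nabla)^*=-\ast_J\,\mathrm d^\nabla\,\ast_J$, uses that $\ast_J:\Omega^2(\End_0)\to\Omega^0(\End_0)$ is an isomorphism to identify $\mathrm{coker}(\mathrm d^\nabla)$ with $\ker\bar\nabla$, i.e.\ the space of parallel sections of $\End_0(T\Sigma,g_J)$, and then invokes the separate Lemma~\ref{lem:noparallelsection} (an Euler--class argument showing $c_1(K^2)\neq 0$) to conclude this space is zero. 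Your route---identifying $\mathrm d^\nabla$ with $\bar\partial_{K^3}$ and reading off $H^1(K^3)=0$ from Serre duality---is shorter and bypasses Lemma~\ref{lem:noparallelsection} entirely; your alternative ellipticity-plus-index route is also sound.

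One caveat, which you share with the paper's own statement: the bijection you assert from all of $\Omega^1(\Sigma,\End_0(T\Sigma,g_J))$ onto $\Gamma(\Sigma,K^3)$ cannot hold as written, since the source bundle has pointwise real rank $4$ while the target has rank $2$. Theorem~\ref{thm:picktensor} only supplies this identification on the subspace of $A$ with $A(X)Y=A(Y)X$ (equivalently $A(J\cdot)=A(\cdot)J$), and that is the intended domain throughout---as your own symbol computation confirms, since the injectivity step explicitly uses $A(X)Y=A(Y)X$. With the domain read this way, both proofs are correct.
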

\begin{proof}
    Recall that for $A\in\Omega^1\big(\Sg,\mathrm{End}_0(T\Sg,g_J)\big)$ and for any $X,Y, Z\in\Gamma(T\Sg)$ we have $$\big(\mathrm d^{\nabla}A\big)(X,Y)Z=\big(\nabla_XA\big)(Y)Z-\big(\nabla_YA\big)(X)Z \ .$$ In particular, if we define the $(0,3)$-tensor $C(X,Y,Z):=g_J(A(X)Y, Z)$, then $A\in\text{Ker}(\mathrm d^\nabla)$ if and only if $C$ is the real part of a $g_J$-holomorphic cubic differential (Theorem \ref{thm:picktensor}). The space of holomorphic cubic differentials on $(\Sg, g_J)$ coincides with the space $H^0(\Sg,K^{\otimes^3})$ of holomorphic sections of the tri-canonical bundle, which is isomorphic (as a real vector space) to $\R^{10g-10}$ by an easy application of Riemann-Roch Theorem for curves. \newline Concerning the surjectivity of $\mathrm d^\nabla$, we will prove that its Co-kernel is trivial. Let us denote with $\ast_J$ the Hodge-star operator defined on differential forms with respect to $g_J$, which can be extended to an isomorphism $\ast_J:\Omega^k(\Sg,\mathrm{End}_0(T\Sg, g_J))\overset{\cong}{\longrightarrow}\Omega^{2-k}(\Sg,\mathrm{End}_0(T\Sg, g_J))$. Let $\big(\mathrm d^\nabla\big)^*$ be the formal adjoint of the exterior covariant derivative with respect to the $L^2$-inner product on $\Omega^2(\Sg,\mathrm{End}_0(T\Sg, g_J))$ induced by $\ast_J$ and integration over $\Sg$. A standard computation shows that $$\big(\mathrm d^\nabla\big)^*=-\ast_J\circ \ \mathrm d^\nabla\circ\ast_J:\Omega^2(\Sg,\mathrm{End}_0(T\Sg, g_J))\longrightarrow\Omega^1(\Sg,\mathrm{End}_0(T\Sg, g_J)) \ .$$ Since Range$(\mathrm{d}^\nabla)$ is a closed subspace of $\Omega^2(\Sg,\mathrm{End}_0(T\Sg, g_J))$, we get that $\text{CoKer}(\mathrm d^\nabla)=\text{Ker}\Big(\big(\mathrm d^\nabla\big)^*\Big)$. In particular, if $\alpha\in\Omega^2(\Sg,\mathrm{End}_0(T\Sg, g_J))$ then \begin{align*}&\big(\mathrm d^\nabla\big)^*\alpha=0 \iff \tag{$\ast_J$ is an isomorphism} \\ &\mathrm d^\nabla(\ast_J\alpha)=0 \iff \tag{$\mathrm d^\nabla\equiv\widebar\nabla$ on $\Omega^0(\Sg,\mathrm{End}_0(T\Sg, g_J))$}\\ &\widebar\nabla(\ast_J\alpha)=0 \ ,\end{align*} where $\widebar\nabla$ in the last equation is the induced connection on $\Omega^0(\Sg,\mathrm{End}_0(T\Sg, g_J))$. According to Lemma \ref{lem:noparallelsection}, the induced connection $\widebar\nabla$ does not admit any non-zero parallel section, hence $\ast_J\alpha=0$, which implies $\alpha=0$.
\end{proof}
\begin{lemma}\label{lem:ordertwoderivatives}Let $(J,A)\in\pickg$ and consider the following $2$-forms on the surface \begin{align*}&\eta_1:=\mathrm d\bigg(\dive_g\big((f-1)\dot J\big)+\mathrm d\dot f\circ J-\frac{f'}{6}\langle(\nabla_{\bullet}A)J,\dot A_0\rangle\bigg) \ , \\ &\eta_2:=\mathrm d\bigg(\dive_g\big((f-1)\dot J\big)\circ J+\mathrm d\dot f_0\circ J+\frac{f'}{6}\langle\nabla_{\bullet}A,\dot A_0\rangle\bigg)\end{align*}where $(\dot J,\dot A_0)\in T_{(J,A)}\pickg$, the function $f$ is the one defined by (\ref{definitionf}), $\dot f_0=-\frac{f'}{4}\langle A,\dot A_0J\rangle$, and $f, f', \dot f, \dot f_0$ are computed in $\normpick=\frac{\vl\vl A\vl\vl_J^2}{8}$. Then, the part involving second order derivatives of $(\dot J,\dot A_0)$ in $\eta_1$ and $\eta_2$ is, respectively\begin{equation*} (f-1)\mathrm d\big(\dive_g\dot J\big)+\frac{f'}{4}\mathrm d\langle A,\nabla_{J\bullet}\dot A_0\rangle,\qquad (f-1)\mathrm d\big(\dive_g\dot J\circ J\big)-\frac{f'}{4}\mathrm d\langle A,\big(\nabla_{J\bullet}\dot A_0\big)J\rangle \ . \end{equation*}\end{lemma}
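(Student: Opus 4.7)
The three summands inside $\mathrm d(\cdot)$ in the definition of $\eta_1$ and $\eta_2$ are treated separately, and we identify which pieces carry two derivatives of $(\dot J,\dot A_0)$. The $1$-form $\beta=\langle(\nabla_\bullet A)J,\dot A_0\rangle$ involves no derivatives of $(\dot J,\dot A_0)$, so $\mathrm d(-\tfrac{f'}{6}\beta)$ is at most first-order in derivatives and contributes nothing to the second-order part of either $\eta_1$ or $\eta_2$. For the divergence term, the Leibniz rule for the divergence of a scalar times an endomorphism gives $\dive_g((f-1)\dot J)=(f-1)\dive_g\dot J+f'\cdot \mathrm dt\circ\dot J$ with $t=\tfrac{1}{8}\|A\|^2_{g_J}$; since the second summand contains no derivatives of $\dot J$, applying $\mathrm d$ leaves $(f-1)\mathrm d(\dive_g\dot J)$ as the only second-order contribution, and post-composing with $J$ in the $\eta_2$ case yields $(f-1)\mathrm d(\dive_g\dot J\circ J)$.

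The delicate step is the $\mathrm d\dot f\circ J$ term in $\eta_1$. We write $\dot f=f'(t)\dot t$ with $\dot t=\delta_{(\dot J,\dot A_0)}t$, and split $\dot t$ into its $\dot A$-variation $\tfrac{1}{4}\langle A,\dot A\rangle=\tfrac{1}{4}\langle A,\dot A_0\rangle$ (using trace-freeness of $A$, so the trace part of $\dot A$ decouples from the pairing) and its direct metric-variation $\dot t_{\dot J}=\tfrac{1}{8}(\delta_{\dot J}g_J)(A,A)$ computed from $\delta_{\dot J}g_J(X,Y)=-g_J(X,J\dot J Y)$. The crucial observation is that $\dot t_{\dot J}=0$. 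Indeed, in a $g_J$-orthonormal frame $\{e_1,Je_1\}$, the cubic-differential constraint $A(JX)=A(X)J$ forces $A_2=A_1J$, and an elementary $2\times 2$ matrix computation shows that the matrices $P:=(\tr(A_aA_b))_{a,b}$ and $Q:=A_1^2+A_2^2$ are both equal to $\tfrac{1}{2}\|A\|^2_{g_J}\mathds 1$, so $2P-Q$ is a scalar multiple of the identity; since $J\dot J$ is trace-less, the contraction vanishes. Hence $\dot f=\tfrac{f'}{4}\langle A,\dot A_0\rangle$ depends on $\dot A_0$ alone, and no second-order $\dot J$-derivatives arise from $\mathrm d(\mathrm d\dot f\circ J)$.

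It follows that the piece of $\mathrm d\dot f$ containing $\nabla\dot A_0$ equals $\tfrac{f'}{4}\langle A,\nabla\dot A_0\rangle$; composing with $J$ and applying $\mathrm d$ produces the second-order contribution $\tfrac{f'}{4}\mathrm d\langle A,\nabla_{J\bullet}\dot A_0\rangle$, which combined with the divergence term yields the claimed formula for $\eta_1$. For $\eta_2$, the function $\dot f_0=-\tfrac{f'}{4}\langle A,\dot A_0 J\rangle$ depends only on $\dot A_0$ by definition; using $\nabla J=0$ (automatic on the surface, since in complex dimension one every Hermitian metric is K\"ahler), the $\nabla\dot A_0$-part of $\mathrm d\dot f_0$ is $-\tfrac{f'}{4}\langle A,(\nabla\dot A_0)J\rangle$, whence $\mathrm d(\mathrm d\dot f_0\circ J)$ contributes $-\tfrac{f'}{4}\mathrm d\langle A,(\nabla_{J\bullet}\dot A_0)J\rangle$ to the second-order part. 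The heart of the lemma, and the main obstacle, is the cancellation $\dot t_{\dot J}=0$: without the cubic-differential constraint $A(JX)=A(X)J$, spurious second-order derivatives of $\dot J$ would appear in $\mathrm d(\mathrm d\dot f\circ J)$ and spoil the clean form of the answer.
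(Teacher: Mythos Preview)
Your argument is correct and follows essentially the same route as the paper: Leibniz on the divergence term, recognise that the $\beta$-term carries no second-order derivatives, and extract the top-order piece of $\mathrm d(\mathrm d\dot f\circ J)$ (resp.\ $\mathrm d(\mathrm d\dot f_0\circ J)$) from the formula $\dot f=\tfrac{f'}{4}\langle A,\dot A_0\rangle$. The only real difference is that the paper simply quotes this identity from \cite[Lemma~3.22]{rungi2021pseudo}, whereas you supply a direct proof via the vanishing of the metric variation $\dot t_{\dot J}$; your emphasis on the role of the constraint $A(JX)=A(X)J$ (forcing $A_2=A_1J$ and hence $P,Q\in\R\cdot\mathds 1$) is a genuine clarification of why no stray second-order $\dot J$-terms appear. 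One small caveat: the precise combination that arises in the metric variation is $\tr\big((J\dot J)P\big)$ rather than a contraction with $2P-Q$, but since you already show that both $P$ and $Q$ are scalar multiples of the identity, any such combination pairs to zero with the trace-free endomorphism $J\dot J$, so the conclusion is unaffected.
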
\begin{proof}  By using (\ref{divergenzadefinizione}), we get the following equation involving the divergence of a smooth section of End$_0(T\Sg, g_J)$ multiplied by a smooth function $\varphi$ \begin{equation}\dive_g\big(\varphi\dot J\big)(X)=\mathrm d\varphi(\dot JX)+\varphi\big(\dive_g\dot J\big)(X),\quad\forall X\in\Gamma(T\Sg) \ .\end{equation}Therefore, \begin{align*} \mathrm d\bigg(\dive_g\big((f-1)\dot J\big)\bigg)=\mathrm d\big(\mathrm df\circ\dot J\big)+\mathrm df\wedge\dive_g\dot J+(f-1)\mathrm d\big(\dive_g\dot J\big) \ ,\end{align*} and it is clear that $(f-1)\mathrm d\big(\dive_g\dot J\big)$ is the only part involving second order derivatives of $\dot J$ in the expression above. Regarding the other two terms in $\eta_1$, let us first define $\tau_1:=\mathrm d\dot f\circ J$ and $\tau_2:=-\frac{f'}{6}\beta$, where $\beta=\langle(\nabla_\bullet A)J,\dot A_0\rangle$. \vspace{0.3cm}\newline \underline{\emph{The differential of $\tau_1$}}\vspace{0.3cm}\newline  Notice that the first order variation of $f\big(\normpick\big)$ is \begin{align*}\dot f&=\frac{f'}{8}(\langle\dot A, A\rangle +\langle A,\dot A\rangle) \tag{see Lemma 3.22 in \cite{rungi2021pseudo}} \\ &=\frac{f'}{4}\langle A,\dot A_0\rangle \ .\end{align*} Therefore, \begin{align*} \mathrm d\tau_1&=\mathrm d\Big(\mathrm d\big(\frac{f'}{4}\langle A,\dot A_0\rangle\big)\circ J\Big) \\ &=\frac{1}{4}\mathrm d\bigg(\langle A,\dot A_0\rangle\mathrm d f'\circ J+f'\mathrm d\langle A,\dot A_0\rangle\circ J\bigg) \\ &=\frac{1}{4}\mathrm d\bigg(\frac{f''}{4}\langle A,\nabla_{J\bullet}A\rangle\langle A,\dot A_0\rangle+f'\big(\langle\nabla_{J\bullet}A,\dot A_0\rangle+\langle A,\nabla_{J\bullet}\dot A_0\rangle\big)\bigg) \ , \end{align*}where in the last step we used that $\mathrm df'=\frac{f''}{4}\langle A,\nabla_{\bullet}A\rangle$. The only interesting part, for our purpose, is the term containing $f'\langle A,\nabla_{J\bullet}\dot A_0\rangle$. In particular, $$\frac{1}{4}\mathrm d \bigg(f'\langle A,\nabla_{J\bullet}\dot A_0\rangle\bigg)=\frac{f''}{16}\langle A,\nabla_{\bullet}A\rangle\wedge\langle A,\nabla_{J\bullet}\dot A_0\rangle+\frac{f'}{4}\mathrm d\langle A,\nabla_{J\bullet}\dot A_0\rangle \ .$$ Again, the only part involving second order derivatives is the term $\frac{f'}{4}\mathrm d\langle A,\nabla_{J\bullet}\dot A_0\rangle$. 
\vspace{0.3cm}\newline \underline{\emph{The differential of $\tau_2$}}\vspace{0.3cm}\newline To conclude the proof it must be proven that $\mathrm d\tau_2$ does not involve second derivatives of $\dot J$ or $\dot A_0$. In fact, by carrying out calculations similar to those made above \begin{align*}\mathrm d\gamma_2&=\mathrm d\bigg(-\frac{f'}{6}\langle(\nabla_\bullet A)J,\dot A_0\rangle\bigg) \\ &=-\frac{1}{6}\bigg(\frac{f''}{4}\langle A,\nabla_\bullet A\rangle\langle(\nabla_\bullet A)J, \dot A_0\rangle+f'\big(\langle(\nabla_\bullet\nabla_\bullet A)J, \dot A_0\rangle+\langle(\nabla_\bullet A)J, \nabla_\bullet\dot A_0\rangle\big)\bigg) \ . \end{align*} Because of the very similar expression of the 2-forms $\eta_1, \eta_2$ it is easy to see, by going over the calculations already done, that the part involving second derivatives of $(\dot J,\dot A_0)$ in $\eta_2$ is exactly $$(f-1)\mathrm d\big(\dive_g\dot J\circ J\big)-\frac{f'}{4}\mathrm d\langle A,\big(\nabla_{J\bullet}\dot A_0\big)J\rangle \ .$$\end{proof}

The next step is to write down in coordinates the expressions found in Lemma \ref{lem:ordertwoderivatives}, so that, later, we will be able to explicitly deduce the principal symbol of the matrix of operators associated with the PDEs defining the subspace $W_{(J,A)}$. In order to do this, we need to recall the construction in coordinates for $\pick$ (see \cite[\S 3.1]{rungi2021pseudo}), and then use the particular definition of $\pickg$ to infer that the same can be done, point-wise, in the genus $g\ge 2$ case (see Remark \ref{rem:identitypointwise}). Let $\rho_0:=\mathrm dx_0\wedge\mathrm dy_0$ be the standard area form on $\R^2$ and $g_J^0:=\rho_0(\cdot,J\cdot)$ be the associated scalar product, for some $J\in\almost$. Recall from Section \ref{sec:4.1} that $\pick$ is $\SL(2,\R)$-equivariantly isomorphic to the holomorphic vector bundle of cubic differentials over Teichm\"uller space of the torus, denoted with $\cubic$. The latter can be identified with $\Hyp\times\C$, where $\Hyp$ is a copy of $\mathcal T(T^2)$ and $\C$ is isomorphic to the fibre over an oriented (almost) complex structure $J:\R^2\to\R^2$. Let $z=x+iy$ and $w=u+iv$ be the complex coordinates on $\Hyp$ and $\C$, respectively. Then, we have the following correspondence $$\Hyp\times\C\ni(z,w)\longmapsto\big(j(z),C_{(z,w)}\big)\in\pick$$where $C_{(z,w)}=\Ree(q_{(z,w)})$ with $q_{(z,w)}=\widebar w(\dx_0-\widebar z\dy_0)^3$ (see Corollary \cite[Lemma 3.18]{rungi2021pseudo} and \cite[Lemma 5.2.1]{trautwein2018infinite}). Because of the $\SL(2,\R)$-invariance, one can compute the pair $(j(z), C_{(z,w)})$ at points $(i,w)\equiv (0,1,u,v)$, for some $w\in\C$. Using the relation $A=(g_J^0)^{-1}C$, one can deduce:
\begin{equation*}
   \dot J=\mathrm{d}_ij(\dot x,\dot y)=\begin{pmatrix}
\dot x & -\dot y \\ -\dot y & -\dot x
\end{pmatrix}, \qquad A_{(i,w)}=\begin{pmatrix}
    u & v \\ v & -u
    \end{pmatrix}\dx_{0}+\begin{pmatrix}
    v & -u \\ -u & -v
    \end{pmatrix}\dy_{0} \ .
\end{equation*}
\begin{equation*}
     (\dot A_0)_{(i,w)}=\begin{pmatrix}
    \dot u+u\dot y+v\dot x & -u\dot x+\dot v+v\dot y \\ -u\dot x+\dot v+v\dot y & -\dot u-u\dot y-v\dot x
    \end{pmatrix}\dx_{0}+\begin{pmatrix}
    \dot v+2(v\dot y-u\dot x) & -\dot u-2(u\dot y+v\dot x) \\ -\dot u-2(u\dot y+v\dot x) & -\dot v+2(u\dot x-v\dot y)
    \end{pmatrix}\dy_{0}
\end{equation*}\begin{equation*}
    (\dot A_{\tr})_{(i,w)}=\begin{pmatrix}
    -u\dot y-v\dot x & 0 \\ 0 & -u\dot y-v\dot x
    \end{pmatrix}\dx_{0}+\begin{pmatrix}
    u\dot x-v\dot y & 0 \\ 0 & u\dot x-v\dot y
    \end{pmatrix}\dy_{0} \ . 
\end{equation*}
Now, let us define the following matrix of smooth differential operators \begin{equation}\begin{aligned}\label{matrixdifferentialoperators}
\Lambda:T_{(J,A)}&\pickg\longrightarrow\Omega^2_\C(\Sg)\oplus\Omega^2\big(\Sg,\mathrm{End}_0(T\Sg,g_J)\big) \\ &(\dot J,\dot A_0)\longmapsto\big((L_1+iL_2)(\dot J,\dot A_0), S(\dot J,\dot A_0)\big)\end{aligned}\end{equation} where \begin{align*}&L_1(\dot J,\dot A_0):=\mathrm d\bigg(\dive_g\big((f-1)\dot J\big)+\mathrm d\dot f\circ J-\frac{f'}{6}\langle(\nabla_{\bullet}A)J,\dot A_0\rangle\bigg)\in\Omega^2(\Sg) \ , \\ &L_2(\dot J,\dot A_0):=\mathrm d\bigg(\dive_g\big((f-1)\dot J\big)\circ J+\mathrm d\dot f_0\circ J+\frac{f'}{6}\langle\nabla_{\bullet}A,\dot A_0\rangle\bigg)\in\Omega^2(\Sg) \ , \\ &S(\dot J,\dot A_0)=\mathrm d^\nabla\dot A_0(\cdot,\cdot)-J(\divr\dot J\wedge A)(\cdot,\cdot)\in\Omega^2\big(\Sg,\mathrm{End}_0(T\Sg,g_J)\big) \ .\end{align*}
It is possible to define the principal symbol of a matrix of mixed-order differential operators as the matrix obtained by taking the principal symbols of each differential operator. The corresponding system of PDEs is called \emph{elliptic}, if the symbol matrix has non-zero determinant (see \cite{agmon1964estimates} and \cite{grubb1977boundary} for more details). 
\begin{lemma}\label{lem:simboloprincipaleellittico}
 Let $(J,A)$ be an arbitrary point in $\pickg$. Then, for any $p\in\Sg$ and for any $0\neq\xi\in T^*\Sg$, the symbol matrix $\sigma\big(\Lambda\big)_p(\xi)$ has non-zero determinant.
\end{lemma}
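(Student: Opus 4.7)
The operator $\Lambda$ is a Douglis--Nirenberg system: assigning order $2$ to the row $L_1+iL_2$ and order $1$ to the row $S$, and order $0$ to both columns $(\dot J,\dot A_0)$, all entries fit under their weight budget. At each point $p$ the source and target fibres both have real dimension $4$ (for the source, $2$ for $\dot J\in\mathrm{End}_0(T_p\Sg,g_J)$ and $2$ for the cubic-symmetric $\dot A_0$; for the target, $2$ for $\Omega^2_\C$ and $2$ for $\Omega^2\otimes\mathrm{End}_0$), so $\sigma(\Lambda)_p(\xi)$ is an $\R$-linear endomorphism of a $4$-real-dimensional space and ellipticity reduces to its invertibility for $\xi\neq 0$. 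My plan is to compute $\sigma(\Lambda)_p(\xi)$ explicitly in a well-chosen complex coordinate and verify non-vanishing of its determinant.

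The key step is to pick a local conformal coordinate $z=x+iy$ around $p$ in which $J$ is standard and $g_J$ is Euclidean at $p$, and to use the coordinate presentations of $\dot J$, $A$ and $\dot A_0$ from the paragraphs preceding the statement to identify them with complex numbers $\mu$, $w$ and $Q$ respectively. I would then carry out three symbol computations. For the $\dot J$-entry of $L_1+iL_2$, the Wirtinger identities
\[
\mathrm{d}(\dive_g\dot J)=\mathrm{Im}(4\partial_z^2\mu)\,\mathrm{d}x\wedge\mathrm{d}y,\qquad \mathrm{d}(\dive_g\dot J\circ J)=-\mathrm{Re}(4\partial_z^2\mu)\,\mathrm{d}x\wedge\mathrm{d}y
\]
(immediate from the explicit second-derivative expressions in Lemma \ref{lem:ordertwoderivatives} together with $\mu=a+ib$) combine to give the leading contribution $4i(f-1)\xi_z^2\mu$. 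For the $\dot A_0$-entry I would use the auxiliary identity $\xi\wedge J^*\xi=-\vl\xi\vl^2\,\mathrm{d}x\wedge\mathrm{d}y$ together with the pointwise evaluation $\langle A,\dot A_0\rangle-i\langle A,\dot A_0 J\rangle=4w\bar Q$ (a direct matrix computation in the coordinate form of $A,\dot A_0$) to get $-f'\vl\xi\vl^2 w\bar Q$. For $S$, the $\dot A_0$-entry $\xi\wedge\dot A_0$ becomes $2i\xi_{\bar z}\bar Q$, and the $\dot J$-entry $-J(\sigma(\dive_g)(\xi)\dot J\wedge A)$ reduces via the antilinearity relation $JA_1=-A_1 J$ to $-2\xi_z\bar w\,\mu$, both under the convention identifying $\mathrm{End}_0\cong\C$ so that right multiplication by $J$ becomes multiplication by $i$.

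Assembling these four complex scalars into a single matrix acting $\C$-linearly on $(\mu,\bar Q)\in\C^2$ gives
\[
M(\xi)=\begin{pmatrix} 4i(f-1)\xi_z^2 & -f'\vl\xi\vl^2 w \\ -2\xi_z\bar w & 2i\xi_{\bar z}\end{pmatrix},
\]
and $\sigma(\Lambda)_p(\xi)$ is $\R$-linearly invertible iff $\det_\C M(\xi)\neq 0$. Using $4\xi_z\xi_{\bar z}=\vl\xi\vl^2$ and the identification $\vl w\vl^2=2\normpick=:2t$, the determinant collapses to
\[
\det M(\xi)=-2\xi_z\,\vl\xi\vl^2\bigl[(f-1)+2tf'\bigr].
\]
By Lemma \ref{lem:functionFef} and Lemma \ref{lem:combinationoffunctionfandfprime} we have $f\le 0<1$ and $f'(t)\le 0$ on $[0,\infty)$ with $f'(t)<0$ for $t>0$, so the bracket equals $-1$ at $t=0$ and is strictly negative for $t>0$. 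Combined with $\xi_z\neq 0$ and $\vl\xi\vl\neq 0$ for $\xi\neq 0$, we conclude $\det M(\xi)\neq 0$, proving the lemma.

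The main obstacle I anticipate is the bookkeeping: three distinct second-order contributions to each of $L_1$ and $L_2$ must be tracked through Lemma \ref{lem:ordertwoderivatives}, and the nonlinear $f''$-terms (which look principal at first sight) must be correctly recognized as lower-order; moreover, the $\mathrm{End}_0$-valued $2$-form arising from the $\dot J$-entry of $S$ must be carefully identified as a complex number under the convention above, using the anticommutation $JA_1=-A_1J$. Once these identifications are fixed, the whole argument collapses into the $2\times 2$ determinant displayed above.
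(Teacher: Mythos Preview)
Your complex-coordinate approach is a good idea and, carried out correctly, reproduces the paper's result more cleanly than the real $4\times4$ computation. But there is a genuine gap: you have overlooked that the trace-free part $\dot A_0$ of a tangent vector to $\pickg$ is \emph{not} an independent coordinate. By Lemma~\ref{lem:tangentpickform} one has $\dot A_0=\dot{\tilde A}_0+T(J,A,\dot J)$, where only $\dot{\tilde A}_0$ (the ``cubic-symmetric'' piece satisfying $(\dot{\tilde A}_0)_2=(\dot{\tilde A}_0)_1J$) is independent of $\dot J$. Your complex parameter $\bar Q$ can only encode $\dot{\tilde A}_0$; the tensor $T$ does \emph{not} satisfy $T_2=T_1J$ (in fact $T_2=2T_1J$ in complex terms), so it cannot be absorbed into $\bar Q$. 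Consequently, when you compute the $\mu$-column by ``setting $\dot A_0=0$'', you are not describing a tangent vector to $\pickg$ at all, and you miss two contributions: the term $\frac{f'}{4}\mathrm d\langle A,\nabla_{J\bullet}T\rangle$ in the $L$-row and the term $\mathrm d^\nabla T$ in the $S$-row.

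Restoring these, the $\mu$-column becomes
\[
\begin{pmatrix} -4i(f-1)\xi_z^2-3if't\,\vl\xi\vl^2 \\[2pt] -3\xi_1\bar w \end{pmatrix}
\]
(using $\langle A,T\rangle-i\langle A,TJ\rangle=6i\vl w\vl^2\mu$, a direct matrix check), and the determinant is
\[
\det M(\xi)=-2\xi_z\,\vl\xi\vl^2\bigl[1-f+3tf'\bigr],
\]
whose squared modulus recovers exactly the paper's $\vl\xi\vl^6(1-f+3tf')^2$. Crucially, the bracket $1-f+3tf'$ now has competing signs ($1-f>0$ but $3tf'\le 0$), so your elementary inequality ``$f\le0$, $f'\le0$'' no longer suffices; one genuinely needs Lemma~\ref{lem:combinationoffunctionfandfprime}(2), as in the paper. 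Your formula $(f-1)+2tf'$ and the ensuing sign argument are artefacts of the missing $T$-terms.
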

\begin{proof}
Let $\{e_1,e_2\}$ be a $g_J$-orthonormal basis and let $\{e_1^*,e_2^*\}$ be the dual basis, so that $\xi=\xi_1e_1^*+\xi_2e_2^*$. We first note that $\sigma(\Lambda)_p(\xi)$ is a $4\times4$ matrix as any term in $\Lambda$, involving the tensors $(\dot J,\dot A_0)$, can be written in the coordinates $(\dot x,\dot y,\dot u,\dot v)$, for what explained above. Moreover, we have the following decomposition: \begin{equation}\label{principalsymbol}
\sigma\big(\Lambda\big)_p(\xi)=\begin{pmatrix}\Theta & \Xi \\ \Gamma & \Delta\end{pmatrix} \ ,\end{equation} where each block is a $2\times 2$ matrix, and each entry in the first and second block-row is a homogeneous polynomial in $\xi_1,\xi_2$ of degree two and one, respectively. After a fairly long computation in coordinates the final expression for $\sigma(\Lambda)_p(\xi)$ is \begin{equation*}
    \begin{pmatrix}
        -2(f-1)\xi_1\xi_2 & (f-1)(\xi_1^2-\xi_2^2)+\frac{3}{2}\vl w\vl^2f'\vl\xi\vl^2 & -f'u(\xi_1^2\xi_2^2) & -f'v\vl\xi\vl^2 \\ (f-1)(\xi_2^2-\xi_1^2)-\frac{3}{2}\vl w\vl^2f'\vl\xi\vl^2 & -2(f-1)\xi_1\xi_2 & -f'v\vl\xi\vl^2 &
f'u\vl\xi\vl^2  \\  -3u\xi_1 &  -3v\xi_1 & -\xi_2 & \xi_1 \\ -3v\xi_1 & 3u\xi_1 & -\xi_1 & -\xi_2 \end{pmatrix} \ ,
\end{equation*}where $\vl\xi\vl^2:=\xi_1^2+\xi_2^2$ and the second column corresponds to the coefficient of $-\dot y$. We only show how to get block $\Theta$, as with a similar calculation one can obtain the remaining ones. To write down explicitly each entry of $\Theta$, we need to compute the principal symbol of $L_1$ and $L_2$, along directions $(\dot x,-\dot y,\dot u,\dot v)$ with $\dot u=\dot v=0$. According to Lemma \ref{lem:ordertwoderivatives}, $\sigma_2(L_1)$ and $\sigma_2(L_2)$ depend, respectively, on $(f-1)\mathrm d(\dive_g\dot J)+\frac{f'}{4}\mathrm d\langle A,\nabla_{J\bullet}\dot A_0\rangle$ and on $(f-1)\mathrm d(\dive_g\dot J\circ J)-\frac{f'}{4}\mathrm d\langle A,(\nabla_{J\bullet}\dot A_0)J\rangle$. In particular, \begin{align*}
    &\mathrm d(\dive_g\dot J)(\xi,\xi)=\dot x(-2\xi_1\xi_2)-\dot y(\xi_1^2-\xi_2^2) \ , \qquad \mathrm d\big(\langle A, \nabla_{J\bullet}\dot A_0\rangle\big)(\xi,\xi)=-6\vl w\vl^2(\xi_1^2+\xi_2^2)\dot y \ , \\ &\mathrm d(\dive_g\dot J\circ J)(\xi,\xi)=\dot x(\xi_2^2-\xi_1^2)-\dot y(-2\xi_1\xi_2) \ , \qquad \mathrm d\big(\langle A, (\nabla_{J\bullet}\dot A_0)J\rangle\big)(\xi,\xi)=6\vl w\vl^2(\xi_1^2+\xi_2^2)\dot x \ ,
\end{align*} 
where all the above equality are to be intended up to lower order terms in $\xi$. In the end, the upper left block in $\sigma(\Lambda)_p(\xi)$ is given by: \begin{equation*}
    \Theta=\begin{pmatrix}
        -2(f-1)\xi_1\xi_2 & (f-1)(\xi_1^2-\xi_2^2)+\frac{3}{2}\vl w\vl^2f'\vl\xi\vl^2 \\ (f-1)(\xi_2^2-\xi_1^2)-\frac{3}{2}\vl w\vl^2f'\vl\xi\vl^2 & -2(f-1)\xi_1\xi_2 
    \end{pmatrix} \ .
\end{equation*} If $\xi\neq 0$, then the matrix $\Delta$ is invertible as its determinant is equal to $\vl\xi\vl^2$. This allows us to use the determinant formula of block matrices to obtain \begin{align*}
\mathrm{det}\big(\sigma(\Lambda)_p(\xi)\big)&=\vl\xi\vl^2\mathrm{det}\big(\Theta-\Xi\Delta^{-1}\Gamma\big) \\ &=\vl\xi\vl^2\Big(4\xi_1^2\xi_2^2\big(1-f+\frac{3}{2}f'\vl w\vl^2\big)^2+(\xi_1^2-\xi_2^2)^2\big(1-f+\frac{3}{2}f'\vl w\vl^2\big)^2\Big) \ .\end{align*} Since $1-f+\frac{3}{2}f'\vl w\vl^2$ is strictly positive (Lemma \ref{lem:combinationoffunctionfandfprime}), requiring that last expression vanishes is equivalent to the conditions $\xi_1\xi_2=0$ and $\xi_1=\xi_2$, which clearly is possible if and only if $\xi_1=\xi_2=0$. 
\end{proof}

\begin{manualtheorem}E
    Let $(J,A)$ be a point in $\haffrhozerotilde$, then $$\mathrm{dim} W_{(J,A)}\ge 16g-16+2g \ .$$
\end{manualtheorem}
\begin{proof}
    Notice that, the space $W_{(J,A)}$ can be seen as the kernel of $\Lambda$, namely the matrix of smooth differential operators defined in (\ref{matrixdifferentialoperators}). Let us consider the deformation $tA$, for some $t\in[0,1]$, and look at the corresponding smooth $1$-parameter family of matrices of differential operators: \begin{align*}\big\{\Lambda_t\big\}_{t\in [0,1]}: T_{(J,tA)}&\pickg\to\Omega^2_\mathbb C(\Sg)\oplus\Omega^2(\Sg,\mathrm{End}_0(T\Sg,g_J)) \\ &(\dot J,\dot A_0)\longmapsto (D_t(\dot J,\dot A_0), S_t(\dot J,\dot A_0)) \ ,\end{align*}
    \begin{align*}
        D_t(\dot J,\dot A_0):=&\mathrm d\bigg(\dive_g\big((f_t-1)\dot J\big)+\mathrm d\dot f_t\circ J-\frac{f'_t}{6}\langle(\nabla_{\bullet}tA)J,\dot A_0\rangle\bigg) \\ &+i\mathrm d\bigg(\dive_g\big((f_t-1)\dot J\circ J\big)+\mathrm d(\dot f_0)_t\circ J+\frac{f'_t}{6}\langle\nabla_{\bullet}tA,\dot A_0\rangle\bigg)  \ ,
    \end{align*}$$S_t(\dot J,\dot A_0):=\mathrm d^\nabla\dot A_0(\cdot,\cdot)-tJ(\dive_g\dot J\wedge A)(\cdot,\cdot)  \ ,$$ where $f_t:=f\big(t^2\normpick\big), \dot f_t=t\frac{f_t'}{4}\langle\dot A_0, A\rangle$, and $(\dot f_0)_t=-t\frac{f_t'}{4}\langle\dot A_0J, A\rangle$. Observe that the matrix $\Lambda_t$ is elliptic for any $t\in[0,1]$ (in the sense explained above) as Lemma \ref{lem:simboloprincipaleellittico} holds for any $(J,A)\in\pickg$. In particular, since $\Sg$ is closed the operator matrix $\Lambda_t$ has a well-defined index for any $t\in[0,1]$. By definition, $\Lambda_0$ associates (up to a sign), to each $(\dot J,\dot A_0)$, the element $$\big(\mathrm d\big(\dive_g\dot J\big)+i\mathrm d \big(\dive_g\dot J\circ J\big), \mathrm d^\nabla\dot A_0\big) \ .$$ The homotopy invariance of the Fredholm index (see \cite{nicolaescu2020lectures} for example) implies the following chain of equalities:$$\mathrm{ind}\big(\Lambda\big)=\mathrm{ind}\big(\Lambda_1\big)=\mathrm{ind}\big(\Lambda_t\big)=\mathrm{ind}\big(\Lambda_0\big) \ .$$
    Since the differential equations obtained from the kernel of the matrix $\Lambda_0$ are decoupled in $\dot J$ and $\dot A_0$, we have the following index decomposition: \begin{align*}
\mathrm{ind}\big(\Lambda_0\big)&=\mathrm{ind}\big(\mathrm d(\dive_g \cdot)+i\mathrm d(\dive_g\circ J)\big)+\mathrm{ind}\big(\mathrm d^\nabla\big) \\ &=\mathrm{ind}\big(\mathrm d(\dive_g \cdot)+i\mathrm d(\dive_g\circ J)\big)+10g-10 \ .
    \end{align*}where in the last step we used Lemma \ref{lem:exteriorcovariantderivative}. It is well-known (see \cite{tromba2012teichmuller} for example) that the divergence operator $\dive_g:T_J\almostg\to\Omega^1(\Sg)$ is surjective and its kernel has real dimension equal to $6g-6$. In particular, for any $\alpha\in\Omega^1(\Sg)$ there exists $\dot J\in T_J\almostg$ such that $\dive_g\dot J=\alpha$. Any such real $1$-form has a decomposition $\alpha=\alpha^{1,0}+\alpha^{0,1}$, with $\widebar{\alpha^{0,1}}=\alpha^{1,0}$. Thus, $$\alpha+i\alpha\circ J=\alpha^{1,0}+\alpha^{0,1}+i\big(i\alpha^{1,0}-i\alpha^{0,1}\big)=2\alpha^{0,1} \ .$$ According to this last identity and the surjectivity of the divergence operator, it follows that the cokernel of $\mathrm d\big((\dive_g \cdot)+i(\dive_g\circ J)\big)$ is isomorphic to $$\mathrm{Coker}\big(\partial:\Omega^{0,1}(\Sg)\longrightarrow\Omega^{1,1}(\Sg)\big)\cong\bigslant{\Omega^{1,1}(\Sg)}{\mathrm{Im}(\partial)}=H^{1,1}_{\partial}(\Sg)\cong\R^2 \ ,$$as there are no $(0,2)$-forms on $(\Sg,J)$. In addition, the kernel of $\mathrm d\big((\dive_g \cdot)+i(\dive_g\circ J)\big)$ is given by
    \[
            \{ \dot{J} \in T_{J}\almostg \ | \ \partial\big((\dive_g \cdot)+i(\dive_g\circ J)\big)=0 \} \cong H^{0,1}_{\partial}(\Sigma) \times \mathrm{Ker}(\dive_{g} \cdot) \cong \R^{6g-6} \times \R^{2g}
    \]
    using again the surjectivity of the divergence operator. Therefore, we have
    \begin{align*}
        \mathrm{ind}(\Lambda_{0}) & = \mathrm{ind}\big(\mathrm d(\dive_g \cdot)+i\mathrm d(\dive_g\circ J)\big)+10g-10 \\
        & = 6g-6+2g-2+10g-10 = 16g-16+2g-2 .
    \end{align*}
    To conclude, we notice that all operators $D_{t}$ take value into the subspace of complex exact $2$-forms, hence the dimension of the cokernel of $\Lambda_{t}$ is at least equal to the dimension of $H^{2}_{\C}(\Sigma) \cong \R^{2}$. Thus
    \begin{align*}
        \mathrm{dim} W_{(J,A)} &= \mathrm{dim}(\Ker(\Lambda_{1})) \\
                                &= \mathrm{ind}(\Lambda_{1}) + \mathrm{dim}(\mathrm{Coker}(\Lambda_{1})) \\
                                & \geq \mathrm{ind}(\Lambda_{0}) + 2 \\
                                & = 16g-16+2g-2+2 = 16g-16+2g \ .
    \end{align*}
\end{proof}

\subsection{The preferred subspace inside the tangent to \texorpdfstring{$\haffrhozerotilde$}{HS_0(S,rho)}}\label{sec:4.4}
In this section we prove Theorem \ref{thmE} by using the theory developed so far. In particular, in Lemma \ref{lem:symplectomorphisminvariancedistribution} and Lemma \ref{lem:invariance by the complex structure} we prove the $\Symp(\Sg,\rho)$ and $\i$ invariance of $W_{(J,A)}$, respectively. Then, we find a formula for the action of the almost-complex structure $\i$ on tangent vectors to the $\Symp(\Sg,\rho)$-orbit (Lemma \ref{lem:complexstructureliederivative}) and we study the operator associated with the first equation in (\ref{Gausscodazzi}) (Lemma \ref{lem:differentialGaussequation}). Finally, if $\pi:\haffrhozerotilde\to\defgtilde$ denotes the quotient projection, where $\defgtilde$ is the quotient of $\haffrhozerotilde$ by $\Ham(\Sg,\rho)$, the injectivity of the map $\mathrm d_{(J,A)}\pi:W_{(J,A)}\to T_{[J,A]}\defgtilde$ is proven in Lemma \ref{lem:Wisomorphictofinitedimensionalquotient} by using all the previous results. The only part of Theorem \ref{thmE} that is left is the inclusion $W_{(J,A)}\subset T_{(J,A)}\haffrhozerotilde$, as it is necessary to explain first the connection between the system of differential equations (\ref{differentialequations}) and the theory of symplectic reduction. For this reason its discussion is postponed to Section \ref{sec:5.3}. The results presented in this section follow closely the ones given for the anti-de Sitter case (\cite[\S 4.5]{mazzoli2021parahyperkahler}), even though one of the two tensors we work with is of a different type.
\begin{lemma}\label{lem:symplectomorphisminvariancedistribution}
    For every symplectomorphism $\phi$ of $(\Sg,\rho)$ and for every $(\dot J,\dot A)\in W_{(J,A)}$, we have $(\phi^*\dot J,\phi^*\dot A)\in W_{(\phi^*J, \phi^*A)}$. In other words, the distribution $\{W_{(J,A)}\}_{(J,A)\in\haffrhozerotilde}$ is invariant under the action of $\Symp(\Sg,\rho)$.
\end{lemma}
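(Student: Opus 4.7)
The plan is to verify that every tensorial ingredient appearing in the three defining equations of (\ref{differentialequations}) is natural under pullback by a symplectomorphism $\phi \in \Symp(\Sg,\rho)$, from which invariance of the distribution will follow term by term. The starting observation is that, since $\phi^{*}\rho = \rho$ and $g_{J}(\cdot,\cdot)=\rho(\cdot,J\cdot)$, one has
\[
    \phi^{*} g_{J} = g_{\phi^{*}J}\ .
\]
Hence $\phi$ is an isometry from $(\Sg, g_{\phi^{*}J})$ to $(\Sg, g_{J})$, and all of the metric-derived operations used in the definition of $W_{(J,A)}$ commute with $\phi^{*}$: the Levi-Civita connection $\nabla$, the divergence $\divr$, the pointwise inner product $\langle\cdot,\cdot\rangle_{J}$ and norm $\| \cdot \|_{J}$, as well as the covariant exterior differential $\mathrm{d}^{\nabla}$ on $\mathrm{End}(T\Sg)$-valued forms. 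The ordinary exterior differential $\mathrm{d}$ likewise commutes with $\phi^{*}$.

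First, I would check the scalar quantities. From $\phi^{*}\bigl(\normpick\bigr) = \tfrac{1}{8}\|\phi^{*}A\|^{2}_{g_{\phi^{*}J}}$ it follows that the smooth functions $f$ and $f'$ (evaluated at $\normpick$) pull back to their counterparts at $(\phi^{*}J,\phi^{*}A)$; the same holds for
\[
    \dot{f} = \tfrac{f'}{4}\langle A, \dot{A}_{0}\rangle, \qquad \dot{f}_{0} = -\tfrac{f'}{4}\langle A, \dot{A}_{0}J\rangle,
\]
because each is assembled from the scalar $f'$ and the natural inner product. Next I would verify the same behavior for the $1$-forms and $\mathrm{End}_{0}(T\Sg,g_{J})$-valued forms in each equation: $\divr\bigl((f-1)\dot{J}\bigr)$, the $J$-twist $\mathrm{d}\dot{f}\circ J$, the $1$-form $\beta(\bullet) = \langle (\nabla_{\bullet}A)J, \dot{A}_{0}\rangle$, and the $\mathrm{End}$-valued $2$-form $\divr\dot{J}\wedge A$ appearing in the Codazzi-type equation. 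Each is a composition of natural operations applied to natural tensors; in particular, for any $1$-form $\alpha$ one has $\phi^{*}(\alpha\circ J) = (\phi^{*}\alpha)\circ(\phi^{*}J)$, which handles the $J$-twisted terms.

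Combining these facts, pullback of the LHS of the first equation of (\ref{differentialequations}) at $(J,A)$ evaluated on $(\dot{J},\dot{A})$ equals the LHS of the same equation at $(\phi^{*}J,\phi^{*}A)$ evaluated on $(\phi^{*}\dot{J},\phi^{*}\dot{A})$, and similarly for the second and third equations. Therefore $(\dot{J},\dot{A}) \in W_{(J,A)}$ if and only if $(\phi^{*}\dot{J},\phi^{*}\dot{A}) \in W_{(\phi^{*}J,\phi^{*}A)}$, which is exactly the stated invariance. There is no substantive obstacle here beyond the bookkeeping involved in tracking the behavior of the $J$-twisted and $\mathrm{End}$-valued quantities under pullback; no result beyond the naturality of $\nabla$, $\divr$, $\mathrm{d}$ and $\mathrm{d}^{\nabla}$ under isometric diffeomorphisms is required.
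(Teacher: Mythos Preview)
Your proof is correct and follows essentially the same approach as the paper: the key point is that $\phi^{*}\rho=\rho$ forces $\phi^{*}g_{J}=g_{\phi^{*}J}$, so $\phi$ is an isometry and all the metric-derived operators ($\nabla$, $\divr$, $\mathrm{d}^{\nabla}$, $\langle\cdot,\cdot\rangle$) are natural under $\phi^{*}$, whence each term in the defining equations of $W_{(J,A)}$ transforms correctly. Your write-up is in fact more detailed than the paper's, which only sketches the naturality argument.
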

\begin{proof}
    The assumption that $\phi$ is a symplectomorphism ($\phi^*\rho=\rho$) is crucial in order to prove that $g_{\phi^*J}$, the metric associated with the area form $\rho$ and complex structure $\phi^*J$, is equal to the pull-back metric $\phi^*g_J=\phi^*\big(\rho(\cdot,J\cdot)\big)$. In other words, we are saying that $\phi:(\Sg, g_{\phi^*J})\to(\Sg, g_J)$, is an isometry. In particular, for any endomorphism of the tangent bundle $B$ we get $$\phi^*\Big(\dive_g B\Big)=\dive_{\phi^*g}(\phi^*B) \ .$$ Moreover, the parts involving the scalar product between tangent vectors $\dot J,\dot J'\in T_J\almostg$ and $\dot A,\dot A'$ are preserved by $\phi$ (see Section \ref{sec:3.2} and Section \ref{sec:4.1}). As for the rest of the terms in the equations defining $W_{(J,A)}$, we see that they are preserved by $\phi$ using the naturality of the action and the functoriality of the involved operators, such as the induced connection $\nabla$ and the exterior covariant derivative $\mathrm d^{\nabla}$ on End$_0(\Sg, g)$-valued $1$-form.
\end{proof}
\begin{remark}
    Notice that the above lemma holds for any symplectomorphism $\phi$ not necessarily Hamiltonian. This is a stronger result than what we need in Theorem \ref{thmE}. In particular, with the same argument it is possible to prove the $\Symp(\Sg,\rho)$-invariance of $\i$ and $\g_f$.
\end{remark}
\begin{lemma}\label{lem:invariance by the complex structure}
    For any $(J,A)\in\haffrhozerotilde$, the subspace $W_{(J,A)}$ is preserved by $\i$.
\end{lemma}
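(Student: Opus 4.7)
The proof is by direct verification: given $(\dot J, \dot A) \in W_{(J,A)}$, set $(\dot J', \dot A') := \i(\dot J, \dot A) = (-J\dot J, -\dot A J - A\dot J)$ and check that $(\dot J', \dot A')$ satisfies the three equations of (\ref{differentialequations}) again.

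The plan is to first extract a clean formula for the trace-free part $\dot A_0'$ of $\dot A'$. Starting from the decomposition $\dot A'(X) = \dot A_0'(X) + \frac{1}{2}\tr(JA(X)\dot J')\mathds{1}$ with $\dot J' = -J\dot J$, and using $JA(X)J = A(X)$ (valid for the trace-less $g_J$-symmetric endomorphism $A(X)$), one has $\tr(JA(X)\dot J') = -\tr(A(X)\dot J)$. A pointwise computation in a $g_J$-orthonormal frame then exploits the dimension-two identity $A(X)\dot J = \tfrac12\tr(A(X)\dot J)\mathds{1} - \tfrac12\tr(JA(X)\dot J)\,J$ (which holds because both $A(X)$ and $\dot J$ are trace-less and $g_J$-symmetric) to collapse all the trace-valued corrections, yielding the clean formula
\[
\dot A_0' = -\dot A_0\, J,
\]
where juxtaposition denotes composition with $J$ in the endomorphism slot. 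I will also record the auxiliary identities $\divr((f-1)\dot J') = \divr((f-1)\dot J)\circ J$ coming from (\ref{divergenzaendomorfismoeJ}), together with $\dot f' = \dot f_0$ and $(\dot f_0)' = -\dot f$, both read directly off the formula $\dot A_0' = -\dot A_0 J$.

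With these in hand the first two equations of (\ref{differentialequations}) get swapped up to signs by $\i$. Writing $\alpha_1,\alpha_2$ for the $1$-forms whose exterior differentials constitute Eq.~1 and Eq.~2, and $\alpha_i'$ for the same expressions evaluated at $(\dot J', \dot A_0')$, I claim that $\alpha_1' = \alpha_2$ and $\alpha_2' = -\alpha_1$. The only non-routine piece here is the transformation of $\beta$: combining $\dot A_0' = -\dot A_0 J$ with the pairing identity $\langle BJ, CJ\rangle = \langle B, C\rangle$ for trace-less $g_J$-symmetric End-valued $1$-forms (itself a consequence of $JBJ = B$) and the holomorphicity rewriting $\nabla_{JY} A = (\nabla_Y A)J$ supplied by Theorem~\ref{thm:picktensor}, one obtains $\beta' = \beta \circ J$. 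Thus $\mathrm d\alpha_1' = \mathrm d\alpha_2 = 0$ and $\mathrm d\alpha_2' = -\mathrm d\alpha_1 = 0$ are immediate consequences of the hypothesis $(\dot J, \dot A) \in W_{(J,A)}$.

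For the third equation, since $J$ is $\nabla$-parallel the exterior covariant derivative $\mathrm d^\nabla$ commutes with composition by $J$, so $\mathrm d^\nabla \dot A_0' = -(\mathrm d^\nabla \dot A_0)\,J$. Plugging in the original third equation $\mathrm d^\nabla \dot A_0 = J(\divr \dot J \wedge A)$ together with $\divr \dot J' = \divr \dot J \circ J$, the claim reduces to the pointwise identity
\[
J\bigl((\divr\dot J \circ J) \wedge A\bigr) = -(\divr \dot J \wedge A),
\]
which I will verify on a frame of the form $\{X, JX\}$ using once more $JA(X)J = A(X)$ and $JA(X) = -A(X)J$. The main obstacle throughout is the careful bookkeeping of which $J$'s act on the $1$-form slot and which on the endomorphism slot, together with the mixings induced by the Pick form identities $A(JX) = A(X)J$ and $A(X)Y = A(Y)X$; once this is organized the entire verification boils down to the two-dimensional algebra of trace-less $g_J$-symmetric endomorphisms and to the linearized holomorphicity condition of Theorem~\ref{thm:picktensor}.
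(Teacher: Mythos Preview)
Your proposal is correct and follows essentially the same approach as the paper: a direct verification that $(\dot J',\dot A')=\i(\dot J,\dot A)$ satisfies the three equations of (\ref{differentialequations}). You are more explicit than the paper in first isolating the key formula $\dot A_0' = -\dot A_0 J$ (the paper uses it implicitly), and you organize the first two equations via $\alpha_1'=\alpha_2$, $\alpha_2'=-\alpha_1$ where the paper handles the second via $\i^2=-\mathds 1$; for the third equation you commute $J$ through $\mathrm d^\nabla$ using $\nabla J=0$, while the paper works with the reformulation in Remark~\ref{rem:codazzilinearizzatainvariante}. These are cosmetic differences; the substance is the same. One small remark: your identity $\divr((f-1)\dot J') = \divr((f-1)\dot J)\circ J$ needs, besides (\ref{divergenzaendomorfismoeJ}), the Leibniz expansion $\divr(\varphi B)=\mathrm d\varphi\circ B+\varphi\,\divr B$ together with $\dot J J=-J\dot J$; this is routine but worth mentioning.
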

\begin{proof}
Recall that by definition $\i(\dot J,\dot A)=(-J\dot J, -\dot AJ-A\dot J)=:(\dot J',\dot A')$. We only need to show that the pair $(\dot J',\dot A')$ still satisfies the equations defining $W_{(J,A)}$. In fact, \begin{align*}
     (f-1)\dive_g(\dot J')&=(f-1)\dive_g\big(-J\dot J\big) \tag{rel. (\ref{divergenzaendomorfismoeJ})} \\ &=(f-1)\dive_g\dot J\circ J \ .\end{align*}  Moreover, \begin{align*}
         &\mathrm d\Big(\frac{f'}{4}\langle\dot A_0',A\rangle\Big)\circ J=\mathrm d\Big(\frac{f'}{4}\langle-\dot A_0J,A\rangle\Big)\circ J \ , \\ &-\frac{f'}{6}\langle(\nabla_\bullet A)J,\dot A_0'\rangle=-\frac{f'}{6}\langle(\nabla_\bullet A)J,-\dot A_0J\rangle=\frac{f'}{6}\langle\nabla_\bullet A,\dot A_0\rangle \ , 
     \end{align*}where in the last step we used relation (\ref{rel:cpxstructurescalarprod1}). By using $\i^2=-\mathds 1$, it follows that the element $\i(\dot J,\dot A)$ satisfies the second equation in (\ref{differentialequations}) as well. Regarding the last equation, notice that, according to Remark \ref{rem:codazzilinearizzatainvariante}, it is equivalent to $\mathrm d^\nabla\dot A_0(\bullet,J\bullet)=\dive_g\dot J(\bullet)A(J\bullet)-\dive_g\dot J(J\bullet)A(\bullet)$. Therefore, for any $X\in\Gamma(T\Sg)$, we get \begin{align*}
         \mathrm d^\nabla\big(\dot A_0'\big)(X,JX)&=-\mathrm d^\nabla\big(\dot A_0J\big)(X,JX) \\ &=-\big(\mathrm d^\nabla\dot A_0\big)(X,JX)J \tag{$\nabla_\bullet J=0$} \\ &=-(\dive_g\dot J)(X)A(JX)J+(\dive_g\dot J)(JX)A(X)J \tag{$A(J\cdot)=A(\cdot)J$} \\ &=(\dive_g\dot J)(X)A(X)+\dive_g\dot J(JX)A(X)J \ .
     \end{align*}On the other hand, \begin{align*}
         (\dive_g\dot J')(X)A(JX)-(\dive_g\dot J')(JX)A(X)&=-(\dive_gJ\dot J)(X)A(JX)+(\dive_gJ\dot J)(JX)A(X) \\ &=(\dive_g\dot J)(JX)A(JX)+(\dive_g\dot J)(X)A(X)  \\ &=(\dive_g\dot J)(JX)A(X)J+(\dive_g\dot J)(X)A(X) \ ,
     \end{align*} where we used relation (\ref{divergenzaendomorfismoeJ}) on the first step and $A(J\cdot)=A(\cdot)J$ on the second one. 
\end{proof}
\begin{lemma}\label{lem:complexstructureliederivative}
    For every symplectic vector field $X$ on $(\Sg,\rho)$ and for every $(J,A)\in\pickg$, with $C(\cdot,\cdot,\cdot)=g_J\big(A(\cdot)\cdot,\cdot\big)$ equal to the real part of a holomorphic cubic differential on $(\Sg, J)$, we have $\i\big(\liederivative_XJ,g_J^{-1}\liederivative_XC\big)=\big(-\liederivative_{JX}J,-g_J^{-1}\liederivative_{JX}C\big)$.
\end{lemma}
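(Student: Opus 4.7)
The plan is to reduce the lemma to two concrete identities by applying the definition $\i_{(J,A)}(\dot J,\dot A)=(-J\dot J,-\dot A J-A\dot J)$ of Section \ref{sec:4.1} to the pair $(\dot J,\dot A)=(\liederivative_X J,\,g_J^{-1}\liederivative_X C)$. The claim then becomes equivalent to the two separate equalities $J\,\liederivative_X J=\liederivative_{JX}J$ and $(g_J^{-1}\liederivative_X C)J+A\,\liederivative_X J=g_J^{-1}\liederivative_{JX}C$. Both will follow from the K\"ahler property $\nabla^{g_J}J=0$, which holds automatically on the surface $\Sg$ once $J$ is compatible with $\rho$.

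For the first identity I would set $B:=\nabla X\in\Gamma(\End(T\Sg))$, where $\nabla:=\nabla^{g_J}$. Using torsion-freeness of $\nabla$ together with $\nabla J=0$ gives the standard formula $(\liederivative_X J)(Y)=J\nabla_Y X-\nabla_{JY}X=[J,B]Y$. Running the same computation with $JX$ in place of $X$ and exploiting $\nabla(JX)=JB$ yields $\liederivative_{JX}J=-B-JBJ=J[J,B]=J\,\liederivative_X J$. Only the K\"ahler identity enters here; the step is independent of the symplectic assumption on $X$.

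For the second identity I would expand both sides with the tensorial Lie derivative formula
\begin{equation*}
(\liederivative_X C)(Y,Z,W)=(\nabla_X C)(Y,Z,W)+C(BY,Z,W)+C(Y,BZ,W)+C(Y,Z,BW),
\end{equation*}
and the analogous one for $\liederivative_{JX}C$, where $B$ is replaced by $JB$. Two ingredients make the difference $(\liederivative_{JX}C)(Y,Z,W)-(\liederivative_X C)(Y,JZ,W)$ collapse. The first is the holomorphicity of the cubic differential $q$ with $C=\Re(q)$, which via the third bullet of Theorem \ref{thm:picktensor} is equivalent to $(\nabla_{JX}C)(Y,Z,W)=(\nabla_X C)(JY,Z,W)$. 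The second is the combination of $A(JY)=A(Y)J$ from Corollary \ref{picktensoraffinesphere} with the trace-free, $g_J$-symmetric nature of each $A(Y)$, which forces $J$ to anticommute with $A(Y)$ and yields the three-fold symmetry $C(JY,Z,W)=C(Y,JZ,W)=C(Y,Z,JW)$. Applying these symmetries term by term, everything cancels except $C(Y,(JB-BJ)Z,W)=C(Y,\liederivative_X J(Z),W)$. Converting back to $(1,2)$-tensors via $C=g_J\cdot A$ and $\dot A=g_J^{-1}\liederivative_X C$ gives precisely the required equality.

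The main obstacle is purely combinatorial: numerous terms of the form $C(\cdot,J\cdot,\cdot)$ and $C(\cdot,\cdot,J\cdot)$ appear during the expansion, and only by carefully invoking the three-fold $J$-symmetry of $C$ together with the total symmetry of its arguments can one see all of them cancel. Once this bookkeeping is arranged, the proof is a short direct calculation. The symplectic hypothesis on $X$ does not actually enter the derivation; it is required only so that $(\liederivative_X J,\,g_J^{-1}\liederivative_X C)$ genuinely lies in $T_{(J,A)}\pickg$, making the stated equality a meaningful identity in that tangent space.
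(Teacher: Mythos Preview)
Your proposal is correct and follows essentially the same approach as the paper: both introduce $B=\nabla X$ (the paper calls it $M_X$), derive $\liederivative_X J=[J,B]$ from $\nabla J=0$, and then compare the two sides of the second component using the holomorphicity condition of Theorem~\ref{thm:picktensor} together with the $J$-symmetries of $A$ (equivalently, of $C$). The only cosmetic difference is that you carry out the second computation at the level of the $(0,3)$-tensor $C$ while the paper works with the $(1,2)$-tensor $A$ and its adjoint term $M_X^*A$; your observation that the symplectic hypothesis on $X$ is not used in the algebra itself is also accurate.
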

\begin{proof}
    For any vector field $V$ on the surface, let use define the operator $M_V:\Gamma(T\Sg)\to\Gamma(T\Sg)$ as $M_V(Y):=\nabla^g_YV$, where $\nabla^g$ is the Levi-Civita connection with respect to $g\equiv g_J=\rho(\cdot,J\cdot)$. Then, for any $Y\in\Gamma(T\Sg)$, we have\begin{align*}
        (\liederivative_VJ)Y&=[V,JY]-J([V,Y]) \\ &=\nabla^g_V(JY)-\nabla^g_{JY}V-J(\nabla^g_VY)+J(\nabla^g_YV) \tag{$\nabla^g$ is torsion-free}  \\ &=J(\nabla^g_VY)-M_V(JY)-J(\nabla^g_VY)+JM_V(Y) \tag{$\nabla^g_\bullet J=0$} \\ &=(JM_V-M_VJ)(Y) \ .
    \end{align*}The above computation implies that \begin{equation}\label{liederivativeJ}
        \liederivative_VJ=JM_V-M_VJ \ .
    \end{equation}Now since $C(\cdot,\cdot,\cdot)$ is a $(0,3)$-tensor, for any $Y,Z,U\in\Gamma(T\Sg)$, its Lie derivative can be computed as follows \begin{align*}
(\liederivative_VC)(Y,Z,U)=V\cdot C(Y,Z,U)-C([V,Y],Z,U)-C(Y,[V,Z],U)-C(Y,Z,[V,U]) \ .
    \end{align*}Moreover, using the relation $$V\cdot C(Y,Z,U)=(\nabla^g_VC)(Y,Z,U)+C(\nabla^g_VY,Z,U)+C(Y,\nabla^g_VZ,U)+C(Y,Z\nabla_V^gU) \ ,$$ we obtain that \begin{equation}\label{liederivativeC}
        (\liederivative_VC)(\cdot,\cdot,\cdot)=(\nabla^g_VC)(\cdot,\cdot,\cdot)+C(M_V\cdot,\cdot,\cdot)+C(\cdot,M_V\cdot,\cdot)+C(\cdot,\cdot,M_V\cdot) \ .
    \end{equation}In particular, by re-writing the last relation using the associated $(1,2)$-tensor defined as $A=g^{-1}C$ and using the compatibility between $\nabla^g$ and the metric $g$, we get \begin{equation}\label{liederivative-g-minusone-C}
        (g^{-1}\liederivative_VC)(\cdot)=(\nabla^g_VA)(\cdot)+A(M_V\cdot)+A(\cdot)M_V+M_V^*A(\cdot) \ ,
    \end{equation}where $M_V^*$ denotes the $g$-adjoint operator of $M_V$. Now let us apply the almost-complex structure $\i$ to the pair $(\liederivative_XJ,g^{-1}\liederivative_XC)$ with $X$ a $\rho$-symplectic vector field. Therefore, $$\i\big(\liederivative_XJ,g^{-1}\liederivative_XC\big)=\big(-J\liederivative_XJ,-(g^{-1}\liederivative_XC)(\cdot)J-A(\cdot)\liederivative_XJ\big) \ .$$ Since $J$ is $\nabla^g$-parallel then $M_{JX}=JM_X$, so that the first component of $\i\big(\liederivative_XJ,g^{-1}\liederivative_XC\big)$ is given by \begin{align*}
    -J\liederivative_XJ&=-J(JM_X-M_XJ) \tag{rel. (\ref{liederivativeJ}) for $V=X$} \\ &=-(JM_{JX}-M_{JX}J) \\ &=-\liederivative_{JX}J \tag{rel. (\ref{liederivativeJ}) for $V=JX$} \ .
    \end{align*}Regarding the second component of $\i\big(\liederivative_XJ,g^{-1}\liederivative_XC\big)$, using relation (\ref{liederivativeJ}) and (\ref{liederivative-g-minusone-C}) for $V=X$, we have \begin{align*}
        -(g^{-1}\liederivative_XC)(\cdot)J-A(\cdot)\liederivative_XJ&=-(\nabla_X^gA)(\cdot)J-A(M_X\cdot)J-M^*_XA(\cdot)J-A(\cdot)M_XJ \\ & \ \ \ \ -A(\cdot)JM_X+A(\cdot)M_XJ \\ &=-(\nabla_X^gA)(\cdot)J-A(M_X\cdot)J-A(\cdot)JM_X+M^*_XJA(\cdot) \ ,
    \end{align*}where in the last equality we used $A(\cdot)J=-JA(\cdot)$. On the other hand, using relation (\ref{liederivative-g-minusone-C}) with $V=JX$, we get \begin{align*}
        -(g^{-1}\liederivative_{JX}C)(\cdot)&=-(\nabla_{JX}^gA)(\cdot)-A(M_{JX}\cdot)-A(\cdot)M_{JX}-M^*_{JX}A(\cdot) \\ &=-(\nabla_{JX}^gA)(\cdot)-A(JM_{X}\cdot)-A(\cdot)JM_{X}-(JM_{X})^*A(\cdot) \tag{$M_{JX}=JM_X$} \\ &=-(\nabla_{X}^gA)(J\cdot)-A(JM_{X}\cdot)-A(\cdot)JM_{X}-(JM_{X})^*A(\cdot) \tag{Theorem \ref{thm:picktensor}} \\ &=-(\nabla_{X}^gA)(J\cdot)-A(M_{X}\cdot)J-A(\cdot)JM_{X}+M_X^*JA(\cdot) \ , \end{align*}where in the last step we used $A(J\cdot)=A(\cdot)J$ and $J^*=-J$. Finally, we conclude by observing that \begin{align*}
            (\nabla^g_XA)(JY)Z&=\nabla^g_X\big(A(JY)Z\big)-A\big(\nabla^g_X(JY)\big)Z-A(JY)\nabla^g_XZ \\ &=\nabla^g_X\big(A(Y)JZ\big)-A\big(J\nabla^g_XY\big)Z-A(Y)J\nabla^g_XZ \\ &=\nabla^g_X\big(A(Y)JZ\big)-A\big(\nabla^g_XY\big)JZ-A(Y)\nabla^g_X(JZ) \\ &=(\nabla^g_XA)(Y)JZ, \qquad \forall X,Y,Z\in\Gamma(T\Sg) \ .
        \end{align*}
\end{proof}
\begin{lemma}\label{lem:differentialGaussequation}
Let $G:\pickg\to C^\infty(\Sg)$ be the operator defined as $G(J,A):=K_h+1-\vl\vl q\vl\vl_h^2$, where $h$ is the metric in the conformal class of $g_J$ with conformal factor $e^F$ (see (\ref{functionalequationF})), and $q$ is a cubic differential whose real part is equal to $C=g_JA$. Suppose that $(J,A)$ satisfies equations $\mathrm{(\ref{Gausscodazzi})}$ and let $U$ be a vector field on $\Sg$. Then, $$\mathrm d_{(J,A)}G\big(\liederivative_UJ,g_J^{-1}\liederivative_UC\big)=-\frac{1}{2}\Delta_h\lambda+(1+2\vl\vl q\vl\vl_h^2)\lambda \ ,$$ where $\lambda:=\dive_{g_J}U\bigg(\frac{3}{2}\vl\vl q\vl\vl_{g_J}^2F'\Big(\frac{\vl\vl q\vl\vl^2_{g_J}}{2}\Big)-1\bigg)$. In particular, if the element $\big(\liederivative_UJ,g_J^{-1}\liederivative_UC\big)$ belongs to the kernel of $\mathrm d_{(J,A)}G$, then $U$ is symplectic.
\end{lemma}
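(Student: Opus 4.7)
I will compute the derivative at $t=0$ of $G$ along the curve
\[ t\longmapsto (J_t,C_t) := (\phi_t^*J,\phi_t^*C), \]
whose velocity at $t=0$ represents the variation $(\liederivative_UJ, g_J^{-1}\liederivative_UC)$. Set $A_t := g_{J_t}^{-1}C_t$, $q_t := \phi_t^*q$, $\tau_t := \tfrac{1}{2}\vl\vl q_t\vl\vl^2_{g_{J_t}}$, and write $\phi_t^*\rho = e^{u_t}\rho$, so that by (\ref{divergenzaXformularho}) $\dot u := \tfrac{d}{dt}\big|_{t=0}u_t = \dive_{g_J}U$. A direct unwinding of the definition gives $\phi_t^*g_J = e^{u_t}g_{J_t}$ and hence
\[ \phi_t^*\bigl(\vl\vl q\vl\vl^2_{g_J}\bigr) \;=\; e^{-3u_t}\vl\vl q_t\vl\vl^2_{g_{J_t}} \;=\; 2e^{-3u_t}\tau_t. \]
Therefore the metric $h_t := e^{F(\tau_t)}g_{J_t}$ defining $G(J_t,A_t)$ and the pull-back $\phi_t^*h = e^{F(e^{-3u_t}\tau_t)+u_t}g_{J_t}$ differ by a conformal factor $h_t = e^{\mu_t}\phi_t^*h$ with
\[ \mu_t \;:=\; F(\tau_t)-F(e^{-3u_t}\tau_t)-u_t, \qquad \mu_0=0, \]
and an immediate chain-rule computation yields
\[ \dot\mu \;=\; \dot u\,(3\tau_0 F'(\tau_0)-1) \;=\; \dive_{g_J}U\cdot\Bigl(\tfrac{3}{2}\vl\vl q\vl\vl^2_{g_J}F'\bigl(\tfrac{\vl\vl q\vl\vl^2_{g_J}}{2}\bigr)-1\Bigr) \;=\; \lambda. \]

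\textbf{The key cancellation.} The crucial observation is that $\phi_t^*h$ is nothing other than the hyperbolic metric attached to the pulled-back solution of (\ref{Gausscodazzi}); in particular $K_{\phi_t^*h}+1-\vl\vl q_t\vl\vl^2_{\phi_t^*h}\equiv 0$ pointwise, and $K_{\phi_t^*h}=\phi_t^*K_h$, $\vl\vl q_t\vl\vl^2_{\phi_t^*h}=\phi_t^*\bigl(\vl\vl q\vl\vl^2_h\bigr)$. Applying the standard conformal-change identities $K_{e^\mu g}=e^{-\mu}\bigl(K_g-\tfrac{1}{2}\Delta_g\mu\bigr)$ and $\vl\vl q\vl\vl^2_{e^\mu g}=e^{-3\mu}\vl\vl q\vl\vl^2_g$ to $h_t=e^{\mu_t}\phi_t^*h$, one gets
\[ G(J_t,A_t) \;=\; e^{-\mu_t}\!\left(K_{\phi_t^*h}-\tfrac{1}{2}\Delta_{\phi_t^*h}\mu_t\right)+1-e^{-3\mu_t}\vl\vl q_t\vl\vl^2_{\phi_t^*h}. \]
Differentiating at $t=0$, the pull-back terms cancel because $U\cdot K_h=U\cdot\vl\vl q\vl\vl^2_h$ (an immediate consequence of $K_h-\vl\vl q\vl\vl^2_h=-1$), leaving
\[ \tfrac{d}{dt}\big|_{t=0}G(J_t,A_t) \;=\; -\tfrac{1}{2}\Delta_h\dot\mu+(3\vl\vl q\vl\vl^2_h-K_h)\dot\mu \;=\; -\tfrac{1}{2}\Delta_h\lambda+(1+2\vl\vl q\vl\vl^2_h)\lambda, \]
which is the claimed identity.

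\textbf{The last claim and the main obstacle.} The operator $L:=-\tfrac{1}{2}\Delta_h+(1+2\vl\vl q\vl\vl^2_h)$ is formally self-adjoint and strictly positive, since integration by parts gives
\[ \int_\Sg\lambda\,L\lambda\,\mathrm{dvol}_h \;=\; \tfrac{1}{2}\int_\Sg\vl\nabla_h\lambda\vl^2\mathrm{dvol}_h+\int_\Sg(1+2\vl\vl q\vl\vl^2_h)\lambda^2\mathrm{dvol}_h, \]
so $L\lambda=0$ forces $\lambda\equiv 0$. Rewriting $F'(t)=-f'(t)+f(t)/(3t)$ via item $(1)$ of Lemma \ref{lem:combinationoffunctionfandfprime} yields
\[ \tfrac{3}{2}\vl\vl q\vl\vl^2_{g_J}F'\bigl(\tfrac{\vl\vl q\vl\vl^2_{g_J}}{2}\bigr)-1 \;=\; -\Bigl(1-f\bigl(\tfrac{\vl\vl q\vl\vl^2_{g_J}}{2}\bigr)+3\tfrac{\vl\vl q\vl\vl^2_{g_J}}{2}f'\bigl(\tfrac{\vl\vl q\vl\vl^2_{g_J}}{2}\bigr)\Bigr), \]
which by item $(2)$ of the same lemma is strictly negative; hence $\lambda\equiv 0$ is equivalent to $\dive_{g_J}U\equiv 0$, i.e.\ to $\liederivative_U\rho=0$, which is exactly the condition that $U$ be symplectic. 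The delicate step is the set-up of the conformal correction $\mu_t$: the hyperbolic metric $h_t$ naturally attached to $(J_t,A_t)$ and the pull-back $\phi_t^*h$ coincide to leading order only when $U$ preserves $\rho$, and the first-order discrepancy $\dot\mu=\lambda$ records exactly this failure; once this bookkeeping is done, the rest is standard two-dimensional conformal geometry together with the positivity of a Schr\"odinger-type operator.
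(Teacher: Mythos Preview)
Your argument is correct and follows essentially the same route as the paper's proof: you pull back along the flow of $U$, express $h_t$ as a conformal perturbation $e^{\mu_t}\phi_t^*h$ of the pulled-back Blaschke metric, identify $\dot\mu=\lambda$, and then use the conformal-change formulae together with $K_h-\vl\vl q\vl\vl_h^2=-1$ to cancel the transport terms. Your treatment of the final implication is slightly cleaner than the paper's (direct integration by parts on $L$ rather than comparison with $-\tfrac12\Delta_h+1$, and an explicit rewriting of $3\tau F'(\tau)-1$ via Lemma~\ref{lem:combinationoffunctionfandfprime}), but the substance is identical.
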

\begin{proof}
    Let us denote with $\{\psi_t\}_{t\in[0,1]}$ the flow of $U$, and let $(J,C)$ be a point in $\pickg$. Consider the path $\{(J_t,C_t)\}_{t\in[0,1]}\subset\pickg$ given by $(J_t,C_t)=(\psi^*_tJ,\psi^*_tC)$ so that $(J_0,C_0)=(J,C)$. In particular, $$\liederivative_UJ=\frac{\mathrm d}{\mathrm dt}\psi^*_tJ\Big|_{t=0},\quad g^{-1}\liederivative_UC=g^{-1}\frac{\mathrm d}{\mathrm dt}\psi^*_tC\Big|_{t=0}, \quad g\equiv g_J \ .$$ The final goal will be to compute $\frac{\mathrm d}{\mathrm d t}G(J_t,C_t)|_{t=0}$. Let us first determine the Riemannian metric $g_t:=\rho(\cdot,J_t\cdot)$, where $J_t=\mathrm d\psi^{-1}_t\circ J\circ\mathrm d\psi_t$. \begin{align*}
    g_t&=\rho\big(\cdot,(\mathrm d\psi^{-1}_t\circ J\circ\mathrm d\psi_t)\cdot\big)=\rho\big((\mathrm d\psi^{-1}_t\circ\mathrm d\psi_t)\cdot,(\mathrm d\psi^{-1}_t\circ J\circ\mathrm d\psi_t)\cdot\big) \\ &=\big(\mathrm{det}(\mathrm d\psi^{-1}_t)\circ\psi_t\big)\rho\big(\mathrm d\psi_t\cdot,(J\circ\mathrm d\psi_t)\cdot\big)=\big(\mathrm{det}(\mathrm d\psi^{-1}_t)\circ\psi_t\big)g\big(\mathrm d\psi_t\cdot,\mathrm d\psi_t\cdot\big) \\ &=\big(\mathrm{det}(\mathrm d\psi^{-1}_t)\circ\psi_t\big)\psi_t^*g \ .
\end{align*}In particular, $g_t$ is conformal to $\psi^*_tg$ with conformal factor given by $u_t:=\big(\mathrm{det}(\mathrm d\psi^{-1}_t)\circ\psi_t\big)$. Now let $F:[0,+\infty)\to\R$ be the function defined in Lemma \ref{lem:functionFef} and consider the conformal change of metric $h=e^Fg$, where $F$ is evaluated at $\vl\vl q\vl\vl_g^2$ divided by $2$. The next step is to determine the Riemannian metric $$h_t:=e^{F\Big(\frac{\vl\vl q_t\vl\vl_{g_t}^2}{2}\Big)}g_t=e^{F\Big(\frac{\vl\vl q_t\vl\vl_{g_t}^2}{2}\Big)}u_t\cdot\psi_t^*g \ ,$$where $q_t$ is the $J_t$-holomorphic cubic differential whose real part is equal to $C_t$. Therefore, \begin{align*}
    h_t&=e^{F\Big(\frac{\vl\vl q_t\vl\vl_{g_t}^2}{2}\Big)}u_t\cdot\psi_t^*g=e^{F\Big(\frac{\vl\vl q_t\vl\vl^2_{g_t}}{2}\Big)}u_t\cdot e^{-F\Big(\frac{\vl\vl q\vl\vl_{g}^2\circ\psi_t}{2}\Big)}\psi^*_th \\ &=e^{F\Big(\frac{\vl\vl q_t\vl\vl^2_{g_t}}{2}\Big)-F\Big(\frac{\vl\vl q\vl\vl_{g}^2\circ\psi_t}{2}\Big)}u_t\cdot\psi_t^*h=v_t\cdot\psi_t^*h \ .
\end{align*}Again, the metric $h_t$ is conformal to $\psi_t^*h$ with conformal factor $v_t$ (notice that $v_0=u_0\equiv 1$). Using the formula of curvature by conformal change of metric, we get \begin{align*}
    K_{h_t}&=K_{v_t\psi_t^*h} \\ &=v_t^{-1}\Big(K_{\psi_t^*h}-\frac{1}{2}\Delta_{\psi_t^*h}\ln v_t\Big) \\ &=v_t^{-1}\Big(K_h\circ\psi_t-\frac{1}{2}\big(\Delta_h\ln(v_t\circ\psi_t^{-1})\big)\circ\psi_t\Big) \ ,
\end{align*}where in the last equality we used the functoriality of the Gaussian curvature and of the Laplacian, namely $$K_{\psi_t^*h}=\psi_t^*(K_h),\quad \Delta_{\psi_t^*h}\ln v_t=\psi_t^*\Big(\Delta_h\ln(v_t\circ\psi_t^{-1})\Big) \ .$$ The last term we need to determine in $G(J_t,C_t)$ is the one involving the norm of the cubic differential $q_t$. \begin{equation}\label{normcubicT}\vl\vl q_t\vl\vl_{h_t}^2=\vl\vl q_t\vl\vl_{v_t\psi_t^*h}^2=v_t^{-3}\vl\vl \psi_t^*q\vl\vl_{\psi_t^*h}^2=v_t^{-3}\vl\vl q\vl\vl_h^2\circ\psi_t \ .\end{equation} We can finally deduce an expression for the term $$K_{h_t}-\vl\vl q_t\vl\vl_{h_t}^2=v_t^{-1}\Big(K_h\circ\psi_t-\frac{1}{2}\big(\Delta_h\ln(v_t\circ\psi_t^{-1})\big)\circ\psi_t-v_t^{-2}\vl\vl q\vl\vl_h^2\circ\psi_t\Big) \ , $$ and compute the first order variation of operator $G$ along the path $t\mapsto(J_t,C_t)$, obtaining \begin{align*}
    \frac{\mathrm d}{\mathrm dt}\big(K_{h_t}-\vl\vl q\vl\vl_{h_t}^2+1\big)\Big|_{t=0}&=-\dot v\big(K_h-\vl\vl q\vl\vl_h^2\big)+U(K_h)-\frac{1}{2}\Delta_h\dot v+2\dot v\vl\vl q\vl\vl_h^2-U\big(\vl\vl q\vl\vl_h^2\big) \\ &=\dot v\big(1+2\vl\vl q\vl\vl_h^2\big)-\frac{1}{2}\Delta_h\dot v \ ,
\end{align*}where in the last line we used that $(J,C)$ satisfies $G(J,C)=0$. At this point, it only remains to compute $\dot v$, i.e. the first order variation of $v_t$ \begin{align*}
    \dot v&=\frac{\mathrm dv_t}{\mathrm dt}\Big|_{t=0}=\frac{\mathrm d(v_t\circ\psi_t^{-1})}{\mathrm dt}\Big|_{t=0} \tag{$v_0\equiv 1$} \\ &=\frac{\mathrm d}{\mathrm dt}e^{F\Big(\frac{\vl\vl q_t\vl\vl^2_{g_t}\circ\psi_t^{-1}}{2}\Big)-F\Big(\frac{\vl\vl q\vl\vl_{g}^2}{2}\Big)}u_t\circ\psi_t^{-1}\Big|_{t=0} \\ &=\dot u+\frac{1}{2}F'\Big(\frac{\vl\vl q\vl\vl_g^2}{2}\Big)\frac{\mathrm d\big(\vl\vl q_t\vl\vl_{g_t}^2\circ\psi_t^{-1}\big)}{\mathrm dt}\Big|_{t=0} \ .
\end{align*}By imitating the steps performed for relation (\ref{normcubicT}), we deduce that $$\vl\vl q_t\vl\vl_{g_t}^2=u_t^{-3}\vl\vl q\vl\vl^2_g\circ\psi_t \ .$$ Since $(\psi_t)$ represents the flow of $U$, the first order variation of the conformal factor $u_t$ is given by $$\dot u=\frac{\mathrm d}{\mathrm dt}\Big(\det(\mathrm d\psi_t^{-1})\circ\psi_t\Big)\Big|_{t=0}=-\dive_g U \ .$$ To conclude, we have \begin{align*}
    \dot v&=\dot u-\frac{3}{2}F'\Big(\frac{\vl\vl q\vl\vl_g^2}{2}\Big)\dot u\vl\vl q\vl\vl_g^2 \\ &=\dive_gU\Big(\frac{3}{2}F'\Big(\frac{\vl\vl q\vl\vl_g^2}{2}\Big)\vl\vl q\vl\vl_g^2-1\Big) \ ,
\end{align*}hence the first order variation of operator $G$ along the path $t\mapsto(J_t,C_t)$ is $$\mathrm d_{(J,A)}G\big(\liederivative_UJ,g^{-1}\liederivative_UC\big)=-\frac{1}{2}\Delta_h\lambda+(1+2\vl\vl q\vl\vl_h^2)\lambda,\quad \lambda:=\dive_gU\Big(\frac{3}{2}F'\Big(\frac{\vl\vl q\vl\vl_g^2}{2}\Big)\vl\vl q\vl\vl_g^2-1\Big) \ .$$Regarding the second part of the statement, observe that the following inequality holds $$T(\lambda):=-\frac{1}{2}\Delta_h\lambda+(1+2\vl\vl q\vl\vl_h^2)\lambda\ge -\frac{1}{2}\Delta_h\lambda +\lambda=:S(\lambda) \ .$$ Since the linear operator $S$ is known to be self-adjoint and positive, hence injective, over $L^2(\Sg, \mathrm d\mathfrak a_h)$, so is the linear operator $T$. Therefore, if $(\liederivative_UJ,g^{-1}\liederivative_UC)$ lies inside the kernel of $\mathrm d_{(J,A)}G$, then the function $\lambda=\dive_gU\Big(\frac{3}{2}F'\Big(\frac{\vl\vl q\vl\vl_g^2}{2}\Big)\vl\vl q\vl\vl_g^2-1\Big)$ is send to $0$ by the operator $T$. At this point, we would conclude by saying that the divergence of the vector field $U$ is zero (see (\ref{divergenzaXformularho})), which is obviously true if $A=0$, i.e. $q=0$. As for the first order variation of the operator $G$ at points where $q\neq 0$, using Lemma \ref{lem:combinationoffunctionfandfprime} with $ t=\frac{\vl\vl q\vl\vl_J^2}{2}$, we get that the function $ \frac{3}{2}F'\Big(\frac{\vl\vl q\vl\vl_g^2}{2}\Big)\vl\vl q\vl\vl_g^2-1$ is strictly negative. In particular, $\dive_gU\Big(\frac{3}{2}F'\Big(\frac{\vl\vl q\vl\vl_g^2}{2}\Big)\vl\vl q\vl\vl_g^2-1\Big)$ is zero if and only if $\dive_gU=0$. 
\end{proof}
\begin{remark}
    In order to conclude the proof of the Theorem \ref{thmE}, one of the remaining results to show is the inclusion of the subspace $W_{(J,A)}$ inside the tangent space to the infinite-dimensional space $\haffrhozerotilde$. In order to show this inclusion, it is necessary to explain how the differential equations defining $W_{(J,A)}$ are related to the process of infinite-dimensional symplectic reduction. In view not to overextending the discussion too much, during the proof of the last lemma that follows, we will use a result presented and proved in Section \ref{sec:5.3}.
\end{remark}
\begin{lemma}\label{lem:Wisomorphictofinitedimensionalquotient}
    For every $(J,A)\in\haffrhozerotilde$, we have $$W_{(J,A)}\cap T_{(J,A)}\Big(\Ham(\Sg, \rho)\cdot(J,A)\Big)=\{0\} \ .$$ In particular, the natural quotient projection $\pi:\haffrhozerotilde\to\defgtilde$ induces a linear isomorphism $$\mathrm d_{(J,A)}\pi:W_{(J,A)}\overset{\cong}{\longrightarrow}T_{[J,A]}\defgtilde \  .$$ 
\end{lemma}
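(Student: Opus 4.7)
The plan is to first establish the triviality of the intersection and then deduce the isomorphism as an immediate corollary by a dimension count. Throughout, I would use the inclusion $W_{(J,A)}\subset T_{(J,A)}\haffrhozerotilde$, whose verification is deferred to Section~\ref{sec:5.3}, as signalled by the preceding Remark.

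First I would pick an arbitrary element of the intersection and realise it as $(\dot J,\dot A)=(\liederivative_X J,g_J^{-1}\liederivative_X C)$ for some Hamiltonian vector field $X\in\Lham(\Sg,\rho)$, since the infinitesimal $\Ham(\Sg,\rho)$-action has precisely this form. The invariance of $W_{(J,A)}$ under $\i$ (Lemma~\ref{lem:invariance by the complex structure}) combined with the explicit formula of Lemma~\ref{lem:complexstructureliederivative} then shows that
$$\i(\dot J,\dot A)=\big(-\liederivative_{JX}J,\,-g_J^{-1}\liederivative_{JX}C\big)\in W_{(J,A)}\subset T_{(J,A)}\haffrhozerotilde.$$
Tangent vectors to $\haffrhozerotilde$ lie in the kernel of the differential of the Gauss-equation operator $G$ of Lemma~\ref{lem:differentialGaussequation}, so applying that lemma with $U:=JX$ forces $JX$ to be symplectic, i.e.\ $\mathrm d(\iota_{JX}\rho)=0$. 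Combined with $\iota_X\rho$ being exact (hence closed), this exhibits $X$ as simultaneously Hamiltonian and harmonic in the sense of Section~\ref{sec:3.1}, and the direct sum decomposition $\Lsymp(\Sg,\rho)=\Lham(\Sg,\rho)\oplus\mathfrak h_J$ recorded in (\ref{splittingsymplecticvectorfield}) forces $X=0$, whence $(\dot J,\dot A)=0$.

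For the final assertion, the kernel of $\mathrm d_{(J,A)}\pi$ coincides with $T_{(J,A)}(\Ham(\Sg,\rho)\cdot(J,A))$, so the triviality just proved yields injectivity of $\mathrm d_{(J,A)}\pi|_{W_{(J,A)}}$. Surjectivity follows by a dimension count, since Theorem~\ref{thmD} provides $\dim W_{(J,A)}\geq 16g-16+2g$ while $\dim T_{[J,A]}\defgtilde=16g-16+2g$ by construction, forcing equality. The main delicate point I expect in executing this plan is the ingredient $W_{(J,A)}\subset T_{(J,A)}\haffrhozerotilde$ itself: it cannot be checked by direct substitution into the linearisation of (\ref{Gausscodazzi}), and instead requires the moment-map characterisation of $W_{(J,A)}$ supplied by Theorem~\ref{thmG} together with the identification of $\haffrhozerotilde$ as the zero locus of an explicit moment map, both developed in Section~\ref{sec:5.3}.
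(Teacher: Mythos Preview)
Your proof is correct and follows essentially the same approach as the paper's: the $\i$-invariance of $W_{(J,A)}$ together with Lemma~\ref{lem:complexstructureliederivative} and Lemma~\ref{lem:differentialGaussequation} force $JX$ to be symplectic, and then one concludes $X=0$. The only cosmetic difference is in this last step: where the paper spells out that $\iota_{JX}\rho=-\mathrm dH\circ J$ being closed means $H$ is $g_J$-harmonic, hence constant, you invoke the direct sum $\Lham(\Sg,\rho)\cap\mathfrak h_J=\{0\}$ from (\ref{splittingsymplecticvectorfield}) directly --- these are equivalent, since that decomposition is precisely the Hodge splitting $B^1(\Sg)\oplus\mathcal H^1(\Sg)$ of $Z^1(\Sg)$.
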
\begin{proof}
    Let $X$ be a Hamiltonian vector field on $\Sg$ with Hamiltonian function $H$, and suppose that $(\liederivative_XJ,g^{-1}\liederivative_XC)$ belongs to $W_{(J,A)}$. Thus, according to Lemma \ref{lem:complexstructureliederivative} and the $\i$-invariance of $W_{(J,A)}$, the same has to hold for $\i(\liederivative_XJ,g^{-1}\liederivative_XC)=(-\liederivative_{JX}J, -g^{-1}\liederivative_{JX}C)$. Since $W_{(J,A)}$ is contained in $T_{(J,A)}\haffrhozerotilde$ (see Proposition \ref{prop:Winsidehaffrhozero}), the differential of operator $G$ considered in Lemma \ref{lem:differentialGaussequation} has to send the pair $(-\liederivative_{JX}J, -g^{-1}\liederivative_{JX}C)$ to zero. By the second part of Lemma \ref{lem:differentialGaussequation}, we deduce that $JX$ is $\rho$-symplectic, namely $\mathrm d(\iota_{JX})=0$. This implies that the $1$-form $-\mathrm dH\circ J=-(\iota_X\rho)\circ J=\iota_{JX}\rho$ is closed, and therefore the function $H$ is $g$-harmonic (since $\mathrm d\big(\mathrm d H\circ J\big)=-\Delta_gH\rho$). The only harmonic functions on a closed manifold are the constants, hence we deduce that the vector field $X$ is equal to zero, which proves the first part of the statement. Regarding the second one, let $\defgtilde$ be the quotient of the infinite-dimensional space $\haffrhozerotilde$ by the group $\Ham(\Sg,\rho)$ and consider the quotient projection $\pi:\haffrhozerotilde\to\defgtilde$. By definition, the kernel of $\mathrm d_{(J,A)}\pi$ coincides with $T_{(J,A)}\big(\Ham(\Sg,\rho)\cdot (J,A)\big)$. Hence, by the first part of the statement, the map $\mathrm d_{(J,A)}\pi$ is injective. Moreover, since $\dim W_{(J,A)} \geq 16g-16+2g$ (Theorem \ref{thmD}) and $\dim\defgtilde=16g-16+2g$, this is actually an isomorphism.
\end{proof}
\begin{remark}
The above lemma shows a major difference with the $\PSL(2,\R)\times\PSL(2,\R)$ case (\cite[Lemma 4.21]{mazzoli2021parahyperkahler}), where the authors were able to obtain a similar result for the group of symplectomorphisms of the surface not necessarily Hamiltonian. This forces us to perform an additional (finite-dimensional) quotient to obtain the Hitchin component, and thus produces additional analytical difficulties carried out in Section \ref{sec:6.1}.
\end{remark}
\subsection{The circle action on \texorpdfstring{$\haff$}{HS(S)}}\label{sec:4.5}
Recall that the space $\pick$ consists of pairs $(J,A)$, where $J$ is an almost-complex structure on $\R^2$ and $A$ is a $1$-form with values in the trace-less and $g_J^0$-symmetric endomorphisms bundle of $\R^2$ such that $A(J\cdot)=A(\cdot)J$ and $A(X)Y=A(Y)X, \ \forall X,Y\in T\R^2$ (see Section \ref{sec:4.1} for more details). In particular, there is $\text{MCG}(T^2)\cong\SL(2,\Z)$-equivariant isomorphism between $\pick$and the holomorphic vector bundle $\cubic$ of cubic differentials over Teichm\"uller space of the torus (\cite[Corollary 3.3]{rungi2021pseudo}). In fact, if $(J,A)\in\pick$ then the $(0,3)$-tensor $C(\cdot,\cdot,\cdot)=g_J^0\big(A(\cdot)\cdot,\cdot\big)$ is the real part of a $J$-holomorphic cubic differential $q$ on $(T^2, J)$. The natural $S^1$-action on $\cubic$ given by $(J,q)\mapsto (J,e^{-i\theta}q)$, can be induced on $\pick$ and results in the following formula \begin{align*}\widehat\Psi_\theta \ :& \ \pick\longrightarrow\pick \\ &(J,A)\mapsto (J,\cos\theta A(\cdot)-\sin\theta A(\cdot)J) \ . \ \end{align*}
It is clear from the definition that $\widehat\Psi_\theta$ preserves the $0$-section in $\pick$ (seen as a vector bundle over $\almost\cong\mathcal T(T^2)$), hence it induces an $S^1$-action on $\deft$ which will still be denoted by $\widehat\Psi_\theta$ by abuse of notation. \begin{lemma}[\cite{rungi2021pseudo}]\label{lem:circleactiontoruscase}
    The $S^1$-action on $\deft$ is Hamiltonian with respect to $\widehat\ome_f$ and it satisfies \begin{equation*}
     \widehat\Psi_\theta^{*}\widehat\g_f=\widehat\g_f \ ,
\end{equation*} for any choice of a smooth function $f:[0,+\infty)\to(-\infty,0]$ such that: $f(0)=0, f'(t)<0$ for any $t>0$ and $\displaystyle\lim_{t\to+\infty}f(t)=-\infty$. Moreover, the Hamiltonian function is given by $$\widehat H\big(J,A\big)=\frac 2{3}f\Big(\frac{\vl\vl q\vl\vl^2_J}{2}\Big) \ , $$ where $q$ is the $J$-holomorphic cubic differential whose real part is equal to $C=g_JA$.
\end{lemma}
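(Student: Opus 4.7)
The plan is to deduce all three properties (isometric action, Hamiltonian property, and the explicit Hamiltonian function) by direct algebraic calculation from the explicit expressions of $\widehat{\ome}_f$ and $\widehat{\g}_f$ in \eqref{symplecticform} and \eqref{pseudoriemannianmetric}, exploiting the identities \eqref{rel:cpxstructurescalarprod1} and \eqref{rel:cpxstructurescalarprod2}. As a preliminary, I would verify that $A_\theta := \cos\theta\, A(\cdot) - \sin\theta\, A(\cdot)J$ is still a valid element of $\pick_J$, i.e.\ that each $A_\theta(X)$ is trace-free and $g_J^0$-symmetric, and that the identities $A_\theta(J\cdot)=A_\theta(\cdot)J$ and $A_\theta(X)Y=A_\theta(Y)X$ still hold. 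These are routine checks using the hypotheses on $A$ together with the fact that in dimension two a trace-free $g_J^0$-symmetric endomorphism anti-commutes with $J$.

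The cornerstone of all subsequent arguments is that the scalar $\normpick = \tfrac{1}{8}\|A\|^2$ is invariant under $\widehat{\Psi}_\theta$. Indeed, \eqref{rel:cpxstructurescalarprod2} gives $\langle A, AJ\rangle = 0$ and \eqref{rel:cpxstructurescalarprod1} gives $\|AJ\|^2 = \|A\|^2$, so
\[
\|A_\theta\|^2 = \cos^2\theta\,\|A\|^2 + \sin^2\theta\,\|AJ\|^2 - 2\sin\theta\cos\theta\,\langle A, AJ\rangle = \|A\|^2.
\]
Consequently $f(\normpick)$ and $f'(\normpick)$ are $\widehat{\Psi}_\theta$-invariant. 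For the invariance of $\widehat{\g}_f$ and $\widehat{\ome}_f$, observe that $\mathrm{d}\widehat{\Psi}_\theta$ acts trivially on the $\dot J$ component (since $J$ is fixed) and rotates $\dot A_0$ by the analogous formula $\dot A_0 \mapsto \cos\theta\,\dot A_0 - \sin\theta\,\dot A_0 J$ (with appropriate trace correction coming from Lemma \ref{lem:tangentpickform}). Each of the three bilinear terms in \eqref{pseudoriemannianmetric} and \eqref{symplecticform} is then invariant by the same algebraic identities applied to $\dot A_0$ and $\dot A_{\mathrm{tr}}$.

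For the Hamiltonian property, I would compute the infinitesimal generator
\[
X_{(J,A)} \;=\; \frac{\mathrm d}{\mathrm d\theta}\widehat{\Psi}_\theta(J,A)\bigg|_{\theta=0} \;=\; (0,-AJ),
\]
which is purely vertical with $-AJ$ already trace-free and $g_J^0$-symmetric, so $(-AJ)_0 = -AJ$ and $(-AJ)_{\mathrm{tr}}=0$. Plugging into \eqref{symplecticform} and using \eqref{rel:cpxstructurescalarprod1} one gets
\[
\iota_{X}\widehat{\ome}_f\bigl((\dot J,\dot A)\bigr) = -\frac{f'}{6}\langle -AJ,\dot A_0 J\rangle = \frac{f'}{6}\langle A,\dot A_0\rangle.
\]
On the other hand, since $A$ is trace-free and $\dot A$ decomposes as in \eqref{decompositiondotA}, one has $\dot{\overbracket{\normpick}} = \tfrac{1}{4}\langle A,\dot A_0\rangle$, whence $\mathrm{d}\widehat H = \tfrac{2}{3}f'(\normpick)\cdot \dot{\overbracket{\normpick}} = \tfrac{f'}{6}\langle A,\dot A_0\rangle$, matching $\iota_X\widehat{\ome}_f$ exactly.

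The main bookkeeping difficulty lies in the trace part of $\dot A$, which depends on $\dot J$ through the relation in Lemma \ref{lem:tangentpickform}. Since $\widehat{\Psi}_\theta$ keeps $J$ fixed but alters $A$, the decomposition $\dot A = \dot A_0 + \tfrac{1}{2}\tr(JA\dot J)\mathds{1}$ changes accordingly under the differential of the action. However, because every contraction appearing in $\mathrm{d}\widehat H$ and $\iota_X\widehat{\ome}_f$ is taken against the trace-free tensor $A$, the trace-part contributions cancel out and these complications do not affect the final computation. Alternatively, one could observe that under the identification $\pick \cong \cubic$, the map $\widehat{\Psi}_\theta$ is just multiplication by $e^{-i\theta}$ on the fibres, which makes the invariance of $\|q\|^2_J$ (and hence of $\widehat{H}$, $\widehat{\g}_f$, $\widehat{\ome}_f$) transparent, and then one only needs to verify the moment-map formula by an infinitesimal calculation.
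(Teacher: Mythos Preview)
The paper does not give a proof of this lemma: it is quoted verbatim from the authors' earlier work \cite{rungi2021pseudo}, so there is no argument in the present paper to compare against. Your direct computation is correct and is essentially the natural one---invariance of $\normpick$ under the rotation $A\mapsto A_\theta$ (via \eqref{rel:cpxstructurescalarprod1}--\eqref{rel:cpxstructurescalarprod2}) gives the isometry property, and the moment-map identity $\iota_X\widehat\ome_f=\mathrm d\widehat H$ follows from the same variation formula $\dot{(\normpick)}=\tfrac14\langle A,\dot A_0\rangle$ that the paper itself uses elsewhere (e.g.\ in the proof of Lemma~\ref{lem:ordertwoderivatives}).
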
Moving on to the case of genus $g\ge 2$, we still have an $S^1$-action on $\cubicg$ given by $([J], q)\mapsto ([J], e^{-i\theta}q)$, which can be induced on the $\SL(3,\R)$-Hitchin component using the parametrization $$\Phi:\hitc\overset{\cong}{\longrightarrow}\cubicg$$ found by Labourie and Loftin (see Section \ref{sec:2.2}). Thanks to Proposition \ref{prop:defgandhaff} and to the construction explained in Section \ref{sec:2.3}, we know that $\hitc$ is diffeomorphic to the following space \begin{equation*}\haffrhozero:=\bigslant{\left\{ (J,C) \ \Bigg | \ \parbox{19em}{$J$ is an (almost) complex structure on $\Sg$ \\ $C$ is the real part of a $J$-cubic differential \\ $\big(h:=e^{F\big(\frac{\vl\vl q\vl\vl^2_{g_J}}{2}\big)}g_J, A:=g_J^{-1}C\big)$ satisfy (\ref{Gausscodazzi})} \right\}}{\Symp_0(\Sg)} \ ,\end{equation*}where $F:[0,+\infty)\to\R$ is the smooth function defined in Lemma \ref{lem:functionFef}. In particular, we can then describe the induced $S^1$-action by the following formula:
\begin{align*}\Psi_\theta \ :& \ \hitc\longrightarrow\hitc \\ &(J,A)\mapsto (J,\cos\theta A(\cdot)-\sin\theta A(\cdot)J) \ . \ \end{align*} 
\begin{manualtheorem}C 
Let $\rho$ be a fixed area form on $\Sg$, then the circle action on $\hitc$ is Hamiltonian with respect to $\ome_f$ and it satisfies: \begin{equation*}\Psi_\theta^*\g_f=\g_f,\quad\forall\theta\in\R \ .\end{equation*}The Hamiltonian function is given by: $$H(J,q):=\frac{2}{3}\int_\Sg f\Big(\frac{\vl\vl q\vl\vl^2_{g_J}}{2}\Big)\rho,$$ where $f:[0,+\infty)\to(-\infty,0]$ is the smooth function defined by (\ref{definitionf}).
\end{manualtheorem}
The proof of Theorem \ref{thmC} is simply an adaptation of the proof made in the torus case (\cite{rungi2021pseudo} and Lemma \ref{lem:circleactiontoruscase}). In fact, as already explained in Remark \ref{rem:identitypointwise}, identities valid for elements in $\pick$ can be interpreted as point-wise identities for smooth sections in $\pickg$, and, according to the construction explained in Section \ref{sec:4.1}, the Hitchin component $\hitc$ can be seen a subset of $\pickg$.
\section{The infinite dimensional symplectic reduction}
In this section we present the process that led us to the definition of the pseudo-K\"ahler structure on the $\SL(3,\R)$-Hitchin component and the characterization of its tangent space as described in Remark \ref{rem:equationsHitchincomponent}. The main theorem we will use was proved by Donaldson (see \cite{donaldson2003moment}).

\subsection{Donaldson's construction}\label{sec:5.1}
In the following discussion, we present a (much more general) construction done in \cite[\S 2]{donaldson2003moment} and adapted to our case of interest. \\ \\ 
Since we will be using a lot of notation from Section \ref{sec:4.1}, let us briefly recall the construction of the infinite-dimensional space $\pickg$. It has been defined as the space of smooth sections of the bundle $$P\big(\pick\big):=\bigslant{P\times\pick}{\SL(2,\R)}\longrightarrow\Sg \ ,$$ where $\SL(2,\R)$ acts diagonally on two factors. In particular, each element in $\pickg$ can be described as a pair $(J,A)$, with $J$ an almost-complex structure on $\Sg$, and $A$ a $1$-form with values in the trace-less and $g_J$-symmetric endomorphisms of $T\Sg$ such that $A(J\cdot)=A(\cdot)J$ and $A(X)Y=A(Y)X, \ \forall X,Y\in\Gamma(T\Sg)$. Moreover, a tangent vector $(\dot J,\dot A)$, where $\dot A:=g_J^{-1}\dot C$, at $(J,A)$ can be considered as the data of:\begin{itemize}
     \item a section $\dot J$ of End$(T\Sg)$ such that $\dot JJ+J\dot J=0$, namely $\dot J$ is a $g_J$-symmetric and trace-less endomorphism of $T\Sg$; \item an End$(T\Sg, g_J)$-valued $1$-form $\dot A$ such that \begin{equation*}\dot A=\dot A_0+\frac{1}{2}\tr(JA\dot J)\mathds 1 \ ,\end{equation*}where $\mathds 1$ is the $2\times 2$ identity matrix and $\dot A_0$ is the trace-less part of $\dot A$. In particular, the trace-part $\dot A_{\text{tr}}$ of $\dot A$ is uniquely determined by $\dot J$.
 \end{itemize}
Let us denote with $s=(J,A)$ an element in $\pickg$, and with $\dot s$ the corresponding tangent vector. Recall there is a natural $\SL(2,\R)$-action on $\pick$ (see (\ref{SLaction})). Given an $\SL(2,\R)$-invariant symplectic form $\widehat\omega$ on $\pick$, there is an induced symplectic structure on each vertical subspace of $P\big(\pick\big)$, denoted with $\widehat\omega_{s(x)}$ for $x\in\Sg$. In particular, given two tangent vectors $\dot s,\dot s'\in T_s\pickg$, we can define \begin{equation}\label{symplecticformDonaldson}\ome_s(\dot s,\dot s'):=\int_{\Sg}\widehat\omega_s(\dot s,\dot s')\rho \ .\end{equation} This gives rise to a formal symplectic structure on $\pickg$ which is invariant by the action of $\Symp_0(\Sg,\rho)$. Now, suppose for a moment that the $\SL(2,\R)$-action on $\pick$ we introduced in (\ref{SLaction}) is Hamiltonian with respect to the symplectic form $\widehat\omega$, and with moment map $\widehat\mu:\pick\to\Lsl(2,\R)^*$. Given any section $s\in\pickg$, the moment map $\widehat\mu$ induces a section $\widehat\mu_s$ of the bundle End$_0(T\Sg)^*$. Then, the following result holds \begin{theorem}[{\cite[Theorem 9]{donaldson2003moment}}]\label{thm:donaldson}
    Let $\rho$ be an area form on $\Sg$ and let $\nabla$ be any torsion-free connection on $T\Sg$ satisfying $\nabla\rho=0$. Define the map $\boldsymbol{\mu}:\pickg\to\Omega^2(\Sg)$ as follows: $$\boldsymbol{\mu}(s):=\widehat\omega\big(\nabla_\bullet s,\nabla_\bullet s\big)+\langle\widehat\mu_s \ | \ R^\nabla\rangle -\mathrm d\big(c(\nabla_\bullet\widehat\mu_s)\big) \ .$$ Then, \begin{enumerate}
        \item[(1)] $\boldsymbol\mu(s)$ is a closed $2$-form for any $s\in\pickg$; \item[(2)] $\boldsymbol\mu$ is equivariant with respect to the action of $\Ham(\Sg,\rho)$; \item[(3)] Given a vector field $V\in\Lham(\Sg,\rho)$, and $\gamma_V$ a primitive of $\iota_V\rho$, the differential of the map \begin{align*}
            &\pickg\longrightarrow\R \\ & s\mapsto\int_\Sg\gamma_V\cdot\boldsymbol\mu(s)
        \end{align*}equals $$\boldsymbol\omega_s(\dot s, \liederivative_Vs)=\int_\Sg\widehat\omega_s(\dot s,\liederivative_Vs)\rho \ .$$
    \end{enumerate}
\end{theorem}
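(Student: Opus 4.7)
The plan is to verify the three assertions separately. Assertion (1) is immediate in our setup: $\Sg$ is a real $2$-manifold, so every $2$-form is automatically closed and $\mathrm d\boldsymbol\mu(s)=0$ for all $s\in\pickg$. Assertion (2) amounts to showing that $\boldsymbol\mu(\phi^*s)=\phi^*\boldsymbol\mu(s)$ for every $\phi\in\Ham(\Sg,\rho)$. All the ingredients entering $\boldsymbol\mu$ --- the fiberwise symplectic form $\widehat\omega$, the fiberwise moment map $\widehat\mu$, covariant differentiation, the curvature $R^\nabla$, and the contraction $c$ --- are natural with respect to diffeomorphisms, and a direct check shows that the whole expression is independent (modulo the exact correction $-\mathrm d(c(\nabla_\bullet\widehat\mu_s))$) of the choice of torsion-free $\rho$-preserving connection. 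In particular, selecting a $\phi$-invariant representative of $\nabla$ reduces equivariance to the naturality of each constituent.

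The bulk of the proof is assertion (3). The plan is to compute the differential of the functional $s\mapsto\int_\Sg\gamma_V\cdot\boldsymbol\mu(s)$ at $s$ in the direction $\dot s$ by differentiating each of the three terms defining $\boldsymbol\mu$. Varying the first term $\widehat\omega(\nabla_\bullet s,\nabla_\bullet s)$ produces $2\widehat\omega(\nabla_\bullet\dot s,\nabla_\bullet s)$; after multiplying by $\gamma_V$ and integrating by parts on $\Sg$ (using $\nabla\rho=0$ and $\mathrm d\gamma_V=\iota_V\rho$) one extracts a piece $\int_\Sg\widehat\omega_s(\dot s,\nabla_V s)\rho$ together with a residual curvature contribution arising from the Ricci identity for the iterated covariant derivative. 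Varying the second term $\langle\widehat\mu_s\mid R^\nabla\rangle$ and invoking the moment map equation $\mathrm d\widehat\mu^\xi=\iota_{X_\xi}\widehat\omega$ on the fiber $\pick$ exactly cancels this residual curvature contribution. Finally, varying the exact correction $-\mathrm d(c(\nabla_\bullet\widehat\mu_s))$ and applying Stokes produces the further term needed to upgrade $\nabla_V s$ to the Lie derivative $\liederivative_V s$, via the fiberwise identity $\liederivative_V s-\nabla_V s=(\nabla V)\cdot s$, where $(\nabla V)$ acts on the fiber through the infinitesimal $\mathfrak{sl}(2,\R)$-action on $\pick$. Summing the three contributions yields the desired formula $\int_\Sg\widehat\omega_s(\dot s,\liederivative_V s)\rho$.

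The main obstacle lies in the intricate cancellations required by (3): the curvature term $\langle\widehat\mu_s\mid R^\nabla\rangle$ and the exact correction $-\mathrm d(c(\nabla_\bullet\widehat\mu_s))$ must conspire so that the derivation $\nabla_V$ --- naturally produced by integration by parts --- is promoted to the Lie derivative $\liederivative_V$, which is the actual infinitesimal generator of the $\Ham(\Sg,\rho)$-action on $\pickg$. The precise bookkeeping is dictated by the moment map property of $\widehat\mu$ on the fiber $\pick$, applied pointwise along $\Sg$; once this is carried out, the independence of the formula from the choice of connection (and hence equivariance, point (2)) is automatic, while closedness (point (1)) is free in dimension two.
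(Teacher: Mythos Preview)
The paper does not prove this theorem: it is quoted verbatim as \cite[Theorem~9]{donaldson2003moment} and used as a black box, with only the notation explained afterwards (and the auxiliary Lemma~\ref{lem:Donaldsonausiliario}, also cited from Donaldson, recalled immediately after). So there is no ``paper's own proof'' to compare against.

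Your sketch is a reasonable outline of Donaldson's original argument, and point~(1) is indeed free on a surface. One caveat on point~(2): you cannot in general ``select a $\phi$-invariant representative of $\nabla$'' for a given Hamiltonian diffeomorphism $\phi$. The correct route (which is what Lemma~\ref{lem:Donaldsonausiliario} encodes) is to first show that the combination $\widehat\omega(\nabla_\bullet s,\nabla_\bullet s)+\langle\widehat\mu_s\mid R^\nabla\rangle$ equals the pullback $s^*\widehat\omega_{P(\pick)}$ of a fixed closed $2$-form on the total space, independent of $\nabla$; equivariance then follows from the $\Symp(\Sg,\rho)$-invariance of that form and the naturality of $s\mapsto s^*$. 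Your sketch for~(3) captures the right mechanism --- integrate by parts, use the fiberwise moment map identity to cancel the curvature residue, and use the exact correction to pass from $\nabla_V s$ to $\liederivative_V s$ via the $\mathfrak{sl}(2,\R)$-action of the trace-free part of $\nabla V$ --- but be aware that the bookkeeping is genuinely delicate and your description remains at the level of a plan rather than a proof.
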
Before moving on, the meaning of each term in the definition of the moment map must be explained. First notice that $\nabla_\bullet s$ is a section of $T^*\Sg\otimes s^*\big(T^{\text{vert}}P\big(\pick\big)\big)$, hence we set $\widehat\omega\big(\nabla_\bullet s,\nabla_\bullet s\big)$ to be the $2$-form on $\Sg$ given by: $$\widehat\omega\big(\nabla_\bullet s,\nabla_\bullet s\big)(U,V):=\widehat\omega\big(\nabla_Us,\nabla_Vs\big), \ \text{for} \ U,V\in\Gamma(T\Sg) \ ,$$ where in the RHS of last equality we apply the symplectic form $\widehat\omega$ on $T^{\text{vert}}P\big(\pick\big)$ and the wedge product on the $1$-form part. Moreover, the covariant derivative $\nabla_\bullet\widehat\mu_s$ is a section of $T^*\Sg\otimes\mathrm{End}_0(T\Sg)^*$ and we get a $1$-form by performing the following contraction: $$c\big(\nabla_\bullet\widehat\mu_s\big)(v):=\sum_{j=1}^2\langle \nabla_{e_j}\widehat\mu_s \ | \ (v\otimes e^*_j)_0 \rangle, \ \text{for} \ v\in\Gamma(T\Sg) \ , $$ where $\{e_1,e_2\}$ is a local orthonormal frame of $T\Sg$ and $\{e_1^*,e_2^*\}$ is the associated orthonormal dual frame. Finally, the curvature tensor $R^\nabla$ of the torsion-free connection $\nabla$ is defined as: $$R^\nabla\big(U,V\big)W:=\nabla_V\nabla_UW-\nabla_U\nabla_VW-\nabla_{[V,U]}W \ ,$$ for any $U,V,W\in\Gamma(T\Sg)$. Because of the anti-symmetry in the first two entries of $R^\nabla$, it can be considered as a section of $\Omega^2(\Sg)\otimes\mathrm{End}_0(T\Sg)$. For this reason we can contract the endomorphism part of $R^\nabla$ with $\widehat\mu_s$ and obtain the $2$-form on $\Sg$ denoted with $\langle\widehat\mu_s \ | \ R^\nabla\rangle$. Let us recall the following technical result that will be useful later in the construction of our moment map.\begin{lemma}[{\cite[Lemma 13]{donaldson2003moment}}]\label{lem:Donaldsonausiliario} There exists a closed $2$-form $\widehat\omega_{P(\pick)}$ on $P\big(\pick\big)$ such that, for any section $s\in\pickg$, the following holds: $$s^*\widehat\omega_{P(\pick)}=\widehat\omega\big(\nabla_\bullet s,\nabla_\bullet s\big)+\langle\widehat\mu_s \ | \ R^\nabla\rangle \ .$$ In particular, since $\pick$ is contractible, the de-Rham cohomology class of $\momentmap(s)$ in $H^2_{\text{dR}}(\Sg,\R)$ does not depend on the chosen section.

\end{lemma}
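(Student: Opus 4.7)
The plan is to construct $\widehat\omega_{P(\pick)}$ explicitly on the total space $P(\pick)$ by a ``minimal coupling'' construction, and then verify the pullback formula by a local computation. The torsion-free connection $\nabla$ on $T\Sg$ preserving $\rho$ is equivalent to an $\SL(2,\R)$-principal connection on the frame bundle $P \to \Sg$; let $\theta \in \Omega^1(P; \mathfrak{sl}(2,\R))$ denote its connection $1$-form. Under the identification between $\mathfrak{sl}(2,\R)$ and trace-free endomorphisms of $\R^2$, the curvature $2$-form $\Omega^\theta := d\theta + \tfrac{1}{2}[\theta,\theta]$ corresponds, via any section of $P$, to the Riemann curvature $R^\nabla$.

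On the product $P \times \pick$, consider the $2$-form
\[
\widetilde\omega := \pi_2^*\widehat\omega + d\scall{\widehat\mu\circ\pi_2}{\theta\circ d\pi_1},
\]
where $\scall{\cdot}{\cdot}\colon \mathfrak{sl}(2,\R)^* \times \mathfrak{sl}(2,\R) \to \R$ is the natural pairing. By construction $d\widetilde\omega = 0$, since $\widehat\omega$ is closed and the second summand is exact. The first step is to check that $\widetilde\omega$ is $\SL(2,\R)$-basic with respect to the diagonal $\SL(2,\R)$-action on $P \times \pick$, so that it descends to a closed $2$-form $\widehat\omega_{P(\pick)}$ on the quotient $P(\pick)$. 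Invariance follows from the equivariance of $\widehat\omega$, $\widehat\mu$ and $\theta$. Horizontality is where the moment map hypothesis enters: if $\xi \in \mathfrak{sl}(2,\R)$ generates the fundamental field $\widehat X_\xi$ on $P \times \pick$, then using $\iota_{\widehat X_\xi} \pi_2^*\widehat\omega = d\scall{\widehat\mu\circ\pi_2}{\xi}$ (from the moment map equation on $\pick$) and $\theta(\widehat X_\xi) = \xi$, a Cartan-formula computation shows that $\iota_{\widehat X_\xi}\widetilde\omega = 0$.

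The second step is to compute the pullback by a section $s\in \pickg$. Locally, trivialise $P$ over an open set $U\subset \Sg$ by a frame field $F\colon U \to P$; then $s$ is represented by a smooth map $\sigma\colon U \to \pick$ via $s = [F,\sigma]$, and $F^*\theta$ is identified with the Christoffel symbols of $\nabla$. By the very definition of the associated-bundle covariant derivative, $\nabla_\bullet s$ along $U$ is identified with $d\sigma + \xi_{F^*\theta}\cdot\sigma$, where $\xi\cdot\sigma$ denotes the infinitesimal action of $\mathfrak{sl}(2,\R)$ on $\pick$. Pulling $\widetilde\omega$ back along $(F,\sigma)$ yields three contributions: the term $\sigma^*\widehat\omega$, a mixed term involving $\widehat\omega$ paired with $\xi_{F^*\theta}\cdot\sigma$, and a curvature term $\scall{\widehat\mu_s}{F^*\Omega^\theta}$. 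Using once more the moment map relation $\iota_{\xi\cdot}\widehat\omega = d\scall{\widehat\mu}{\xi}$, the first two contributions recombine exactly into $\widehat\omega(\nabla_\bullet s, \nabla_\bullet s)$, while the third is $\scall{\widehat\mu_s}{R^\nabla}$. Since both sides are independent of the trivialisation, this yields the global identity
\[
s^*\widehat\omega_{P(\pick)} = \widehat\omega(\nabla_\bullet s,\nabla_\bullet s) + \scall{\widehat\mu_s}{R^\nabla}.
\]

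The main technical obstacle is the bookkeeping in the last step: keeping track of how the horizontal distribution defined by $\theta$ interacts with $d\sigma$ and how the cross terms cancel via the moment map equation. Once this is done, the de Rham cohomology class statement is immediate: since $\pick$ is contractible, $P(\pick) \to \Sg$ is homotopy equivalent to $\Sg$, so the class $[\widehat\omega_{P(\pick)}]$ pulls back to a class on $\Sg$ which is independent of the section used.
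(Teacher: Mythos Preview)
The paper does not give its own proof of this lemma; it is simply quoted from Donaldson's paper \cite[Lemma~13]{donaldson2003moment}. Your proof sketch is the standard minimal-coupling (or ``coupling form'') construction, which is precisely the argument Donaldson uses, so you are on the right track and essentially reproduce the intended proof.

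Two small points worth tightening. First, the sign in your definition $\widetilde\omega = \pi_2^*\widehat\omega + d\langle\widehat\mu\circ\pi_2,\pi_1^*\theta\rangle$ depends on the convention for the diagonal action on $P\times\pick$: with the paper's convention (left action $B\cdot F=F\circ B^{-1}$ on $P$ and conjugation on $\pick$), the fundamental vector field on the $P$-factor contracts with $\theta$ to give $-\xi$ rather than $+\xi$, so one of your two intermediate identities ($\theta(\widehat X_\xi)=\xi$ or the moment-map sign) has to flip for horizontality to actually cancel. This is pure bookkeeping, but since you flag it yourself as the main obstacle, it is worth writing out once carefully. Second, the ``in particular'' clause concerns $[\momentmap(s)]$ rather than $[s^*\widehat\omega_{P(\pick)}]$; you should note explicitly that $\momentmap(s)$ differs from $s^*\widehat\omega_{P(\pick)}$ only by the exact term $-\mathrm d\big(c(\nabla_\bullet\widehat\mu_s)\big)$, so the two have the same de~Rham class and your homotopy argument transfers immediately.
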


\subsection{The moment map on \texorpdfstring{$\pickg$}{pickg}}\label{sec:5.2}
Before proceeding with the genus $g\ge 2$ case, as occurred before, we need to recall some results proved in the torus case. In Section \ref{sec:4.1} we introduced an $\SL(2,\R)$-action on elements $(J,A)\in\pick$, given by $$P\cdot (J,A)=(PJP^{-1}, PA(P^{-1}\cdot)P^{-1}) \ , $$ where $A(P^{-1}\cdot)$ has to be interpreted as the action of $P\in\SL(2,\R)$ via pull-back on the $1$-form part of $A$. \begin{lemma}[\cite{rungi2021pseudo}]\label{lem:SLmomentmaptoruscase}
    The $\SL(2,\R)$-action on $\pick$ is Hamiltonian with respect to $\widehat\ome_f$ with moment map $\widehat\mu:\pick\to\Lsl(2,\R)^*$ given by
\begin{equation}
    \widehat\mu_{(J,A)}(X)=\bigg(1-f\Big(\frac{\vl\vl q\vl\vl^2_J}{2}\Big)\bigg)\tr(JX) \ ,
\end{equation}for all $X\in\Lsl(2,\R)$ and for any choice of a smooth function $f:[0,+\infty)\to(-\infty,0]$ such that: $f(0)=0, f'(t)<0$ for any $t>0$ and $\displaystyle\lim_{t\to+\infty}f(t)=-\infty$. Moreover, $q$ is the $J$-holomorphic cubic differential whose real part is equal to $C=g_JA$.
\end{lemma}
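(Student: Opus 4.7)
The plan is to verify the two defining properties of a moment map directly, adapting the computation carried out in our previous work on the torus case \cite{rungi2021pseudo} to account for the $\sqrt{2}$-rescaling of the cubic differential that has been introduced here (see Remark \ref{rem:osservazionedifferenzialecubicoriscalatowangequation}).

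First I would check equivariance. For every $P\in\SL(2,\R)$ and $X\in\Lsl(2,\R)$, the identity $\widehat\mu_{P\cdot(J,A)}(X) = \widehat\mu_{(J,A)}(\Ad_{P^{-1}}X)$ follows from two observations: (i) the $\SL(2,\R)$-invariance of the scalar product (\ref{def:scalarproductpick}) on $\pick$ implies that $\vl\vl q\vl\vl_J^2$, and hence the scalar factor $1-f(\normpick)$, is preserved by the action; (ii) cyclic invariance of the trace gives $\tr((PJP^{-1})X) = \tr(J\Ad_{P^{-1}}X)$. Combining (i) and (ii) yields equivariance with respect to the coadjoint action, since $\Lsl(2,\R)$ is identified with its dual via the trace pairing.

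Next I would compute the fundamental vector field generated by $X\in\Lsl(2,\R)$. Differentiating the action $P\cdot(J,A) = (PJP^{-1}, PA(P^{-1}\cdot)P^{-1})$ at $P = \exp(tX)$ at $t=0$ gives
\[
X^{\#}_{(J,A)} = \big([X,J],\; [X,A(\cdot)] - A(X\cdot)\big).
\]
Using Lemma \ref{lem:tangentpickform} one then decomposes the second component into its trace-free and trace parts, checking that the trace part is indeed determined by $[X,J]$ via the compatibility $\tr\dot A(Y) = \tr(JA(Y)[X,J])$, which follows from $A(J\cdot)=A(\cdot)J$ together with a short manipulation using $[X,J]J+J[X,J]=0$.

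For the Hamiltonian equation $d\widehat\mu^X = \iota_{X^{\#}}\widehat\ome_f$, I would differentiate the scalar function $\widehat\mu^X(J,A) = (1-f(\normpick))\tr(JX)$. Using the variation formula $\dot f = \tfrac{f'}{4}\langle A,\dot A_0\rangle$ from \cite{rungi2021pseudo} (suitably adjusted for the $\sqrt{2}$-rescaling of $q$), this produces
\[
d_{(J,A)}\widehat\mu^X(\dot J, \dot A) = -\tfrac{f'}{4}\langle A,\dot A_0\rangle\,\tr(JX) + (1-f(\normpick))\,\tr(\dot J X).
\]
On the other hand, expanding $\widehat\ome_f\big(X^{\#}_{(J,A)}, (\dot J,\dot A)\big)$ by (\ref{symplecticform}) yields three terms, one from each summand of $\widehat\ome_f$. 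The main obstacle is the algebraic identity matching the $f'$-pieces: after substituting the fundamental vector field and invoking the relations $A(J\cdot)=A(\cdot)J$, $A(X)Y=A(Y)X$, and $AJ=-JA$, one must show that the $\langle\dot A_{\tr}, \ast_J(\cdot)\rangle$ and $\langle\dot A_0,(\cdot) J\rangle$ contributions collapse to exactly $-\tfrac{f'}{4}\langle A,\dot A_0\rangle\tr(JX)$. The calibration of the coefficients $\tfrac{f'}{6}$ and $\tfrac{f'}{12}$ in (\ref{symplecticform}) was chosen precisely to make this matching work under the $\sqrt{2}$-rescaling, thereby aligning the resulting moment map with the left-hand side of Wang's equation so that the vanishing locus of the moment map in Section \ref{sec:5.2} encodes the Gauss equation for hyperbolic affine spheres.
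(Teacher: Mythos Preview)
The paper does not prove this lemma; it is stated with attribution to \cite{rungi2021pseudo}, where the computation was originally carried out. Your proposal reconstructs the natural direct-verification argument (equivariance plus the Hamiltonian condition $d\widehat\mu^X = \iota_{X^\#}\widehat\ome_f$), which is presumably what the cited paper does as well. The structure of your sketch is correct: equivariance follows from $\SL(2,\R)$-invariance of the norm and cyclicity of the trace exactly as you say; the fundamental vector field $X^\#_{(J,A)} = \big([X,J],\,[X,A(\cdot)]-A(X\cdot)\big)$ is right; and the differential of $\widehat\mu^X$ is as stated.

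The one incomplete step is the final algebraic identity matching the three terms of $\widehat\ome_f\big(X^\#,(\dot J,\dot A)\big)$ from (\ref{symplecticform}) against $-\tfrac{f'}{4}\langle A,\dot A_0\rangle\tr(JX) + (1-f)\tr(\dot J X)$, which you flag (``one must show'') but do not carry out. That computation is the actual substance of the lemma and requires the explicit matrix expressions for $A$, $\dot A_0$, $\dot A_{\mathrm{tr}}$ worked out in \cite{rungi2021pseudo}; without it the argument is a plausible outline rather than a proof. Since the present paper treats the result as imported, this is acceptable as a pointer to the original source, but as a self-contained proof it remains a sketch.
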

Moving on to the genus $g\ge 2$ case, in Section \ref{sec:4.1} we defined a (formal) family of pseudo-K\"ahler structures $(\g_f, \i,\ome_f)$ on $\pickg$, depending on a choice of a smooth function $f$ as in Lemma \ref{lem:SLmomentmaptoruscase}. If $\widehat\ome_f$ denotes the symplectic form induced on each fibre by an area-preserving isomorphism between $T_x\Sg$ and $\R^2$, then $$(\ome_f)_{(J,A)}\big((\dot J,\dot A),(\dot J', \dot A')\big)=\int_\Sg\widehat\ome_f\big((\dot J,\dot A),(\dot J', \dot A')\big)\rho$$ is obtained from relation (\ref{symplecticformDonaldson}) by integrating fibre-wise the family of symplectic forms introduced in the torus case. Moreover, according to Donaldson's construction of previous section, the group $\Symp_0(\Sg,\rho)$ acts on $\pickg$ preserving $\ome_f$ and the action of $\Ham(\Sg,\rho)$ is Hamiltonian.
\begin{theorem}\label{thm:ourmomentmap}
    The moment map found by Donaldson for the action of $\Ham(\Sg,\rho)$ on $\big(\pickg, \ome_f\big)$ can be expressed as: $$\momentmap(J,A)=-\frac{2}{3}f'\Big(\frac{\vl\vl\tau\vl\vl^2}{2}\Big)\big(\vl\vl\bar\partial\tau\vl\vl^2-\vl\vl\partial\tau\vl\vl^2\big)\rho+2K_J\bigg(f\Big(\frac{\vl\vl \tau\vl\vl^2_J}{2}\Big)-1\bigg)\rho+2i\bar\partial\partial f\Big(\frac{\vl\vl\tau\vl\vl^2}{2}\Big) \ ,$$where $\tau$ is the complex cubic differential whose real part is equal to $C=g_JA$ and where $\bar\partial=\bar\partial_J, \partial=\partial_J$.
\end{theorem}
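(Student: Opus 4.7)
The plan is to apply Donaldson's Theorem \ref{thm:donaldson} with the fibre-wise moment map $\widehat{\mu}_{(J,A)}(X) = (1-f)\tr(JX)$ for the $\SL(2,\R)$-action on $\pick$ from Lemma \ref{lem:SLmomentmaptoruscase}, and with $\nabla$ chosen to be the Levi-Civita connection of $g_J$, which is torsion-free and preserves both $\rho$ and $J$. Donaldson's formula then reads
\begin{equation*}
\momentmap(J,A) = \widehat{\omega}_f(\nabla_\bullet s,\nabla_\bullet s) + \langle\widehat{\mu}_s\mid R^\nabla\rangle - d\big(c(\nabla_\bullet\widehat{\mu}_s)\big),
\end{equation*}
and I will identify each summand with the corresponding term in the statement.

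\emph{Curvature term.} Since on the Riemann surface $(\Sg,g_J)$ the Levi-Civita curvature acts as $R^\nabla(U,V) = -K_J\, J\cdot\rho(U,V)$ with Donaldson's convention (Section \ref{sec:5.1}), and since $\widehat{\mu}_s$ is represented under the trace pairing on $\End_0(T\Sg)$ by the section $(1-f)J$, one computes directly
\begin{equation*}
\langle\widehat{\mu}_s\mid R^\nabla\rangle = (1-f)\tr\big(J\cdot(-K_J J)\big)\,\rho = 2K_J(f-1)\,\rho.
\end{equation*}

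\emph{Exact term.} Because $\nabla J = 0$, the covariant derivative of the section $(1-f)J$ representing $\widehat{\mu}_s$ is $-df\otimes J$. Evaluating the contraction $c$ of Section \ref{sec:5.1} in a $g_J$-orthonormal frame yields $c(\nabla_\bullet\widehat{\mu}_s) = -\,df\circ J$. Using the standard K\"ahler identity $d(d\psi\circ J) = 2i\,\bar{\partial}\partial\psi$ for functions on $(\Sg,J)$, we get
\begin{equation*}
-d\big(c(\nabla_\bullet\widehat{\mu}_s)\big) = d(df\circ J) = 2i\,\bar{\partial}\partial f\Big(\tfrac{\vl\vl\tau\vl\vl^2}{2}\Big).
\end{equation*}

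\emph{Variation term.} Since $\nabla J = 0$, the derivative $\nabla_U s = (0,\nabla_U A)$ has no $\dot J$-component, whence the constraint (\ref{tracedotA}) forces $(\nabla_U A)_{\tr} = 0$. The expression (\ref{symplecticform}) for $\widehat{\omega}_f$ restricted to vertical vectors of this form collapses to $\widehat{\omega}_f(\nabla_U s,\nabla_V s) = -\tfrac{f'}{6}\langle\nabla_U A,(\nabla_V A)J\rangle$; after antisymmetrising in $(U,V)$ by means of (\ref{rel:cpxstructurescalarprod2}), we obtain in a $g_J$-orthonormal oriented frame $(e_1,e_2)$
\begin{equation*}
\widehat{\omega}_f(\nabla_\bullet s,\nabla_\bullet s) = -\tfrac{f'}{3}\,\langle\nabla_{e_1}A,(\nabla_{e_2}A)J\rangle\,\rho.
\end{equation*}
It remains to show that the pointwise scalar equals $2\big(\vl\vl\bar{\partial}\tau\vl\vl^2 - \vl\vl\partial\tau\vl\vl^2\big)$. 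To this end I fix a local holomorphic coordinate $z$ with $g_J = e^\phi|dz|^2$, write $\tau = Q\,dz^3$, express $A = g_J^{-1}C$ with $C = \Re(\tau)$ as explicit $2\times 2$ matrices in the frame $(\partial_x,\partial_y)$, compute the Christoffel symbols of $g_J$ and hence $\nabla A$, and compare the result with the components of $\partial\tau = Q_z\,dz\otimes dz^3$ and $\bar{\partial}\tau = Q_{\bar z}\,d\bar z\otimes dz^3$, whose pointwise norms are $|Q_z|^2 e^{-4\phi}$ and $|Q_{\bar z}|^2 e^{-4\phi}$. Summing the three contributions yields the claimed formula.

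\emph{Main obstacle.} The principal technical difficulty is the variation term, where one must carry out the explicit local translation between the real $\End_0(T\Sg)$-valued $1$-form picture and the complex cubic differential. Unlike in the AdS analogue \cite[Theorem 6.5]{mazzoli2021parahyperkahler}, the tensor $A$ carries an additional $1$-form slot which produces several cross-terms under $\nabla$; however, the $J$-compatibility conditions $A(JX)=A(X)J$ and $A(X)Y=A(Y)X$ built into $\pick$, together with the identity $\langle MJ,N\rangle = -\langle M,NJ\rangle$, force these cross-terms to combine precisely into the difference $\vl\vl\bar{\partial}\tau\vl\vl^2 - \vl\vl\partial\tau\vl\vl^2$.
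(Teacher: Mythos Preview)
Your approach is essentially identical to the paper's: apply Donaldson's formula with $\nabla$ the Levi-Civita connection of $g_J$, and identify the three summands one by one. The curvature and exact terms are handled exactly as in the paper, and for the variation term you propose a holomorphic-coordinate computation where the paper uses a $g_J$-orthonormal frame; both routes are valid.

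One slip to flag: the ``antisymmetrising'' step is spurious. By Donaldson's definition the $2$-form $\widehat\omega_f(\nabla_\bullet s,\nabla_\bullet s)$ already satisfies
\[
\widehat\omega_f(\nabla_\bullet s,\nabla_\bullet s)(e_1,e_2)=\widehat\omega_f\big((0,\nabla_{e_1}A),(0,\nabla_{e_2}A)\big)=-\tfrac{f'}{6}\langle\nabla_{e_1}A,(\nabla_{e_2}A)J\rangle,
\]
since $\widehat\omega_f$ is skew and the pairing $\langle\nabla_U A,(\nabla_V A)J\rangle$ is already antisymmetric by (\ref{rel:cpxstructurescalarprod2}); no extra factor of $2$ arises. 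Correspondingly, the scalar you must identify is $\langle\nabla_{e_1}A,(\nabla_{e_2}A)J\rangle = 4\big(\|\bar\partial\tau\|^2-\|\partial\tau\|^2\big)$, not $2$ times that quantity. Your two factor-of-$2$ errors cancel, so the final formula is correct, but the intermediate reasoning should be cleaned up.
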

\begin{proof}
 We will determine the expression for $\momentmap$ using Theorem \ref{thm:donaldson}, hence starting from the explicit description of $\widehat\mu$ given in Lemma \ref{lem:SLmomentmaptoruscase}. As a torsion-free connection $\nabla$ we can choose the Levi-Civita connection with respect to $g_J=\rho(\cdot,J\cdot)$, which clearly satisfies $\nabla\rho=0$. Similar computations can be found in \cite{trautwein2018infinite}, where the functions $f$ and $F$ are chosen to have different properties. \vspace{0.2cm}\newline \textbf{The term $\widehat\omega(\nabla_\bullet s,\nabla_\bullet s)$:}\vspace{0.1cm}\newline Since $\nabla$ is the Levi-Civita connection for $g_J$, we have $\nabla_V J=0$, and the element $\nabla_VA$ is still an End$_0(T\Sg,g_J)$-valued $1$-form for any $V\in\Gamma(T\Sg)$. Now let $\{e_1,e_2\}$ be a local $g_J$-orthonormal frame of $T\Sg$ and let $\{e_1^*,e_2^*\}$ be the dual frame. Then, we get \begin{align*}(\widehat\omega_f)_{(J,A)}\big((0, \nabla_{e_1}A ), (0, \nabla_{e_2}A)\big)&=-\frac{1}{6}f'\Big(\frac{\vl\vl\tau\vl\vl^2}{2}\Big)\langle\nabla_{e_1}A,(\nabla_{e_2}A)J\rangle \ .\end{align*} According to the above observation, the tensors $\nabla_{e_1}A$ and $\nabla_{e_2}A$ can be written as $$\nabla_{e_1}A=(A_1)^1e^*_1+(A_2)^1e^*_2,\qquad \nabla_{e_2}A=(A_1)^2e^*_1+(A_2)^2e^*_2 \ ,$$ where $A_j:=A(e_j)$ for $j=1,2$ and $$(A_1)^k:=\begin{pmatrix}
        a^{1k}_{11} & a^{1k}_{12} \\ a^{1k}_{12} & -a^{1k}_{11}
    \end{pmatrix},\qquad (A_2)^k:=\begin{pmatrix}
        a^{2k}_{11} & a^{2k}_{12} \\ a^{2k}_{12} & -a^{2k}_{11}
    \end{pmatrix}, \ \text{for} \ k=1,2 \ .$$ Using the relation $A_2=A_1J$ and $\nabla_\bullet J=0$, we get $$(A_2)^k:=\begin{pmatrix}
        a^{1k}_{12} & -a^{1k}_{11} \\ -a^{1k}_{11} & -a^{1k}_{12}
    \end{pmatrix} \ .$$ Moreover, by (\ref{def:scalarproductpick2}), we have \begin{align*}
\langle\nabla_{e_1}A,(\nabla_{e_2}A)J\rangle&=\tr\Big((A_1)^1(A_1)^2J+(A_2)^1(A_2)^2J\Big) \\ &=4(a_{11}^{11}a_{12}^{21}-a_{12}^{11}a_{11}^{21}) \ .
    \end{align*} Recalling that $A=g_J^{-1}C$, last formula can be written in terms of $C(\cdot,\cdot,\cdot)$ by using the following relation: $$a_{lm}^{kj}=g_J\big((\nabla_{e_k}A)(e_j)\cdot e_l,e_m\big)=(\nabla_{e_k}C)(e_j,e_l,e_m)=:(\nabla_{e_k}C)_{jlm} \ .$$ In the end, we obtain  \begin{align*}(\widehat\omega_f)_{(J,A)}\big((0, \nabla_{e_1}A ), (0, \nabla_{e_2}A)\big)&=-\frac{2}{3}f'\Big(\frac{\vl\vl\tau\vl\vl_J^2}{2}\Big)\Big((\nabla_{e_1}C)_{111}(\nabla_{e_2}C)_{112}-(\nabla_{e_1}C)_{112}(\nabla_{e_2}C)_{111}\Big) \\ &=-\frac{2}{3}f'\Big(\frac{\vl\vl\tau\vl\vl_J^2}{2}\Big)\Big((\nabla_{e_1}C)_{222}(\nabla_{e_2}C)_{111}-(\nabla_{e_1}C)_{111}(\nabla_{e_2}C)_{222}\Big) \ ,\end{align*} where in the last step we used $C(J\cdot,J\cdot,J\cdot)=-C(J\cdot,\cdot,\cdot)=-C(\cdot,J\cdot,\cdot)=-C(\cdot,\cdot,J\cdot)$. The action of the operators $\partial$ and $\bar\partial$ on $\tau$ are defined as follows: $$(\partial\tau)(v,\cdot,\cdot,\cdot)=\frac{1}{2}\Big(\nabla_v\tau-i\nabla_{Jv}\tau\Big),\qquad(\bar\partial\tau)(v,\cdot,\cdot,\cdot)=\frac{1}{2}\Big(\nabla_v\tau+i\nabla_{Jv}\tau\Big) \ .$$With a fairly long calculation in local coordinates we deduce $$\vl\vl(\bar\partial\tau)(e_1,\cdot,\cdot,\cdot)\vl\vl^2-\vl\vl(\partial\tau)(e_1,\cdot,\cdot,\cdot)\vl\vl^2=(\nabla_{e_1}C)_{222}(\nabla_{e_2}C)_{111}-(\nabla_{e_1}C)_{111}(\nabla_{e_2}C)_{222} \ .$$ Finally, we get $$\widehat\omega(\nabla_\bullet(J,A),\nabla_\bullet(J,A))=(\widehat\omega_f)_{(J,A)}\big((0, \nabla_{e_1}A ), (0, \nabla_{e_2}A)\big)=-\frac{2}{3}f'\Big(\frac{\vl\vl\tau\vl\vl_J^2}{2}\Big)\big(\vl\vl\bar\partial\tau\vl\vl^2-\vl\vl\partial\tau\vl\vl^2\big)\rho \ ,$$ where $\vl\vl\partial\tau\vl\vl^2=\vl\vl\partial\tau(v,\cdot,\cdot,\cdot)\vl\vl^2$ and $\vl\vl\bar\partial\tau\vl\vl^2=\vl\vl\bar\partial\tau(v,\cdot,\cdot,\cdot)\vl\vl^2$ for some unit vector $v$ (the norm is independent of such vector).
    \vspace{0.2cm}\newline\textbf{The term $\langle\widehat\mu_s \ | \ R^\nabla\rangle$:}\vspace{0.1cm}\newline Since $\nabla$ is the Levi-Civita connection for $g_J$, the tensor $R^\nabla$ coincides with the Riemann tensor of $g_J$. A classical computation using a local orthonormal frame shows that $R^\nabla=K_J\rho J$, where $K_J$ is the Gaussian curvature of $g_J$. From Lemma \ref{lem:SLmomentmaptoruscase} we have $\widehat\mu_{(J,A)}(\cdot)=\bigg(1-f\Big(\frac{\vl\vl \tau\vl\vl^2_J}{2}\Big)\bigg)\tr(J\cdot)$, therefore $$\langle\widehat\mu_s \ | \ R^\nabla\rangle=2K_J\bigg(f\Big(\frac{\vl\vl \tau\vl\vl^2_J}{2}\Big)-1\bigg)\rho \ .$$ \vspace{0.2cm}\textbf{The term $\mathrm d\big(c(\nabla_\bullet\widehat\mu_s)\big)$:}\newline Notice that, for any $B\in\Lsl(T\Sg,\rho)\equiv\mathrm{End}_0(T\Sg)$ and for any $v\in\Gamma(T\Sg)$, we have $$\nabla_v\widehat\mu_{(J,A)}\big(B\big)=-\mathrm df\Big(\frac{\vl\vl \tau\vl\vl^2_J}{2}\Big)(v)\tr(JB) \ ,$$ where we used again $\nabla_\bullet J=0$. If $\{e_1,e_2\}$ denotes a local $g_J$-orthonormal frame for $T\Sg$ and $\{e_1^*,e_2^*\}$ denotes its dual frame, we obtain \begin{align*}
    c(\nabla_\bullet\widehat\mu_{(J,A)})(v)&=\langle\nabla_{e_1}\widehat\mu_{(J,A)} \ | \ (v\otimes e_1)^*_0\rangle+\langle\nabla_{e_2}\widehat\mu_{(J,A)} \ | \ (v\otimes e_2)^*_0\rangle \\ &=-\mathrm df\Big(\frac{\vl\vl \tau\vl\vl^2}{2}\Big)(e_1)\tr\big(J(v\otimes e_1)^*_0\big)-\mathrm df\Big(\frac{\vl\vl \tau\vl\vl^2}{2}\Big)(e_2)\tr\big(J(v\otimes e_2)^*_0\big) \\ &=-\mathrm df\Big(\frac{\vl\vl \tau\vl\vl^2}{2}\Big)(e_1)e^*_1\big(Jv\big)-\mathrm df\Big(\frac{\vl\vl \tau\vl\vl^2}{2}\Big)(e_2)e^*_2\big(Jv\big) \\ &=- \bigg(\mathrm d\Big(f\Big(\frac{\vl\vl \tau\vl\vl^2}{2}\Big)\Big)\circ J\bigg)(v) \ .
    \end{align*}In other words $c(\nabla_\bullet\widehat\mu_{(J,A)})=-\mathrm d\Big(f\Big(\frac{\vl\vl \tau\vl\vl^2}{2}\Big)\Big)\circ J$. It is not difficult to show that, for any $\psi\in C^{\infty}(\Sg)$, the following relation holds: $$\mathrm d\big(\mathrm d\psi\circ J\big)=-\Delta_{g_J}\psi=-2i\partial_J\bar\partial_J\psi=2i\bar\partial_J\partial_J\psi \ .$$ In the end, we get $$\mathrm d\big(c(\nabla_\bullet\widehat\mu_{(J,A)})\big)=-\mathrm d\bigg(\mathrm d\Big(f\Big(\frac{\vl\vl \tau\vl\vl^2}{2}\Big)\Big)\circ J\bigg)=-2i\bar\partial\partial f\Big(\frac{\vl\vl\tau\vl\vl^2}{2}\Big) \ .$$
\end{proof}
\begin{corollary}\label{cor:momentmaptilde}
    Let $\rho$ be a fixed area form on $\Sg$, and let $c:=\frac{2\pi\chi(\Sg)}{\mathrm{Area}(\Sg,\rho)}$. Then, the map \begin{align*}\widetilde\momentmap:&\pickg\longrightarrow B^2(\Sg)\subset\Lham(\Sg,\rho)^* \\ &(J,A)\longmapsto\momentmap(J,A)+2c\rho\end{align*} is a moment map for the action of $\Ham(\Sg,\rho)$ on $\big(\pickg,\ome_f\big)$.
\end{corollary}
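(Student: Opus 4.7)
The plan is to invoke the general construction of Theorem \ref{thm:donaldson}, which already guarantees closedness of $\boldsymbol{\mu}(s)$, equivariance of $\boldsymbol{\mu}$ under $\mathrm{Ham}(\Sigma,\rho)$, and the defining relation for the differential of $s\mapsto\int_\Sigma \gamma_V\cdot\boldsymbol{\mu}(s)$. The only thing missing to promote $\boldsymbol{\mu}$ to an honest moment map with values in $\mathcal{L}^{\mathrm{ham}}(\Sigma,\rho)^*$ is that the image must lie in $B^2(\Sigma)$, since the natural inclusion $B^2(\Sigma)\subset \mathcal{L}^{\mathrm{ham}}(\Sigma,\rho)^*$ of Section \ref{sec:3.1} is defined only on exact 2-forms; the correction $+2c\rho$ is introduced precisely to ensure this exactness.

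To check that $\widetilde{\boldsymbol{\mu}}(s)\in B^2(\Sigma)$ for every $s\in\pickg$, observe first, via Lemma \ref{lem:Donaldsonausiliario}, that the de-Rham class $[\boldsymbol{\mu}(s)]\in H^2_{\mathrm{dR}}(\Sigma,\mathbb{R})$ does not depend on the section. It therefore suffices to compute $\int_\Sigma \boldsymbol{\mu}(s)$ at a single convenient point, the natural choice being the Fuchsian section $s=(J,0)$. On this locus $\tau\equiv 0$, so that $f\bigl(\|\tau\|^2/2\bigr)\equiv f(0)=0$, $\partial\tau=\bar\partial\tau=0$ and $\bar\partial\partial f\bigl(\|\tau\|^2/2\bigr)=0$; consequently the explicit formula of Theorem \ref{thm:ourmomentmap} collapses to $\boldsymbol{\mu}(J,0)=-2K_J\rho$. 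Gauss-Bonnet (applied to $g_J$, whose area form is $\rho$) gives
\[
\int_\Sigma \boldsymbol{\mu}(J,0)=-2\int_\Sigma K_J\,\rho=-4\pi\chi(\Sigma),
\]
so with $c=2\pi\chi(\Sigma)/\mathrm{Area}(\Sigma,\rho)$ one obtains $\int_\Sigma\widetilde{\boldsymbol{\mu}}(J,0)=-4\pi\chi(\Sigma)+2c\cdot\mathrm{Area}(\Sigma,\rho)=0$. Thus $[\widetilde{\boldsymbol{\mu}}(s)]=0$ in $H^2_{\mathrm{dR}}(\Sigma,\mathbb{R})$ for every $s$, and exactness on the closed surface $\Sigma$ follows.

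For the moment map equation, fix $V\in\mathcal{L}^{\mathrm{ham}}(\Sigma,\rho)$ with primitive $\gamma_V=H$ (i.e. $\iota_V\rho=\mathrm{d}H$) and define $\widetilde{\mu}^V(s):=\int_\Sigma H\cdot \widetilde{\boldsymbol{\mu}}(s)$, which is just the pairing of $\widetilde{\boldsymbol{\mu}}(s)\in B^2(\Sigma)$ with $V$ under the identifications of Section \ref{sec:3.1} (up to the standard sign from integration by parts). Since $\widetilde{\boldsymbol{\mu}}(s)-\boldsymbol{\mu}(s)=2c\rho$ is independent of $s$, the term $2c\int_\Sigma H\rho$ is a constant in $s$ and drops out upon differentiation; hence property (3) of Theorem \ref{thm:donaldson} transfers verbatim to $\widetilde{\boldsymbol{\mu}}$, giving $\mathrm{d}\widetilde{\mu}^V=\iota_{X_V}\boldsymbol{\omega}_f$ with the appropriate sign convention. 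Equivariance under $\mathrm{Ham}(\Sigma,\rho)$ is inherited from $\boldsymbol{\mu}$ via Theorem \ref{thm:donaldson}(2), using that $\rho$ is $\mathrm{Ham}(\Sigma,\rho)$-invariant so that the correction term $2c\rho$ is itself invariant.

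The main (and essentially only) obstacle is the integral computation pinning down the numerical constant, which is the reason the precise value $c=2\pi\chi(\Sigma)/\mathrm{Area}(\Sigma,\rho)$ appears: everything else is an immediate consequence of Theorem \ref{thm:donaldson}, Lemma \ref{lem:Donaldsonausiliario} and the Fuchsian-point evaluation. In this sense the corollary is really a normalization statement, analogous to the one that appears for Teichm\"uller space in Theorem \ref{thm:momentmapteich}, and is formally identical to the calibration done there via Gauss-Bonnet.
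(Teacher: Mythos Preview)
Your proof is correct and follows essentially the same approach as the paper: both use Lemma \ref{lem:Donaldsonausiliario} to reduce to the section $(J,0)$, evaluate $\int_\Sg\momentmap(J,0)=-4\pi\chi(\Sg)$ via Gauss--Bonnet, and then observe that the constant shift $2c\rho$ does not affect the moment map axioms since it is section-independent and $\Ham(\Sg,\rho)$-invariant. Your write-up is slightly more explicit than the paper's in verifying that properties (i) and (ii) of Definition \ref{def:momentmap} transfer, but the argument is the same.
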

\begin{proof}
    According to Lemma \ref{lem:Donaldsonausiliario}, the de-Rham cohomology class of the closed $2$-form $\momentmap(J,A)$ does not depend on the choice of the section, and the same is true for its integral over the surface. Hence, if $A=0$, by Gauss-Bonnet Theorem we get $$\int_\Sg\momentmap(J,0)=-2\int_\Sg K_J\rho=-4\pi\chi(\Sg) \ .$$ In particular, the integral of the $2$-form $\momentmap(J,A)+2c\rho$ is equal to zero. This implies that $\widetilde\momentmap$ takes values in the space of exact $2$-forms $B^2(\Sg)$, which is contained in $\Lham(\Sg,\rho)^*$ (see Section \ref{sec:3.1}). Finally, the properties $(i)$ and $(ii)$ in Definition \ref{def:momentmap} continue to hold for $\widetilde\momentmap$ since the additional term $2c\rho$ does not depend on the chosen section.
\end{proof}
In the remaining part of this section we show how, if we assume the additional hypothesis $\bar\partial_J\tau=0$, the moment map $\widetilde\momentmap$ we found is directly related to Wang's equation for hyperbolic affine spheres in $\R^3$ (see Section \ref{sec:2.1}). The idea of proof of the following result is similar to that used in \cite{trautwein2018infinite} for a slightly different moment map.
\begin{theorem}
    Let $(J,A)\in\pickg$ and suppose that $A=g_J^{-1}\Ree(\tau)$ with $\bar\partial_J\tau=0$, then $$\widetilde\momentmap(J,A)=-2e^{F\Big(\frac{\vl\vl\tau\vl\vl^2_J}{2}\Big)}\Big(K_h-\vl\vl \tau\vl\vl_h^2+1\Big)\rho \ , \qquad\text{where} \ \ h:=e^{F\Big(\frac{\vl\vl \tau\vl\vl_J^2}{2}\Big)}g_J \ .$$
\end{theorem}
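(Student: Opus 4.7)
My approach is to expand both sides of the claimed identity in terms of the Laplace--Beltrami operator $\Delta_{g_J}$, the Gaussian curvature $K_J$, and the function $F$, and then reduce the claim to a single scalar identity relating $F$ and $f$ that follows from Lemma \ref{lem:combinationoffunctionfandfprime}.

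First, I would expand the right-hand side. Using the conformal change formula $K_{e^F g_J} = e^{-F}\big(K_J - \tfrac{1}{2}\Delta_{g_J} F\big)$ and the scaling of the cubic norm $\|\tau\|_h^2 = e^{-3F}\|\tau\|_J^2 = 2t\,e^{-3F}$ (where $t = \|\tau\|_J^2/2$), together with the defining relation $c\,e^{-F} - 2t\,e^{-3F} + 1 = 0$ of Lemma \ref{lem:functionFef} multiplied by $e^F$, one computes
\begin{equation*}
    -2e^F\big(K_h - \|\tau\|_h^2 + 1\big) \, \rho = \big(-2K_J + \Delta_{g_J} F + 2c\big) \, \rho.
\end{equation*}
For the left-hand side, the hypothesis $\bar\partial_J \tau = 0$ makes the $\|\bar\partial\tau\|^2$ term in Theorem \ref{thm:ourmomentmap} vanish, and the elementary identity $2i\bar\partial\partial f = -\Delta_{g_J} f \cdot \rho$ converts the Hessian contribution into a Laplacian; after adding the correction from Corollary \ref{cor:momentmaptilde} one obtains
\begin{equation*}
    \widetilde{\momentmap}(J,A) = \Big( \tfrac{2}{3} f' \|\partial\tau\|^2 + 2K_J(f-1) - \Delta_{g_J} f + 2c \Big) \, \rho.
\end{equation*}

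Matching these two expressions cancels the $-2K_J$ and $+2c$ terms, and the theorem reduces to the pointwise identity
\begin{equation*}
    \Delta_{g_J}(F + f) = 2 f K_J + \tfrac{2}{3} f' \|\partial\tau\|^2,
\end{equation*}
where $F$ and $f$ are both evaluated at $t = \|\tau\|_J^2/2$. I would prove this by establishing two local computations in a conformal coordinate $z$ where $g_J = e^\psi |dz|^2$ and $\tau = Q (dz)^3$ with $Q$ holomorphic: first, $|\nabla t|^2 = 2t\,\|\partial\tau\|^2$, a direct calculation using $Q_{\bar z} = 0$ and the normalizations $\|\tau\|^2_J = |Q|^2 e^{-3\psi}$ and $\|\partial\tau\|^2 = |Q_z - 3Q\psi_z|^2 e^{-4\psi}$; and second, $\Delta_{g_J} t = 6tK_J + |\nabla t|^2/t$, which follows from the harmonicity of $\log|Q|^2$, the classical formula $K_J = -\tfrac{1}{2}\Delta_{g_J}\psi$, and the general identity $\Delta_{g_J}\log t = \Delta_{g_J} t/t - |\nabla t|^2/t^2$.

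Applying the chain rule $\Delta_{g_J} \phi(t) = \phi'(t) \Delta_{g_J} t + \phi''(t) |\nabla t|^2$ to $\phi = F + f$, and using Lemma \ref{lem:combinationoffunctionfandfprime}(1) which gives $(F+f)'(t) = f(t)/(3t)$ and hence $(F+f)''(t) = f'(t)/(3t) - f(t)/(3t^2)$, the $K_J$--terms collect to $2 f K_J$ and the $|\nabla t|^2$--terms collapse to $f' |\nabla t|^2 / (3t)$, which by the first local computation equals $\tfrac{2}{3} f' \|\partial\tau\|^2$. The main obstacle is the careful bookkeeping of normalization constants, so as to guarantee that the factor $2$ in $|\nabla t|^2 = 2t\,\|\partial\tau\|^2$ is the correct one; once this is in place everything is explicit. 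Away from the isolated zeros of $\tau$ every expression is smooth, and the identity extends to the zero locus by continuity, since the reduced identity depends smoothly on $t$ at $t=0$ by Lemma \ref{lem:functionFef}.
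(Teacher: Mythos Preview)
Your proof is correct and follows essentially the same route as the paper's. The paper works throughout in the $\partial,\bar\partial$ formalism: it records the identities $-\tfrac{i}{\lambda}\bar\partial\lambda\wedge\partial\lambda=\|\partial\tau\|_J^2\rho$ and $K_J\rho=-\tfrac{i}{3}\bar\partial\partial\log\lambda$ (which are exactly your two local computations rewritten), and then proves the key identity
\[
2i\bar\partial\!\left(\Big(f'(\lambda)-\tfrac{f(\lambda)}{3\lambda}\Big)\partial\lambda\right)=-\tfrac{2i}{3\lambda}f'(\lambda)\bar\partial\lambda\wedge\partial\lambda+2f(\lambda)K_J\rho+2i\bar\partial\partial f(\lambda),
\]
after which Lemma~\ref{lem:combinationoffunctionfandfprime}(1) turns the left side into $-2i\bar\partial\partial F=\Delta_{g_J}F\cdot\rho$ and the conformal change formula finishes. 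Converting this identity via $2i\bar\partial\partial\psi=-\Delta_{g_J}\psi\cdot\rho$ gives precisely your reduced scalar equation $\Delta_{g_J}(F+f)=2fK_J+\tfrac{2}{3}f'\|\partial\tau\|^2$, and the paper's expansion of the right-hand side is the chain-rule computation you spell out. The only stylistic difference is that you translate to Laplacians at the outset and isolate the reduced identity explicitly, which arguably makes the role of Lemma~\ref{lem:combinationoffunctionfandfprime} more transparent; the paper keeps everything in Dolbeault form until the final step.
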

\begin{proof} If $A=0$, the statement is immediate. Suppose that $A\neq 0$ and define $\lambda$ to be the function $\frac{\vl\vl\tau\vl\vl^2_J}{2}$. Then, outside the zeroes of $A$ it is easy to show that: $$-\frac{i}{\lambda}\bar\partial\lambda\wedge\partial\lambda=\vl\vl\partial\tau\vl\vl^2_J\rho \ , \qquad K_J\rho=-\frac{i}{3}\bar\partial\partial\log(\lambda) \ .$$Let us assume for a moment that the following identity holds: \begin{equation}\label{relationformomentmap}
        2i\bar\partial\bigg(\Big(f'(\lambda)-\frac{f(\lambda)}{3\lambda}\Big)\partial\lambda\bigg)=-\frac{2i}{3\lambda}f'(\lambda)\bar\partial\lambda\wedge\partial\lambda+2f(\lambda)K_J\rho+2i\bar\partial\partial f(\lambda) \ .
    \end{equation} According to Theorem \ref{thm:ourmomentmap} we can write $\widetilde\momentmap(J,A)$ as follows: $$\widetilde\momentmap(J,A)=-\frac{2i}{3\lambda}f'(\lambda)\bar\partial\lambda\wedge\partial\lambda+\frac{2i}{3}\big(1-f(\lambda)\big)\bar\partial\partial\log(\lambda)+2i\bar\partial\partial f(\lambda)+2c\rho$$ In particular, we obtain the following sequence of identities: \begin{align*}
        \widetilde\momentmap(J, A)&=-\frac{2i}{3\lambda}f'(\lambda)\bar\partial\lambda\wedge\partial\lambda+\frac{2i}{3}\big(1-f(\lambda)\big)\bar\partial\partial\log(\lambda)+2i\bar\partial\partial f(\lambda)+2c\rho \\ &=-\frac{2i}{3\lambda}f'(\lambda)\bar\partial\eta\wedge\partial\lambda+\frac{2i}{3}\big(1-f(\lambda)\big)\big(-\frac{3}{i}K_J\rho)+2i\bar\partial\partial f(\lambda)+2c\rho \tag{rel. $(\ref{relationformomentmap})$} \\ &=2i\bar\partial\bigg(\Big(\frac{f(\lambda)}{3\lambda}-f'(\lambda)\Big)\partial\lambda\bigg)-2(K_J-c)\rho \tag{Lemma \ref{lem:combinationoffunctionfandfprime}} \\ &=-2i\bar\partial F'(\lambda)\wedge\partial\lambda-2(K_J-c)\rho \\ &= -2i\bar\partial\partial F(\lambda)-2(K_J-c)\rho \\ &=-2\Big(K_J-\frac{1}{2}\Delta_{g_J}F(\lambda)-c\Big)\rho \ . 
    \end{align*} Now, if $h$ denotes the Riemannian metric on $\Sg$ conformal to $g_J$ with conformal factor equal to $e^{F(\lambda)}$, we get $$K_h=e^{-F(\lambda)}\Big(K_J-\frac{1}{2}\Delta_{g_J}F(\lambda)\Big) \ .$$ On the other hand, using the functional equation (\ref{functionalequationF}) satisfied by $F$, we have \begin{align*}
        \widetilde\momentmap(J,A)&=-2\Big(K_J-\frac{1}{2}\Delta_{g_J}F(\lambda)-c\Big)\rho \\ &=-2\Big(e^{F(\lambda)}K_h-c\Big)\rho \\ &=-2e^{F(\lambda)}\big(K_h-\vl\vl\tau\vl\vl^2_Je^{-3F(\lambda)}+1\big) \\ &=-2e^{F(\lambda)}\big(K_h-\vl\vl\tau\vl\vl^2_h+1\big)\rho \ .
    \end{align*} In order to finish the proof, it only remains to show that relation (\ref{relationformomentmap}) holds, which stems from the following identities:
    $$2fK_J\rho=\frac{2i}{3\lambda}f\Big(\frac{1}{\lambda}\bar\partial\wedge\partial\lambda-\bar\partial\partial\lambda\Big), \quad 2i\bar\partial\partial f=2i\Big(f''\bar\partial\lambda\wedge\partial\lambda+f'\bar\partial\partial\lambda\Big) \ .$$ This ends the proof outside the zeroes of $A$, which is a finite set in $\Sg$. Thus, the statement follows by continuity of the expression.
\end{proof}

\begin{corollary}\label{cor:zerolocusmomentmap}
    Let $(J,A)\in\pickg$. Then $\widetilde\momentmap(J,A)=0$ and $\mathrm d^\nabla A=0$ if and only if $(J,A)\in\haffrhozerotilde$.
\end{corollary}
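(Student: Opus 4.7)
The plan is to extract the corollary from the explicit formula for $\widetilde\momentmap$ proved in the preceding theorem, together with the characterization of the Codazzi-type equation in terms of the holomorphicity of the associated cubic differential.

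First, I would use Theorem \ref{thm:picktensor} to translate the condition $\mathrm d^{\nabla}A=0$ (with $\nabla$ the Levi-Civita connection of $g_J$) into the statement that the complex cubic differential $\tau=C(\cdot,\cdot,\cdot)-iC(J\cdot,\cdot,\cdot)$ is $J$-holomorphic, i.e.\ $\bar\partial_J\tau=0$. This step is immediate: an element $(J,A)\in\pickg$ by definition satisfies $A(X)$ is $g_J$-symmetric and traceless, together with $A(X)Y=A(Y)X$, so the associated tensor $C$ is totally symmetric and the hypothesis of Theorem \ref{thm:picktensor} is met. By Remark \ref{rem:conformalinvarianceHS} the equation $\mathrm d^{\nabla}A=0$ is invariant under conformal change, hence it is also equivalent to the Codazzi half of \eqref{Gausscodazzi} written with respect to the Blaschke-type metric $h=e^{F(\|\tau\|_{g_J}^2/2)}g_J$ that appears in the definition of $\haffrhozerotilde$.

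Next, assuming $\bar\partial_J\tau=0$, I would invoke the preceding theorem to substitute the explicit expression
\[
\widetilde\momentmap(J,A)=-2e^{F(\|\tau\|_{g_J}^2/2)}\bigl(K_h-\|\tau\|_h^2+1\bigr)\rho.
\]
Since the factor $e^{F(\|\tau\|_{g_J}^2/2)}$ is strictly positive and the area form $\rho$ is nowhere vanishing, the $2$-form $\widetilde\momentmap(J,A)$ vanishes identically on $\Sg$ if and only if the smooth function $K_h-\|\tau\|_h^2+1$ vanishes identically, which is precisely the Gauss half of \eqref{Gausscodazzi}.

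Combining the two observations, the conjunction $\{\widetilde\momentmap(J,A)=0,\ \mathrm d^{\nabla}A=0\}$ is equivalent to the pair $(h,A)$ satisfying both equations of \eqref{Gausscodazzi}, which is exactly the defining condition for $(J,A)\in\haffrhozerotilde$. There is essentially no serious obstacle: the only subtlety worth flagging is that the explicit formula in the preceding theorem was derived under the standing assumption $\bar\partial_J\tau=0$, so in each direction of the equivalence the Codazzi condition must be established first (via Theorem \ref{thm:picktensor}) before one is entitled to read off the Gauss equation from the vanishing of the moment map.
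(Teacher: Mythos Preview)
Your proposal is correct and follows essentially the same approach as the paper's own proof: translate $\mathrm d^{\nabla}A=0$ into $\bar\partial_J\tau=0$ via Theorem \ref{thm:picktensor} (and its conformal invariance, Remark \ref{rem:conformalinvarianceHS}), then apply the preceding theorem's formula for $\widetilde\momentmap$ and read off the Gauss equation from the positivity of $e^{F}$ and the non-vanishing of $\rho$. Your explicit remark that the Codazzi condition must be established first in each direction, so that the hypothesis $\bar\partial_J\tau=0$ of the preceding theorem is available, is a helpful clarification that the paper's terser argument leaves implicit.
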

\begin{proof}
    Recall from Section \ref{sec:2.3} that $\haffrhozerotilde$ is the space of pairs $(J,A)$ such that $\big(h=e^Fg_J, A\big)$ satisfies (\ref{Gausscodazzi}) (see also Remark \ref{rem:conformalinvarianceHS}). By Theorem \ref{thm:picktensor} we know that, up to contraction with the metric, $A$ is the real part of a $J$-holomorphic cubic differential $\tau$. Finally, the above theorem implies that $\widetilde\momentmap(J,A)=0$ if and only if $K_h-\vl\vl \tau\vl\vl^2_h=-1$.
\end{proof}
\subsection{The symplectic quotient}\label{sec:5.3}
Here we explain how the use of symplectic reduction allows us to determine (in part) the system of differential equations (\ref{differentialequations}) defining $W_{(J,A)}$, hence those characterizing the tangent space to the $\SL(3,\R)$-Hitchin component (see Remark \ref{rem:equationsHitchincomponent}). Following in parallel the construction done for Teichm\"uller space in Section \ref{sec:3.2}, the idea is to induce our symplectic form $\ome_f$ from the ambient space $\pickg$ to the quotient of $\widetilde\momentmap^{-1}(0)$ by the group $\Ham(\Sg,\rho)$. On the other hand, there are two major differences with the case of $\mathcal T(\Sg)$: the first is that the infinite-dimensional space $\haffrhozerotilde$ is cut by two equations (see Corollary \ref{cor:zerolocusmomentmap}) and, only one of them, has an interpretation as a moment map. In particular, we have to look at the space $\widetilde\momentmap^{-1}(0)\cap\mathcal{M}_{\mathrm C}$ modulo $\Ham(\Sg,\rho)$; the second is that once we induce the symplectic form on the quotient, the pairing $\ome_f(\i\cdot,\cdot)=\g_f$ gives rise to a pseudo-Riemannian metric, and this generates additional difficulties since one is intent to identify the space $W_{(J,A)}$ with the $\g_f$-orthogonal to the $\Ham(\Sg,\rho)$-orbit.\newline
Our moment map $\widetilde\momentmap$ has values in the space of exact $2$-forms on the surface, which is contained in the dual Lie algebra of the Hamiltonian group (see Corollary \ref{cor:momentmaptilde}). After recalling two technical lemmas, in Proposition \ref{prop:differentialourmomentmap} we compute a primitive of the differential of the moment map (still with values in the exact 2-forms). This will allow us, in Proposition \ref{prop:integrazioneperparti}, to perform a highly non-trivial integration by parts, which will be useful later in discussing the (possible) presence of degenerate vectors for the pseudo-metric away from the Fuchsian locus. Then, 
 with Proposition \ref{prop:Winsidehaffrhozero} we prove the inclusion of $W_{(J,A)}$ inside the tangent to $\haffrhozerotilde$, the discussion of which had been left hanging by Section \ref{sec:4.4}. Finally, inspired by the K\"ahler reduction of Teichm\"uller space, in Theorem \ref{thmG} we are able to characterize $W_{(J,A)}$ as the largest subspace in $T_{(J,A)}\haffrhozerotilde$ that is both $\g_f$-orthogonal to the orbit and invariant under the action of the complex structure $\i$. \newline The statements and the proofs of Proposition \ref{prop:differentialourmomentmap} and Proposition \ref{prop:integrazioneperparti} are inspired by the analogous counterparts in the anti-de Sitter case (\cite[Proposition 6.10 and 6.12]{mazzoli2021parahyperkahler}). Despite that, the presence of the $1$-form part in the tensor $A$ created additional problems during the development of the proofs, which will be highlighted throughout. We first recall two technical lemmas that will be useful further on.

\begin{lemma}[{\cite[Lemma 4.16]{mazzoli2021parahyperkahler}}]\label{lem:divergenzaendomorfismo}
    Let $B$ be a trace-less endomorphism of $T\Sg$, then $$(\nabla_XB)Y-(\nabla_YB)X=(\dive_gB)(Y)X-(\dive_gB)(X)Y \ .$$
\end{lemma}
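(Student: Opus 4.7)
The plan is to verify the identity pointwise by reducing to a convenient orthonormal frame. Both sides of the equation are tensorial (i.e.\ $C^\infty(\Sg)$-linear) in the vector fields $X$ and $Y$: the left-hand side because $\nabla B$ is a $(1,2)$-tensor and the divergence contraction is tensorial, and the right-hand side by construction. Moreover, both sides are antisymmetric in $(X,Y)$. Since $\dim\Sg=2$, it therefore suffices to fix a point $p\in\Sg$, choose a local $g$-orthonormal frame $\{e_1,e_2\}$ of $T\Sg$, and check the equality for $X=e_1$ and $Y=e_2$.

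The next step is to expand both sides in the basis $\{e_1,e_2\}$ at $p$. On the right, the definition (\ref{divergenzadefinizione}) of the divergence gives
\begin{align*}
(\dive_g B)(e_2)\,e_1-(\dive_g B)(e_1)\,e_2
=\,&\bigl[g((\nabla_{e_1}B)e_2,e_1)+g((\nabla_{e_2}B)e_2,e_2)\bigr]e_1\\
&-\bigl[g((\nabla_{e_1}B)e_1,e_1)+g((\nabla_{e_2}B)e_1,e_2)\bigr]e_2.
\end{align*}
The trace-less hypothesis on $B$ translates, after covariantly differentiating the identity $g(Be_1,e_1)+g(Be_2,e_2)=0$, into
$$
g((\nabla_X B)e_1,e_1)+g((\nabla_X B)e_2,e_2)=0, \qquad \forall X\in\Gamma(T\Sg),
$$
so that $g((\nabla_{e_2}B)e_2,e_2)=-g((\nabla_{e_2}B)e_1,e_1)$ and $g((\nabla_{e_1}B)e_1,e_1)=-g((\nabla_{e_1}B)e_2,e_2)$.

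Substituting these trace-free relations, the right-hand side becomes
$$
\bigl[g((\nabla_{e_1}B)e_2,e_1)-g((\nabla_{e_2}B)e_1,e_1)\bigr]e_1+\bigl[g((\nabla_{e_1}B)e_2,e_2)-g((\nabla_{e_2}B)e_1,e_2)\bigr]e_2,
$$
whose coefficients coincide precisely with $g((\nabla_{e_1}B)e_2-(\nabla_{e_2}B)e_1,\,e_k)$ for $k=1,2$, that is, with the components of the left-hand side in the frame $\{e_1,e_2\}$. Hence the two sides agree at $p$, and since $p$ was arbitrary the identity follows. No real obstacle appears here: everything reduces to a short linear-algebra check, and the only nontrivial input is the simplification enforced by the trace-free condition, which is specific to the $2$-dimensional situation where trace-free endomorphisms of $T_p\Sg$ span a $3$-dimensional space and the antisymmetrized covariant derivative lands in a $2$-dimensional one.
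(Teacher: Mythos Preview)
Your proof is correct. The paper does not supply its own proof of this lemma---it is quoted from \cite[Lemma 4.16]{mazzoli2021parahyperkahler}---so there is nothing to compare against beyond noting that your direct orthonormal-frame verification is the natural argument and goes through as written. One small point of phrasing: rather than ``covariantly differentiating the identity $g(Be_1,e_1)+g(Be_2,e_2)=0$'' (which could be read as differentiating the frame too), it is cleaner to say that $\tr(\nabla_X B)=\nabla_X(\tr B)=0$, and then evaluate this scalar identity in the orthonormal frame at $p$; but the content is the same.
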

\begin{lemma}[{\cite[Lemma 4.15]{mazzoli2021parahyperkahler}}]\label{lem:firstordervariationlevicivita}
    Let $\dot J\in T_J\almostg$ be an infinitesimal variation of a complex structure on $\Sg$. If $\dot\nabla$ denotes the first order variation of the Levi-Civita connection of $g_J=\rho(\cdot,J\cdot)$ along $\dot J$, then the following holds: \begin{equation}
        \dot\nabla_XY=-\frac{1}{2}\big((\dive\dot J)(X)JY+J(\nabla_X\dot J)Y\big) \ ,
    \end{equation}for every tangent vector fields $X,Y$ on $\Sg$.
\end{lemma}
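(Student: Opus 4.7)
My plan is to derive the formula from the classical first-order variation of the Koszul identity. Set $B:=\dot J$ and $g:=g_J$, so that the first-order variation of the metric is $\dot g(Y,Z)=\rho(Y,BZ)$. The Levi-Civita connection satisfies
\begin{equation*}
2g(\dot\nabla_XY,Z)=(\nabla_X\dot g)(Y,Z)+(\nabla_Y\dot g)(X,Z)-(\nabla_Z\dot g)(X,Y),
\end{equation*}
which is the standard identity obtained by differentiating Koszul's formula at $t=0$ (the Lie-bracket terms on the right-hand side combine with the derivatives of $g_t$ to cancel the terms involving $\dot g(\nabla_{\bullet}\cdot,\cdot)$). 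Using $\nabla\rho=0$, a direct computation gives $(\nabla_X\dot g)(Y,Z)=\rho(Y,(\nabla_XB)Z)$, so the task reduces to manipulating
\begin{equation*}
2g(\dot\nabla_XY,Z)=\rho(Y,(\nabla_XB)Z)+\rho(X,(\nabla_YB)Z)-\rho(X,(\nabla_ZB)Y).
\end{equation*}

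The first key step is to apply Lemma \ref{lem:divergenzaendomorfismo} to the trace-free endomorphism $B$: the combination $(\nabla_YB)Z-(\nabla_ZB)Y=(\dive B)(Z)Y-(\dive B)(Y)Z$ lets me rewrite
\begin{equation*}
\rho(X,(\nabla_YB)Z)-\rho(X,(\nabla_ZB)Y)=(\dive B)(Z)\rho(X,Y)-(\dive B)(Y)\rho(X,Z).
\end{equation*}
The second key step is the observation that, because $\nabla_XB$ is still trace-free and $g$-symmetric (since $B$ is and $\nabla J=0$), it is $\rho$-\emph{skew}-symmetric, giving $\rho(Y,(\nabla_XB)Z)=-\rho((\nabla_XB)Y,Z)$. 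Combining both, the expression above becomes $-\rho((\nabla_XB)Y,Z)+(\dive B)(Z)\rho(X,Y)-(\dive B)(Y)\rho(X,Z)$.

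Finally, I want to match this with $-g\big((\dive B)(X)JY+J(\nabla_XB)Y,Z\big)$. Using the compatibilities $g(JW,Z)=\rho(JW,JZ)=\rho(W,Z)$ (since $\rho$ is $J$-invariant), this target equals $-(\dive B)(X)\rho(Y,Z)-\rho((\nabla_XB)Y,Z)$. Comparing the two, the identity reduces to showing
\begin{equation*}
(\dive B)(X)\rho(Y,Z)+(\dive B)(Y)\rho(Z,X)+(\dive B)(Z)\rho(X,Y)=0,
\end{equation*}
which holds automatically because the left-hand side is a $3$-form on a $2$-dimensional surface, hence vanishes identically. This forces the desired equality and completes the proof.

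The main obstacle (though fairly mild) is the bookkeeping required to check that $\nabla_XB$ is $\rho$-skew-symmetric and that the various $g$-versus-$\rho$ conversions go through cleanly; once that is verified, the rest is a direct calculation and an application of the dimensional constraint $\dim\Sg=2$.
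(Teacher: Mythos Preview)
Your proof is correct. The paper does not give its own argument for this lemma but cites it from \cite{mazzoli2021parahyperkahler}, so there is nothing to compare against; your self-contained derivation via the differentiated Koszul formula, Lemma~\ref{lem:divergenzaendomorfismo}, and the $3$-form-on-a-surface vanishing is a clean and complete proof. One minor remark: when you justify that $\nabla_XB$ is $\rho$-skew-symmetric, the essential point is that $\nabla_XB$ anti-commutes with $J$ (which follows by differentiating $BJ+JB=0$ using $\nabla J=0$); in dimension two this is indeed equivalent to being trace-free and $g$-symmetric, so your claim is fine, but stating the anti-commutation explicitly would make that step more transparent.
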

\begin{proposition}\label{prop:differentialourmomentmap}
    For every $(J,A)\in\pickg$ such that $A=g_J^{-1}\Ree(\tau)$ with $\bar\partial_J\tau=0$, and for every tangent vector $(\dot J,\dot A)\in T_{(J,A)}\pickg$ we have \begin{equation}
        \mathrm d\widetilde\momentmap(\dot J,\dot A)=\mathrm d\Big((f-1)\dive_g\dot J+\mathrm df\circ\dot J+\mathrm d\dot f\circ J-\frac{f'}{6}\beta\Big) \ ,
    \end{equation}where $f,f',\dot f$ are evaluated at $\frac{\vl\vl\tau\vl\vl_J^2}{2}$ and $\beta$ is the $1$-form defined as $\beta(V):=\langle\dot A_0, (\nabla_V A)J\rangle$.
\end{proposition}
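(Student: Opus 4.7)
The plan is to linearize the explicit formula for $\widetilde{\momentmap}$ from Theorem \ref{thm:ourmomentmap} directly. I will decompose
\[
\widetilde{\momentmap}(J,A) = M_1 + M_2 + M_3 + 2c\rho
\]
with
\begin{align*}
M_1 &= -\tfrac{2}{3}f'(\lambda)\bigl(\vl\vl\bar\partial\tau\vl\vl^2 - \vl\vl\partial\tau\vl\vl^2\bigr)\rho,\\
M_2 &= 2K_J(f-1)\rho,\\
M_3 &= 2i\bar\partial\partial f(\lambda) = \mathrm d\bigl(\mathrm df(\lambda)\circ J\bigr),
\end{align*}
where $\lambda = \tfrac{1}{2}\vl\vl\tau\vl\vl^2_J$, and the last identity for $M_3$ is the Hodge-theoretic formula $2i\bar\partial\partial\psi = \mathrm d(\mathrm d\psi\circ J)$ recalled inside the proof of Theorem \ref{thm:ourmomentmap}. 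Write $\alpha := (f-1)\divr_g\dot J + \mathrm df\circ\dot J + \mathrm d\dot f\circ J - \tfrac{f'}{6}\beta$ for the target primitive.

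First I would dispatch the easy contributions. Because $M_3$ is manifestly exact, its variation is
\[
\delta M_3 = \mathrm d\bigl(\mathrm d\dot f\circ J + \mathrm df\circ \dot J\bigr),
\]
producing two of the four summands of $\mathrm d\alpha$. For $M_2$, Proposition \ref{differentialofcurvature} gives $(\delta K_J)\rho = \tfrac{1}{2}\mathrm d(\divr_g\dot J)$, whence
\[
\delta M_2 = (f-1)\,\mathrm d(\divr_g\dot J) + 2K_J\dot f\,\rho = \mathrm d\bigl((f-1)\divr_g\dot J\bigr) - \mathrm df\wedge\divr_g\dot J + 2K_J\dot f\,\rho.
\]
This accounts for the third summand of $\mathrm d\alpha$ and leaves a residue $2K_J\dot f\,\rho - \mathrm df\wedge\divr_g\dot J$. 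The whole proposition thus reduces to the pointwise identity
\[
\delta M_1 + 2K_J\dot f\,\rho - \mathrm df\wedge\divr_g\dot J = -\tfrac{1}{6}\,\mathrm d(f'\beta)\qquad\text{whenever }\bar\partial_J\tau = 0.
\]

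The real work is in computing $\delta M_1$. Since $\vl\vl\bar\partial\tau\vl\vl^2 - \vl\vl\partial\tau\vl\vl^2 = -\vl\vl\partial\tau\vl\vl^2$ at a holomorphic point, the variation splits into a prefactor piece $\tfrac{2}{3}(\delta f')\vl\vl\partial\tau\vl\vl^2\rho$ with $\delta f' = f''\dot\lambda$, and the difference of variations $\delta\vl\vl\bar\partial\tau\vl\vl^2 - \delta\vl\vl\partial\tau\vl\vl^2$. I would evaluate the latter in a local $g_J$-orthonormal frame using the coordinate expression $\vl\vl\bar\partial\tau\vl\vl^2-\vl\vl\partial\tau\vl\vl^2 = (\nabla_{e_1}C)_{222}(\nabla_{e_2}C)_{111}-(\nabla_{e_1}C)_{111}(\nabla_{e_2}C)_{222}$ established inside the proof of Theorem \ref{thm:ourmomentmap}, and then linearize componentwise. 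Here both variations enter: $\dot A_0$ produces $\delta C$ directly, whereas $\dot J$ varies the Levi-Civita connection via Lemma \ref{lem:firstordervariationlevicivita} and the lowering operator $C = g_J A$. At this stage the assumption $\bar\partial_J\tau = 0$, equivalent by Theorem \ref{thm:picktensor} to $(\nabla_{JX}A)Y = (\nabla_X A)(JY)$, makes several a priori distinct second derivatives of $A$ coincide; this collapse is what allows the $\dot A_0$-linear part of $\delta M_1$ to be reorganized into $-\tfrac{1}{6}\mathrm d(f'\beta)$, with $\beta = \langle (\nabla_\bullet A)J,\dot A_0\rangle$ emerging from the surviving contraction.

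The \emph{main obstacle} is the bookkeeping for the remaining $\dot J$-linear and curvature contributions. The first-order change of $\nabla$ (Lemma \ref{lem:firstordervariationlevicivita}) together with the commutator $[\nabla_{e_1},\nabla_{e_2}]A = -R^\nabla A = -K_J\rho\cdot(JA - AJ)$-type terms produce a curvature contribution proportional to $K_J\langle A,\dot A_0\rangle\rho$; using $\dot f = \tfrac{f'}{4}\langle A,\dot A_0\rangle$ (from the proof of Lemma \ref{lem:ordertwoderivatives}) this matches $2K_J\dot f\,\rho$ exactly. The residual $\dot J$-pieces involve $\divr_g\dot J$ paired against $f'\vl\vl\partial\tau\vl\vl^2$ and, via Lemma \ref{lem:divergenzaendomorfismo} plus the functional relation $f'-f/(3\lambda) = -F'$ of Lemma \ref{lem:combinationoffunctionfandfprime}, collapse to the missing $-\mathrm df\wedge\divr_g\dot J$. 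The delicate point is tracking signs and the factor $\tfrac{1}{6}$, which emerges from the ratio between the conformal rescaling $F$ defining the Blaschke metric and the symmetric-$3$-tensor normalization of $\tau$. This is the analogue of the integration-by-parts identity of \cite[Proposition 6.10]{mazzoli2021parahyperkahler}, but genuinely more involved: here $A$ carries an extra $1$-form index compared with the endomorphism-valued tensor in the AdS$_3$ case, and that additional index is precisely what forces the appearance of $\beta$ in the final expression.
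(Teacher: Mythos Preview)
Your overall plan is the paper's: decompose $\widetilde\momentmap = M_1+M_2+M_3+2c\rho$, handle $\delta M_2$ via Proposition~\ref{differentialofcurvature} and $\delta M_3$ by $2i\bar\partial\partial\psi=\mathrm d(\mathrm d\psi\circ J)$, and reduce to $\delta M_1 = -\mathrm d\bigl(\tfrac{f'}{6}\beta\bigr) - 2\dot fK_J\rho + \mathrm df\wedge\divr_g\dot J$. The paper then linearizes $(\nabla_{e_i}A)' = \dot\nabla_{e_i}A + \nabla_{e_i}A'$ via Lemma~\ref{lem:firstordervariationlevicivita}, exactly as you intend.

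The gap is in the $\dot J$-bookkeeping. Your invocation of Lemma~\ref{lem:divergenzaendomorfismo} and of the relation $f'-f/(3\lambda)=-F'$ is misplaced: neither enters this proof. The $\divr_g\dot J$-contribution arises from the $\tfrac{3}{2}(\divr\dot J)(X)A(Y)JZ$ piece of $\dot\nabla_X A$ and collapses to $\mathrm df\wedge\divr_g\dot J$ immediately via the chain rule $\mathrm df=\tfrac{f'}{4}\langle A,\nabla_\bullet A\rangle$ (this is $\langle A,\nabla A\rangle$, not $\vl\vl\partial\tau\vl\vl^2$). More importantly, the extra $1$-form index of $A$ does more than force $\beta$ into the answer: when $\dot\nabla$ acts on the $1$-form slot one also picks up terms of the shape $A\bigl(J(\nabla_{e_i}\dot J)\,\cdot\bigr)$, and after pairing these leave the residue
\[
\langle\nabla_{e_1}A,\,A(\nabla_{e_2}\dot J\,\cdot)\rangle - \langle A(\nabla_{e_1}\dot J\,\cdot),\,\nabla_{e_2}A\rangle,
\]
which is not obviously zero and is not absorbed by $\beta$ or by $\mathrm df\wedge\divr_g\dot J$. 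The paper proves this vanishes as a separate pointwise identity (its relation (ii)), by writing $\nabla_{e_i}\dot J$ in the basis $\{A_1,A_1J\}$ off the zeroes of $A$ and checking cancellation directly; this is precisely the step with no analogue in the AdS case, and without it your computation cannot close. As a minor aside: the curvature term $2\dot fK_J\rho$ in fact appears when one expands $\mathrm d\beta$ on the target side, using $R^\nabla(e_1,e_2)A=3K_JAJ$, rather than from a commutator inside $\delta M_1$ itself, which contains no second covariant derivatives of $A$.
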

\begin{proof}
    The most intricate part of the proof is encompassed in showing the following identity: \begin{equation}\label{difficultidentitydmomentmap}
        \bigg(-\frac{2}{3}f'\big(\vl\vl\bar\partial\tau\vl\vl^2-\vl\vl\partial\tau\vl\vl^2\big)\rho\bigg)'=-\mathrm d\Big(\frac{f'}{6}\beta\Big)-2\dot fK_J\rho+\mathrm{d}(f-1)\wedge\dive_g\dot J \ ,
    \end{equation}where the derivative is taken with respect to $(J,A)$ along tangent directions $(\dot J,\dot A)$. Let us assume for a moment that (\ref{difficultidentitydmomentmap}) holds and let us prove the formula stated in the theorem. In fact, the other terms in $\widetilde\momentmap$ (see Theorem \ref{thm:ourmomentmap}) are easier to handle \begin{align*}\big(2(f-1)K_J\rho\big)'&=2\dot fK_J\rho+2(f-1)\mathrm dK_J(\dot J)\rho \tag{Proposition \ref{differentialofcurvature}} \\ &=2\dot fK_J\rho+(f-1)\mathrm d\big(\dive_g\dot J\big) \ .\end{align*} Moreover, $$\big(2i\bar\partial\partial f\big)'=-\big(\Delta_{g_J}f\big)'\rho=\mathrm d\big((\mathrm df\circ J)'\big)=\mathrm d\big(\mathrm d\dot f\circ J+\mathrm df\circ\dot J\big) \ .$$ Combining these formulas, we get the desired expression for the moment map\begin{align*}
        \mathrm d\widetilde\momentmap(\dot J,\dot A)&=-\mathrm d\Big(\frac{f'}{6}\beta\Big)-2\dot fK_J\rho+\mathrm{d}f\wedge\dive_g\dot J+\mathrm d\big(\mathrm d\dot f\circ J+\mathrm df\circ\dot J\big)+2\dot fK_J\rho+(f-1)\mathrm d\big(\dive_g\dot J\big) \\ &=\mathrm d\Big((f-1)\dive_g\dot J+\mathrm df\circ\dot J+\mathrm d\dot f\circ J-\frac{f'}{6}\beta\Big) \ .
    \end{align*}Now let us focus on proving relation (\ref{difficultidentitydmomentmap}). As was shown in Theorem \ref{thm:ourmomentmap}, we know that: \begin{equation}\label{symplecticformmomentmap}
        -\frac{2}{3}f'\big(\vl\vl\bar\partial\tau\vl\vl^2-\vl\vl\partial\tau\vl\vl^2\big)=(\widehat\omega_f)_{(J,A)}\big((0, \nabla_{e_1}A ), (0, \nabla_{e_2}A)\big) \ ,
    \end{equation}for any choice of a local frame $\{e_1,e_2\}$ such that $\rho(e_1,e_2)=1$. Therefore, we can compute the following derivative: \begin{align*}
        \bigg(-\frac{2}{3}f'\big(\vl\vl\bar\partial\tau\vl\vl^2-\vl\vl\partial\tau\vl\vl^2\big)\bigg)'&=\bigg((\widehat\omega_f)_{(J,A)}\big((0, \nabla_{e_1}A ), (0, \nabla_{e_2}A)\big)\bigg)' \\ &= \bigg(-\frac{f'}{6}\langle\nabla_{e_1}A, (\nabla_{e_2}A)J\rangle\bigg)' \ ,
    \end{align*}where in the second step we used that, for any $i=1,2$, the endomorphism part of $\nabla_{e_i}A$ is trace-less and $g_J$-symmetric. In order to simplify the computation of the derivative, let us make some preliminary observations. Since equation (\ref{symplecticformmomentmap}) is true for any unit volume local frame $\{e_1, e_2\}$, we can further assume that it is $g_J$-orthonormal and does not change as $J$ varies along tangent directions. Moreover, the terms corresponding to variations $\dot J$ make no contributions as $\tr(\nabla_{e_1}A\nabla_{e_2}A\dot J)=0$ (see (\ref{def:scalarproductpick2})). This allows us to reduce the study of the derivative to only the following terms: \begin{equation}\begin{aligned}\label{firstderivativemomentmap}
        \bigg(-\frac{2}{3}f'\big(\vl\vl\bar\partial\tau\vl\vl^2-\vl\vl\partial\tau\vl\vl^2\big)\bigg)'=&-\frac{f''}{24}\langle A,\dot A_0\rangle\langle\nabla_{e_1}A, (\nabla_{e_2}A)J\rangle+ \\ &-\frac{f'}{6}\langle(\nabla_{e_1}A)', (\nabla_{e_2}A)J\rangle -\frac{f'}{6}\langle\nabla_{e_1}A, (\nabla_{e_2}A)'J\rangle \ ,
    \end{aligned}\end{equation}and we expressed the first order variation of $f'$ as $\frac{f''}{4}\langle A,\dot A_0\rangle$ (see Lemma 3.22 in \cite{rungi2021pseudo}). At this point, using Lemma \ref{lem:firstordervariationlevicivita}, we can obtain an expression for $(\nabla_XA)'$. In fact, $$(\nabla_XA)'=\dot\nabla_XA+\nabla_XA'$$ and we can compute \begin{align*}
        (\dot\nabla_XA)(Y)Z&=\dot\nabla_X\big(A(Y)Z\big)-A(\dot\nabla_XY)Z-A(Y)\dot\nabla_XZ \\ &=\frac{1}{2}\Big(-(\dive\dot J)(X)JA(Y)Z-J(\nabla_X\dot J)A(Y)Z+(\dive\dot J)(X)A(JY)Z+ \\ & \ \ \ +A\big(J(\nabla_X\dot J)Y\big)Z+(\dive\dot J)(X)A(Y)JZ+A(Y)J(\nabla_X\dot J)Z\Big) \\ &=\frac{1}{2}\Big(3(\dive\dot J)(X)A(Y)JZ+A\big(J(\nabla_X\dot J)Y\big)Z+A(Y)J(\nabla_X\dot J)Z-J(\nabla_X\dot J)A(Y)Z\Big)
    \end{align*}where we used $A(JY)Z=A(Y)JZ$ and $A(Y)JZ=-JA(Y)Z$. As for the term involving the derivative of $A$, we first notice that $A'=J\dot JA+\dot A$, hence \begin{align*}
        (\nabla_XA')&=J\nabla_X\dot JA+J\dot J\nabla_XA+\nabla_X\dot A_0+\nabla_X\dot A_{\text{tr}} \\ &=J\nabla_X\dot JA+J\dot J\nabla_XA+\nabla_X\dot A_0+\frac{1}{2}\Big(\tr(\nabla_X\dot JJA)+\tr(\dot JJ\nabla_XA)\Big) \ .
    \end{align*}Now, choosing $X=e_1$ and observing that the two trace terms in $(\nabla_{e_1}A')$ and the four elements $AJ\nabla_{e_1}\dot J, J\nabla_{e_1}\dot JA, J\nabla_{e_1}\dot JA, J\dot J\nabla_{e_1}A$ are zero once they pair with $(\nabla_{e_2}A)J$ using the scalar product (\ref{def:scalarproductpick2}), we get \begin{align*}
        -\frac{f'}{6}\langle(\nabla_{e_1}A)',(\nabla_{e_2}A)J\rangle&=-\frac{f'}{4}(\dive\dot J)(e_1)\langle AJ,(\nabla_{e_2}A)J\rangle-\frac{f'}{6}\langle\nabla_{e_1}\dot A_0,(\nabla_{e_2}A)J\rangle \\ & \ \ \ -\frac{f'}{12}\langle A\big(\nabla_{e_1}\dot J\cdot\big)J, \nabla_{e_2}AJ\rangle \ .
    \end{align*} Moreover, since $\langle\nabla_{e_1}A, (\nabla_{e_2}A)'J\rangle=-\langle(\nabla_{e_1}A)J, (\nabla_{e_2}A)'\rangle$, performing a similar computation as above, we obtain \begin{align*}
        \frac{f'}{6}\langle(\nabla_{e_2}A)',(\nabla_{e_1}A)J\rangle&=\frac{f'}{4}(\dive\dot J)(e_2)\langle AJ,(\nabla_{e_1}A)J\rangle+\frac{f'}{6}\langle\nabla_{e_2}\dot A_0,(\nabla_{e_1}A)J\rangle \\ & \ \ \ +\frac{f'}{12}\langle A\big(\nabla_{e_2}\dot J\cdot\big)J, \nabla_{e_1}AJ\rangle \ .
    \end{align*}Combining everything together in (\ref{firstderivativemomentmap}), we have
    \begin{equation}\begin{aligned}\label{derivativestep2}
        \Big(-\frac{2}{3}f'\big(\vl\vl\bar\partial\tau\vl\vl^2-\vl\vl\partial\tau\vl\vl^2\big)\Big)'=&-\frac{f''}{24}\langle A,\dot A_0\rangle\langle\nabla_{e_1}A, (\nabla_{e_2}A)J\rangle+\frac{f'}{4}\Big((\dive\dot J)(e_2)\langle\nabla_{e_1}A, A\rangle \\ &-(\dive\dot J)(e_1)\langle AJ,(\nabla_{e_2}A)J\rangle\Big)-\frac{f'}{6}\Big(\langle\nabla_{e_1}\dot A_0,(\nabla_{e_2}A)J\rangle \\ &+\langle\nabla_{e_1}A, (\nabla_{e_2}\dot A_0)J)\rangle\Big)+\frac{f'}{12}\Big(\langle\nabla_{e_1}A, A\big(\nabla_{e_2}\dot J\cdot\big)\rangle \\ &-\langle A\big(\nabla_{e_1}\dot J\cdot\big), \nabla_{e_2}A\rangle\Big) \ ,
    \end{aligned}\end{equation} where we used, again, the symmetry and the compatibility of the scalar product with $J$ (see (\ref{rel:cpxstructurescalarprod1})). Regarding the divergence term found in (\ref{derivativestep2}), it can be elaborated as follows: \begin{align*}
        \frac{f'}{4}\Big((\dive\dot J)(e_2)\langle\nabla_{e_1}A, A\rangle-(\dive\dot J)(e_1)\langle A,\nabla_{e_2}A\rangle\Big)&=-\big(\dive\dot J\wedge\mathrm d f\big)(e_1, e_2) \\ &=\big(\mathrm d(f-1)\wedge\dive\dot J\big)(e_1,e_2) \ ,
    \end{align*}where we used $\mathrm d f=\frac{f'}{4}\langle A,\nabla_\bullet A\rangle$. Comparing relation (\ref{difficultidentitydmomentmap}) with (\ref{derivativestep2}), the proof is complete if we show that 
        \begin{align*}
          \boldsymbol{(i)} \  -\mathrm d\Big(\frac{f'}{6}\beta\Big)(e_1,e_2)=&-\frac{f''}{24}\langle A,\dot A_0\rangle\langle\nabla_{e_1}A, (\nabla_{e_2}A)J\rangle+2\dot fK_J+ \\ &-\frac{f'}{6}\Big(\langle\nabla_{e_1}\dot A_0,(\nabla_{e_2}A)J\rangle +\langle\nabla_{e_1}A, (\nabla_{e_2}\dot A_0)J)\rangle\Big) \ ,
        \end{align*}
       $$\boldsymbol{(ii)} \ \ \langle\nabla_{e_1}A, A\big(\nabla_{e_2}\dot J\cdot\big)\rangle-\langle A\big(\nabla_{e_1}\dot J\cdot\big), \nabla_{e_2}A\rangle=0 \ .\qquad\qquad\qquad\qquad\qquad$$\newline\textbf{Proof of relation $\boldsymbol{(i)}$}\newline First notice that if $A=0$ then the relation is clearly satisfied. Suppose $A$ is not identically zero, then $$-\mathrm d\Big(\frac{f'}{6}\beta\Big)=-\frac{1}{6}\mathrm df'\wedge\beta-\frac{f'}{6}\mathrm d\beta=-\frac{f''}{24}\langle A,\nabla_{\bullet}A\rangle\wedge\beta-\frac{f'}{6}\mathrm d\beta \ .$$ Regarding the differential of $\beta(\bullet)=\langle\dot A_0,(\nabla_\bullet A)J\rangle$ we get \begin{align*}
           \mathrm d\beta(e_1,e_2)&=e_1\cdot\big(\langle\dot A_0, (\nabla_{e_2}A)J\rangle\big)-e_2\cdot\big(\langle\dot A_0, (\nabla_{e_1}A)J\rangle\big)-\langle\dot A_0, (\nabla_{[e_1,e_2]}A)J\rangle \\ &=\langle\nabla_{e_1}\dot A_0,(\nabla_{e_2}A)J\rangle-\langle\nabla_{e_2}\dot A_0,(\nabla_{e_1}A)J\rangle+\langle\dot A_0,\big(\nabla_{e_1}\nabla_{e_2}A-\nabla_{e_2}\nabla_{e_1}A-\nabla_{[e_1,e_2]}A\big)J\rangle \\ &=\langle\nabla_{e_1}\dot A_0,(\nabla_{e_2}A)J\rangle-\langle\nabla_{e_2}\dot A_0,(\nabla_{e_1}A)J\rangle-3K_J\langle A,\dot A_0\rangle \ , 
       \end{align*}where the last equality follows from $R^\nabla(e_1,e_2)A=3K_JAJ$ since $R^\nabla(e_1,e_2)=\nabla_{e_1}\nabla_{e_2}-\nabla_{e_2}\nabla_{e_1}-\nabla_{[e_1,e_2]}$. Thus, \begin{equation}
           -\frac{f'}{6}\mathrm d\beta(e_1,e_2)=-\frac{f'}{6}\Big(\langle\nabla_{e_1}\dot A_0,(\nabla_{e_2}A)J\rangle-\langle\nabla_{e_2}\dot A_0,(\nabla_{e_1}A)J\rangle\Big)+2\dot fK_J \ .
       \end{equation}Concerning the other therm, we need to prove that $$\big(\langle A,\nabla_{\bullet}A\rangle\wedge\beta\big)(e_1, e_2)=\langle A,\dot A_0\rangle\langle\nabla_{e_1}A, (\nabla_{e_2}A)J\rangle \ .$$ Notice that, for any $p\in\Sg$ outside the zeroes of $A$, the elements $(A_1)_p:=\big(A(e_1)\big)_p$ and $(A_1J)_p:=\big(A(e_1)J\big)$ form a basis for the space of $g_J$-symmetric and trace-less endomorphisms of $T_p\Sg$. In particular, using the scalar product $\langle\cdot,\cdot\rangle$ we can write $$\dot A_0=\frac{1}{\vl\vl A\vl\vl^2}\Big(\langle A,\dot A_0\rangle A+\langle AJ,\dot A_0\rangle AJ\Big),\quad \nabla_{e_1}A=\frac{1}{\vl\vl A\vl\vl^2}\Big(\langle A,\nabla_{e_1}A\rangle A+\langle AJ,\nabla_{e_1}A\rangle AJ\Big) \ .$$ Replacing these identities in the previous equation, we obtain \begin{align*}
          \big(\langle A,\nabla_{\bullet}A\rangle\wedge\beta\big)(e_1, e_2)&=\langle A, \nabla_{e_1}A\rangle\langle\dot A_0,(\nabla_{e_2}A)J\rangle-\langle A,\nabla_{e_2}A\rangle\langle\dot A_0, (\nabla_{e_1}A)J\rangle \\ &=\frac{\langle A, \nabla_{e_1}A\rangle}{\vl\vl A\vl\vl^2}\Big(\langle\dot A_0, A\rangle\langle A,(\nabla_{e_2}A)J\rangle+\langle\dot A_0, AJ\rangle\langle AJ,(\nabla_{e_2}A)J\rangle\Big) \\ & \ \ \ -\frac{\langle A, \nabla_{e_2}A\rangle}{\vl\vl A\vl\vl^2}\Big(\langle\dot A_0, A\rangle\langle A,(\nabla_{e_1}A)J\rangle+\langle\dot A_0, AJ\rangle\langle AJ,(\nabla_{e_1}A)J\rangle\Big) \\ &=\frac{\langle\dot A_0, A\rangle}{\vl\vl A\vl\vl^2}\Big(\langle A,\nabla_{e_1}A\rangle\langle A,(\nabla_{e_2}A)J\rangle-\langle A, \nabla_{e_2}A\rangle\langle A,(\nabla_{e_1}A)J\rangle\Big) \\ &=\frac{\langle\dot A_0, A\rangle}{\vl\vl A\vl\vl^2}\cdot\big\langle\langle A,\nabla_{e_1}A\rangle A+\langle AJ,\nabla_{e_1}A\rangle AJ; (\nabla_{e_2}A)J\big\rangle \\ &=\langle A,\dot A_0\rangle\langle\nabla_{e_1}A, (\nabla_{e_2}A)J\rangle \ .
       \end{align*}Since the relation is true on the complement of a finite set in $\Sg$ (the zeroes of $A$), it extends on the whole surface by continuity of the expression.\vspace{0.1cm}\newline
       \textbf{Proof of relation $\boldsymbol{(ii)}$}\newline
       As explained at the beginning of the section, the presence of the $1$-form part in the tensor $A$ generates further difficulties. In fact, one has to deal with terms of the form $A\big(\nabla_{e_i}\dot J\cdot\big)$ which do not appear in the anti-de Sitter case. First of all notice that if $A$ is identically zero, then the relation is clearly satisfied. Hence, let us assume that this is not the case. In the following, we will use the notations introduced in the proof of Theorem \ref{thm:ourmomentmap}. Namely, $$\nabla_{e_1}A=(A_1)^1e^*_1+(A_2)^1e^*_2,\qquad \nabla_{e_2}A=(A_1)^2e^*_1+(A_2)^2e^*_2 \ ,$$ where $A_j:=A(e_j)$ for $j=1,2$ and since $A=g_J^{-1}C$, we have \begin{align*}& A_1=\begin{pmatrix}
       C_{111} & C_{112} \\ C_{112} & -C_{111}
       \end{pmatrix},\qquad A_2=\begin{pmatrix}
           C_{112} & -C_{111} \\ -C_{111} & -C_{112}
       \end{pmatrix}, \\[1em] &(A_1)^k:=\begin{pmatrix}
        (\nabla_kC)_{111} & (\nabla_kC)_{112} \\ (\nabla_kC)_{112} & -(\nabla_kC)_{111}
    \end{pmatrix},\quad (A_2)^k:=\begin{pmatrix}
        (\nabla_kC)_{112} & -(\nabla_kC)_{111} \\ -(\nabla_kC)_{111} & -(\nabla_kC)_{112}
    \end{pmatrix}, \ k=1,2 \\[1em] &(\nabla_kC)_{jlm}:=(\nabla_{e_k}C)(e_j,e_l,e_m)=g_J\big((\nabla_{e_k}A)(e_j)\cdot e_l,e_m\big) \ .\end{align*}By assumption, $C$ is the real part of a holomorphic cubic differential and, this is equivalent (see Theorem \ref{thm:picktensor}), to require that $(\nabla_{JX}A)(\cdot)=(\nabla_XA)(J\cdot)$ for any vector field $X$ on the surface. In particular, we obtain the following additional relations \begin{equation}
        (\nabla_2C)_{111}=(\nabla_1C)_{112},\qquad (\nabla_2C)_{112}=-(\nabla_1C)_{111} \ .
\end{equation}The next step is to write explicitly, in a similar way, the tensors $$A\big(\nabla_{e_1}\dot J\cdot\big)=(\widetilde A_1)^1e_1^*+(\widetilde A_2)^1e_2^*,\qquad A\big(\nabla_{e_2}\dot J\cdot\big)=(\widetilde A_1)^2e_1^*+(\widetilde A_2)^2e_2^* \ . $$ For any $p\in\Sg$ outside the zeroes of $A$, the elements $A_1$ and $A_2=A_1J$ form a basis for the space of $g_J$-symmetric and trace-less endomorphisms of $T_p\Sg$. In particular, both $\nabla_{e_1}\dot J$ and $\nabla_{e_2}\dot J$ can be written in this basis as \begin{align*}
    &\nabla_{e_1}\dot J=\frac{1}{\tr(A_1^2)}\Big(\tr(\nabla_{e_1}\dot JA_1)A_1+\tr(A_1J\nabla_{e_1}\dot J)A_2\Big) \\ &\nabla_{e_2}\dot J=\frac{1}{\tr(A_1^2)}\Big(\tr(\nabla_{e_2}\dot JA_1)A_1+\tr(A_1J\nabla_{e_2}\dot J)A_2\Big) \ .
\end{align*} This new form of the endomorphisms allows us to compute their values on the $g_J$-orthonormal basis of the tangent to the surface \begin{align*}
    \nabla_{e_1}\dot J\cdot e_1&=\frac{1}{\tr(A_1^2)}\Big(\tr(\nabla_{e_1}\dot JA_1)C_{111}+\tr(A_1J\nabla_{e_1}\dot J)C_{112}\Big)e_1 \\ & \ \ \ +\frac{1}{\tr(A_1^2)}\Big(\tr(\nabla_{e_1}\dot JA_1)C_{112}-\tr(A_1J\nabla_{e_1}\dot J)C_{111}\Big)e_2 
    \end{align*}
   \begin{align*} \nabla_{e_1}\dot J\cdot e_2&=\frac{1}{\tr(A_1^2)}\Big(\tr(\nabla_{e_1}\dot JA_1)C_{112}-\tr(A_1J\nabla_{e_1}\dot J)C_{111}\Big)e_1 \\ & \ \ \ -\frac{1}{\tr(A_1^2)}\Big(\tr(\nabla_{e_1}\dot JA_1)C_{111}+\tr(A_1J\nabla_{e_1}\dot J)C_{112}\Big)e_2 
\end{align*}and the same calculation can be done for $\nabla_{e_2}\dot J$. In particular, we obtain\begin{align*}&(\widetilde A_1)^k=\frac{1}{\tr(A_1^2)}\Big(\tr(\nabla_{e_k}\dot JA_1)\big(C_{111}A_1+C_{112}A_2\big)+\tr(A_1J\nabla_{e_k}\dot J)\big(C_{112}A_1-C_{111}A_2\big)\Big), \\[1em]  & (\widetilde A_2)^k=\frac{1}{\tr(A_1^2)}\Big(\tr(\nabla_{e_k}\dot JA_1)\big(C_{112}A_1-C_{111}A_2\big)-\tr(A_1J\nabla_{e_k}\dot J)\big(C_{111}A_1+C_{112}A_2\big)\Big) \ . \end{align*} To conclude, we notice that $\tr(A_1^2)=2\big(C_{111}^2+C_{112}^2\big)$, hence \begin{align*}
    \langle\nabla_{e_1}A, A\big(\nabla_{e_2}\dot J\cdot\big)\rangle-\langle A\big(\nabla_{e_1}\dot J\cdot\big), \nabla_{e_2}A\rangle&=\tr\Big((A_1)^1(\widetilde A_1)^2+(A_2)^1(\widetilde A_2)^2\Big) \\ & \ \ \ -\tr\Big((A_1)^2(\widetilde A_1)^1-(A_2)^2(\widetilde A_2)^1\Big) \\ &=0 \ .
\end{align*}Since the relation is true on the complement of a finite set in $\Sg$ (the zeroes of $A$), it extends on the whole surface by continuity of the expression.
\end{proof}

\begin{remark}\label{rem:fixedprimitive}
In analogy with what happens for the $\PSL(2,\R)\times\PSL(2,\R)$ case (see \cite[Remark 6.11]{mazzoli2021parahyperkahler}), we fix a primitive of $\mathrm{d}\widetilde\momentmap$ found in Proposition \ref{prop:differentialourmomentmap} and we consider the linear map $L_{(J,A)}:T_{(J,A)}\pickg\to\Omega^1(\Sg)/B^1(\Sg)\subset\Lsymp(\Sg,\rho)^*$ which associates to each tangent vector $(\dot J,\dot A)$ the above primitive (modulo exact $1$-forms). 
With an abuse of notation we will denote this primitive by $\mathrm{d}\widetilde\momentmap(\dot J,\dot A)\equiv L_{(J,A)}(\dot J,\dot A)$. 
\end{remark}

\begin{proposition}\label{prop:integrazioneperparti}
    Let $(J,A)\in\haffrhozerotilde$, then for every $(\dot J,\dot A)\in T_{(J,A)}\pickg$ and for every symplectic vector field $V$, we have \begin{equation}\label{integrationbypart}
        \ome_f\big((\liederivative_VJ, g_J^{-1}\liederivative_VC); (\dot J,\dot A)\big)=-\langle\mathrm d\widetilde\momentmap(\dot J,\dot A) \ | \ V\rangle_{\Lsymp}
    \end{equation}
\end{proposition}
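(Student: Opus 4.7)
The plan is to exploit the moment-map structure on $(\pickg, \ome_f)$ established in Corollary \ref{cor:momentmaptilde}, together with the explicit primitive $L_{(J,A)}$ of $\mathrm d\widetilde\momentmap$ computed in Proposition \ref{prop:differentialourmomentmap}. By Remark \ref{rem:fixedprimitive}, the right-hand side of (\ref{integrationbypart}) is interpreted as $-\int_\Sg L_{(J,A)}(\dot J, \dot A)\wedge\iota_V\rho$, which is well-defined as a pairing since $\iota_V\rho \in Z^1(\Sg)$ whenever $V$ is symplectic. The argument splits naturally into a Hamiltonian case and a harmonic complement.

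First, suppose $V = V_H$ is Hamiltonian with $\iota_V\rho = \mathrm dH$. With the left-action convention in which the infinitesimal generator of the action of $V$ on $\pickg$ is $-\liederivative_V(J,A)$, the moment-map equation of Definition \ref{def:momentmap}(ii) reads
\begin{equation*}
\ome_f\big(\liederivative_V(J,A);\,(\dot J,\dot A)\big) = -\mathrm d\widetilde\momentmap^V(\dot J,\dot A).
\end{equation*}
Unpacking $\widetilde\momentmap^V(J,A) = \int_\Sg H\cdot\widetilde\momentmap(J,A)$ and using Proposition \ref{prop:differentialourmomentmap} to write $\mathrm d_{(J,A)}\widetilde\momentmap(\dot J,\dot A) = \mathrm dL_{(J,A)}(\dot J,\dot A)$, Stokes' theorem on the closed surface $\Sg$ produces
\begin{equation*}
-\mathrm d\widetilde\momentmap^V(\dot J,\dot A) = -\int_\Sg H \cdot\mathrm dL_{(J,A)}(\dot J,\dot A) = -\int_\Sg L_{(J,A)}(\dot J,\dot A)\wedge\iota_V\rho,
\end{equation*}
after using $\mathrm dH\wedge L' = -L'\wedge\mathrm dH$ for $1$-forms. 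This establishes (\ref{integrationbypart}) for Hamiltonian $V$.

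To extend the identity to a general symplectic $V$, we invoke the decomposition $V = V_H + V_{\mathrm{harm}}$ of Lemma \ref{lem:vectorfieldssurface}, where $V_H\in\Lham(\Sg,\rho)$ and $V_{\mathrm{harm}}$ is harmonic. Since both sides are linear in $V$, it suffices to treat $V = V_{\mathrm{harm}}$. Here the moment-map property is unavailable, and we must verify the identity by direct computation: expand the left-hand side using formula (\ref{symplecticform}) for $\hat\ome_f$ applied to the tangent vector $\liederivative_V(J,A) = (\liederivative_VJ,g_J^{-1}\liederivative_VC)$, and the right-hand side via the four-term expression for $L_{(J,A)}$ in Proposition \ref{prop:differentialourmomentmap}. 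A sequence of integrations by parts on $\Sg$ matches the two expressions, with the symplectic condition $\mathrm d(\iota_V\rho) = 0$ being precisely what eliminates the remaining boundary-type terms.

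The main obstacle is the harmonic case. The computation involves the Lie derivatives of $J$ and of the cubic form $C$, and the $1$-form part of the Pick tensor $A$ --- absent in the anti-de Sitter analogue \cite[Proposition 6.12]{mazzoli2021parahyperkahler} --- generates additional terms of the form $A(\nabla_\bullet V\,\cdot)$ that must be carefully tracked. These are controlled using the same tensorial identities $\boldsymbol{(i)}$ and $\boldsymbol{(ii)}$ employed in the proof of Proposition \ref{prop:differentialourmomentmap}, which encode the interplay between the Pick form, its covariant derivative, and $\nabla\dot J$ dictated by the holomorphicity relation $(\nabla_{JX}A) = (\nabla_XA)J$. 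Once these identities are applied, the remaining discrepancy between the two sides is seen to be proportional to $\mathrm d(\iota_V\rho)$, which vanishes by the symplectic hypothesis.
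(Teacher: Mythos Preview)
Your decomposition $V = V_H + V_{\mathrm{harm}}$ and the treatment of the Hamiltonian piece via the moment-map axiom are both correct and clean; indeed, the paper remarks after the proof that this case is automatic. However, the entire substance of the proposition lies in the harmonic case, and there you have written a description of the proof rather than a proof. Saying ``a sequence of integrations by parts matches the two expressions'' and ``these are controlled using the same tensorial identities $\boldsymbol{(i)}$ and $\boldsymbol{(ii)}$'' is not enough: the computation must actually be carried out, and the identities $\boldsymbol{(i)}$, $\boldsymbol{(ii)}$ from Proposition~\ref{prop:differentialourmomentmap} are \emph{not} what is needed here. Those concern $\nabla_{e_i}A$ and $\nabla_{e_j}\dot J$; the obstruction in the present computation is a different, purely pointwise algebraic identity involving the symmetric trace-free part $M_V^{\mathrm s}$ of $\nabla V$, namely
\[
\langle A(M_V^{\mathrm s}\cdot),\dot A_0 J\rangle \;-\;\tfrac{1}{2}\langle AM_V^{\mathrm s}+M_V^{\mathrm s}A,\,(\dot A J+A\dot J)_{\mathrm{tr}}\rangle \;=\;0,
\]
which the paper proves by an explicit coordinate calculation (relation~(\ref{symplecticform3-4zero})). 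Your final sentence, that the residual discrepancy is proportional to $\mathrm d(\iota_V\rho)$, is also not what happens: the symplectic hypothesis is used earlier, to kill the trace of $M_V$ and to drop divergence terms in the integrations by parts (via $\dive_g V = 0$), while the identity above holds independently of any further condition on $V$.

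For comparison, the paper does not split into Hamiltonian plus harmonic at all: it computes $\ome_f\big((\liederivative_VJ,g^{-1}\liederivative_VC);(\dot J,\dot A)\big)$ directly for any symplectic $V$, using the decomposition of $M_V$ and formula~(\ref{liederivativeCsenzatraccia}) for $(g^{-1}\liederivative_VC)_0$. The three terms of $\widehat\ome_f$ are then handled one by one --- the $\langle\liederivative_VJ,J\dot J\rangle$ term via Lemma~\ref{traceliederivative} and integration by parts, the $\nabla_VA$ term similarly, and the remaining $M_V^{\mathrm s}$-terms via the algebraic identity above. Your organizational idea buys nothing in the hard case: once the Hamiltonian part is split off, the harmonic computation is essentially identical to the paper's direct computation, since harmonic vector fields are symplectic and the paper's argument uses only that.
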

\begin{proof}Before we begin the proof of the formula stated in the proposition, let us make some preliminary remarks. For any vector field $X$ on the surface, let use define the operator $M_X:\Gamma(T\Sg)\to\Gamma(T\Sg)$ as $M_X(Y):=\nabla^g_YX$, where $\nabla^g$ is the Levi-Civita connection with respect to $g\equiv g_J=\rho(\cdot,J\cdot)$. The endomorphism $M_X$ can be decomposed as $$M_X=\frac{\tr(M_X)}{2}\mathds 1-\frac{\tr(JM_X)}{2}J+M_X^{\mathrm s} \ ,$$ where the first term is the trace part, the second one is the $g$-skew-symmetric part, and the third one is the $g$-symmetric and trace-less part. If $X=V$ is a $\rho$-symplectic vector field, then the trace part of $M_V$ vanishes. Since $J$ is $\nabla^g$-parallel, we have $M_{JV}=JM_V$ and its decomposition is  given by \begin{equation}M_{JV}=JM_V=\frac{\tr(JM_V)}{2}\mathds 1+0+JM_V^{\mathrm s} \ .\end{equation} In particular, the $g$-skew-symmetric part of $M_{JV}$ vanishes and $JM_V^{\mathrm s}=M_{JV}^{\mathrm s}$. Recall that (see (\ref{liederivative-g-minusone-C})) we found the following formula for the Lie derivative of $C$ expressed in terms of the tensor $A$ $$(g^{-1}\liederivative_VC)(\cdot)=(\nabla_VA)(\cdot)+A(M_V\cdot)+A(\cdot)M_V+M_V^*A(\cdot) \ ,$$ which can be re-written using the decomposition of $M_V$ found above \begin{equation}\label{liederivativeCsenzatraccia}(g^{-1}\liederivative_VC)(\cdot)=\underbrace{(\nabla_VA)(\cdot)-\frac{3}{2}\tr(JM_V)AJ+A\big(M_V^{\mathrm s}\cdot\big)}_{\text{trace-less}}+\underbrace{AM_V^{\mathrm s}+M_V^{\mathrm s}A}_{\text{trace part}} \ .\end{equation}At this point, we can compute the symplectic form \begin{align*}
    (\ome_f)\big((\liederivative_VJ,g^{-1}\liederivative_VC), (\dot J,\dot A)\big)&=\int_\Sg\Big((f-1)\langle\liederivative_VJ,J\dot J\rangle-\frac{f'}{6}\langle(g^{-1}\liederivative_VC)_0, (\dot AJ+A\dot J)_0\rangle \\ & \ \ \ +\frac{f'}{12}\langle(g^{-1}\liederivative_VC)_{\text{tr}},(\dot AJ+A\dot J)_{\text{tr}}\rangle\Big)\rho \\ &=\int_\Sg\Big((f-1)\langle\liederivative_VJ,J\dot J\rangle-\frac{f'}{6}\langle \nabla_VA-\frac{3}{2}\tr(JM_V)AJ,\dot A_0J\rangle \\ & \ \ \ -\frac{f'}{6}\big(\langle A\big(M_V^{\mathrm s}\cdot\big),\dot A_0J\rangle-\frac{1}{2}\langle AM_V^{\mathrm{s}}+M_V^{\mathrm{s}}A, (\dot AJ+A\dot J)_{\text{tr}}\rangle\big)\Big)\rho
\end{align*}In order to simplify the third and fourth term in the integral, we make us of the following identity which will be proven at the end \begin{equation}\label{symplecticform3-4zero}
    \langle A\big(M_V^{\mathrm s}\cdot\big),\dot A_0J\rangle-\frac{1}{2}\langle AM_V^{\mathrm{s}}+M_V^{\mathrm{s}}A, (\dot AJ+A\dot J)_{\text{tr}}\rangle=0 \ .
\end{equation}Regarding the first term in the symplectic form, we use Lemma \ref{traceliederivative} and we obtain \begin{align*}
    \int_\Sg(f-1)\langle\liederivative_VJ,J\dot J\rangle\rho &=\int_\Sg\Big((1-f)(\dive_g\dot J)(V)+(f-1)\dive_g(\dot JV)\Big)\rho \\ &=\int_\Sg\Big((1-f)(\dive_g\dot J)(V)-\mathrm df(\dot JV)+\dive_g\big((f-1)\dot JV\big)\Big)\rho \\ &=-\int_\Sg\Big((f-1)(\dive_g\dot J)(V)+\mathrm df(\dot JV)\Big)\rho \ .
\end{align*}Moving on to the second term in the symplectic form \begin{align*}
    -\int_\Sg\frac{f'}{6}\langle\nabla_VA-\frac{3}{2}\tr(JM_V)AJ,\dot A_0J\rangle\rho&=-\int_\Sg\Big(-\frac{f'}{6}\beta(V)-\dot f\dive_g(JV)\Big)\rho \\ &=-\int_\Sg\Big(-\frac{f'}{6}\beta(V)+\mathrm d\dot f(JV)-\dive_g(\dot fJV)\Big)\rho \\ &=-\int_\Sg\Big(-\frac{f'}{6}\beta(V)+\mathrm d\dot f(JV)\Big)\rho \ .
\end{align*} In the end, combining the above two relations with (\ref{symplecticform3-4zero}), we obtain \begin{align*}
    (\ome_f)\big((\liederivative_VJ,g^{-1}\liederivative_VC), (\dot J,\dot A)\big)&=-\int_\Sg\Big((f-1)(\dive_g\dot J)(V)+\mathrm df(\dot JV)-\frac{f'}{6}\beta(V)+\mathrm d\dot f(JV)\Big)\rho \\ &=\int_\Sg\iota_V\Big((f-1)\dive_g\dot J+\mathrm df\circ\dot J-\frac{f'}{6}\beta+\mathrm d\dot f\circ J\Big)\rho \\ &=\int_\Sg\Big((f-1)\dive_g\dot J+\mathrm df\circ\dot J-\frac{f'}{6}\beta+\mathrm d\dot f\circ J\Big)\wedge\iota_V\rho \\ &=-\langle\mathrm d\widetilde\momentmap(\dot J,\dot A) \ | \ V\rangle_\Lsymp \ ,
\end{align*} where in the third step we used equation (\ref{vectorfieldand1form}). \vspace{0.1cm}\newline \textbf{Proof of relation (\ref{symplecticform3-4zero})}\newline Once again, the presence of the $1$-form part in $A$ makes the analysis more difficult. In fact, there is an additional term which does not appear in the anti-de Sitter case. If $A=0$ the identity is clearly satisfied. Suppose $A\neq 0$, then for any $p\in \Sg$ outside the zeroes of $A$ the elements $A_1$ and $A_2=A_1J$ form a basis for the space of $g_J$-symmetric and trace-less endomorphisms of $T_p\Sg$. Let $\{e_1, e_2\}$ be a $g_J$-orthonormal basis and let $\{e_1^*, e_2^*\}$ be its dual. Following the approach used to prove Proposition \ref{prop:differentialourmomentmap}, we have \begin{align*}
    A\big(M_V^{\mathrm s}\cdot\big)&=\frac{1}{\tr(A_1^2)}\Big(\tr(M_V^\mathrm sA_1)\big(C_{111}A_1+C_{112}A_2\big)+\tr(A_1JM_V^\mathrm s)\big(C_{112}A_1-C_{111}A_2\big)\Big)e_1^* \\[0.7em] & \ \ \ +\frac{1}{\tr(A_1^2)}\Big(\tr(M_V^\mathrm sA_1)\big(C_{112}A_1-C_{111}A_2\big)+\tr(A_1JM_V^\mathrm s)\big(C_{111}A_1+C_{112}A_2\big)\Big)e_2^* \ , \\[0.7em] &
    AM_V^\mathrm s+M_V^\mathrm sA=\tr(M_V^\mathrm sA_1)\mathds 1 e_1^*+\tr(A_1JM_V^\mathrm s)\mathds 1e_2^*, \\[0.7em] & (\dot A J+A\dot J)_{\text{tr}}=\frac{1}{2}\tr(\dot A_2)\mathds 1 e_1^*-\frac{1}{2}\tr(\dot A_1)\mathds 1 e_2^*,\qquad \dot A_0J=(\dot A_1)_0Je_1^*+(\dot A_2)_0Je_2^*
\end{align*} In particular, we can write the two terms in (\ref{symplecticform3-4zero}) as follows: \begin{align*}
    &-\frac{1}{2}\langle AM_V^{\mathrm{s}}+M_V^{\mathrm{s}}A, (\dot AJ+A\dot J)_{\text{tr}}\rangle=-\frac{1}{2}\Big(\tr(\dot A_2)\tr(A_1M_V^\mathrm s)-\tr(\dot A_1)\tr(A_1M_{JV}^\mathrm s)\Big),
\end{align*}\begin{align*}\langle A\big(M_V^{\mathrm s}\cdot\big),\dot A_0J\rangle&=\frac{\tr(A_1M_V^\mathrm s)}{\tr(A_1^2)}\Big(C_{111}\tr(A_1(\dot A_1)_0J)+C_{112}\tr(A_2(\dot A_1)_0J)\Big) \\[0.7em] & \ \ \ + \frac{\tr(A_1M_V^\mathrm s)}{\tr(A_1^2)}\Big(C_{112}\tr(A_1(\dot A_2)_0J)-C_{111}\tr(A_2(\dot A_2)_0J)\Big) \\[0.7em] & \ \ \ +\frac{\tr(A_1M_{JV}^\mathrm s)}{\tr(A_1^2)}\Big(C_{112}\tr(A_1(\dot A_1)_0J)-C_{111}\tr(A_2(\dot A_1)_0J)\Big) \\[0.7em] & \ \ \ -\frac{\tr(A_1M_{JV}^\mathrm s)}{\tr(A_1^2)}\Big(C_{111}\tr(A_1(\dot A_2)_0J)+C_{112}\tr(A_2(\dot A_2)_0J)\Big) \ . 
\end{align*}Finally, writing $\dot A$ in term of the variations of the tensor $C$, namely $$\dot A_1=\begin{pmatrix}
    \dot C_{111} & \dot C_{112} \\ \dot C_{112} & \dot C_{122}
\end{pmatrix},\qquad\dot A_2=\begin{pmatrix}
    \dot C_{112} & \dot C_{122} \\ \dot C_{122} & \dot C_{222}
\end{pmatrix}$$and using that $\tr(A_i(\dot A_k)_0J)=\tr(A_i\dot A_kJ)$ for any $i,j=1,2$, a direct computation shows the desired equality. Since (\ref{symplecticform3-4zero}) holds on the complement of a finite set in $\Sg$, it holds everywhere by continuity of the expression.
\end{proof}
\begin{remark}
    It is crucial to emphasize the importance of the result just proved. From the general theory of moment maps (see Definition \ref{def:momentmap}) we know that (\ref{integrationbypart}) follows from Corollary \ref{cor:momentmaptilde} if $\mathrm d\widetilde\momentmap$ is paired with Hamiltonian vector fields. The point is that $\widetilde\momentmap$ can not be promoted to a moment map for the action of $\Symp(\Sg,\rho)$, which still preserves $\ome_f$. In particular, the formula showed above is far from being obvious when computed for a symplectic vector field, which decomposes as the sum of a harmonic and a Hamiltonian vector field (see (\ref{splittingsymplecticvectorfield})).
\end{remark}
\begin{lemma}\label{lem:linearizedcodazzi}
    Let $(J,A)\in\pickg$, then the kernel of the linearized Codazzi-like equation $\mathrm d^\nabla A=0$ is given by $$\big\{(\dot J,\dot A)\in T_{(J,A)}\pickg \ | \ \mathrm d^\nabla\dot A_0(\bullet,\bullet)-J(\dive_g\dot J\wedge A)(\bullet,\bullet)=0\big\} \ .$$
\end{lemma}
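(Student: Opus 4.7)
The plan is to differentiate the nonlinear map $F:\pickg \to \Omega^2(\Sg, \mathrm{End}(T\Sg))$ given by $F(J,A) = \mathrm d^{\nabla^J}\! A$ at the point $(J,A)$. Writing a tangent vector as $(\dot J,\dot A)$, the linearisation splits naturally as
\[
\mathrm d F_{(J,A)}(\dot J,\dot A) \;=\; (\dot\nabla A)(\cdot,\cdot) \;+\; \mathrm d^{\nabla}\dot A_{\mathrm{true}}(\cdot,\cdot),
\]
where $\dot\nabla$ is the first-order variation of the Levi-Civita connection furnished by Lemma~\ref{lem:firstordervariationlevicivita}, and $\dot A_{\mathrm{true}} = \dot A + J\dot J A$ is the honest derivative along a curve $A_t = g_{J_t}^{-1}C_t$ (the extra summand $J\dot J A$ compensating for the convention $\dot A = g_J^{-1}\dot C$ used in Lemma~\ref{lem:tangentpickform}).

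The first step is to compute $(\dot\nabla_X A)(Y)$ from the Leibniz rule $(\dot\nabla_X A)(Y)Z = \dot\nabla_X(A(Y)Z) - A(\dot\nabla_X Y)Z - A(Y)\dot\nabla_X Z$, substituting the expression for $\dot\nabla$ and invoking the identities $A(JY) = A(Y)J$ and the pointwise anticommutation $A(Y)J = -JA(Y)$, which holds on a surface for any trace-free $g_J$-symmetric endomorphism. After collecting terms this gives
\[
(\dot\nabla_X A)(Y) \;=\; \tfrac{3}{2}(\dive_g\dot J)(X)\, A(Y) J \;-\; \tfrac{1}{2} J A\bigl((\nabla_X\dot J) Y\bigr) \;-\; \tfrac{1}{2}\tr\bigl((\nabla_X\dot J)A(Y)\bigr)\, J.
\]
Antisymmetrising in $X,Y$ and using Lemma~\ref{lem:divergenzaendomorfismo} to rewrite $(\nabla_X\dot J)Y - (\nabla_Y\dot J)X$ as $(\dive_g\dot J)(Y)X - (\dive_g\dot J)(X)Y$, the first two summands collapse into the clean expression $-J(\dive_g\dot J \wedge A)(X,Y)$, leaving only a residual $J$-valued scalar piece $-\tfrac{1}{2}\Psi(X,Y)\,J$ with $\Psi(X,Y) := \tr((\nabla_X\dot J)A(Y)) - \tr((\nabla_Y\dot J)A(X))$.

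The second step is to unpack $\mathrm d^{\nabla}\dot A_{\mathrm{true}} = \mathrm d^\nabla \dot A_0 + \mathrm d^\nabla \dot A_{\mathrm{tr}} + \mathrm d^\nabla(J\dot J A)$. The trace-part $\dot A_{\mathrm{tr}} = \tfrac{1}{2}\tr(JA\dot J)\mathds 1$ produces a $\mathds 1$-valued contribution $\mathrm d\phi\cdot\mathds 1$; differentiating $J\dot J A$ with the parallelism of $J$ produces $J[(\nabla_X\dot J)A(Y)-(\nabla_Y\dot J)A(X)]$ plus the remainder $J\dot J\,\mathrm d^\nabla A$, the latter vanishing on $\haffrhozerotilde \subset \mathcal M_{\mathrm C}$. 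Decomposing each summand along $\mathrm{End}(T\Sg) = \R\mathds 1 \oplus \R J \oplus \mathrm{End}_0^{\mathrm{sym}}(T\Sg, g_J)$ via the surface identity $PQ = \tfrac{1}{2}\tr(PQ)\mathds 1 - \tfrac{1}{2}\tr(PQJ)J$ for trace-free $g_J$-symmetric $P,Q$, the $J$-valued contributions cancel in pairs ($-\tfrac{1}{2}\Psi J$ against $+\tfrac{1}{2}\Psi J$), while the $\mathds 1$-valued contributions assemble into $\tfrac{1}{2}\bigl(\tr(J(\mathrm d^\nabla A)\dot J) + \tr(J\dot J (\mathrm d^\nabla A))\bigr)\mathds 1$, which vanishes identically by the elementary two-dimensional identity $\tr(JM\dot J) + \tr(J\dot J M) = 0$ for trace-free $g_J$-symmetric $M$.

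What remains after all cancellations is precisely $\mathrm d^\nabla \dot A_0 - J(\dive_g\dot J\wedge A)$, giving the claimed characterisation of the kernel. The main obstacle is not any single analytic step but the combinatorial bookkeeping: the linearisation fragments into several algebraically distinct pieces, spread across three conventions for the variation of $A$ (namely $\dot A$, $\dot A_{\mathrm{true}}$, and $\dot A_0$) and across three pointwise summands $\R\mathds 1$, $\R J$, $\mathrm{End}_0^{\mathrm{sym}}$; only the two low-dimensional miracles $JM = -MJ$ and $PQ + QP = \tr(PQ)\mathds 1$ cause the non-trace-free $g$-symmetric components to collapse and leave the single tensorial equation stated in the lemma.
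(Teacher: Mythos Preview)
Your proof is correct and follows essentially the same route as the paper: differentiate $\mathrm d^{\nabla}A$, split into the $\dot\nabla$-contribution (computed via Lemma~\ref{lem:firstordervariationlevicivita}) and the $\mathrm d^{\nabla}\dot A_{\mathrm{true}}$-contribution, collapse the $(\nabla_X\dot J)Y-(\nabla_Y\dot J)X$ term via Lemma~\ref{lem:divergenzaendomorfismo}, and then verify that all pieces outside $\mathrm d^{\nabla}\dot A_0 - J(\dive_g\dot J\wedge A)$ cancel. The only organisational difference is that you invoke the two-dimensional identity $PQ+QP=\tr(PQ)\mathds 1$ early and track the cancellations channel by channel in $\R\mathds 1\oplus\R J$, whereas the paper keeps the endomorphism expressions unsimplified until the final step; both arguments implicitly use $\mathrm d^{\nabla}A=0$ (so the lemma is really stated at a point of $\mathcal M_{\mathrm C}$), which you note correctly.
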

\begin{proof}
    Recall that, for any vector fields $X,Y,Z\in\Gamma(T\Sg)$, we have \begin{equation}\label{dnabla}(\mathrm d^\nabla A)(X,Y)Z=(\nabla_XA)(Y)Z-(\nabla_YA)(X)Z \ .\end{equation} Therefore, we need to compute the derivative of (\ref{dnabla}) with respect to variations of $(J,A)$. For instance, \begin{align*}\Big((\nabla_XA)(Y)Z-(\nabla_YA)(X)Z\Big)'=(\dot\nabla_XA)(Y)Z-(\dot\nabla_YA)(X)Z+(\mathrm d^\nabla A')(X,Y)Z \ ,\end{align*}where $A'=J\dot JA+\dot A$. The part involving the variation of the connection has already been computed in the proof of Proposition \ref{prop:differentialourmomentmap} \begin{align*}
        (\dot\nabla_XA)(Y)Z=\frac{1}{2}\Big(3(\dive\dot J)(X)A(Y)JZ+A\big(J(\nabla_X\dot J)Y\big)Z+A(Y)J(\nabla_X\dot J)Z-J(\nabla_X\dot J)A(Y)Z\Big).
    \end{align*}Subtracting the term $(\dot\nabla_YA)(X)Z$ from the last expression and using Lemma \ref{lem:divergenzaendomorfismo} on $A\big((\nabla_X\dot J)Y-(\nabla_Y\dot J)X\big)$, we get \begin{align*}
        (\dot\nabla_XA)(Y)Z-(\dot\nabla_YA)(X)Z&=J\Big((\dive\dot J)(Y)A(X)-(\dive\dot J)(X)A(Y)\Big)Z+\frac{1}{2}JA(X)(\nabla_Y\dot J)Z \\ & \ \ \ +\frac{1}{2}J\Big((\nabla_Y\dot J)A(X)-A(Y)(\nabla_X\dot J)-(\nabla_X\dot J)A(Y)\Big)Z \\ &=-J(\dive\dot J\wedge A)(X,Y)Z+\frac{1}{2}JA(X)(\nabla_Y\dot J)Z \\ & \ \ \ +\frac{1}{2}J\Big((\nabla_Y\dot J)A(X)-A(Y)(\nabla_X\dot J)-(\nabla_X\dot J)A(Y)\Big)Z \ .
    \end{align*}
    Regarding the term with the exterior covariant derivative of $A'$, we have \begin{equation}
        (\mathrm d^\nabla A')(X,Y)Z=\underbrace{(\mathrm d^\nabla (J\dot JA))(X,Y)Z}_{\text{term} \ (a)}+\underbrace{(\mathrm d^\nabla\dot A_{\text{tr}})(X,Y)Z}_{\text{term} \ (b)}+(\mathrm d^\nabla\dot A_0)(X,Y)Z \ .
    \end{equation} The term $(a)$ is easy to handle since $\nabla_\bullet J=0$ and $\mathrm d^\nabla A=0$, \begin{align*}
        (\mathrm d^\nabla (J\dot JA))(X,Y)Z&=\nabla_X(J\dot JA)(Y)Z-\nabla_Y(J\dot JA)(X)Z \\&=J\Big((\nabla_X\dot J)A(Y)-(\nabla_Y\dot J)A(X)\Big)Z  \ .
    \end{align*}As for the term $(b)$, recall that $\dot A_{\text{tr}}=\frac{1}{2}\tr(\dot JJA)\mathds 1$, hence \begin{align*}
        (\mathrm d^\nabla\dot A_{\text{tr}})(X,Y)Z&=(\nabla_X\dot A_{\text{tr}})(Y)Z-(\nabla_Y\dot A_{\text{tr}})(X)Z \\[0.7em] &=\frac{1}{2}\tr(\nabla_X\big(\dot JJA(Y)\big))Z-\frac{1}{2}\tr(\nabla_Y\big(\dot JJA(X)\big))Z \\[0.7em] &=\frac{1}{2}\tr((\nabla_X\dot J)JA(Y)-(\nabla_Y\dot J)JA(X))Z \ .
    \end{align*}We conclude if we show that \begin{align*}
        \frac{1}{2}\tr((\nabla_X\dot J)JA(Y)-(\nabla_Y\dot J)JA(X))Z&=-J(\nabla_X\dot J)A(Y)Z+J(\nabla_Y\dot J)A(X)Z \\ & \ \ \ -\frac{1}{2}JA(X)(\nabla_Y\dot J)Z-\frac{1}{2}J(\nabla_Y\dot J)A(X)Z \\ & \ \ \ +\frac{1}{2}JA(Y)(\nabla_X\dot J)Z+\frac{1}{2}J(\nabla_X\dot J)A(Y)Z \ ,
    \end{align*}which follows from the fact that the elements $JA(X)\nabla_Y\dot J-J(\nabla_Y\dot J)A(X)$ and $J(\nabla_X\dot J)A(Y)-JA(Y)\nabla_X\dot J$ are both trace-term, and they can be written as \begin{align*}&JA(X)\nabla_Y\dot J-J(\nabla_Y\dot J)A(X)=-\tr(J(\nabla_Y\dot J)A(X))\mathds 1,\\[0.7em] &J(\nabla_X\dot J)A(Y)-JA(Y)\nabla_X\dot J=-\tr(JA(Y)\nabla_X\dot J)\mathds 1 \ .\end{align*}
\end{proof}
\begin{proposition}\label{prop:Winsidehaffrhozero}
    Let $(J,A)\in\haffrhozerotilde$ and consider the space $W_{(J,A)}$ defined by the system of equations (\ref{differentialequations}). Then, $$W_{(J,A)}\subset T_{(J,A)}\haffrhozerotilde \ .$$
\end{proposition}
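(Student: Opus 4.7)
The plan is to use the description of $\haffrhozerotilde$ as a joint zero locus provided by Corollary \ref{cor:zerolocusmomentmap}, namely
\[
\haffrhozerotilde = \widetilde\momentmap^{-1}(0) \,\cap\, \mathcal M_{\mathrm C},
\]
where $\mathcal M_{\mathrm C} = \{(J,A) \in \pickg : \mathrm d^\nabla A = 0\}$. Consequently, a pair $(\dot J, \dot A) \in T_{(J,A)}\pickg$ lies in $T_{(J,A)}\haffrhozerotilde$ if and only if it satisfies two linearized conditions: first, the variation of the moment map $\widetilde\momentmap$ vanishes, and second, the linearization of the Codazzi-type equation $\mathrm d^\nabla A = 0$ holds. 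The proof will consist in matching these two linearized conditions with (a subset of) the three PDEs defining $W_{(J,A)}$.

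The linearization of $\mathrm d^\nabla A = 0$ is already fully identified: Lemma \ref{lem:linearizedcodazzi} shows that its kernel coincides with the third equation of (\ref{differentialequations}), i.e., $\mathrm d^\nabla \dot A_0(\bullet,\bullet) - J(\divr \dot J \wedge A)(\bullet,\bullet) = 0$. So that condition is built into the definition of $W_{(J,A)}$ by design.

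For the moment map, I appeal to Proposition \ref{prop:differentialourmomentmap}, which gives the primitive
\[
\mathrm d\widetilde\momentmap(\dot J, \dot A) \;=\; \mathrm d\!\left((f-1)\divr \dot J + \mathrm d f \circ \dot J + \mathrm d \dot f \circ J - \tfrac{f'}{6}\beta\right).
\]
Using the Leibniz-type identity $\divr((f-1)\dot J) = (f-1)\divr \dot J + \mathrm df \circ \dot J$, the expression inside the outer $\mathrm d$ coincides with the $1$-form appearing in the first equation of (\ref{differentialequations}). Thus the first equation of the system defining $W_{(J,A)}$ is precisely the condition that the first-order variation of the $2$-form $\widetilde\momentmap$ vanishes at $(J,A)$, i.e., $(\dot J, \dot A) \in T_{(J,A)}\widetilde\momentmap^{-1}(0)$.

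Combining these two observations: any $(\dot J, \dot A) \in W_{(J,A)}$ satisfies the linearized moment-map condition (by equation one) and the linearized Codazzi condition (by equation three), hence lies in $T_{(J,A)}\haffrhozerotilde$. Note that the second equation of (\ref{differentialequations}) plays no role for this inclusion — it is the condition obtained by applying the first equation to $\i(\dot J,\dot A) = (-J\dot J, -\dot A J - A\dot J)$, and its purpose is only to guarantee the $\i$-invariance of $W_{(J,A)}$ (as verified independently in Lemma \ref{lem:invariance by the complex structure}). There is no genuine obstacle to this proof once Proposition \ref{prop:differentialourmomentmap} is in hand; the key step is simply the identification of the primitive of $\mathrm d\widetilde\momentmap$ with the $1$-form in the first PDE of the system, which is the reason why that particular primitive was singled out in Remark \ref{rem:fixedprimitive}.
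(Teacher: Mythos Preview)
Your proof is correct and follows essentially the same approach as the paper: both identify $\haffrhozerotilde = \widetilde\momentmap^{-1}(0)\cap\mathcal M_\mathrm C$ via Corollary \ref{cor:zerolocusmomentmap}, then match the first and third equations of (\ref{differentialequations}) with $\mathrm{Ker}(\mathrm d\widetilde\momentmap)$ (via Proposition \ref{prop:differentialourmomentmap}) and the linearized Codazzi equation (via Lemma \ref{lem:linearizedcodazzi}), respectively. Your explicit mention of the Leibniz identity $\divr((f-1)\dot J) = (f-1)\divr\dot J + \mathrm df\circ\dot J$ and the remark that the second equation is redundant for the inclusion are helpful clarifications that the paper leaves implicit.
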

\begin{proof}
    According to Corollary \ref{cor:zerolocusmomentmap}, the infinite-dimensional space $\haffrhozerotilde$ can be seen as the intersection of $\widetilde\momentmap^{-1}(0)$ with $\mathcal{M}_\mathrm C:=\{(J,A)\in\pickg \ | \ \mathrm d^\nabla A=0\}$. In particular, the tangent space to the pre-image of the zero locus of the moment map is identified with $\mathrm{Ker}\big(\mathrm d\widetilde\momentmap\big)$. On the other hand, Proposition \ref{prop:differentialourmomentmap} and Lemma \ref{lem:linearizedcodazzi} together implies that $$(\dot J,\dot A)\in T_{(J,A)}\haffrhozerotilde\iff\begin{cases}
      \mathrm d\Big((f-1)\dive_g\dot J+\mathrm df\circ\dot J+\mathrm d\dot f\circ J-\frac{f'}{6}\langle\dot A_0,(\nabla_\bullet A)J\rangle\Big)=0  \\ \mathrm d^\nabla\dot A_0(\bullet,\bullet)-J(\dive_g\dot J\wedge A)(\bullet,\bullet)=0
    \end{cases}$$Looking again at the equations (\ref{differentialequations}) defining the space $W_{(J,A)}$, it is clear that $$W_{(J,A)}\subset T_{(J,A)}\haffrhozerotilde \ .$$
\end{proof}At this point, it must be noted that the subspace we are interested in can be described as \begin{equation}\label{charcaterizationofW}W_{(J,A)}=\left\{(\dot J,\dot A)\in T_{(J,A)}\pickg \ \bigg| \ \parbox{15em}{$(\dot J,\dot A),\i(\dot J,\dot A)\in\mathrm{Ker}(\mathrm d\widetilde\momentmap)$ \\ $\mathrm d^\nabla\dot A_0(\bullet,\bullet)-J(\dive_g\dot J\wedge A)(\bullet,\bullet)=0$}\right\}\end{equation}
which clarifies the connection of the first two equations in (\ref{differentialequations}) with symplectic reduction theory.
\begin{manualtheorem} H
For any $(J,A)\in\haffrhozerotilde$, the vector space $W_{(J,A)}$ is the largest subspace in $T_{(J,A)}\haffrhozerotilde$ that is: \begin{itemize}
    \item[$\bullet$] invariant under the complex structure $\i$;
    \item[$\bullet$] $\g_f$-orthogonal to the orbit $T_{(J,A)}\big(\Ham(\Sg,\rho)\cdot(J,A)\big)$.
\end{itemize}
\end{manualtheorem}
\begin{proof}
    Recall from Corollary \ref{cor:zerolocusmomentmap} that the space $\haffrhozerotilde$ can be identified with $\widetilde\momentmap^{-1}(0)\cap\mathcal M_\mathrm C$, where $\mathcal{M}_\mathrm C:=\{(J,A)\in\pickg \ | \ \mathrm d^\nabla A=0\}$. Let us denote with $\widetilde W$ the largest subspace in $T_{(J,A)}\haffrhozerotilde$ that is $\g_f$-orthogonal to $T_{(J,A)}\big(\Ham(\Sg,\rho)\cdot(J,A)\big)$ and $\i$-invariant. Suppose that $(\dot J,\dot A)\in \widetilde W$, hence the same is true for $\i(\dot J,\dot A)$ by $\i$-invariance.  In particular, both $(\dot{J}, \dot{A})$ and  $\i(\dot J,\dot A)$ lie in $\mathrm{Ker}(\mathrm d\widetilde\momentmap)$. We now note that $(\dot{J}, \dot{A})$ is $\g_f$-orthogonal to the $\Ham(\Sg,\rho)$-orbit if and only if $\i(\dot{J}, \dot A)$ lies in $\mathrm{Ker}(\mathrm d\widetilde\momentmap)$ thanks to the following computation \begin{align*}
        \g_f\big((\liederivative_VJ,g_J^{-1}\liederivative_V C),(\dot J,\dot A)\big)&=-\g_f\big((\liederivative_VJ,g_J^{-1}\liederivative_V C), \i^2(\dot J,\dot A)\big) \\ &=-\ome_f\big((\liederivative_VJ,g_J^{-1}\liederivative_V C), \i(\dot J,\dot A)\big) \\ &=\langle\mathrm d\widetilde\momentmap\big(\i(\dot J,\dot A)\big) \ | \ V\rangle_{\Lham},\quad \forall \ V\in\Lham(\Sg,\rho) \ . 
    \end{align*} 
    by definition of moment map. This implies that an element $(\dot J,\dot A)$ belongs to $\widetilde W$ if and only if $$\begin{cases}
        \mathrm d\widetilde\momentmap(\dot J,\dot A)=0 \\ \mathrm d\widetilde\momentmap\big(\i(\dot J,\dot A)\big)=0 \\ \mathrm d^\nabla\dot A_0(\bullet,\bullet)-J(\dive_g\dot J\wedge A)(\bullet,\bullet)=0 \ ,
    \end{cases}$$ which is equivalent to the system of partial differential equations (\ref{differentialequations}) defining the subspace $W_{(J,A)}$ (see Proposition \ref{prop:differentialourmomentmap}).
\end{proof}

\section{Final properties}
In this final section of the paper we discuss about the non-degeneracy of the pseudo-metric over the entire $\SL(3,\R)$-Hitchin component, and we study the relation of $\ome_f$ with Goldman's symplectic form on the Fuchsian locus, both along horizontal and vertical directions. 
\subsection{The finite-dimensional quotient}\label{sec:6.1}
 Although the main part of the results have been shown, it still remains to prove Theorem \ref{thmF}, namely the identification of $\hitc$ with the finite dimensional quotient $\defgtilde/H$, where $\defgtilde$ is the smooth manifold of real dimension $16g-16+2g$ isomorphic to the quotient of the space $\haffrhozerotilde$ by the group $\Ham(\Sg,\rho)$ (see Theorem \ref{thmE}), and $H:=\Symp_0(\Sg,\rho)/\Ham(\Sg,\rho)$ is isomorphic to $H^1_{\text{dR}}(\Sg,\R)$ (see Lemma \ref{lem:fluxisomorphism}). The tangent space $T_{[J,A]}\defgtilde$ is identified with the vector space $W_{(J,A)}$ which is defined as the space of solutions to the following system of differential equations 
 \begin{equation*}
\begin{cases}
\mathrm d\big(\divr\big((f-1)\dot J\big)+\mathrm d\dot f\circ J-\frac{f'}{6}\beta\big)=0  \\ \mathrm d\big(\divr\big((f-1)\dot J\big)\circ J+\mathrm d\dot{f}_0\circ J-\frac{f'}{6}\beta\circ J\big)=0 \\ \mathrm d^\nabla\dot A_0(\bullet,\bullet)-J(\divr\dot J\wedge A)(\bullet,\bullet)=0
\end{cases}\end{equation*}Let us denote with $\alpha_1$ and $\alpha_2$ the $1$-forms in the above system whose differential is zero and let us introduce the vector space \begin{equation}\label{definitionV}
    V_{(J,A)}:=\left\{(\dot J,\dot A)\in T_{(J,A)}\haffrhozerotilde \ \bigg| \ \parbox{15em}{ $\alpha_1+i\alpha_2$ is exact \\ $\mathrm d^\nabla\dot A_0(\bullet,\bullet)-J(\divr\dot J\wedge A)(\bullet,\bullet)=0$}\right\}
\end{equation}It is not difficult to see, following the lines of the proof of Lemma \ref{lem:symplectomorphisminvariancedistribution} and Lemma \ref{lem:invariance by the complex structure}, that $V_{(J,A)}$ is invariant under the action of $\Symp(\Sg,\rho)$ and the complex structure $\i$. In what follows, although we will use the term "symplectic form" to denote $\ome_f$, we do not yet know whether on the spaces we are considering $\ome_f$ is actually non-degenerate. In any case, with abuse of terminology, the results we are about to present still apply.
\begin{proposition}\label{prop:orthogonaldecompositionW} 
There is a $\ome_f$-orthogonal decomposition \begin{equation*}
    W_{(J,A)}=V_{(J,A)}\overset{\perp_{\ome_f}}{\oplus}S_{(J,A)} \ ,
\end{equation*}where $S_{(J,A)}:=\{\big(\liederivative_XJ,g_J^{-1}\liederivative_XC\big) \ | \ X\in\Gamma(T\Sg), \ \mathrm d(\iota_X\rho)=\mathrm d(\iota_{JX}\rho)=0\}\cong T_{(J,A)}\big(H\cdot (J,A)\big)$ is the tangent space to the harmonic orbit.
\end{proposition}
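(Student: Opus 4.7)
The plan is to establish the decomposition in three steps: first verify the inclusions $V_{(J,A)}, S_{(J,A)} \subset W_{(J,A)}$, then prove $\ome_f$-orthogonality via the primitive of the moment map, and finally conclude the direct sum through a dimension count whose non-trivial part is $V_{(J,A)} \cap S_{(J,A)} = \{0\}$.

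The inclusion $V \subset W$ is immediate: exactness of the complex $1$-form $\alpha_1 + i\alpha_2$ forces closedness of both $\alpha_1$ and $\alpha_2$, which yields the first two equations of (\ref{differentialequations}), and the Codazzi-type equation is common to both definitions. For $S \subset W$, I would take $X \in \mathfrak h_J$ and observe that the tangent vector $\eta(X) := (\liederivative_X J, g_J^{-1}\liederivative_X C)$ is generated by the natural $\Symp_0(\Sg, \rho)$-action on $\pickg$, which preserves the set $\haffrhozerotilde = \widetilde\momentmap^{-1}(0) \cap \mathcal M_{\mathrm C}$ by naturality of both $\widetilde\momentmap$ and $\mathrm d^\nabla A = 0$. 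By Lemma \ref{lem:complexstructureliederivative}, $\i\eta(X) = -\eta(JX)$ with $JX$ still harmonic, so both $\eta(X)$ and $\i\eta(X)$ lie in $\Ker(\mathrm d\widetilde\momentmap)$ and in the kernel of the linearized Codazzi equation. The characterization (\ref{charcaterizationofW}) then gives $\eta(X) \in W_{(J,A)}$.

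For $\ome_f$-orthogonality, I will apply Proposition \ref{prop:integrazioneperparti} to $\eta \in V$ and $\eta(X) \in S$, yielding
\begin{equation*}
\ome_f(\eta(X), \eta) = -\int_\Sg L(\eta) \wedge \iota_X\rho.
\end{equation*}
Proposition \ref{prop:differentialourmomentmap} identifies the class of $L(\eta)$ in $\Omega^1/B^1$ with that of $\alpha_1$; since $\eta \in V$ forces $\alpha_1$ to be exact, we have $L(\eta) = d\gamma$ for some $\gamma \in C^\infty(\Sg)$, and a Stokes integration on the closed surface combined with $d(\iota_X\rho) = 0$ (as $X$ is symplectic) delivers the vanishing.

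The hard part will be the direct sum. A dimension count uses $\dim W = 16g - 16 + 2g$ (Theorem \ref{thmE}) and $\dim S = 2g$, the latter because the map $X \mapsto \eta(X)$ is injective on $\mathfrak h_J$: any $X$ with $\liederivative_X J = 0$ is an infinitesimal holomorphic automorphism of $(\Sg, J)$, which must vanish for $g \geq 2$. The remaining step, $V \cap S = \{0\}$, reduces via Proposition \ref{prop:integrazioneperparti} and Poincar\'e duality on harmonic $1$-forms to the injectivity of
\begin{equation*}
\Phi \colon \mathfrak h_J \to H^1(\Sg, \R), \qquad X \mapsto [L(\eta(X))],
\end{equation*}
equivalently to the non-degeneracy of the restriction $\ome_f|_{S_{(J,A)}}$. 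This is the main obstacle: at the Fuchsian locus ($A = 0$) the formula collapses to $L(\eta(X)) = -\dive_g(\liederivative_X J)$, and a direct computation should identify its cohomology class with a nonzero multiple of $[\iota_X\rho]$, which is the Hodge representative of a non-trivial class under the isomorphism $\mathfrak h_J \cong H^1(\Sg, \R)$. Away from the Fuchsian locus I would extend the conclusion by a careful analysis of the $A$-dependent corrections in $L$, in the spirit of the integration-by-parts arguments developed in Section \ref{sec:5.3}. Once the injectivity of $\Phi$ is established, the rank-nullity theorem forces $\dim V = 16g - 16$, and combined with the orthogonality this gives the desired $\ome_f$-orthogonal decomposition $W_{(J,A)} = V_{(J,A)} \oplus S_{(J,A)}$.
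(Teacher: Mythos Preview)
Your overall architecture matches the paper's: the $\ome_f$-orthogonality of $V_{(J,A)}$ and $S_{(J,A)}$ is obtained exactly as you describe, via Proposition~\ref{prop:integrazioneperparti} and exactness of $\alpha_1$. However, two points deserve attention.

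First, your dimension count is incomplete. Knowing $V\cap S=\{0\}$ and $V\perp_{\ome_f} S$ does not by itself force $V+S=W$; you only have $V\subset S^{\perp_{\ome_f}}$. What the paper actually uses (somewhat tersely) is the \emph{equality} $V_{(J,A)}=\big(S_{(J,A)}\big)^{\perp_{\ome_f}}$ inside $W_{(J,A)}$. The reverse inclusion follows from Poincar\'e duality: if $(\dot J,\dot A)\in W$ is $\ome_f$-orthogonal to every $\eta(X)$ with $X$ harmonic, then $\int_\Sg\alpha_1\wedge\iota_X\rho=0$ for all harmonic $X$, and since $\{\iota_X\rho\}$ spans the harmonic representatives of $H^1_{\mathrm{dR}}(\Sg)$ while $\alpha_1$ is closed, one gets $[\alpha_1]=0$; the same argument applied to $\i(\dot J,\dot A)$ gives exactness of $\alpha_2$. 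Once $V=S^{\perp_{\ome_f}}$, the decomposition $W=V\oplus S$ is equivalent to $V\cap S=\{0\}$ in finite dimensions.

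Second, and more importantly, the proof of $V\cap S=\{0\}$ is the heart of the proposition, and your plan (check at the Fuchsian locus, then ``extend by careful analysis of the $A$-dependent corrections'') does not carry through as stated. The paper's argument works uniformly and is rather specific: given $X$ harmonic with $\eta(X)\in V$, one pairs the exact $1$-form $\widetilde\alpha_1$ not with $\iota_X\rho$ but with $\iota_{JX}\rho$ (legitimate since $JX$ is symplectic). Using the adjoint identity $L^*(\dot J)=-J(\dive_g\dot J)^\#$ for the Lie-derivative operator, the Codazzi equation $(\nabla_{JX}A)=(\nabla_X A)(J\cdot)$, and the algebraic relation~(\ref{symplecticform3-4zero}), the integral collapses to
\[
\int_\Sg(f-1)\,\vl\vl\liederivative_XJ\vl\vl^2\,\rho \;+\;\frac{1}{6}\int_\Sg f'\,\vl\vl\nabla_XA\vl\vl^2\,\rho\;=\;0.
\]
Since $f-1<0$ and $f'<0$ everywhere, both integrands are non-positive, forcing $\liederivative_XJ=0$; as $(\Sg,J)$ has genus $g\ge 2$, this gives $X=0$. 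The choice of $JX$ rather than $X$ is what converts a skew pairing into a sign-definite one (effectively computing $\g_f(\eta(X),\eta(X))$ via $\i\eta(X)=-\eta(JX)$), and this is the idea your outline is missing.
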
\begin{proof}
    Recall that, according to (\ref{integrationbypart}), for any symplectic vector field $X$ on the surface and for any $(\dot J,\dot A)\in T_{(J,A)}\pickg$, we have $$\ome_f\big((\liederivative_XJ, g_J^{-1}\liederivative_XC); (\dot J,\dot A)\big)=-\langle\mathrm d\widetilde\momentmap(\dot J,\dot A) \ | \ X\rangle_{\Lsymp} \ ,$$ where $\mathrm{d}\widetilde\momentmap(\dot J,\dot A)$ denotes the primitive found in Proposition \ref{prop:differentialourmomentmap} (see also Remark \ref{rem:fixedprimitive}). In particular, if $(\dot J,\dot A)\in V_{(J,A)}$ such a primitive equals the $1$-form $\alpha_1$ considered in (\ref{definitionV}), hence it is exact. Using the non-degenerate symplectic pairing (\ref{symplecticpairing}), we get $$\ome_f\big((\liederivative_XJ, g_J^{-1}\liederivative_XC); (\dot J,\dot A)\big)=-\langle\mathrm d\widetilde\momentmap(\dot J,\dot A), X\rangle_{\Lsymp}=0 \ ,$$ for any symplectic vector field $X$ and for any $(\dot J,\dot A)\in V_{(J,A)}$. In other words, $V_{(J,A)}$ is $\ome_f$-orthogonal to the symplectic orbit and it coincides with the $\ome_f$-orthogonal to $S_{(J,A)}$ inside $W_{(J,A)}$. For this reason, we can conclude if we show that $$V_{(J,A)}\cap S_{(J,A)}=\{0\} \ .$$ Suppose there exists a harmonic vector field $X$ such that $(\liederivative_XJ,g_J^{-1}\liederivative_XC)\in V_{(J,A)}$. By definition of $V_{(J,A)}$, the $1$-form $$\widetilde\alpha_1:=\divr\Big((f-1)\liederivative_XJ\Big)+\mathrm d\dot f\circ J-\frac{f'}{6}\langle\big(g_J^{-1}\liederivative_XC\big)_0,(\nabla_\bullet A)J\rangle$$ is exact. Therefore, \begin{align*}
        \int_\Sg\widetilde\alpha_1\wedge \iota_U\rho &=-\langle\mathrm d\widetilde\momentmap\big(\liederivative_XJ,g_J^{-1}\liederivative_XC\big), U\rangle_{\Lsymp} \tag{rel. (\ref{vectorfieldand1form})} \\ &=0, \quad \forall U\in\Lsymp(\Sg,\rho) \ .
    \end{align*}Since $X$ is harmonic, we can choose $U=JX$ and obtain \begin{align*}
        0&=\int_\Sg\Big(\divr\Big((f-1)\liederivative_XJ\Big)+\mathrm d\dot f\circ J-\frac{f'}{6}\beta\Big)\wedge \iota_{JX}\rho \tag{rel. (\ref{vectorfieldand1form})}\\ &=\int_\Sg\Big(\divr\Big((f-1)\liederivative_XJ\Big)+\mathrm d\dot f\circ J-\frac{f'}{6}\beta\Big)\big(JX\big)\rho \\ &=\int_\Sg\Big(\divr\Big((f-1)\liederivative_XJ\Big)-\frac{f'}{6}\beta\Big)(JX)\rho-\int_\Sg(\mathrm d\dot f)(X)\rho \\ &=\int_\Sg\Big(\divr\Big((f-1)\liederivative_XJ\Big)-\frac{f'}{6}\beta\Big)(JX)\rho-\int_\Sg\big(\divr(\dot fX)-\dot f\divr(X)\big)\rho \\ &=\int_\Sg\Big(\divr\Big((f-1)\liederivative_XJ\Big)-\frac{f'}{6}\beta\Big)(JX)\rho \ . \tag{$X$ is harmonic}
    \end{align*}Now let us compute the term \begin{align*}
    \beta(JX)&=\langle\big(g_J^{-1}\liederivative_XC\big)_0,(\nabla_{JX}A)J\rangle \tag{Theorem \ref{thm:picktensor}} \\ &=-\langle\big(g_J^{-1}\liederivative_XC\big)_0,\nabla_{X}A\rangle \tag{rel. (\ref{liederivativeCsenzatraccia})} \\ &=-\langle\nabla_XA-\frac{3}{2}\tr(JM_X)AJ+A\big(M_X^{\mathrm s}\cdot\big),\nabla_XA\rangle \tag{$JX$ is symplectic} \\ &=-\langle\nabla_XA+A\big(M_X^{\mathrm s}\cdot\big),\nabla_XA\rangle \\ &=-\vl\vl\nabla_XA\vl\vl^2-\langle A\big(M_X^{\mathrm s}\cdot\big),\nabla_XA\rangle \tag{Theorem \ref{thm:picktensor}} \\ &=-\vl\vl\nabla_XA\vl\vl^2+\langle A\big(M_X^{\mathrm s}\cdot\big),(\nabla_{JX}A)J\rangle \tag{$\nabla_\bullet J=0$} \\ &=-\vl\vl\nabla_XA\vl\vl^2+\langle A\big(M_X^{\mathrm s}\cdot\big),(\nabla_{JX}A)J+A\nabla_{JX}J\rangle \ .
    \end{align*}Applying equation (\ref{symplecticform3-4zero}) to the last term with $\dot A_0=\nabla_{JX}A$ and $\dot J=\nabla_{JX}J$, we get \begin{align*}
        \beta(JX)&=-\vl\vl\nabla_XA\vl\vl^2+\langle A\big(M_X^{\mathrm s}\cdot\big),(\nabla_{JX}A)J+A\nabla_{JX}J\rangle \\ &=-\vl\vl\nabla_XA\vl\vl^2+\frac{1}{2}\langle AM_X^\mathrm s+M_X^\mathrm sA,\Big((\nabla_{JX}A)J+A\nabla_{JX}J\Big)_{\text{tr}}\rangle \\ &=-\vl\vl\nabla_XA\vl\vl^2 \ ,
    \end{align*} where we used that the endomorphism part of $(\nabla_{JX}A)J$
is trace-less. In order to study the divergence term, let us first make some preliminary observations. Let $L:\Gamma(T\Sg)\to\mathrm{End}_0(T\Sg,g_J)$ be the Lie derivative operator. It can be shown that its $L^2$-adjoint is $L^*(\dot J)=-J(\divr_{g_J}\dot J)^\#$ (\cite{tromba2012teichmuller}), where $\#:\Omega^1(\Sg)\to\Gamma(T\Sg)$ is the musical isomorphism induced by the metric $g_J$. Therefore, \begin{align*}
    \int_\Sg\Big(\divr\Big((f-1)\liederivative_XJ\Big)\Big)(JX)\rho&=\int_\Sg \langle\divr\Big((f-1)\liederivative_XJ\Big)^\#,JX\rangle\rho \\ &=-\int_\Sg \langle J\Big(\divr\Big((f-1)\liederivative_XJ\Big)\Big)^\#,X\rangle\rho \\ &=\int_\Sg\langle(f-1)\liederivative_XJ,\liederivative_XJ\rho\rangle \\ &=\int_\Sg(f-1)\vl\vl\liederivative_XJ\vl\vl^2\rho \ .
\end{align*}Referring back to the term we are interested in, we conclude $$\int_\Sg(f-1)\vl\vl\liederivative_XJ\vl\vl^2\rho+\frac{1}{6}\int_\Sg f'\vl\vl\nabla_XA\vl\vl^2\rho=0 $$ and, since $f,f'$ are both strictly negative, this is possible if and only if $\liederivative_XJ=\nabla_XA=0$. Given that on a Riemann surface $(\Sg, J)$ of genus $g\ge 2$ there are no non-zero biholomorphism isotopic to the identity, it follows that $X=0$.
\end{proof}
\begin{lemma}\label{lem:complexsymplecticsubspace}The vector space $S_{(J,A)}$ is a complex-symplectic subspace of $\big(W_{(J,A)}, \i,\ome_f\big)$ isomorphic to $H^1_{\text{dR}}(\Sg,\R)$. \end{lemma}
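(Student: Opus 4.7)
The plan is to exhibit an explicit isomorphism $\Phi:\mathfrak{h}_J\to S_{(J,A)}$, show that $S_{(J,A)}$ is invariant under $\i$, and verify that $\ome_f|_{S_{(J,A)}}$ is non-degenerate; the pseudo-K\"ahler identity $\ome_f(\cdot,\i\cdot)=\g_f$ together with $\i$-invariance then automatically yields that $\g_f$ restricts non-degenerately too, giving a complex-symplectic subspace.

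First, I would define $\Phi(X):=\big(\liederivative_XJ,\,g_J^{-1}\liederivative_XC\big)$ for $X\in\mathfrak{h}_J$. The map is surjective onto $S_{(J,A)}$ by definition. For injectivity, if $\Phi(X)=0$ then in particular $\liederivative_XJ=0$, i.e.\ $X$ is an infinitesimal automorphism of the complex structure $J$; since $\Sg$ has genus $g\geq 2$ there are no nontrivial holomorphic vector fields on $(\Sg,J)$, so $X=0$. Composing $\Phi^{-1}$ with the classical bijection $X\mapsto \iota_X\rho$ between harmonic vector fields and harmonic $1$-forms, and then with the Hodge isomorphism $\mathcal{H}^1(\Sg,g_J)\cong H^1_{\mathrm{dR}}(\Sg,\R)$, yields the desired identification $S_{(J,A)}\cong H^1_{\mathrm{dR}}(\Sg,\R)$. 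Invariance of $S_{(J,A)}$ under $\i$ is immediate from Lemma \ref{lem:complexstructureliederivative}: we have $\i\,\Phi(X)=-\Phi(JX)$, and $X\in\mathfrak{h}_J$ if and only if $JX\in\mathfrak{h}_J$ since the defining conditions $\mathrm{d}(\iota_X\rho)=\mathrm{d}(\iota_{JX}\rho)=0$ are symmetric in $X$ and $JX$.

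The only real computation is the non-degeneracy of $\ome_f$ on $S_{(J,A)}$. Given a nonzero $X\in\mathfrak{h}_J$ I would pair $\Phi(X)$ against $\Phi(JX)$, which also lies in $S_{(J,A)}$. By Proposition \ref{prop:integrazioneperparti} applied to the symplectic vector field $JX$,
\begin{equation*}
\ome_f\big(\Phi(JX),\Phi(X)\big)\;=\;-\langle \mathrm{d}\widetilde{\momentmap}\big(\Phi(X)\big)\mid JX\rangle_{\Lsymp}\;=\;-\int_{\Sg}\alpha_X\wedge\iota_{JX}\rho,
\end{equation*}
where $\alpha_X=\divr\big((f-1)\liederivative_XJ\big)+\mathrm{d}\dot f\circ J-\tfrac{f'}{6}\beta_X$ is the primitive of Proposition \ref{prop:differentialourmomentmap}. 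Now I would repeat verbatim the computation already carried out in the proof of Proposition \ref{prop:orthogonaldecompositionW}, \emph{without} assuming $\Phi(X)\in V_{(J,A)}$: the harmonicity $\divr_g X=0$ kills the $\mathrm{d}\dot f\circ J$ contribution after a Stokes argument, and the identity $\beta_X(JX)=-\vl\vl\nabla_XA\vl\vl^2$ reduces the remaining term. One arrives at
\begin{equation*}
\ome_f\big(\Phi(JX),\Phi(X)\big)\;=\;-\int_{\Sg}(f-1)\,\vl\vl\liederivative_XJ\vl\vl^2\rho\;-\;\frac{1}{6}\int_{\Sg}f'\,\vl\vl\nabla_XA\vl\vl^2\rho.
\end{equation*}
Since $f\leq 0$ (so $f-1<0$ everywhere) and $f'\leq 0$ by Lemma \ref{lem:functionFef}, both summands are non-negative; moreover the first is strictly positive because $\liederivative_XJ\not\equiv 0$, again by the absence of holomorphic vector fields on $\Sg$ when $g\geq 2$. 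Therefore $\ome_f\big(\Phi(JX),\Phi(X)\big)>0$, which shows that no nonzero element of $S_{(J,A)}$ lies in the $\ome_f$-radical of $S_{(J,A)}$.

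The only conceptual obstacle would have been finding a natural pairing partner for an arbitrary element of $S_{(J,A)}$; the point is that $J$ sends $\mathfrak{h}_J$ to itself, so the pairing $\ome_f(\Phi(JX),\Phi(X))$ is internal to $S_{(J,A)}$ and its positivity comes from the same integration-by-parts identity that drove the transversality $V_{(J,A)}\cap S_{(J,A)}=\{0\}$ in Proposition \ref{prop:orthogonaldecompositionW}. Once non-degeneracy and $\i$-invariance are in hand, compatibility of $\ome_f$ with $\i$ on the ambient space $\pickg$ makes $(S_{(J,A)},\ome_f|_{S_{(J,A)}},\i|_{S_{(J,A)}})$ a complex-symplectic subspace of $W_{(J,A)}$.
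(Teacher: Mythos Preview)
Your proof is correct and follows essentially the paper's route: the $\i$-invariance via Lemma~\ref{lem:complexstructureliederivative} and the identification with $H^1_{\mathrm{dR}}(\Sg,\R)$ through $X\mapsto\iota_X\rho$ are argued identically. The only difference is packaging: for non-degeneracy the paper simply cites Proposition~\ref{prop:orthogonaldecompositionW} to obtain $S_{(J,A)}\cap S_{(J,A)}^{\perp_{\ome_f}}=\{0\}$, whereas you extract that proposition's core computation and run it directly as the positivity $\ome_f\big(\Phi(JX),\Phi(X)\big)>0$.
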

\begin{proof}
Requiring $S_{(J,A)}$ to be a complex subspace of $\big(W_{(J,A)}, \i\big)$ is equivalent to say that it is preserved by the action of the complex structure. For instance, if $(\liederivative_XJ,g_J^{-1}\liederivative_XC)\in S_{(J,A)}$ then $\i\big(\liederivative_XJ,g_J^{-1}\liederivative_XC\big)=\big(-\liederivative_{JX}J,-g_J^{-1}\liederivative_{JX}\big)$ (see Lemma \ref{lem:complexstructureliederivative}). Since $X$ is harmonic, i.e. $X$ and $JX$ are symplectic vector field, the element $\big(-\liederivative_{JX}J,-g_J^{-1}\liederivative_{JX}\big)$ belongs to $S_{(J,A)}$ as $\mathrm d\big(\iota_{J^2X}\rho\big)=-\mathrm d\big(\iota_X\rho\big)=0$. Moreover, according to Proposition \ref{prop:orthogonaldecompositionW}, we have $$S_{(J,A)}\cap\big(S_{(J,A)}\big)^{\perp_{\ome_f}}=\{0\} \ ,$$ which implies that $S_{(J,A)}$ is a symplectic subspace of $\big(W_{(J,A)},\ome_f\big)$ endowed with the restricted symplectic form.
\newline
 Now if $(\liederivative_XJ,g_J^{-1}\liederivative_XC)\in S_{(J,A)}$, then $\mathrm{d}(\iota_X\rho)=\mathrm d(\iota_{JX}\rho)=0$. In particular, $$0=\mathrm d(\iota_{JX}\rho)=-\mathrm d(\iota_X\rho\circ J)$$ and since $\iota_X\rho\circ J=\ast_J(\iota_X\rho)$, we conclude that $\iota_X\rho$ is a harmonic $1$-form. This gives a well-defined map from $S_{(J,A)}$ to the space of harmonic $1$-forms on the surface, which is isomorphic to $H^1_{\text{dR}}(\Sg,\R)$ by Hodge theory. The map is an isomorphism since for any cohomology class $[\gamma]\in H^1_{\text{dR}}(\Sg, \R)$ there exists a unique harmonic representative, which is of the form $\iota_X\rho$, for some harmonic vector field $X$ on the surface (see Lemma \ref{lem:vectorfieldssurface}). \end{proof}
 \begin{remark}\label{rem:orthogonaldecompositionpseudometric}
    It should be noted that the decomposition of Proposition \ref{prop:orthogonaldecompositionW} is also orthogonal with respect to $\g_f$. In fact, $\g_f(\cdot,\cdot)=\ome_f(\i\cdot,\cdot)$ and using the $\i$-invariance of $S_{(J,A)}$ it follows that $$V_{(J,A)}=\big(S_{(J,A)}\big)^{\perp_{\ome_f}}=\big(S_{(J,A)}\big)^{\perp_{\g_f}}\subset W_{(J,A)} \ .$$
\end{remark}
In Section \ref{sec:3.2}, we discussed how to obtain Teichm\"uller space by means of symplectic reduction theory and we argued how the symplectic form is actually part of a K\"ahler metric. If $\mu$ denotes the moment map of Theorem \ref{thm:momentmapteich}, the quotient space $\widetilde{\mathcal{T}}(\Sg)=\mu^{-1}(0)/\Ham(\Sg,\rho)$ is a smooth manifold of dimension $6g-6+2g$ with a natural $H$-action. In particular, since the action is free and proper, the quotient map $p:\widetilde{\mathcal{T}}(\Sg)\to\mathcal{T}(\Sg)$ is an $H$-principal bundle. On the other hand, there is a $\mathrm{MCG}(\Sg)$-equivariant projection map $\widetilde\pi:\defgtilde\to\widetilde{\mathcal{T}}(\Sg)$ which allows us to lift the $H$-action from $\widetilde{\mathcal{T}}(\Sg)$ to $\defgtilde$. By a standard argument, the $H$-action on $\defgtilde$ is free and proper as well (see \cite[Proposition 6.3.3]{labourie2008cross}).
In the end, the quotient $\defgtilde/H$ results in an identification with $\defg$ so that the following diagram commutes \begin{equation*} \begin{tikzcd} {} \defgtilde \arrow[r, "\widetilde\pi"] \arrow[d, "p'"'] & \widetilde{\mathcal{T}}(\Sg) {} \arrow[d, "p"] \\
\defg \arrow[r, "\pi"]             & \mathcal{T}(\Sg)  {}              \end{tikzcd}\end{equation*}where $\pi:\defg\to\mathcal T(\Sg)$ is the $\mathrm{MCG}(\Sg)$-equivariant holomorphic vector bundle map given by Theorem \ref{teoloftinlabourie}, and $p':\defgtilde\to\defg$ is the quotient projection. According to Proposition \ref{prop:orthogonaldecompositionW} and Lemma \ref{lem:complexsymplecticsubspace}, the orbits $H$-orbits in $\defgtilde$ are complex-symplectic submanifolds, therefore there is a well-defined complex structure $\i$ and symplectic form $\ome_f$ on the quotient (see \cite[Lemma 4.4.9]{trautwein2018infinite}), giving rise to a pseudo-K\"ahler metric on the $\SL(3,\R)$-Hitchin component. In other words, we proved the following
\begin{manualtheorem}G 
The $H$-action on $\defgtilde$ is free and proper, with complex and symplectic $H$-orbits. Moreover, the pseudo-K\"ahler structure $(\g_f,\i,\ome_f)$ descend to the quotient which is identified with $\hitc$. Finally, the complex structure $\i$ induced on the $\SL(3,\R)$-Hitchin component coincides with the one found by Labourie and Loftin.\end{manualtheorem} 

\subsection{The pseudo-metric is non-degenerate on the orbit}\label{sec:6.2}
Here we want to study the set $\mathcal{M}_\mathrm C=\{(J,A)\in\pickg \ | \ \mathrm d^\nabla A=0\}$, namely the subspace of $\pickg$ where the Codazzi-like equation for hyperbolic affine spheres (see (\ref{Gausscodazzi})) is satisfied. The main result of this section is contained in Corollary \ref{cor:nondegeneratemetriconVandMc}.
\begin{lemma}\label{lem:difforbitinsidecodazzi}
    Let $(J,A)$ be a point in $\mathcal M_\mathrm C$, then $$T_{(J,A)}\big(\Diff(\Sg)\cdot (J,A)\big)\subset T_{(J,A)}\mathcal{M}_\mathrm C \ .$$ Moreover, the tangent space $T_{(J,A)}\mathcal M_\mathrm C$ admits the following decomposition: \begin{equation*}
        V_{(J,A)}\overset{\perp_{\g_f}}{\oplus} S_{(J,A)}\overset{\perp_{\g_f}}{\oplus} T_{(J,A)}\big(\Ham(\Sg,\rho)\cdot(J,A)\big)\overset{\perp_{\g_f}}{\oplus}\i\Big(T_{(J,A)}\big(\Ham(\Sg,\rho)\cdot(J,A)\big)\Big) \ .
    \end{equation*}
\end{lemma}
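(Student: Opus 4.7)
The proof splits naturally into three parts: the inclusion of the diffeomorphism orbit into $T_{(J,A)}\mathcal{M}_{\mathrm C}$, the identification of the four summands that together span $T_{(J,A)}\mathcal{M}_{\mathrm C}$ when $(J,A)\in\haffrhozerotilde$, and the verification of the six pairwise $\g_f$-orthogonalities.

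For the inclusion I would exploit the $\Diff(\Sg)$-naturality of the Codazzi-type equation $\mathrm d^{\nabla}A=0$: the exterior covariant derivative commutes with pullback, so the curve $t\mapsto(\phi_t^{*}J,\phi_t^{*}A)$ stays inside $\mathcal{M}_{\mathrm C}$ for every isotopy $\{\phi_t\}$ of $\Sg$. Differentiating at $t=0$ produces the generators $(\liederivative_X J,\,g_J^{-1}\liederivative_X C)$ of the $\Diff_0(\Sg)$-orbit, all of which therefore lie in $T_{(J,A)}\mathcal{M}_{\mathrm C}$.

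To obtain the direct sum decomposition I would combine the Hodge splitting of vector fields given by Lemma \ref{lem:vectorfieldssurface} ($X=V+W+JW'$ with $V$ harmonic and $W,W'\in\Lham(\Sg,\rho)$) with the identity of Lemma \ref{lem:complexstructureliederivative}, which shows that $(\liederivative_{JW'}J,\,g_J^{-1}\liederivative_{JW'}C)=-\i(\liederivative_{W'}J,\,g_J^{-1}\liederivative_{W'}C)$. This yields
\[
T_{(J,A)}(\Diff_0(\Sg)\cdot(J,A))=S_{(J,A)}\oplus T_{(J,A)}(\Ham(\Sg,\rho)\cdot(J,A))\oplus\i\bigl(T_{(J,A)}(\Ham(\Sg,\rho)\cdot(J,A))\bigr),
\]
the sum being direct because on a surface of genus $\geq 2$ no non-zero vector field $X$ satisfies $\liederivative_X J=0$. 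To incorporate $V_{(J,A)}$, I would invoke Theorem \ref{thmE}, which identifies $W_{(J,A)}$ with $T_{[J,A]}\defgtilde$, giving $T_{(J,A)}\haffrhozerotilde=W_{(J,A)}\oplus T_{(J,A)}(\Ham\cdot(J,A))$; together with Proposition \ref{prop:orthogonaldecompositionW} this refines to $V_{(J,A)}\oplus S_{(J,A)}\oplus T_{(J,A)}(\Ham\cdot(J,A))$. Finally, since $\haffrhozerotilde=\widetilde\momentmap^{-1}(0)\cap\mathcal{M}_{\mathrm C}$ and $\widetilde\momentmap$ is the moment map for the Hamiltonian action on $\pickg$, I would check that $\i T_{(J,A)}(\Ham\cdot(J,A))$ supplies a complement to $T_{(J,A)}\haffrhozerotilde$ inside $T_{(J,A)}\mathcal{M}_{\mathrm C}$ by applying Lemma \ref{lem:differentialGaussequation} to the generators $(\liederivative_{JW}J,\,g_J^{-1}\liederivative_{JW}C)$ and using the injectivity of the elliptic self-adjoint operator $T(\lambda)=-\tfrac12\Delta_h\lambda+(1+2\vl\vl q\vl\vl_h^2)\lambda$ that appears there. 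This spanning step, relying on the moment-map picture to exhaust all transverse directions, is the main technical obstacle.

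For the six pairwise $\g_f$-orthogonalities I would proceed as follows. The orthogonality $V\perp_{\g_f}S$ is Remark \ref{rem:orthogonaldecompositionpseudometric}. Both $V\perp_{\g_f}T(\Ham\cdot(J,A))$ and $S\perp_{\g_f}T(\Ham\cdot(J,A))$ are immediate from Theorem \ref{thmG}, since $V_{(J,A)},S_{(J,A)}\subset W_{(J,A)}$ and $W_{(J,A)}$ is $\g_f$-orthogonal to the Hamiltonian orbit. The orthogonalities $V\perp_{\g_f}\i T(\Ham\cdot(J,A))$ and $S\perp_{\g_f}\i T(\Ham\cdot(J,A))$ then follow from the $\i$-invariance of $V_{(J,A)}$, $S_{(J,A)}$ and $\g_f$, via the identity $\g_f(u,\i w)=-\g_f(\i u,w)$ together with $\i u$ remaining in $V_{(J,A)}$ or $S_{(J,A)}$ respectively. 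The remaining orthogonality I would derive from the chain
\[
\g_f(\dot X_W,\i\dot X_{W'})=\ome_f(\dot X_W,\dot X_{W'})=\widetilde\momentmap_{(J,A)}^{[W,W']}=0,
\]
where the first equality uses the $\i$-invariance of $\ome_f$, the second is the standard identity $\omega(X_\xi,X_\eta)=\mu^{[\xi,\eta]}$ for an equivariant moment map (Definition \ref{def:momentmap}), and the vanishing follows since $\widetilde\momentmap_{(J,A)}=0$ at $(J,A)\in\haffrhozerotilde$.
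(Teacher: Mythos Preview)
Your structure mirrors the paper's proof, and the six orthogonality arguments are all correct; in fact your treatment of $T_{(J,A)}(\Ham\cdot(J,A))\perp_{\g_f}\i\,T_{(J,A)}(\Ham\cdot(J,A))$ via the moment-map identity $\ome_f(X_W,X_{W'})=\widetilde\momentmap^{[W,W']}_{(J,A)}=0$ is more complete than the paper's, which literally only records the diagonal case $\g_f(u,\i u)=\ome_f(u,u)=0$.

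Two points of divergence. For the inclusion, your naturality claim ``$\mathrm d^{\nabla}$ commutes with pullback'' glosses over a genuine subtlety: $\nabla$ is Levi--Civita for $g_J=\rho(\cdot,J\cdot)$ with $\rho$ \emph{fixed}, so for $\phi\notin\Symp(\Sg,\rho)$ the pulled-back connection $\phi^*\nabla$ is Levi--Civita for $\phi^*g_J$, which is only conformal (not equal) to $g_{\phi^*J}$. The paper handles this by passing through the equivalent, conformally-invariant reformulation (Theorem~\ref{thm:picktensor}, Remark~\ref{rem:conformalinvarianceHS}): $\mathrm d^{\nabla}A=0$ iff $q$ is $J$-holomorphic, and \emph{this} condition is pullback-natural under all of $\Diff(\Sg)$; the paper then checks in isothermal coordinates that $\phi_t^*C=\Ree(\phi_t^*q)$. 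Your conclusion is right but the stated mechanism needs this conformal-invariance step.

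On the spanning: the paper's proof does \emph{not} establish that the four summands exhaust $T_{(J,A)}\mathcal{M}_{\mathrm C}$. It only decomposes the $\Diff(\Sg)$-orbit into $S\oplus T(\Ham)\oplus\i\,T(\Ham)$ via Lemma~\ref{lem:vectorfieldssurface} and Lemma~\ref{lem:complexstructureliederivative}, observes that $V_{(J,A)}\subset T_{(J,A)}\mathcal{M}_{\mathrm C}$ by its defining equations, and then verifies the pairwise $\g_f$-orthogonalities. Your attempt to prove that $\i\,T(\Ham)$ complements $T_{(J,A)}\haffrhozerotilde$ inside $T_{(J,A)}\mathcal{M}_{\mathrm C}$ via Lemma~\ref{lem:differentialGaussequation} goes beyond what the paper supplies, and, as you correctly flag, the surjectivity part is not immediate (the map $W\mapsto\lambda$ factors through $\dive_g(JW)=\Delta_{g_J}H$, whose image is only mean-zero functions).
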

\begin{proof}
    If $(J,A)\in\mathcal M_\mathrm C$, then $\mathrm d^\nabla A=0$ where $\nabla$ is the Levi-Civita connection with respect to $g_J=\rho(\cdot,J\cdot)$. In particular, $A=g_J^{-1}C=g_J^{-1}\Ree(q)$ where $q$ is a $J$-complex cubic differential on $(\Sg, J)$ so that equation $\mathrm d^\nabla A=0$ is equivalent to $\bar\partial_J q=0$ (see Theorem \ref{thm:picktensor}). Now let $X\in\Gamma(T\Sg)$ and consider its flow $\{\phi_t\}\subset\Diff(\Sg)$, namely $X=\frac{\mathrm d}{\mathrm dt}\phi_t|_{t=0}$ and $\phi_0=\mathrm{Id}$. Let us define $$J_t:=\mathrm d\phi_t^{-1}\circ J\circ\mathrm d\phi_t,\qquad C_t:=C(\mathrm d\phi_t\cdot,\mathrm d\phi_t\cdot,\mathrm d\phi_t\cdot),\qquad q_t:=\phi_t^*q \ .$$ It is not difficult to show that $q$ is holomorphic with respect to $J$ if and only if $q_t$ is holomorphic with respect to $J_t$. Therefore, to conclude the proof of the first part of the statement, we only need to show that $\Ree(C_t)=q_t$. This last identity can be proven with a computation in coordinates. In fact, let $\{x,y\}$ be isothermal coordinates on the surface, so that $g_J=e^u(\mathrm dx^2+\mathrm dy^2)$ and $q=(P+iQ)\mathrm dz^3$, with $P+iQ$ a $J$-holomorphic function. Then, we get $$C=P\mathrm dx^3-3P\mathrm dx\odot\mathrm dy^2-3Q\mathrm dx^2\odot\mathrm dy+Q\mathrm dy^3 \ ,$$ where $\odot$ denotes the symmetric product. Plugging in the action of the flow $(\phi_t)$ on the expressions above for $q$ and $C$ gives the claim. Regarding the decomposition, we already know by Lemma \ref{lem:vectorfieldssurface} that $T_{(J,A)}\big(\Diff(\Sg)\cdot (J,A)\big)$ splits as a direct sum $$T_{(J,A)}\big(H\cdot (J,A)\big)\oplus T_{(J,A)}\big(\Ham(\Sg,\rho)\cdot(J,A)\big)\oplus\i\Big(T_{(J,A)}\big(\Ham(\Sg,\rho)\cdot(J,A)\big)\Big) \ ,$$ where $H:=\Symp_0(\Sg\,\rho)/\Ham(\Sg,\rho)$. In particular, by Lemma \ref{lem:complexsymplecticsubspace} the tangent to the harmonic orbit is identified with $S_{(J,A)}$. Let $U$ be a Hamiltonian vector field on the surface. The $\g_f$-orthogonality follows from the following computation: \begin{align*}
\g_f\big((\liederivative_UJ,g^{-1}\liederivative_UC);\i(\liederivative_UJ,g^{-1}\liederivative_UC)\big)=\ome_f\big((\liederivative_UJ,g^{-1}\liederivative_UC);(\liederivative_UJ,g^{-1}\liederivative_UC)\big)=0 \ ,
    \end{align*}and by $\i$-invariance of $S_{(J,A)}$, which is contained in the largest subspace in $T_{(J,A)}\haffrhozerotilde$ that is $\g_f$-orthogonal to the Hamiltonian orbit (see Theorem \ref{thmG}). Finally, $V_{(J,A)}$ is $\g_f$-orthogonal to the symplectic orbit by Proposition \ref{prop:integrazioneperparti} and to the space $\i\Big(T_{(J,A)}\big(\Ham(\Sg,\rho)\cdot(J,A)\big)\Big)$ by $\i$-invariance.
\end{proof}
\begin{proposition}\label{prop:metricnondegenerateontheorbit}
    Let $(J,A)$ be a point in $\mathcal M_\mathrm C$. Then, the pseudo-metric $\g_f$ is non-degenerate when restricted to the following subspaces: $$S_{(J,A)}, \ \ T_{(J,A)}\big(\Ham(\Sg,\rho)\cdot(J,A)\big), \ \ \i\Big(T_{(J,A)}\big(\Ham(\Sg,\rho)\cdot(J,A)\big)\Big). $$
\end{proposition}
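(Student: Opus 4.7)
The plan is to treat the three subspaces separately and reduce the Hamiltonian-orbit case (which is the main one) to a question about the kernel of a single elliptic operator.

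First, for $S_{(J,A)}$ I would simply invoke Lemma~\ref{lem:complexsymplecticsubspace}, which provides both $\i$-invariance of $S_{(J,A)}$ and non-degeneracy of $\ome_f|_{S_{(J,A)}}$. Since $\g_f(\cdot,\cdot)=\ome_f(\i\cdot,\cdot)$ and $\i|_{S_{(J,A)}}$ is a linear automorphism, non-degeneracy of $\g_f|_{S_{(J,A)}}$ follows at once: if $\g_f(u,v)=0$ for every $v\in S_{(J,A)}$, then $\ome_f(\i u,v)=0$ for every such $v$, forcing $\i u=0$ and hence $u=0$. For $\i\,T_{(J,A)}\bigl(\Ham(\Sg,\rho)\cdot(J,A)\bigr)$ I would use the identity $\g_f(\i u,\i v)=\g_f(u,v)$: combined with the fact that $\i$ is a linear isomorphism from the Hamiltonian orbit onto its $\i$-image, this transports non-degeneracy from the Hamiltonian orbit to its $\i$-image, reducing that case to the one below.

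The central work is on $T_{(J,A)}\bigl(\Ham(\Sg,\rho)\cdot(J,A)\bigr)$. Setting $u_X:=(\liederivative_X J,\,g_J^{-1}\liederivative_X C)$ for $X\in\Lham(\Sg,\rho)$, I would start from the pseudo-Hermitian relation $\g_f(u,v)=-\ome_f(u,\i v)$ together with Lemma~\ref{lem:complexstructureliederivative} (which gives $\i u_Y=-u_{JY}$) to rewrite $\g_f(u_X,u_Y)=\ome_f(u_X,u_{JY})$. Since $X$ is symplectic, Proposition~\ref{prop:integrazioneperparti} with $V=X$ and variation $(\dot J,\dot A)=u_{JY}$ applies, and after the integration-by-parts identity $\int_\Sg L\wedge dH_X=\int_\Sg H_X\,dL$ on the closed surface (with $\iota_X\rho=dH_X$) it produces
\[
\g_f(u_X,u_Y) \;=\; -\int_\Sg H_X\cdot d\widetilde\momentmap(u_{JY}).
\]
The next step will be to identify $d\widetilde\momentmap(u_{JY})=P_{(J,A)}(H_Y)\,\rho$ for a scalar differential operator $P_{(J,A)}$ on $C^\infty(\Sg)$, by plugging $u_{JY}$ into the formula of Proposition~\ref{prop:differentialourmomentmap} and invoking Proposition~\ref{differentialofcurvature} together with $\liederivative_{JY}\rho=\Delta_g H_Y\,\rho$. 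I expect the top-order part of $P_{(J,A)}$ to arise solely from $d\bigl((f-1)\dive_g(\liederivative_{JY}J)\bigr)$ and to equal $-(1-f)\Delta_g^2$, while each of the three remaining summands of $L_{(J,A)}(u_{JY})$ carries a factor of $A$ or $\nabla A$ and will contribute only strictly lower-order operators. The positivity $1-f\geq 1$ (Lemma~\ref{lem:functionFef}) then makes $P_{(J,A)}$ a strongly elliptic fourth-order operator, and the symmetry of $\g_f$ forces it to be formally self-adjoint.

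Non-degeneracy of $\g_f$ on the Hamiltonian orbit will thus reduce to the statement $\ker(P_{(J,A)})=\R$ (the constants automatically lie in the kernel, since $u_{JY}$ vanishes when $H_Y$ is constant). At the Fuchsian locus one computes $P_{(J,A)}=-\Delta_g^2+2|c|\Delta_g$ and
\[
\g_f(u_X,u_X) \;=\; \int_\Sg|\Delta_g H_X|^2\,\rho \;+\; 2|c|\int_\Sg|dH_X|^2\,\rho,
\]
which is strictly positive for non-constant $H_X$, so $\ker(P_{(J,A)})=\R$ holds there. The hard part will be propagating this kernel description to all of $\mathcal M_{\mathrm C}$: the plan is to use the uniform lower bound $1-f\geq 1$ to preserve ellipticity of the principal symbol, together with the pointwise bounds on $f,f',f''$ supplied by Lemma~\ref{lem:combinationoffunctionfandfprime}, in order to dominate the $A$-dependent lower-order corrections and prevent enlargement of the kernel away from the Fuchsian locus.
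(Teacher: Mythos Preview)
Your treatment of $S_{(J,A)}$ and of $\i\bigl(T_{(J,A)}(\Ham(\Sg,\rho)\cdot(J,A))\bigr)$ is fine and matches the paper. The gap is in the Hamiltonian-orbit case, at the very last step. Ellipticity of the principal symbol together with pointwise bounds on $f,f',f''$ does \emph{not} prevent enlargement of the kernel of a self-adjoint fourth-order operator under lower-order perturbation: an eigenvalue can cross zero. Your proposed ``domination'' argument is therefore not a proof, and there is no obvious way to make it one without exploiting more structure than you have identified. In fact, your own formula already contains the missing structure: by the symmetry of $\g_f$ you equally have $\g_f(u_X,u_Y)=-\int_\Sg H_Y\, \mathrm d\widetilde\momentmap(u_{JX})$, so $u_X$ degenerate on the Hamiltonian orbit is \emph{equivalent} to the second-order condition $\mathrm d\widetilde\momentmap(u_{JX})=0$, not a fourth-order one.

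The paper closes exactly this way, but packages it via Theorem~\ref{thmG} and Lemma~\ref{lem:Wisomorphictofinitedimensionalquotient}. The point is that $u_X$ lies in $T_{(J,A)}\haffrhozerotilde$ (the Hamiltonian orbit preserves $\haffrhozerotilde$), while $\g_f$-orthogonality to the orbit is equivalent to $\i u_X\in\mathrm{Ker}(\mathrm d\widetilde\momentmap)$; hence $u_X\in W_{(J,A)}$ by the characterization \eqref{charcaterizationofW}. Lemma~\ref{lem:Wisomorphictofinitedimensionalquotient} then forces $u_X=0$, and its proof is where the analytic work actually happens: one uses $\i u_X=-u_{JX}\in W_{(J,A)}\subset T_{(J,A)}\haffrhozerotilde$, feeds $u_{JX}$ into $\mathrm d_{(J,A)}G$, and invokes Lemma~\ref{lem:differentialGaussequation}, whose operator $T(\lambda)=-\tfrac12\Delta_h\lambda+(1+2\|q\|_h^2)\lambda$ is manifestly positive for every $(J,A)$, forcing $\dive_g(JX)=0$ and hence $H_X$ constant. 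That positivity of a \emph{second}-order operator is the structural fact your perturbation sketch was missing.
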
\begin{proof}
    The pseudo-metric $\g_f$ is non-degenerate on the Hamiltonian orbit as a consequence of Lemma \ref{lem:Wisomorphictofinitedimensionalquotient} and Theorem \ref{thmG}, indeed they imply together the following condition $$T_{(J,A)}\big(\Ham(\Sg,\rho)\cdot(J,A)\big)\cap\Big(T_{(J,A)}\big(\Ham(\Sg,\rho)\cdot(J,A)\big)\Big)^{\perp_{\g_f}}=\{0\} \ .$$ Moreover, the same is true on the Hamiltonian orbit after applying the complex structure $\i$ since $\g_f(\i\cdot,\i\cdot)=\g_f(\cdot,\cdot)$. Regarding the subspace $S_{(J,A)}$, we get the thesis directly from the proof of Lemma \ref{lem:complexsymplecticsubspace} (see also Remark \ref{rem:orthogonaldecompositionpseudometric}).
\end{proof}
\begin{corollary}\label{cor:nondegeneratemetriconVandMc}
    Let $(J,A)$ be a point in $\mathcal M_\mathrm C$. Then, the following are equivalent:\newline
        $\bullet$ $\g_f$ is non-degenerate on $T_{(J,A)}\mathcal M_\mathrm C$;\newline
        $\bullet$ $\g_f$ is non-degenerate when restricted to $V_{(J,A)}$
\end{corollary}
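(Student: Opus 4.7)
The plan is to reduce the claim to a purely linear-algebraic statement about orthogonal decompositions of a vector space equipped with a (possibly indefinite) symmetric bilinear form, and then invoke the two preceding results. Concretely, Lemma \ref{lem:difforbitinsidecodazzi} gives a $\g_f$-orthogonal decomposition
\begin{equation*}
T_{(J,A)}\mathcal{M}_{\mathrm{C}}
= V_{(J,A)}\overset{\perp_{\g_f}}{\oplus} S_{(J,A)}\overset{\perp_{\g_f}}{\oplus} T_{(J,A)}\!\big(\Ham(\Sg,\rho)\cdot(J,A)\big)\overset{\perp_{\g_f}}{\oplus}\i\Big(T_{(J,A)}\!\big(\Ham(\Sg,\rho)\cdot(J,A)\big)\Big),
\end{equation*}
while Proposition \ref{prop:metricnondegenerateontheorbit} asserts that $\g_f$ is non-degenerate when restricted to each of the last three summands.

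The elementary fact I would then invoke is the following: if $E=E_1\oplus\cdots\oplus E_k$ is a $g$-orthogonal decomposition of a (possibly infinite-dimensional) space endowed with a symmetric bilinear form $g$, and $g|_{E_j}$ is non-degenerate for all $j\geq 2$, then $g|_E$ is non-degenerate if and only if $g|_{E_1}$ is. For one direction, assume $g|_{E_1}$ is non-degenerate and pick $v=v_1+\cdots+v_k$ in the radical of $g|_E$; pairing with elements of $E_j$ and using orthogonality yields $g(v_j,w_j)=0$ for all $w_j\in E_j$, hence $v_j=0$ for each $j$. For the converse, if $g|_E$ is non-degenerate and $v_1\in E_1$ satisfies $g(v_1,w_1)=0$ for all $w_1\in E_1$, then orthogonality gives $g(v_1,w)=0$ for every $w\in E$, forcing $v_1=0$.

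Applying this to the decomposition above with $E_1=V_{(J,A)}$ yields the desired equivalence. There is no serious obstacle here: all the analytic content (the explicit form of the orthogonal splitting from Lemma \ref{lem:difforbitinsidecodazzi}, and the non-degeneracy on the harmonic and Hamiltonian summands from Proposition \ref{prop:metricnondegenerateontheorbit}) has already been established, so the corollary is essentially a bookkeeping consequence. The only subtlety worth mentioning is that the Hamiltonian and its $\i$-image are infinite-dimensional, but the linear-algebra argument above is insensitive to dimension since it only uses the orthogonality of the summands and the non-degeneracy of the restrictions.
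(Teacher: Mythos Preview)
Your proposal is correct and follows exactly the same approach as the paper: invoke the $\g_f$-orthogonal decomposition of Lemma \ref{lem:difforbitinsidecodazzi}, use Proposition \ref{prop:metricnondegenerateontheorbit} for the non-degeneracy on the three orbit-type summands, and conclude by the elementary linear-algebra fact about orthogonal direct sums. You have in fact spelled out that last step more carefully than the paper, which simply says ``the thesis follows directly.''
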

\begin{proof}
    The tangent space $T_{(J,A)}\mathcal M_\mathrm C$ decomposes in the $\g_f$-orthogonal direct sum of four subspaces (Lemma \ref{lem:difforbitinsidecodazzi}). Thanks to Proposition \ref{prop:metricnondegenerateontheorbit} we know that the metric $\g_f$ is non-degenerate on three out of four spaces, and the one not counted is exactly $V_{(J,A)}$. Using that the decomposition is $\g_f$-orthogonal, the thesis follows directly.
\end{proof}

\subsection{A new result regarding Goldman's symplectic form}
In \cite{Goldman_affine}, Goldman gave a gauge theoretic formula for a symplectic form $\ome_G$ on the Hitchin component, which carries his name: if we identify $\Hit_3(\Sigma)$ with the moduli space of flat $\mathrm{SL}(3,\R)$-connections, by associating to $\rho \in \Hit_3(\Sigma)$ the Blaschke connection $\nabla$ of the unique $\rho$-equivariant affine sphere, we can interpret tangent vectors to the Hitchin component as variations $\dot{\nabla}$ of the connection. Then the pairing 
\[
         \ome_G(\dot{\nabla}, \dot{\nabla}') = \int_\Sigma \tr(\dot{\nabla} \wedge \dot{\nabla}') -\frac{1}{3} \tr(\dot{\nabla}) \wedge \tr(\dot{\nabla}')
\]
induces a non-degenerate symplectic form on $\Hit_3(\Sigma)$. The techniques we developed in Section \ref{sec:5.2} allows us to explicitly compute this form at the Fuchsian locus along \textit{all} variations, tangent or transverse to the Fuchsian locus.\\

\noindent Recall that the Blaschke connection $\nabla$ of the affine sphere with embedding data $(J, A)$ can be written as $\nabla=\nabla^h+\sqrt{2}A$, where $\nabla^h$ denotes the Levi-Civita connection of the Blaschke metric $h$. See also Remark \ref{rem:osservazionedifferenzialecubicoriscalatowangequation}  and Remark \ref{rm:rescaling} that justify the presence of the factor $\sqrt{2}$. By the discussion in Section \ref{sec:2.3}, we can write $h=e^{F(\|A\|_0^2)} g_J$, where $F$ is the function introduced in Lemma \ref{lem:functionFef}. Therefore, by standard Riemannian geometry, we have
\[
        \nabla^{h}_X Y = \nabla^J_X Y + \frac{1}{2}\left(X(F)Y+Y(F)X-g_J(X,Y)\mathrm{grad}_J(F) \right) \ ,
\]
where $\nabla^J$ and $\mathrm{grad}_J$ denote the Levi-Civita connection and the gradient for the metric $g_J$, respectively. On the Fuchsian locus $A=0$, the Blaschke metric $h$ is the unique hyperbolic metric in the conformal class of $J$ by Equation (\ref{Gausscodazzi}) and, by our choice of the area form $\rho$ on $\Sigma$ with volume $-2\pi\chi(\Sigma)$, $F$ is constantly equal to $0$ on $\Sigma$. Thus for any $(\dot{J}, \dot{A}) \in V_{(J,0)}$ the induced variation on the Blaschke connection is
\begin{align*}
        \dot{\nabla}_X Y &=\dot{\nabla}^J_X Y +\frac{1}{2}\big( X(\dot{F})Y  + Y(\dot{F})X-\dot{g}_J(X,Y)\mathrm{grad}_J(F) \\
        & \ \ \ \ - g_J(X,Y)\dot{\mathrm{grad}}_J(F)-g_J(X,Y)\mathrm{grad}_J(\dot{F})\big)+ \dot{A}(X)Y \\ 
        & =   \dot{\nabla}^J_X Y + \sqrt{2}\dot{A}(X)Y \tag{$\dot{F}=0$, see Lemma 3.22 in \cite{rungi2021pseudo}} \\
        & = -\frac{1}{2}\big((\dive\dot J)(X)JY+J(\nabla^J_X\dot J)Y\big) + \sqrt{2}\dot{A}(X)Y \tag{by Lemma \ref{lem:firstordervariationlevicivita}} \\
        & = -\frac{1}{2}J(\nabla^J_X\dot J)Y + \sqrt{2}\dot{A}_0(X)Y     
\end{align*}
because the trace part of $\dot{A}$ vanishes identically (see Lemma \ref{lem:tangentpickform}) and the tangent vector $(\dot{J}, \dot{A})$ is a solution to the system (\ref{equationHitchincomponent}), which on the Fuchsian locus simply says that $\dot{J}$ is divergence free and $\dot{A}_0$ is the real part of a holomorphic cubic differential. Since on the Fuchsian locus $\nabla^{J}$ is equal to the Blaschke connection $\nabla$, we will suppress the superscript $J$ from now on.

\begin{proof}[Proof of Theorem \ref{thmGoldman}] We show that $\ome_{G}$ and $\ome_f$ coincide at a point on the Fuchsian locus $(J,0)$ along any pair of tangent vectors. The reader may refer to Equation (\ref{symplecticform}) for the explicit formula of $\ome_f$ that we will use throughout the proof. Let us start with vertical variations, i.e. tangent vectors of the form $(0, \dot{A}_{0}), (0, \dot{A}_{0}') \in V_{(J,0)}$. From the discussion above, the corresponding variations of the Blaschke connection are $\dot{\nabla}=\sqrt{2}\dot{A}_{0}$ and $\dot{\nabla}'=\sqrt{2}\dot{A}_{0}'$. Therefore,
\[
    \ome_{G}(\dot{\nabla}, \dot{\nabla}')= 2\int_{\Sigma} \tr(\dot{A}_{0} \wedge \dot{A}_{0}')= 2\langle \dot{A}_{0}, \dot{A}_{0}'J\rangle
\]
because, since $\dot{A}_{0}'$ is the real part of a holomorphic cubic differential, in an orthonormal basis for $g_{J}$, we have $\dot{A}_{0}'=(\dot{A}_{0})_{1}e_{1}^{*}+(\dot{A}_{0})_{1}Je_{2}^{*}$. The last expression in the above formula equals exactly $(\ome_f)_{(J,0)}((0, \dot{A}_{0}), (0, \dot{A}_{0}'))$ because $f'(0)=-3$ by direct computation. \\
Let us now consider the pairing between a horizontal variation $(\dot{J}, 0)\in V_{(J,0)}$ and a vertical one $(0, \dot{A}_{0})$. We need to check that, if we denote again by $\dot{\nabla}$ and $\dot{\nabla}'$ the induced variations of the Blaschke connection, we have
\[
    \ome_G(\dot{\nabla}, \dot{\nabla}')=(\ome_f)_{(J,0)}((\dot{J},0), (0,\dot{A}_{0}'))=0 \ .
\]
Now, we know that $\dot{\nabla}=-\frac{1}{2}J(\nabla \dot{J})$ and $\dot{\nabla}'=\sqrt{2}\dot{A}_{0}'$, therefore 
\begin{align*}
    \ome_G(\dot{\nabla}, \dot{\nabla}') &= -\frac{\sqrt{2}}{2}\int_{\Sigma} \tr(J\nabla\dot{J} \wedge \dot{A}_{0}') \\
    &=-\frac{\sqrt{2}}{2}\int_{\Sigma} \tr(\nabla J\dot{J} \wedge \dot{A}_{0}') \tag{$\nabla J=0$} \\
    &= \frac{\sqrt{2}}{2} \int_{\Sigma} \tr(J\dot{J} \wedge d^{\nabla}\dot{A}_{0}') \tag{integrating by parts} \\
    &= 0 \tag{Equation (\ref{equationHitchincomponent}) with $A=0$} \ .
\end{align*}
We are left with the case of two horizontal tangent vectors $(\dot{J}, 0), (\dot{J}',0) \in V_{(J,0)}$, which induce the variations of Blaschke connection $\dot{\nabla}=-\frac{1}{2}J(\nabla \dot{J})$ and $\dot{\nabla}'=-\frac{1}{2}J(\nabla \dot{J}')$. We compute
\begin{align*}
    \ome_{G}(\dot{\nabla}, \dot{\nabla}')&= \frac{1}{4} \int_{\Sigma} \tr (J\nabla \dot{J} \wedge J\nabla \dot{J}') \\ 
    &=\frac{1}{4} \int_{\Sigma} \tr (\nabla J\dot{J} \wedge \nabla J\dot{J}') \tag{$\nabla J=0$} \\
    &= -\frac{1}{4} \int_{\Sigma} \tr (J\dot{J} \wedge d^{\nabla}(\nabla J\dot{J}')) \tag{integrating by parts}\\
    &= -\frac{1}{4} \int_{\Sigma} \tr(J\dot{J} \wedge [-R^{\nabla}, J\dot{J}'])
\end{align*}
because $d^{\nabla}\circ \nabla$ is the curvature operator of the connection on the bundle $\mathrm{End}(T\Sigma)$, which  coincides with the adjoint action of the Riemann tensor $R^{\nabla}$. The minus sign in the above formula comes from our conventions in the definition of the Riemann tensor (see page 52). Since $\nabla$ is the Levi-Civita connection of the hyperbolic metric compatible with $J$, we have $R^{\nabla}=-J\otimes \rho$, hence
\begin{align*}
    \ome_{G}(\dot{\nabla}, \dot{\nabla}')&= -\frac{1}{4} \int_{\Sigma} \tr(J\dot{J} \wedge [-R^{\nabla}, J\dot{J}']) \\
    &= -\frac{1}{4} \int_{\Sigma} \tr(J\dot{J}[J, J\dot{J}']) \rho\\
    & = \frac{1}{2} \int_{\Sigma} \tr(J\dot{J}\dot{J}') \rho \tag{$J\dot{J}'=-\dot{J}'J$} \\
    & = -\frac{1}{2} \int_{\Sigma} \tr(\dot{J}J\dot{J}') \rho \\
    & = (\ome_{f})_{(J,0)}((\dot{J},0),(\dot{J}',0)) \ . 
\end{align*}
Note that this last computation is compatible with the fact that Goldman symplectic form on Teichm\"uller space is equal to four times the Weil-Petersson symplectic form. \\
By bilinearity, we conclude that $\ome_{G}$ coincides with $\ome_{f}$ on the Fuchsian locus along all pairs of tangent vectors. 
\end{proof}

\noindent Theorem \ref{thmGoldman} implies that the pairing between Goldman symplectic form on the Hitchin component and the Labourie-Loftin complex structure cannot give rise to a positive definite K\"ahler structure. Moreover, it is reasonable to conjecture that Goldman symplectic form coincides with our $\ome_{f}$ at all points.

\emergencystretch=1em

\printbibliography

\end{document}